\definecolor{gris25}{gray}{0.75}
\definecolor{mycolor}{rgb}{0, 0, 0.1}
\newmdenv[innerlinewidth=0.5pt, roundcorner=4pt,linecolor=mycolor,innerleftmargin=6pt,
innerrightmargin=6pt,innertopmargin=6pt,innerbottommargin=6pt]{mybox}
\newcommand{\E}{\mathbb{E}}
\renewcommand{\subset}{\subseteq}
\newcommand{\R}{{\mathbb{R}}}
\newcommand{\T}{\mathcal{T}}
\newcommand{\s}{\mathcal{S}}
\newcommand{\p}{\mathcal{P}}
\newcommand{\N}{\mathbb{N}}
\newcommand{\Z}{\mathbb{Z}}
\newcommand{\veps}{\varepsilon}
\newcommand{\Var}{\mathrm{Var}}
\newcommand{\pr}{\mathbb{P}}
\newcommand{\cross}{\text{\Cross}}
\def\llbracket{[\hspace{-.10em} [ }
\def\rrbracket{ ] \hspace{-.10em}]}
\newcommand{\bnu}{\ensuremath{\boldsymbol{\bar{\nu}}}}
\renewcommand{\theequation}{\arabic{equation}}
\newtheorem{thm}{Theorem}[section]
\newtheorem{defn}[thm]{Definition}
\newtheorem{lem}[thm]{Lemma}
\newtheorem{prop}[thm]{Proposition}
\newtheorem{cor}[thm]{Corollary}
\newtheorem{rem}[thm]{Remark}
\date{}
\title{\bf{\textsc{Scaling limits of multi-type Markov Branching trees}}}
\author{B\'en\'edicte Haas\thanks{ Universit\'e Paris 13, LAGA, CNRS UMR 7539, 93430 Villetaneuse, haas@math.univ-paris13.fr}  \quad \& \hspace{0.2cm} Robin Stephenson\thanks{Department of Statistics, University of Oxford, 24-29 St Giles', Oxford OX1 3LB, UK, robin.stephenson@normalesup.org  \newline \text{} \quad  This work is partially supported by the ANR GRAAL ANR--14--CE25--0014 and the EPSRC Fellowship EP/N004833/1.}}
\begin{document}

\maketitle

\begin{abstract} We introduce multi-type Markov Branching trees, which are simple random population tree models where individuals are characterized by their size and their type and give rise to (size,type)-children in a Galton-Watson fashion, with the rule that the size of any individual is a least the sum of the sizes of its children. 
Assuming that the macroscopic size-splittings are rare, we describe the scaling limits of multi-type Markov Branching trees in terms of multi-type self-similar fragmentation trees. We observe three different regimes according to whether the probability of type change of a size-biased child is proportional to the probability of macroscopic splitting (the critical regime, in which we get in the limit multi-type fragmentation trees with indeed several types), smaller than the probability of macroscopic splitting (the solo regime, in which the limit trees are monotype as we never see a type change), or larger than the probability of macroscopic splitting (the mixing regime,  in which case the types mix in the limit and we get monotype fragmentation trees).

This framework allows us to unify models which may \emph{a priori} seem quite different, a strength which we illustrate with two notable applications. The first one concerns the description of the scaling limits of growing models of random trees built by gluing at each step on the current structure a finite tree picked randomly in a finite alphabet of trees, extending R\'emy's well-known algorithm for the generation of uniform binary trees to a fairly broad framework. We are then either in the critical regime with multi-type fragmentation trees in the scaling limit, or in the solo regime. The second application concerns the scaling limits of large multi-type critical Galton-Watson trees when the offspring distributions all have finite second moments. This topic has already been studied but our approach gives a different proof and we improve on previous results by relaxing some hypotheses. We are then in the mixing regime: the scaling limits are always multiple of the Brownian CRT, a pure monotype fragmentation tree in our framework.
\end{abstract}

\tableofcontents

\bigskip

\bigskip

\setlength\parindent{0pt}
\setlength\parskip{8pt}
\linespread{1.095}

\section{Introduction}
\label{Intro}

We consider population models where individuals are completely characterised by two parameters: their \emph{size}, which is a positive integer, and their \emph{type}, which is an integer in the finite set $[\kappa]:=\{1,\ldots,\kappa\}$, where $\kappa$ is a positive integer fixed throughout the paper. We say that this model is \emph{multi-type Markov Branching} (multi-type MB) if it is built recursively, generation by generation, with the rule that an individual of size $n$ and type $i$ gives birth, independently of other individuals of its generation and according to a distribution that only depends on $n$ and $i$, to a group of (size,type)-children whose sum of sizes is less than or equal to $n$.
The monotype setting ($\kappa=1$) has been investigated in several papers \cite{Ald96,BDMcS08,Ford05,HM12,HMPW08, Riz12}. The first examples of monotype MB trees are the well-studied monotype Galton-Watson trees conditioned to have a given number of vertices or leaves, or  important models in phylogenetics such as the Yule or comb models. However, this framework is in fact much broader, see the previous references and the survey \cite{HSurvey16} for other monotype examples. In \cite{HM12,HMPW08}, the scaling limits of monotype MB trees have been studied under a natural condition satisfied in many examples, namely  that an individual of size $n$ asymptotically gives rise to strictly more than one individual of macroscopic sizes (i.e.\ proportional to $n$) with a probability of order $n^{-\gamma}$ for some $\gamma>0$. Then, if $T_n$ denotes the tree of descendants of an individual of size $n$, where here size can mean either the total number of descendants or the total number of leaves, the rescaled tree $n^{-\gamma} \cdot T_n$ converges in distribution for the Gromov-Hausdorff-Prokhorov topology to a random compact real tree. Rizzolo \cite{Riz12} extends this result to trees with more general notions of sizes. The trees obtained in the limit belong to the family of \emph{fragmentation trees} introduced in \cite{HM04, Steph13}, which describe the genealogy of self-similar fragmentation processes as introduced by Bertoin \cite{BertoinSSF,BertoinBook}. This family includes in particular the Brownian CRT of Aldous and more generally the stable L\'evy trees of Duquesne-Le Gall and Le Jan -- in fact the scaling limit results mentioned above for MB trees allow us to recover well-known results by  Aldous \cite{Ald93} and Duquesne \cite{Duq03} on the convergence of rescaled Galton-Watson trees conditioned on their total progeny to a stable L\'evy tree. Other applications were developed in \cite{BerFire, HMPW08, HM12, Riz12}. We complete this picture by mentioning that recently Dadoun \cite{Dadoun17+} studied the scaling limits of MB tree models that incorporate growth, with connections with the theory of random maps, and Pagnard \cite{Pagnard17} studies the local limits of MB trees and their volume growth (see the references therein for other papers partly interested in  local limits of MB trees).  

The class of multi-type MB trees contains as first examples multi-type Galton-Watson trees conditioned to have a given number of vertices, or a given number of leaves. The scaling limits of multi-type Galton-Watson trees conditioned to have a given number of vertices of a fixed type have been first studied by Miermont \cite{M08} when the covariance matrix of the offspring distributions is finite, assuming furthermore some finite exponential moments.  A first extension has been made by Berzunza \cite{Berzunza18} who described the scaling limits of \emph{forests} of multi-type Galton-Watson trees with offspring distributions in the domain of attraction of stable laws.  A second extension has been made by de Raph\'elis \cite{deRaph17} who considers \emph{infinite} sets of types, a very delicate case. 

The purpose of this paper is to study the scaling limits of multi-type MB trees in a general and unifying setting, and develop applications illustrating the different facets of our framework. Naturally, one may expect that the scaling limits of multi-type MB trees are multi-type fragmentation trees. This family of trees has been introduced in \cite{Steph17} to describe the genealogy of multi-type self-similar fragmentations, also introduced in this paper. They are self-similar models which generalize the notion of homogeneous multi-type fragmentations earlier constructed by Bertoin \cite{Ber08}. Note that we are restricted here to finitely many types, but some different fragmentation models with infinitely many have also been studied recently by Duchamps~\cite{Duchamps19}.

We will observe different types of behavior in the scaling limit, depending on the dynamics of the model.
We will work under the following assumptions (here rough versions, see Section \ref{sec:main} for precise ones):
\vspace{-0.38cm}
\begin{itemize}
\item[(i)] the macroscopic size-splittings of an individual of size $n$ are rare and occur with a probability of order $n^{-\gamma}$ for all types, for some $\gamma>0$ (in fact, for the condition (ii) c) below this probability will more generally be allowed to be of order $O(n^{-\gamma})$ for all types with at least one type of order $n^{-\gamma}$)
\end{itemize}
\vspace{-0.52cm}
and
\vspace{-0.4cm}
\begin{itemize}
\item[(ii)] the probability of type change of a size-biased child of an individual of size $n$ is:
\begin{itemize}
\vspace{-0.1cm}
\item[a)] either of order $n^{-\gamma}$ for all types,
\vspace{-0.05cm}
\item[b)] or of order $o(n^{-\gamma})$ for all types,
\vspace{-0.05cm}
\item[c)] or of order $n^{-\beta}$ for some $\beta \in [0,\gamma)$, for all types. 
\end{itemize} 
\end{itemize} 
We could certainly relax some of those hypotheses to be in a more general framework, but at a significant cost of technicalities in which we do not want to embark as the interest in terms of applications is not clear. In all of our three cases, fixing a type $i$, we will have to rescale the tree of descendants of an individual of size $n$ and type $i$, by $n^{\gamma}$ to observe a non-trivial limit. 
We will then observe three different regimes, which can roughly be summed up as follows:
\begin{itemize}[leftmargin=2.7cm]
\item[under (ii) a),]the limiting tree is a multi-type fragmentation tree,
\item[under (ii) b),]the limiting tree is a monotype fragmentation tree whose dynamics are governed by the type $i$ alone,
\item[under (ii) c),]the limiting tree is a monotype fragmentation tree whose dynamics are governed by a mixture of contributions from all the types, via their stationary distribution appearing in the scaling limit.
\end{itemize}
The case (ii) b) is certainly the least interesting since in the $n^{\gamma}$ scale no type change is  observed asymptotically, and the study reduces to a purely monotype case (this case, though, is a slight extension of the results of \cite{HM12} since we include here a rather general notion of size). In general, for all cases, our proofs consist of exploring the tree starting from the root and evaluating the scaling limits of typical paths, starting from the path from the root to a typical vertex of the tree.  A key point is that this path, including the types of the vertices that compose it, is a bivariate Markov chain on $\mathbb Z_+ \times \{1,\ldots,\kappa\}$. In \cite{HS18} we have studied the scaling limits of such processes and we will use these results to show that here our typical path converges to a time-changed Markov additive process, which, roughly, is the typical path in a multi-type fragmentation tree. While this way of exploring the tree is  inspired by the monotype study of \cite{HM12}, we insist that the multi-type framework brings its own difficulties, notably with the need to deal with types at different scales.

We will then develop two notable applications illustrating our results in their different regimes. The first one concerns growing sequences of random trees that are built recursively by gluing at each step on a random edge of the current structure a random tree chosen in a finite alphabet of finite trees, thus generalizing R\'emy's algorithm \cite{Rem85} for the generation of random binary trees. This model may be seen as a MB model fitting in our cases (ii) a) or (ii) b), with all possible values of $\gamma>0$, depending on the average number of edges of the alphabet trees.
We will see that the scaling limit is a multi-type fragmentation tree, which has strictly more than one type, except when the alphabet is uniquely composed of the tree with a unique edge (it is well-known that then the limit is the Brownian CRT, a monotype fragmentation tree in our setting) or uniquely composed of star trees. The second application deals with critical multi-type Galton-Watson trees having offsprings with finite second moments. This is an illustration of our case (ii) c), with $\gamma=1/2$ and $\beta=0$. 
We will recover the results of Miermont \cite{M08}, under less restrictive assumptions, since we do not have to assume exponential moments. We emphasize that our proof, based on an exploration of the trees via typical genealogical paths is different from Miermont's one, based on the study of contour functions.  Mostly, we believe that this method could also be used to describe the scaling limits of multi-type Galton-Watson trees conditioned to have a given number of vertices (ideally also a given number of leaves, but this will require more work) with offspring distributions in the domain of attraction of stable laws. This has not been proved yet for a single tree, and would complete the work of Berzunza on multi-type Galton-Watson forests. This will be considered in future work. 

\textbf{Organization of the paper.} In Section \ref{sec:main}, after having introduced our discrete (Section \ref{sec:the model}) and continuous (Section \ref{sec:multi-type frag}) multi-type trees, we will expose our main theorems on the scaling limits of MB trees (Section \ref{sec:main results}). The proofs are postponed to Section \ref{sec:proofs1} and Section \ref{sec:proofs2}.
The applications to growing models of random trees and multi-type Galton-Watson trees with a finite second moments are developed in Section \ref{sec:App1} and Section \ref{sec:App2} respectively.

\section{Multi-type Markov Branching trees and their scaling limits}
\label{sec:main}

We emphasize that throughout the paper, all the discrete trees that we consider are rooted, unordered, unlabelled and that they may be interpreted as metric spaces, equipped with the graph distance.

\subsection{The model: multi-type MB trees}
\label{sec:the model}

\textbf{Discrete typed partitions.} For $n\in\N$, we call \emph{$\kappa$-type partition of $n$} any finite $(\N\times[\kappa])$-valued sequence of the form $$\bar{\lambda}=(\lambda,\mathbf{i})=\big((\lambda_1,i_1),\ldots,(\lambda_{p(\bar{\lambda})},i_{p(\bar{\lambda})})\big)$$ such that $p(\bar{\lambda})$ is a nonnegative integer -- the length of the partition -- and:
\begin{itemize}
\item[(i)] $\sum_{m=1}^{p(\bar{\lambda})} \lambda_m\leq n$;
\item[(ii)] the sequence is lexicographically nonincreasing: for all $m\leq p(\bar{\lambda})-1$, either $\lambda_{m+1}<\lambda_m$ or $\lambda_{m+1}=\lambda_m$ and $i_{m+1}\leq i_m$.
\end{itemize}
We then let $\overline{\p}_n$ be the set of $\kappa$-type partitions of $n.$ By convention, the empty sequence $\emptyset$ is an element of $\overline{\p}_n$, with length $p(\emptyset)=0$, corresponding to the situation when an individual has no children. 

\textbf{Splitting distributions and associated MB trees.} In a multi-type MB model the children of any individual with size $n$ can be sorted into an element of $\overline{\p}_{n},$ and thus the offspring distributions are probability measures on $\overline{\p}_{n}.$ We call the offspring distributions of MB models \emph{splitting distributions}, to emphasize the fact that the size of a parent is spread out in its children. Let $q_n^{(i)}$ be the splitting distribution of an individual with type $i\in[\kappa]$ and size $n\in\N.$ 
Then for all $i\in[\kappa]$ and $n\in\N$, we denote by  $T_n^{(i)}$ the family (rooted, unordered, unlabelled) tree  of the population started at an individual with size $n$ and type $i$, and call it a \emph{$\kappa$-type MB tree}. Formally, it is a multi-type Galton-Watson tree with type set $\N\times [\kappa],$ where the offspring of an individual with characteristics $(m,j)\in\N\times [\kappa]$ has distribution $(q_m^{(j)}, m \leq n, j \in [\kappa])$. To guarantee that $T_n^{(i)}$ is finite a.s., we will always make the following assumption: 
\begin{eqnarray}
\label{hyp:principal}
&&\hspace{-0.9cm}\text{For all } n\in\N \text{ and }i\in[\kappa], \\
&&\hspace{-0.3cm} \bullet \text{ either }q_n^{(i)}\left(\{\emptyset\}\cup\{\bar{\lambda}\in\overline{\p}_n:\;\lambda_1<n\}\right)>0 \nonumber \\ 
&&\hspace{-0.3cm} \bullet \text{ or there exists a type }j\neq i \text{ satisfying the previous point and a path }i_1=i,i_2,\ldots,i_p=j \nonumber \\
&& \hspace{-0.1cm} \text{ such that } 
q_n^{(i_l)}(\{(n,i_{l+1})\})>0 \text{ for } l=1,\ldots,p-1. \nonumber
\end{eqnarray}
Note that this implies that $q_n^{(i)}(\{\emptyset\})>0$ for at least one integer $n$ and one type $i$.

\emph{A probability measure on $T_n^{(i)}$.} The tree $T_n^{(i)}$ comes with a natural probability measure which we will call $\mu_n^{(i)},$ defined thus: for every vertex $u$ in the tree with size $k\leq n$, let $\bar{\lambda}$ be the list of sizes and types of its children. Then, if $\sum_{m=1}^{p(\bar{\lambda})} \lambda_m <k$, put at $u$ an atom with mass 
$$\mu_n^{(i)}(\{u\}):=\frac{k-\sum_{m=1}^{p(\bar{\lambda})} \lambda_m}{n}.$$ 
By definition, we see that the subtree rooted at $u$ then has mass $k/n$; in particular $\mu_n^{(i)}$ is a probability measure.

\emph{Conservative cases.} For $i\in [\kappa]$ and $n\geq 2$ we say that the probability $q_n^{(i)}$ is \emph{conservative} if
$$q_n^{(i)}\Bigg(\sum_{m=1}^{p(\bar{\lambda})}\lambda_m=n\Bigg)=1.$$ 
When the measures $q_n^{(i)}$ are conservative for all $n\geq 2$ and $i\in [\kappa]$, the tree $T_n^{(i)}$ has $n$ leaves and the measure $\mu_n^{(i)}$ is uniformly supported on its set of leaves, $\forall n\geq 1$. 

\begin{rem} In some applications, we will need to allow individuals to have size $0$. This difference will be treated on a case-by-case basis.
\end{rem}

\subsection{Multi-type fragmentation trees}\label{sec:deftree}
\label{sec:multi-type frag}

We now present some background on multi-type self-similar fragmentation trees as constructed in \cite{Steph17}.

\textbf{Continuous typed partitions.} We let $\overline{\s}^{\downarrow}$ be the set of sequences of the form $$\bar{\mathbf{s}}=(\mathbf{s},\mathbf{i})=(s_n,i_n)_{n\in\N}\in ([0,1]\times \{0,1,\ldots,\kappa\})^{\N}$$ such that:
\begin{itemize}
\item[(i)] $\sum_{n\in\N} s_n\leq 1$; 
\item[(ii)] for all $n\in\N$, $i_n=0$ if and only if $s_n=0;$
\item[(ii)] the sequence is lexicographically nonincreasing: for all $n\in\N$, either $s_{n+1}<s_n$ or $s_{n+1}=s_n$ and $i_{n+1}\leq i_n.$
\end{itemize}
An element of $\overline{\s}^{\downarrow}$ should be seen as a finite or countable set of particles with masses $(s_n,n\in\N)$ and types $(i_n,n\in\N).$ We do not allow for particles with mass $0$: $s_n=0$ for some $n$ means that there is no $n$-th particle at all, and so it is matched with the placeholder type $0$. 
We also define $s_0:=1-\sum_{n \in \mathbb N} s_n$. Following \cite{Ber08}, $\overline{\s}^{\downarrow}$ is compactly metrised by letting, for two partitions $\bar{\mathbf{s}}$ and $\bar{\mathbf{s}}',$ $d(\bar{\mathbf{s}},\bar{\mathbf{s}}')$ be the Prokhorov distance between the two measures 
\begin{equation}
\label{def:metric}
s_0\delta_{\mathbf 0}+\sum_{n=1}^{\infty}s_{n}\delta_{s_n\mathbf{e}_{i_n}} \quad \text{ and } \quad s'_0\delta_{\mathbf 0}+\sum_{n=1}^{\infty}s'_{n}\delta_{s'_n\mathbf{e}_{i'_n}}
\end{equation} 
on the $\kappa$-dimensional unit cube (where $(\mathbf{e}_i,i\in[\kappa])$ is the canonical basis of $\R^{\kappa}$). We emphasize that the functions $\bar{\mathbf s} \mapsto \sum_{n\geq n_0} s_n^q$ and $\bar{\mathbf s} \mapsto \sum_{n\geq n_0} s_n^q \mathbf 1_{\{i_n=i\}}$ are continuous on $\overline{\s}^{\downarrow}$ when $q>1$, for each integer $n_0$ and each type $i \in [\kappa]$. 
This is not the case of the functions $\bar{\mathbf s} \mapsto \sum_{n\geq n_0} s_n$ and $\bar{\mathbf s} \mapsto \sum_{n\geq n_0} s_n \mathbf 1_{\{i_n=i\}}$, which are however continuous at each point $\bar{\mathbf s}$ such that $\sum_{i=1}^{\infty} s_i=1$ (by Fatou's lemma). These continuity properties will be used regularly throughout the paper.

\textbf{Dislocation measures and multi-type fragmentation trees.} Let $\gamma>0$ and $\boldsymbol{\bar{\nu}}=(\bar{\nu}^{(i)},i\in[\kappa])$ be a vector of $\sigma$-finite measures on $\overline{\s}^{\downarrow}.$ We call them \emph{dislocation measures} if they also satisfy these four conditions for all $i$:
\begin{itemize}
\item[(i)] $\bar{\nu}^{(i)}\left(\sum_{n\in \mathbb N} s_n <1 \right)=0$;
\item[(ii)] $\bar{\nu}^{(i)}\big((1,i),(0,0),\ldots\big)=0;$
\item[(iii)] $\bar{\nu}^{(i)}(s_1<1)>0$; 
\item[(iv)] $\int_{\overline{\s}^{\downarrow}} (1-s_1\mathbf{1}_{\{i_1=i\}})\bar{\nu}^{(i)}(\mathrm d\bar{\mathbf{s}}) < \infty.$
\end{itemize}
In this case, for all $\gamma>0$, one can construct a continuous-time population with individuals characterised by a mass $x\in(0,1]$ and a type $j\in[\kappa]$ such that we start with a single individual $(1,i)$ and an individual with characteristics $(x,j)\in (0,1]\times [\kappa]$ splits into individuals with characteristics $\left((xs_m,i_m),m\in\N\right)$ at rate $$x^{-\gamma}\bar{\nu}^{(i)}(\mathrm d \bar{\mathbf{s}}).$$
Note that Condition (i) corresponds to a \emph{conservation property} -- no mass is lost when an individual splits -- that we adopt here for the sake of simplicity. Condition (iv) is necessary for our process not to vanish immediately, and ensures that both the rate of type change of the largest fragment from a split and the rate of macroscopic splittings are finite.
It is then possible to build the family tree of this genealogy, which is a compact real tree denoted by $\T_{\gamma,\bnu}^{(i)},$ called the \emph{multi-type fragmentation tree} with index of self-similarity $\gamma$ and dislocation measures  $(\bar{\nu}^{(i)},i\in[\kappa])$. We refer to Section \ref{sec:construcmultMB} for a precise definition of such a tree and more generally to \cite[Section 3]{Steph17} for details. It is moreover naturally equipped with a probability measure $\mu_{\gamma,\bnu}^{(i)},$ which is supported by its set of leaves, a set with Hausdorff dimension equal to $1/\gamma$ a.s under some mild additional assumptions.

When $\kappa=1$, the tree $(\T_{\gamma,\bnu}^{(1)},\mu_{\gamma,\bnu}^{(1)})$ is a monotype fragmentation tree, as introduced in \cite{HM04} (see also \cite{Steph13} for the non-conservative models).  In fact, it is easier in this setting to use simpler notations that do not refer to types, and we let $\mathcal S^{\downarrow}$ denote the set of nonincreasing sequences  $(s_n,n\geq 1)$ such that $s_n\geq 0$ for all $n$ and $\sum_{n \in \mathbb N}s_n\leq 1$, equipped with distance $\sup_{n}|s_n-s'_n|$ for $\mathbf s,\mathbf s' \in \mathcal S^{\downarrow}$. In this context, a dislocation measure is a measure $\nu$ on $\mathcal S^{\downarrow}$ such that $\nu(\sum_{n \in \mathbb N}s_n<1)=\nu(s_1=1)=0$ and $\int_{\mathcal S^{\downarrow}} (1-s_1)\nu(\mathrm d{\mathbf{s}})<\infty$, and we will thus denote this tree by $(\T_{\gamma,{\nu}},\mu_{\gamma,{\nu}})$.
A key example that will appear several times in our results is the Brownian CRT of Aldous, denoted here by $\T_{\mathrm{Br}}$. We recall from Bertoin \cite{BertoinSSF} that equipped with its ``uniform" measure $\mu_{\mathrm{Br}}$ it is a  fragmentation tree with index of self-similarity $\gamma=1/2$ and dislocation measure $\nu_{\mathrm{Br}}$, which is binary (i.e.\ it only charges sequences such that $s_3=0$),  conservative and such that
\begin{equation}
\label{Brdisloc}
\nu_{\mathrm{Br}}(s_1\in \mathrm{d}x)=\sqrt{\frac{2}{\pi x^3(1-x)^3}}\mathbf{1}_{\{1/2\leq x < 1\}}\mathrm{d}x.
\end{equation}

\subsection{Main results: scaling limit theorems}
\label{sec:main results}

We consider a family of offspring distributions $(q_n^{(i)})_{(n, i) \in \mathbb N \times [\kappa]}$ satisfying (\ref{hyp:principal}) and an associated sequence of multi-type MB measured trees  $$(T_n^{(i)}, \mu_n^{(i)})_{(n, i) \in \mathbb N \times [\kappa]}.$$ We underline that there is no notion of joint distribution for these MB measured trees, that may be constructed on different probability spaces. In order to describe their scaling limits (in distribution) we make some further assumptions on the sequence $(q_n^{(i)})$ that involve two parameters: a real number $\gamma$ and a vector of dislocation measures. 

\medskip

\noindent\textbf{Possible values of} $\gamma$. If, for all $i\in[\kappa]$ and all $n$ large enough, the probability $q_n^{(i)}$ is conservative, then consider any $\gamma>0$. If the above is not true, we restrict ourselves to $0<\gamma<1.$ This holds for both theorems below.

As is now standard, we endow the trees $T_n^{(i)},n\geq 1,i \in [\kappa]$ with the graph distance, and consider the Gromov-Hausdorff-Prokhorov (GHP) topology on the set of equivalent classes of measured compact metric spaces. See e.g. \cite{ADH, ABBGM17} for background on this topology. 

We can now formalize the results announced in the introduction, by considering three situations where we compare the probabilities of macroscopic splittings (we informally say that the splitting of an individual of size $n$ is macroscopic if its largest child has size less than $n(1-\varepsilon)$ for some $\varepsilon>0$) with the probability of type change of a size-biased child of an individual of size $n$. In the first situation, called the \emph{critical case}, macroscopic splittings of individuals of size $n$ and any type happen at rate $n^{-\gamma},$ which is also the order of magnitude of the probability that a size-biased child of such an individual has a different type than its parent. We then observe in the limit a multi-type fragmentation tree with index $\gamma$ and whose dislocation measures are the scaling limits of the offspring distributions, in the sense of (\ref{hypocvcritique}) below. In the second situation, called \emph{solo case}, macroscopic splits still occur with a probability of order $n^{-\gamma}$ whereas the probability of type change of a size-biased fragment is a $o(n^{-\gamma}),$ and then the limit is a monotype fragmentation tree, whose type is technically that of its root. These two situations will be often considered together,
as opposed to the third situation, called the \emph{mixing regime}. There, macroscopic splits happen with a probability which is a $O(n^{-\gamma})$ for all types, with at least one type realising the bound, and the probability of type change of a size-biased fragment is larger, specifically of order $n^{-\beta}$ for some $\beta<\gamma$, and we observe in the limit a monotype fragmentation tree with index $\gamma$ and a dislocation measure which is a mixture of monotype dislocation measures appearing as scaling limits of the offspring distributions in the sense of (\ref{hypocvmixing}). The scaling factors in this mixture are given by the stationary distribution of the Markov chain describing the asymptotic evolution of the types. All of these results are formally stated as follows.

\medskip

\noindent\textbf{Critical and solo regimes.} For $n\in\N$, $i\in[\kappa]$ we let $\bar{\nu}_n^{(i)},$ a probability distribution on $\overline{\s}^{\downarrow},$ be the distribution of $$\left(\left(\frac{\Lambda_1}{n},i_1\right),\ldots,\left(\frac{\Lambda_{p(\bar{\lambda})}}{n},i_{p(\bar{\Lambda})}\right),(0,0),\ldots\right)$$ if $\bar{\Lambda}$ has distribution $q_n^{(i)}.$
\begin{thm}\label{theo:critique}
Assume that, for all $i\in[\kappa]$, we have the following weak convergence of measures
\begin{equation}
\label{hypocvcritique}
n^{\gamma}\left(1-s_1\mathbf{1}_{\{i_1=i\}}\right)\bar{\nu}_n^{(i)}(\mathrm d \bar{\mathbf{s}}) \underset{n\to\infty}\longrightarrow \left(1-s_1\mathbf{1}_{\{i_1=i\}}\right)\bar{\nu}^{(i)}(\mathrm d \bar{\mathbf{s}})\end{equation}
where $\bnu=(\bar{\nu}^{(i)},i\in[\kappa])$ is a vector of dislocation measures on $\overline{\s}^{\downarrow}$ which satisfies one of the following: 
\begin{equation}\label{hypocvcritique2} \forall i\in[\kappa], \quad \bar{\nu}^{(i)}\left(\exists k\in \mathbb N, s_k>0\text{ and } i_k\neq i\right)>0.\end{equation}
or
\begin{equation}\label{hypocvcritique3} \forall i\in[\kappa], \quad \bar{\nu}^{(i)}\left(\exists k\in \mathbb N, s_k>0\text{ and } i_k\neq i\right)=0.\end{equation}
We then have the following convergence in distribution of measured metric spaces for the GHP-topology, for all $i\in[\kappa]$:
\[\big({n^{-\gamma}} \cdot T_n^{(i)}, \mu_n^{(i)}\big) \overset{(d)}{\underset{n\rightarrow \infty}{\longrightarrow}} \big(\T^{(i)}_{\gamma,\bnu}, \mu^{(i)}_{\gamma,\bnu}\big).
\]
\end{thm}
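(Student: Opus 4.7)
My plan is to follow the spine-decomposition approach pioneered in the monotype setting by Haas and Miermont \cite{HM12}, with the additional ingredient that we must keep track of the types of vertices along marked paths and show that they evolve at the right rate in the scaling limit. The main starting point is that, as in the monotype case, the path from the root of $T_n^{(i)}$ to a vertex sampled according to $\mu_n^{(i)}$, decorated with the sizes and types of the vertices it visits, forms a bivariate Markov chain on $\mathbb{Z}_+\times[\kappa]$ started from $(n,i)$. Its transitions are those of a size-biased version of the splitting laws $q_m^{(j)}$, together with an extra ``stop here'' mass accounting for the atom of $\mu_n^{(i)}$ at the current vertex. Hypothesis (\ref{hypocvcritique}) translates precisely into the hypotheses of the scaling-limit theorem for such bivariate chains proved in \cite{HS18}, so that, with space rescaled by $1/n$ and time by $n^{\gamma}$, the decorated spine converges in distribution to a time-changed Markov additive process on $(0,1]\times[\kappa]$. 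Under (\ref{hypocvcritique2}) this process is exactly the tagged-fragment process of the multi-type self-similar fragmentation with index $\gamma$ and dislocation measures $\bnu$; under (\ref{hypocvcritique3}) it reduces to a monotype tagged-fragment process whose type is frozen at $i$.

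\textbf{From spines to reduced trees.} Starting from this spine convergence, I would then prove by induction on $k\geq 1$ that the subtree $R_n^{(i)}(k)$ of $T_n^{(i)}$ spanned by the root and $k$ i.i.d.\ $\mu_n^{(i)}$-samples, endowed with its graph distance rescaled by $n^{-\gamma}$, the types of its branch-points, and the total $\mu_n^{(i)}$-masses of the bushes hanging off it, converges in distribution to the analogous reduced tree of $(\T^{(i)}_{\gamma,\bnu},\mu^{(i)}_{\gamma,\bnu})$ with $k$ leaves sampled from $\mu^{(i)}_{\gamma,\bnu}$. The recursive step relies on the Markov branching property: conditionally on the sizes and types of the children of the first branch-point reached along the spine, the subtrees containing the remaining sampled leaves are independent MB trees of smaller sizes drawn from the same family $(q_\cdot^{(\cdot)})$, so the induction hypothesis applies to each. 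Crucially, under (\ref{hypocvcritique}) only macroscopic splits survive the rescaling, so the sizes of the non-spine children at the branch-point remain of order $n$, and (\ref{hypocvcritique}) then yields joint convergence of the spine together with the dislocation at the branch-point to the first branching event of the continuum reduced tree.

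\textbf{From finite-dimensional convergence to GHP, and the main obstacle.} Upgrading convergence of all reduced trees to GHP-convergence of the full measured trees is by now standard once one has tightness of the height of $n^{-\gamma}\cdot T_n^{(i)}$ and a control showing $\mu_n^{(i)}(R_n^{(i)}(k))\to 1$ as $k\to\infty$ uniformly in $n$; see for instance \cite{HM12, Steph13} for the monotype case, whose arguments carry over since both properties essentially reduce to statements about a single spine. Tightness of the height is obtained from a moment bound on the absorption time of the rescaled spine Markov chain, provided by the limit of \cite{HS18} combined with a uniform integrability argument. In my view, the main obstacle is the first step: one must verify that the hypotheses of the bivariate-chain scaling limit of \cite{HS18} are implied by (\ref{hypocvcritique}) alone, noting in particular that (\ref{hypocvcritique}) only tests the measures against the weight $1-s_1\mathbf{1}_{\{i_1=i\}}$, whereas type changes along the spine can occur precisely in the events $\{s_1=1,\, i_1\neq i\}$, which need to be correctly accounted for in the critical regime in order to recover the correct tagged-fragment dynamics of $\T^{(i)}_{\gamma,\bnu}$.
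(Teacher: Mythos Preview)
Your overall strategy matches the paper's proof almost exactly: reduce to the conservative case, identify the tagged-fragment (spine) chain as a bivariate Markov chain, invoke \cite{HS18} for its scaling limit, prove convergence of $k$-point reduced trees by induction on $k$ using the MB property, establish tightness, and finally upgrade to GHP by adding the measure. So the architecture is correct.

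Two points deserve correction or clarification.

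\emph{Tightness does not follow from spine moments.} You write that tightness of the rescaled height comes from ``a moment bound on the absorption time of the rescaled spine Markov chain, provided by the limit of \cite{HS18}''. This is not sufficient: the absorption time $D_1^{(n)}$ of the spine is the height of \emph{one} $\mu_n^{(i)}$-typical leaf, whereas tightness (as in Lemma~\ref{lem:tightness}) requires controlling the heights of \emph{all} subtrees hanging off the reduced tree $\mathcal{R}(T_n^{(i)},[k])$. For this one needs uniform moment bounds on the full tree heights $H_m^{(j)}$, i.e.\ $\sup_m m^{-p\gamma}\E[(H_m^{(j)})^p]<\infty$ for every $p$ and $j$, which is the content of Proposition~\ref{Hmoments}. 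In the critical and solo regimes this is proved by a direct induction on $n$ using the MB recursion $H_n^{(i)}\overset{\mathrm{law}}{=}1+\max_\ell H_{\Lambda_\ell}^{(i_\ell)}$ together with the uniform lower bound $\sum_{\bar\lambda}q_n^{(i)}(\bar\lambda)\big(1-\sum_\ell(\lambda_\ell/n)^{p\gamma}\big)\geq d\,n^{-\gamma}$, which follows from (\ref{hypocvcritique}) since each $\bar\nu^{(i)}$ is a genuine dislocation measure. This is a separate (short) argument, not a byproduct of the spine convergence.

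\emph{Your ``main obstacle'' is not an obstacle.} You worry that (\ref{hypocvcritique}) may miss type-change events of the form $\{s_1=1,\,i_1\neq i\}$ because it integrates against the weight $1-s_1\mathbf{1}_{\{i_1=i\}}$. But on that event the weight equals $1$, so these type changes are fully captured. Concretely, Lemma~\ref{lem:hypok1} rewrites the relevant transition sums as $n^\gamma\int g(\bar{\mathbf s})\,(1-s_1\mathbf{1}_{\{i_1=i\}})\,\bar\nu_n^{(i)}(\mathrm d\bar{\mathbf s})$ for a function $g$ that is bounded and continuous on the conservative part of $\overline{\mathcal S}^\downarrow$ away from the single point $((1,i),(0,0),\ldots)$, and (\ref{hypocvcritique}) applies directly. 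The genuine technical care in passing from (\ref{hypocvcritique}) to the hypotheses of \cite{HS18} lies in checking this continuity on $\{\sum_\ell s_\ell=1\}$, not in the type-change events you flag.
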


Hypothesis (\ref{hypocvcritique}) means in particular that, for Lebesgue-almost-every $\varepsilon>0$, the probability that an individual with size $n$ and type $i$ gives a sequence of children with largest size smaller than $n(1-\varepsilon)$ is asymptotically proportional to
$n^{-\gamma} \bar{\nu}^{(i)}(s_1 \leq 1-\varepsilon)$, where $\bar{\nu}^{(i)}(s_1 \leq 1-\varepsilon)$ is finite by definition. If we combine it with assumption (\ref{hypocvcritique2}) then we are in the critical regime mentioned earlier. Indeed, the probability that a size-biased fragment changes its type behaves asymptotically as 
$$\sum_{\lambda\in\overline{\p}_n}q_n^{(i)}(\bar{\lambda})\sum_{m=1}^{p(\bar{\lambda})} \frac{\lambda_m}{\sum_{k=1}^{p(\bar{\lambda})}\lambda_k} \mathbf{1}_{\{i_m \neq i\}} \underset{n \rightarrow \infty}\sim n^{-\gamma}\int_{\bar \s^{\downarrow}}\sum_{m=1}^{\infty}s_m \mathbf{1}_{\{i_m \neq i\}}\mathrm d\bar{\nu}^{(i)}(\bar{\mathbf{s}}),$$ 
where the integral is finite by definition of a dislocation measure. Meanwhile, if we combine hypotheses (\ref{hypocvcritique}) and (\ref{hypocvcritique3}) then the probability that a size-biased fragment changes its type is $o(n^{-\gamma}),$ placing us in the solo case.
We believe that Theorem \ref{theo:critique} would still hold without either of (\ref{hypocvcritique2}) or (\ref{hypocvcritique3}), putting us in an intermediate regime where some, but not all of the types would act as dead ends. In order to lighten this already long article, we do not develop this case which requires a certain number of technicalities and is not the most interesting in terms of applications.

\emph{The monotype case.} We emphasize that Theorem \ref{theo:critique} includes the monotype cases (assuming (\ref{hypocvcritique}) for the unique type involved, then  (\ref{hypocvcritique3}) is automatically satisfied) and that it then generalizes slightly the results obtained in \cite{HM12} and \cite{Riz12} by allowing more general notions of sizes.

\bigskip

\noindent\textbf{Mixing regime.} Let $0\leq \beta < \gamma$. For $n\in\N$, let $P_n$ be the matrix defined by
$$P_n(i,j)=q_n^{(i)} (i_1=j) \quad \text{for all types } i,j.$$ 
Let also, for $n\in\N$ and $i\in[\kappa]$, $\nu_n^{(i)}$ be the probability measure on $\s^{\downarrow}$ which is the distribution of 
$$\left(\frac{\Lambda_1}{n},\ldots,\frac{\Lambda_{p(\bar{\Lambda})}}{n},0,\ldots\right),$$ without the types, if $\bar{\Lambda}$ has distribution $q_n^{(i)}.$ In our theorem below, we use the notion of $\mathsf Q-$matrix on $[\kappa]$: we recall that it is a $\kappa\times \kappa$ matrix such that the diagonal coefficients are nonpositive, the coefficients outside the diagonal are nonnegative and the sum of each line is 0.

 \begin{thm}\label{theo:melange}
Assume that, for all $i\in[\kappa],$ we have the following weak convergence of measures on $\s^{\downarrow}$:
\begin{equation}
\label{hypocvmixing}
n^{\gamma}(1-s_1)\nu_n^{(i)}(\mathrm d \mathbf{s}) \underset{n\to\infty}\longrightarrow (1-s_1)\nu^{(i)}(\mathrm d \mathbf{s})
\end{equation}
where the $\nu^{(i)},i\in[\kappa]$ are either dislocation measures on ${\s}^{\downarrow}$ (which by definition cannot be the null measure) or the null measure on ${\s}^{\downarrow}$, such that at least one of these measures is not null. Assume also the following convergence of matrices:
\begin{equation}
\label{hypocvmixing2}
n^{\beta}(P_n-I) \ {\underset{n \rightarrow \infty}\longrightarrow} \ Q
\end{equation}
where $Q$ is an irreducible $\mathsf Q$-matrix on $[\kappa]$, and $\chi=(\chi_1,\ldots,\chi_\kappa)$ is the corresponding unique invariant probability measure. We then have the following convergence in distribution of measured metric spaces for the GHP-topology, for all $i\in[\kappa]$:
\[\big(n^{-\gamma} \cdot T_n^{(i)},\mu_n^{(i)}\big) \overset{(d)}{\underset{n \rightarrow \infty}\longrightarrow} \big(\T_{\gamma,\nu},\mu_{\gamma,\nu}\big)
\]
where $\big(\T_{\gamma,\nu},\mu_{\gamma,\nu}\big)$ is a monotype fragmentation tree, with dislocation measure $\nu=\sum_{i=1}^{\kappa}  \chi_i\nu^{(i)}$.
\end{thm}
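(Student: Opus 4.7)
As outlined in the introduction, my plan is to analyse the tree by exploring it along typical genealogical paths, reducing the problem to the scaling limits of bivariate Markov chains on $\N \times [\kappa]$ treated in \cite{HS18}. Consider the path from the root of $T_n^{(i)}$ to a $\mu_n^{(i)}$-distributed random leaf, and let $(X_k, I_k)_{k \geq 0}$ record the size and type of its successive vertices. Conditionally on $(X_k, I_k)=(m,j)$, the law of $(X_{k+1}, I_{k+1})$ is obtained by sampling a splitting from $q_m^{(j)}$ and picking a size-biased child (or stopping at the current vertex with its residual mass). This makes $(X_k, I_k)_{k \geq 0}$ a bivariate Markov chain on $\N \times [\kappa]$, starting from $(n, i)$ and eventually absorbed at size $0$.

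The first main step is to establish the convergence of the rescaled trajectory $(n^{-\gamma} X_{\lfloor n^\gamma t \rfloor}, I_{\lfloor n^\gamma t \rfloor})_{t \geq 0}$. Hypothesis (\ref{hypocvmixing}) implies that, per step, the size coordinate undergoes a macroscopic decrement at rate of order $n^{-\gamma}$, with relative-jump distribution asymptotically given by the size-biased version of $\nu^{(i)}$ when the current type is $i$ (and is null on the macroscopic scale when $\nu^{(i)} = 0$). Hypothesis (\ref{hypocvmixing2}), meanwhile, gives the type coordinate a rate of change of order $n^{-\beta}$ per step, governed asymptotically by the irreducible $\mathsf Q$-matrix $Q$. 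Because $\beta < \gamma$, over each block of $n^\gamma$ steps needed to see a macroscopic size transition, the type coordinate undergoes $n^{\gamma - \beta} \to \infty$ transitions and thus equilibrates to the stationary measure $\chi$. This is an averaging principle in the spirit of \cite{HS18}: the effective jump measure seen by the size coordinate, after averaging over $\chi$, is exactly $\nu = \sum_{i=1}^\kappa \chi_i \nu^{(i)}$, and the limit of $(n^{-\gamma} X_{\lfloor n^\gamma t \rfloor})_t$ is the Lamperti-type self-similar Markov process with index $\gamma$ and jump measure $\nu$ on $(0,1]$, which is precisely the spine of the fragmentation tree $\T_{\gamma,\nu}$. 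The tightness inputs follow from the $(1-s_1)$-weighted convergence in (\ref{hypocvmixing}).

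Once the convergence of a single size-biased spine is established, I would extend to the joint convergence of the reduced trees $R_n(k)$ spanned by $k$ i.i.d.\ $\mu_n^{(i)}$-distributed leaves and the root, for any fixed $k \geq 1$. By the MB recursive structure, at each split along the spine the remaining subtrees off the spine are themselves independent MB trees, whose sizes and types can be tracked. One iterates the one-spine result with a continuity argument at each splitting event, using that each newly created sub-spine's type re-equilibrates to $\chi$ on the fast scale before producing a macroscopic split. Thus the rescaled $R_n(k)$ converges to the reduced $k$-leaf tree of $(\T_{\gamma,\nu}, \mu_{\gamma,\nu})$. Combined with a tightness estimate that controls the GHP distance between $n^{-\gamma} \cdot T_n^{(i)}$ and $R_n(k)$ via the decay of $\mu_n^{(i)}$-masses of subtrees of small size, this yields the claimed GHP convergence.

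The main obstacle, in my view, is carrying out the averaging principle cleanly in the degenerate situation where some $\nu^{(i)}$ are null: one cannot simply assert that the type at a given splitting event is $\chi$-distributed, but must instead show that between two consecutive macroscopic splits the type chain performs $\Theta(n^\gamma)$ steps and hence exceeds by far the mixing time $\Theta(n^\beta)$, so that equilibration to $\chi$ indeed holds with high probability. Making this quantitative uniformly along an entire spine, and then propagating the averaging through the recursive splitting structure so that the effective dislocation measure at every macroscopic split is exactly $\sum_i \chi_i \nu^{(i)}$, is the technical heart of the argument.
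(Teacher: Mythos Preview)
Your overall strategy matches the paper's: pass to the tagged fragment chain $(X_n,J_n)$ as a bivariate Markov chain, invoke the averaging results of \cite{HS18} for the one-spine limit, induct to $k$-marginals, then close with tightness. But you are missing the central technical ingredient, and your proposed handling of the null-$\nu^{(i)}$ case is not how the argument actually goes through.

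The key tool you have not identified is a uniform height-moment bound $\sup_n n^{-p\gamma}\E\big[(H_n^{(i)})^p\big]<\infty$ (Proposition~\ref{Hmoments}). In the mixing regime with some null $\nu^{(i)}$ this is the hardest part of the whole proof: it is established by an induction on $n$ that tracks type-dependent correction terms $C_i\, n^{\beta-\gamma}$ in the tail of $H_n^{(i)}/n^\gamma$, the constants $C_i$ being produced by a Perron--Frobenius argument on the restriction of $Q$ to the types other than a reference type with nontrivial $\nu^{(i)}$ (Lemma~\ref{lem:PF}). This bound is then used twice. First, it upgrades the \cite{HS18} functional convergence of $n^{-1}X_n(\lfloor n^\gamma\cdot\rfloor)$ to include the absorption time $D_1^{(n)}/n^\gamma$ (Corollary~\ref{cor:tempsabsorption}); the results of \cite{HS18} only deliver this when \emph{all} the limiting measures are nontrivial, so your appeal to that reference is incomplete precisely in the degenerate case you flag. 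Second, it drives the tightness step (Lemma~\ref{lem:tightness}) by controlling, via Markov's inequality, the heights of the subtrees hanging off the $k$-marginal. Your claim that ``the tightness inputs follow from the $(1-s_1)$-weighted convergence in~\eqref{hypocvmixing}'' is therefore too optimistic: neither the absorption-time convergence nor the Gromov--Hausdorff tightness follows from~\eqref{hypocvmixing} alone, and the direct mixing-time heuristic you sketch (``$\Theta(n^\gamma)$ steps vs.\ mixing time $\Theta(n^\beta)$'') does not by itself yield the uniform moment control that both steps require.
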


Note that assumption (\ref{hypocvmixing}) guarantees that macroscopic splittings happen with a probability of order $n^{-\gamma}$ (or lower, for $i$ such that $\nu^{(i)}$ is null) and hypothesis (\ref{hypocvmixing2}) places us in the mixing regime as informally defined earlier. Indeed, while it states that the largest fragment from a split changes type with probability of order $n^{-\beta},$ we can check that this then also holds for a size-biased fragment, see Lemma \ref{lem:hypok2}.

The proofs of these two theorems will be developed in Section \ref{sec:proofs1} and Section \ref{sec:proofs2}. We emphasize that the proofs are more involved technically in the mixing regime when at least one measures $\nu^{(i)}$ is null, since some macroscopic splittings are then significantly slower than the others. This case, however, fully deserves our interest: it is in particular encountered for finite variance multi-type Galton-Watson trees if some types may only give birth to one child, or multi-type Galton-Watson trees whose offspring distributions are in the domain of attraction of stable laws, with varying indices depending on the type of the parent.

\section{Application 1: Growing models of random trees}
\label{sec:App1}

Throughout this section we consider an alphabet of two finite rooted trees to keep things simple, but we emphasize that all the results extend readily to a \emph{finite} alphabet of finite rooted trees. Our two-tree case also include the single tree case when $\tau_A=\tau_B$, with the notation below.

We will consider a growing sequence of random trees obtained by drawing recursively a tree in the set $\{\tau_A,\tau_B\}$, where  $\tau_A,\tau_B$ are two rooted trees with respectively $n_A \geq 0$ and $n_B \geq n_A$ 
edges. When $n_A=0$, \ $\tau_A$ is the tree with a single vertex.
The probability to choose $\tau_A$ and $\tau_B$ are respectively denoted by $q_A \in [0,1]$ and $q_B:=1-q_A$, and 
throughout this section
\begin{equation}
\label{genericrv}
N \text{ denotes  a random variable with distribution } q_A \delta_{n_A}+q_B\delta_{n_B},
\end{equation}
that corresponds to the number of edges of the randomly chosen tree. 
Our sole assumption on the parameters is that the expectation of this number of edges  is nonzero:
$$\mathbb E[N]:=q_An_A+q_Bn_B>0.$$ 
Let then $T_0$ be a rooted planted tree (i.e.\ the root has degree 1) with $n_0\geq 1$ edges, which will be our starting point. The sequence $(T_n,n \geq 0)$ is built recursively starting from $T_0$ by: 
\begin{itemize}
\item[(i)] choosing at step $n$ an edge uniformly in $T_n$ 
\item[(ii)] gluing on this edge a random tree which is equal to $\tau_A$ with probability  $q_A$ and to  $\tau_B$ with probability $q_B$, independently of everything else: the gluing is done by inserting the root of the random tree ``in the middle" of the selected edge of $T_n$. This gives $T_{n+1}$.
\end{itemize}
We will call the successive random trees used for this construction the \emph{brick trees}. 

Our aim is to describe the scaling limit of $(T_n)$ in terms of multi-type fragmentation trees. This will be achieved by the using the underlying MB structure of these trees. In some particular cases, the scaling limit is already known. When $\tau_A=\tau_B=T_0$ is the tree with a single edge, this procedure is known as \emph{R\'emy's algorithm} \cite{Rem85} and generates a sequence of trees distributed as planted binary Galton-Watson trees conditioned to have $n+1$ leaves, $n\geq0$, whose planar order has been forgotten. 
In this case, it is well known that $n^{-1/2} \cdot T_n$ converges almost surely in the GHP-topology to a multiple of the Brownian CRT (see \cite{Ald93} for the convergence in distribution and e.g. \cite{CH13} for a proof of the almost sure convergence). This result was extended in \cite{HS14} to  the case when $\tau_A=\tau_B$ is a star tree with $k$ edges (and $k+1$ vertices) and $T_0$ is the tree with a single edge: then, $n^{-\frac{1}{k+1}} \cdot T_n$ converges to a fragmentation tree with an infinite dislocation measure constructed from a Dirichlet distribution. This convergence has only been proved in \emph{probability} in \cite{HS14}, but the theorem below shows that it is in fact almost sure.

We introduce some notation. For each $v \in \tau_A$, each $v \in \tau_B$ and each $v \in T_0$, let  $\tau_v$ be a planted version of the subtree of the descendants of $v$, including $v$ (by \emph{planted version} we mean that an edge is attached to $v$ and that the other extremity of this edge is the new root). Then let
$$
\mathcal B=\left\{\tau_{v},v \in \tau_A\backslash{\{\rho_A\}}\cup \tau_B\backslash{\{\rho_B\}} \cup T_0\backslash\{{\rho_0}\}\right\}\
$$
where $\rho_A,\rho_B,\rho_0$ are the respective roots of $\tau_A,\tau_B,T_0$. Note that the tree with a unique edge always belongs to $\mathcal B$ (it is generated by the leaves of $\tau_A, \tau_B, T_0$) and that $T_0 \in \mathcal B$ since it is planted.

\begin{thm}
\label{theo:recursif}
Let $\mu_n$ be the uniform probability on the vertices of $T_n$. We have the following almost sure convergence in the GHP-topology:
\begin{equation}
\label{cvGT}
\big(n^{-\frac{1}{\mathbb E[N]+1}} \cdot T_n,\ \mu_n \big)\  \overset{\mathrm{a.s.}}{\underset{n \rightarrow \infty}{\longrightarrow}}\ \left(\mathcal T, \mu \right)
\end{equation}
where the limit is a multi-type fragmentation tree with \ $\# \mathcal B$ \ types, with index of self-similarity $(\mathbb E[N]+1)^{-1}$ and with dislocation measures denoted $\bar \nu_{\mathrm{growth}}^{(i)}$, $i \in [\# \mathcal B]$, which are constructed from limits of urn models depending on all the parameters of the model, $\tau_A,\tau_B,p_A,p_B,T_0$, see Definition \ref{def:randdisloc2}. When $n_A=n_B$, these measures are mixtures of biased-Dirichlet-distributions. With the notation below, the type of the root of $\mathcal T$ is 1.
\end{thm}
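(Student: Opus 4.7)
The plan is to cast $(T_n, \mu_n)$ as a multi-type Markov Branching tree in the sense of Section \ref{sec:the model}, apply Theorem \ref{theo:critique}, and then strengthen the resulting convergence in distribution to almost sure convergence along the lines of \cite{CH13,HS14}.

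\emph{Multi-type MB structure.} I would label each non-root vertex $v$ of $T_n$ by the element of $\mathcal{B}$ that is its ``birth subtree'', i.e.\ the planted subtree that $v$ carries at the instant of its creation (either as a vertex of $T_0$ or of a brick tree inserted at some step); the root $\rho_0$ is labelled by $T_0$, which I take to be type $1$. The size of $v$ at stage $n$ is the number of brick-tree insertions that have landed in the subtree of $T_n$ rooted at $v$. An exchangeability argument, based on the fact that a uniformly chosen edge of $T_n$ conditioned to lie in a given subtree is a uniformly chosen edge of that subtree, shows that conditionally on the type $j$ and size $k$ of $v$ the descendant tree of $v$ is independent of everything outside it and distributed as an isolated seed of type $j$ grown by $k$ insertions. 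This is the Markov Branching property, with offspring distribution $q_n^{(j)}$ equal to the joint law of (sizes, types) of the root's children in the first-level decomposition of a type-$j$ seed after $n$ insertions.

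\emph{Asymptotics via generalized P\'olya urns.} To verify hypothesis (\ref{hypocvcritique}) with $\gamma = 1/(\mathbb{E}[N]+1)$, I would study, for each type $j \in \mathcal{B}$, the law of this first-level decomposition as the number of insertions tends to infinity. The evolution of the edge counts of the root's current direct subtrees is a generalized P\'olya urn whose replacement scheme is dictated by the distribution of $N$ and the shapes of $\tau_A, \tau_B, T_0$; the total edge count grows linearly with expected increment $\mathbb{E}[N]+1$ per step. By the standard Athreya--Karlin / Janson theorems for triangular multicolour urns, the vector of normalised edge counts of these direct subtrees converges almost surely to a random probability vector $\mathscr{D}^{(j)}$, which in the balanced case $n_A = n_B$ is an explicit biased Dirichlet (and a mixture of such in general). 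From this I would deduce (\ref{hypocvcritique}) with limit $\bar{\nu}_{\mathrm{growth}}^{(j)}$ obtained as a suitable distributional transform of $\mathscr{D}^{(j)}$; the critical exponent $\gamma$ arises because the un-rescaled probability that $s_1$ lies below any fixed $1-\varepsilon$ decays like $n^{-\gamma}$, a manifestation of the concentration of the urn composition near its extremal configurations. Checking either (\ref{hypocvcritique2}) or (\ref{hypocvcritique3}) according to whether $\mathcal{B}$ contains distinct types able to ``feed'' each other, Theorem \ref{theo:critique} then gives the convergence of $n^{-\gamma} \cdot T_n$ to a multi-type fragmentation tree in distribution.

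\emph{Almost-sure upgrade.} The main obstacle is promoting this in-distribution statement to an almost sure convergence. Following the strategy of \cite{CH13} and \cite{HS14}, I would embed the growth in continuous time by attaching to each edge an independent rate-$1$ exponential clock that triggers a brick-tree insertion when it rings, producing a Yule-type branching process $(\widetilde{T}_t)$. On this common probability space, the P\'olya urn limits $\mathscr{D}^{(j)}$ are attained almost surely at every internal node, and iterating these almost sure splits allows one to construct, directly on the sample space of the growth process, a candidate limit tree $(\mathcal{T}, \mu)$ to which $e^{-t\gamma} \cdot \widetilde{T}_t$ converges almost surely in the GHP topology. A classical de-Poissonisation then transfers the a.s.\ convergence back to the discrete-time process $n^{-\gamma} \cdot T_n$. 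The hardest part of this step is to control the urn fluctuations uniformly across the infinitely many nested subtrees that make up $\mathcal{T}$, which I would handle with quantitative martingale estimates for generalized P\'olya urns and a tightness argument for the family of rescaled subtrees.
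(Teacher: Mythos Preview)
Your overall plan matches the paper's: exhibit the MB structure, verify hypothesis~(\ref{hypocvcritique}) to apply Theorem~\ref{theo:critique}, then upgrade to almost sure convergence. But the middle step, as you describe it, does not work.

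\textbf{The urn is not what you think.} You write that ``the evolution of the edge counts of the root's current direct subtrees is a generalized P\'olya urn'' converging a.s.\ to some vector $\mathscr{D}^{(j)}$. This is wrong because the set of direct subtrees of the ancestor is not fixed: every time a brick lands on the root edge, a new branchpoint is inserted and becomes the new ancestor, so the old first-level decomposition is collapsed into one block and fresh subtrees from the brick are added. The probability that the root edge is hit at step $j$ is of order $1/j$, which is not summable, so this happens infinitely often a.s. There is no single urn with a fixed colour set, and the Athreya--Karlin/Janson theorems you invoke do not apply. In particular, your explanation of the exponent $\gamma$ as ``concentration of the urn composition near its extremal configurations'' is not the mechanism.

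The paper's device is to introduce $J_n$, the index of the \emph{last} step at which a brick was glued on the root edge (up to time $n$), and to condition on $\{J_n=k\}$. Conditionally on this event, the first-level decomposition from step $k$ onward \emph{is} a clean P\'olya urn with random increments $N+1$ and fixed initial weights $(S_{k-1}+n_i,\mathbf n_A)$ or $(S_{k-1}+n_i,\mathbf n_B)$, whose rescaled colour counts converge to an $\mathrm{Urn}_{N+1}$ limit (Lemma~\ref{cvurns}). The exponent $\gamma=1/(\mathbb E[N]+1)$ then comes not from the urn but from $\mathbb P(J_n=k)\sim \ell_k\,n^{-\gamma}$ for each fixed $k$ (Lemma~\ref{lemcv}), proved via a Hoeffding-based control of the random walk $S_j$. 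Summing over $k$ with a domination argument (Lemma~\ref{lemsum}) yields (\ref{hypocvcritique}) and the dislocation measures $\bar\nu_{\mathrm{growth}}^{(i)}$ as the mixtures in Definition~\ref{def:randdisloc}. This conditioning on $J_n$ is the missing idea in your proposal.

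\textbf{The almost-sure upgrade.} The paper does not carry out the Yule-embedding and uniform-urn-control programme you sketch. Instead it cites S\'enizergues~\cite{S18}, who proves a general almost sure GHP convergence criterion for recursive gluing constructions; the criterion here reduces to a trivial check since $N$ is bounded. Your route via \cite{CH13,HS14} is conceivable but would essentially amount to reproving~\cite{S18} in this special case, and the ``hardest part'' you flag --- uniform control of infinitely many nested urns --- is exactly the substantial content of that paper.
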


Note that the fragmentation tree $(\mathcal T,\mu)$ is monotype if and only if $\# \mathcal B=1$, i.e.\ if and only if $T_0, \tau_A,\tau_B$ are star trees (since $T_0$ is planted, it is thus necessarily the tree with a single edge).
The proof of this theorem relies mainly on our Theorem \ref{theo:critique}, which gives a convergence in distribution and allows us to identify the limit as a multi-type fragmentation tree. However, clearly, the recursive construction on a common probability space induces a stronger convergence. The most subtle point to get this stronger convergence is to establish the almost sure compactness. This is done by S\'enizergues \cite{S18} who gives a sufficient condition for recursive constructions of graphs to converge almost surely in the scaling limit. We emphasize that his result includes our setting here, hence proving the a.s.\ scaling limit of the trees $T_n$, but that it does not identify the limit as a multi-type fragmentation tree, but rather as a gluing of random metric spaces as studied in \cite{S17}. His sufficient condition can be stated as follows in our setting: for $N_i,i\geq 1$ a sequence of i.i.d. random variables distributed as $N$ (that represents the successive numbers of edges of the brick trees), we have almost surely the existence of some $\varepsilon>0$ and $c<(\mathbb E[N]+1)^{-1}$ such that
$$
\sum_{i=1}^n N_i=\mathbb E[N]n (1+O(n^{-\varepsilon})) \quad \text{and} \quad N_i \leq i^{c+o(1)}.$$
This is obviously true for $\varepsilon<1/2$ and $c=0$ since the random variable $N$ is deterministically bounded.

We need some more vocabulary and notation. First, noting that most of the trees we work with are planted, we decide to call \emph{ancestor} of a planted tree the unique child of its root. Second, we rewrite $\mathcal{B}$ as
$$
\mathcal B=\left\{\tau_j, j \in [\# \mathcal B]\right\},
$$
for any ordering such that $\tau_1:=T_0$. For each $j$, we let $n_j$ denote the number of edges of $\tau_j$ and we give type $j$ to the ancestor of $\tau_j.$ We let $p_j$ be the out-degree of this ancestor. If $p_j\geq 1$, the $p_j$ planted subtrees descending from this ancestor are themselves in $\mathcal B$ (these subtrees are planted, i.e.\ we include for each of them the edge adjacent to the ancestor). We let $\mathbf n_j:=(n_{j,1},\ldots,n_{j,p_j})$ denote their sequence of number of edges and $\mathbf i_j:=(i_{j,1},\ldots,i_{j,p_i})$ the sequence of types of their respective ancestors, these sequences being indexed so that $(\mathbf n_j,\mathbf i_j)$ is in $\overline{\p}_n$. If $p_j=0$, $(\mathbf n_j,\mathbf i_j):=\emptyset$. Similarly, in $\tau_A$, we let $p_A$ denote the out-degree of the root and, if $p_A\geq 1$, $\mathbf n_A:=(n_{A,1},\ldots,n_{A,p_A})$ the sequence of the number of edges of the $p_A$ subtrees descending from the root and $\mathbf i_A:=(i_{A,1},\ldots,i_{A,p_A})$ the corresponding sequence of types of the  ancestors of the subtrees, so that $(\mathbf n_A,\mathbf i_A)$ is in $\overline{\p}_n$. If $p_A=0$, $(\mathbf n_A,\mathbf i_A):=\emptyset$. We use similar notation for $\tau_B$. 

\bigskip

\textbf{Organization of the rest of the section.} Below we first describe the family of dislocation measures involved in the limiting tree and then proceed to the proof. It is split into two parts: the verification of the MB property, with an adequate notion of size, and then the verification of the criterion (\ref{hypocvcritique}) for the corresponding splitting distributions (the fact that the limiting dislocation measures satisfy (\ref{hypocvcritique2}) when there are more than two types ($\# \mathcal B \geq 2$) and (\ref{hypocvcritique3}) otherwise follows easily from their definition). In a last part we will discuss the case where $T_0$ has a root degree larger than 2, to which Theorem \ref{theo:recursif} can easily be adapted.

\subsection{The dislocation measures} 
\label{sec:disloc}

 The dislocation measures $\bar{\nu}^{(i)}_{\mathrm{growth}}, i \in [\# \mathcal B]$, are built from distributions appearing as scaling limits of urn models, which we first review.

\subsubsection{Background on asymptotics of urn models} 

\noindent \textbf{Classical P\'olya urns.} Consider an urn model with $k\geq 2$ colors and initial weights $a_1,\ldots,a_k>0$ respectively. At each step draw a color with a probability proportional to its weight and add a weight $\beta>0$ to this color. Let $W_{n,1},\ldots,W_{n,k}$ denote the weights of the $k$ colors after $n$ steps. Then
$$
(\beta n)^{-1} \cdot \big(W_{n,1},\ldots,W_{n,k}\big)  \underset{n \rightarrow \infty}{\overset{\mathrm{a.s.}}\longrightarrow} \big(W_1,\ldots,W_k\big)
$$ 
where $(W_1,\ldots,W_k)$ follows a Dirichlet $\mathrm{Dir}(a_1/\beta, \ldots, a_k/\beta)$ distribution. 

\bigskip

\noindent \textbf{P\'olya urns with random increments and random initial weights.} We still start with $k$ colors, but now the case $k=1$ is included, and initial weights $a_1,\ldots,a_k>0$, that can possibly be random. We moreover assume that the increments are random and deterministically bounded: at step $i$ draw a color with a probability proportional to its weight and add a weight $\beta_i>0$ to this color, where the $\beta_i,i\geq 1$ are i.i.d. deterministically bounded and independent of $(a_1,\ldots,a_k)$. Then if we let $W_{n,1},\ldots,W_{n,k}$ denote the weights of the $k$ colors after $n$ steps 
$$
\big(\textstyle \sum_{i=1}^kW_{n,i} \big)^{-1} \cdot\big(W_{n,1},\ldots,W_{n,k}\big)  \underset{n \rightarrow \infty}{\overset{\mathrm{a.s.}}\longrightarrow} \big(W_1,\ldots,W_k \big)
$$ 
where the limit is a random variable on the $k-1$ dimensional simplex. The existence of the limit is easy to see since for each $1 \leq i \leq k$, letting $B_n$ denote the total weight after $n$ steps, $(W_{n,i}/B_n)_n$ is clearly a bounded martingale, but its distribution is not explicit as in the balanced case. Note that the assumption that the  $\beta_i,i\geq 1$ are deterministically bounded is not needed to get this convergence, but it is important for the following property, due to Pemantle \cite{Pem90}: almost surely,
\begin{equation}
\label{diff}
W_i>0, \text{ for all } 1 \leq i \leq k, \quad \text{and} \quad W_i\neq W_j, \text{ for all }1 \leq i \neq j \leq k.
\end{equation}
In this paper, the random increments will always be distributed as $N+1$, with $N$ defined in (\ref{genericrv}). We will then denote the distribution of the limit $(W_1,\ldots,W_k)$ by
\begin{equation}
\label{def:urn}
\mathrm{Urn}_{N+1}(\mathbf a)
\end{equation}
where $\mathbf a=(a_1,\ldots,a_k)$ is the initial sequence of weights.

In fact, we will need the following strengthening of the above convergence. If $N_{n,i}$ denotes the number of times the color $i$ has been drawn until step $n$, then
\begin{equation}
\label{cvurnR}
n^{-1} \cdot\big(N_{n,1},\ldots,N_{n,k}\big)  \underset{n \rightarrow \infty}{\overset{\mathrm{a.s.}}\longrightarrow} \big(W_1,\ldots,W_k \big).
\end{equation}
This is an easy consequence of the above convergence and the following lemma, due to Dubins and Freedman \cite{DF65}.

\begin{lem}
\label{lem:DF}
Let $(\mathcal F_n)_{n\geq 1}$ be a filtration and $(X_n)_{n \geq 1}$ a sequence of Bernoulli random variables adapted to this filtration. Set  $p_n:=\mathbb P(X_n=1|\mathcal F_{n-1})$. Then,
$$
\frac{\sum_{j=1}^n X_j}{\sum_{j=1}^n p_j}\overset{a.s.}{\underset{n\rightarrow \infty}\longrightarrow} 1 \quad \text{ on the set }\big\{\textstyle \sum_{j=1}^{\infty}p_j=\infty \big\}.
$$
\end{lem}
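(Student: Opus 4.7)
The plan is to recast the statement as a martingale law of large numbers. Set $M_n := \sum_{j=1}^n (X_j - p_j)$ and $A_n := \sum_{j=1}^n p_j$. Then $(M_n)_{n\geq 0}$ is an $(\mathcal F_n)$-martingale with $M_0 = 0$ and bounded increments $|X_j - p_j|\leq 1$, and the claim is equivalent to showing that $M_n/A_n \to 0$ almost surely on the event $E := \{A_\infty = \infty\}$, since $\left(\sum_{j=1}^n X_j\right)/A_n = 1 + M_n/A_n$.

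Next I would observe that the predictable quadratic variation of $M$ is $\langle M\rangle_n = \sum_{j=1}^n p_j(1-p_j) \leq A_n$, and proceed by a short case split. If $\langle M\rangle_\infty < \infty$, the martingale $(M_n)$ is bounded in $L^2$ and hence converges almost surely to a finite limit; on $E$ the denominator $A_n$ diverges, and so $M_n/A_n \to 0$ trivially. If instead $\langle M\rangle_\infty = \infty$, the strong law of large numbers for square-integrable martingales (e.g.\ Hall--Heyde, \emph{Martingale Limit Theory}, Theorem 2.18) yields $M_n/\langle M\rangle_n \to 0$ almost surely, and the bound $\langle M\rangle_n \leq A_n$ then gives $|M_n|/A_n \leq |M_n|/\langle M\rangle_n \to 0$.

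A one-shot alternative that avoids the case split is to invoke Chow's martingale version of Kronecker's lemma: the sequence $(A_n)$ is $(\mathcal F_{n-1})$-predictable and non-decreasing, tends to infinity on $E$, and satisfies $\sum_j \mathbb{E}[(M_j - M_{j-1})^2 \mid \mathcal F_{j-1}]/A_j^2 \leq \sum_j (A_j - A_{j-1})/A_j^2 < \infty$ on $E$, where the last bound follows by comparison with $\int du/u^2$ once $A_n \geq 1$. Chow's theorem then delivers $M_n/A_n \to 0$ on $E$ directly.

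There is no genuine obstacle here: this is the classical Dubins--Freedman lemma, and the only subtle point is that one cannot apply a ``strong law'' of the form $M_n/\langle M\rangle_n \to 0$ blindly, since $\langle M\rangle_\infty$ may be finite while $A_\infty$ is infinite (for instance if $p_j \to 1$). This is precisely what the $L^2$-bounded martingale argument handles in the first case of the split, which is the only mildly delicate step of the proof.
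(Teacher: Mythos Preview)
Your argument is correct: the martingale reformulation $M_n=\sum_{j\leq n}(X_j-p_j)$ together with the case split on $\langle M\rangle_\infty$ (or alternatively Chow's Kronecker-type lemma) is the standard route to this result, and your handling of the subtlety when $\langle M\rangle_\infty<\infty$ but $A_\infty=\infty$ is exactly right.

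There is nothing to compare against, however: the paper does not prove this lemma but simply attributes it to Dubins and Freedman~\cite{DF65} and uses it as a black box.
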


Indeed, for each $i, 1 \leq i \leq k$ and for each $1\leq j \leq n$ set $X^{(i)}_{j}:=1$ if the color $i$ is drawn at step $n$ and $X^{(i)}_{j}:=0$ otherwise. The filtration $\mathcal F^{(i)}$ is the filtration generated by this sequence of Bernoulli random variables, and $p^{(i)}_{j}:=\mathbb P(X_{j}=1|\mathcal F_{j-1})=W_{j-1,i}/\sum_{\ell=1}^{k}W_{j-1,\ell}$. By Lemma \ref{lem:DF} and since $W_i>0$ for all $1\leq i \leq k$, we get the expected behaviour (using Ces\`aro's lemma).

\bigskip

\subsubsection{The dislocation measures $\bar \nu^{(i)}_{\mathrm{growth}}$} 

Let ${\mathcal S}$ denote the set of nonnegative summable sequences with sum smaller than 1 (with no constraint of monotonicity). 
It will be more convenient here to define first a dislocation-like measure on the set $$\overline{\mathcal S} \subset {\mathcal S}\times \{0,1,\ldots,\kappa\}^{\mathbb N} \text{ of sequences } (s_n,i_n)_{n\in \mathbb N} \text{ satisfying } s_n=0 \Leftrightarrow i_n=0, \text{ }\forall n \in \mathbb N.$$ As for $\overline{\mathcal S}^{\downarrow}$, we endow $\overline{\mathcal S}$ with the metric that assigns to two elements $\bar{\mathbf s}, \bar{\mathbf s}'$ the Prokhorov distance between the two measures defined from $\bar{\mathbf s}, \bar{\mathbf s}'$ by (\ref{def:metric}). 

If $\bar{\omega}$ denotes a measure on $\overline{\mathcal S}$ such that $\int_{\overline{\mathcal S}}(1-s_1) \bar{\omega} (\mathrm d\overline{\mathbf s})<\infty$, we will then let 
$\bar{\omega}^{\downarrow}$ denote the push-forward of this measure obtained by the map 
\begin{equation}
\label{def:rank}
\mathsf{rank} : \overline {\mathbf s} \mapsto (s_{\sigma(j)},i_{\sigma(j)})_{j \in \mathbb N} \in \overline{\mathcal S}^{\downarrow}
\end{equation} 
where $\sigma$ is a permutation on $\mathbb N$ such that $j <k$ if and only if $s_{\sigma(j)} > s_{\sigma(k)}$ or $s_{\sigma(j)} = s_{\sigma(k)}$ and $i_{\sigma(j)} \geq i_{\sigma(k)}$. Note that $\int_{\overline{\mathcal S}}(1-s_1) \bar{\omega}^{\downarrow} (\mathrm d\overline{\mathbf s})$ is then finite.

\bigskip

\begin{defn}
\label{def:randdisloc}
Let $(S_k)_{k\geq 0}$ denote a random walk starting from $S_0=0$ with increments having the distribution of $N+1$.
We define the measure $\bar{\omega}_{\mathrm{growth}}^{(i)}$ as a mixture of probability measures on  $\overline{\mathcal S}$:
\begin{eqnarray*}
\bar{\omega}_{\mathrm{growth}}^{(i)}&:=&\ell_0 \cdot \big(\mathrm{Urn}_{N+1}(\mathbf n_i), \mathbf i_i\big) \mathbf 1_{\{p_i \geq 1\}} \\
&+& \sum_{k=1}^{\infty} \ell_k \cdot \Big[q_A \cdot \big(\mathrm{Urn}_{N+1}(S_{k-1}+n_{i},\mathbf n_A), (i,\mathbf i_A)\big) +q_B \cdot \big(\mathrm{Urn}_{N+1}(S_{k-1}+n_{i},\mathbf n_B), (i,\mathbf i_B)\big)\Big],
\end{eqnarray*}
where $\ell_k, k\geq 0$ are the abstract nonzero quantities appearing in Lemma \ref{lemcv} (i) below.
When $n_B=n_A$, this expression is more explicit:
\begin{eqnarray*}
\bar{\omega}_{\mathrm{growth}}^{(i)}&=& \frac{\Gamma\left(\frac{n_i}{n_A+1} \right)}{\Gamma \left( \frac{n_i-1}{n_A+1}\right)} \cdot \left(\mathrm{Dir}\left(\frac{n_{i,1}}{n_A+1},\ldots,\frac{n_{i,p_i}}{n_A+1}\right), \mathbf i_i\right) \mathbf 1_{\{p_i \geq 1\}} \\
&+&  \frac{\Gamma\left(\frac{n_i}{n_A+1} \right)}{\Gamma \left(\frac{n_A+n_i}{n_A+1} \right) \cdot (n_A+1)}\cdot \left[q_A \cdot \frac{1}{1-s_1}\left(\mathrm{Dir}\left(\frac{n_{i}}{n_A+1},\frac{n_{A,1}}{n_A+1},\ldots,\frac{n_{A,p_A}}{n_A+1}\right), (i,\mathbf i_A)\right) \right. \\
&& \hspace{3.55cm}\ \left.+q_B \cdot \frac{1}{1-s_1}\left(\mathrm{Dir}\left(\frac{n_{i}}{n_A+1},\frac{n_{B,1}}{n_A+1},\ldots,\frac{n_{B,p_B}}{n_A+1}\right), (i,\mathbf i_B)\right)\right].
\end{eqnarray*}
\end{defn}

It is straightforward to check that this explicit expression of $\bar{\omega}_{\mathrm{growth}}^{(i)}$ when $n_A=n_B$ indeed corresponds to the abstract one, using (\ref{explicitell}) and the fact that $S_k=k(n_A+1), \forall k\geq 0$ in this case. Note that $n_A$ is necessarily nonzero in such a case (equivalently $p_A \geq 1)$, so that $(\mathbf n_A,\mathbf i_A) \neq \emptyset$ and the Dirichlet distributions are well-defined. 
In all cases the integral $\int_{\overline{\mathcal S}}(1-s_1 \mathbf 1_{\{i_1=i\}}) \bar{\omega}_{\mathrm{growth}}^{(i)} (\mathrm d\overline{\mathbf s})$ is finite according to Lemma \ref{lemsum} below. We can thus define:

\begin{defn}
\label{def:randdisloc2}
The dislocation measure $\bar{\nu}^{(i)}_{\mathrm{growth}}$ is then defined as $\bar{\nu}^{(i)}_{\mathrm{growth}}:=\bar{\omega}_{\mathrm{growth}}^{(i)\downarrow}$. 
\end{defn}

It it clear that this is indeed a dislocation measure that satisfies (\ref{hypocvcritique2}) when $\# \mathcal B \geq 2$ and (\ref{hypocvcritique3}) when $\# \mathcal B=1$. Note the particular case where $p_1=0$ and $n_A=n_B=1$:  $\bar{\nu}^{(1)}_{\mathrm{growth}}$ is then a monotype dislocation measure that corresponds to the Brownian dislocation measure (\ref{Brdisloc}) multiplied by $(2\sqrt 2)^{-1}$. More generally, if $T_0$ is the tree with a single edge and $\tau_A$ and $\tau_B$ are star trees with both $k$ edges, the dislocation measure  $\bar{\nu}^{(1)}_{\mathrm{growth}}$ is a monotype measure that was already identified in \cite{HS14}.

\subsection{The MB property}
\label{subsec:MB}

For each type $i \in [\# \mathcal B]$, we let $(T_n^{(i)})_{n\geq 0}$ denote a sequence of trees built from $T_0^{(i)}:=\tau_i$. In general,  these trees have a total number of vertices and total number of leaves that are random. There is however a quantity which is deterministic: the number of branchpoints created by gluing the roots of the successive brick trees onto the structure (this number is equal to $n$ in $T_n^{(i)}$). We will use this quantity to exhibit a MB property for a sequence of reduced trees, which will be sufficient to prove Theorem \ref{theo:recursif}.

\bigskip

\noindent \textbf{Assignment of types to the vertices of $T_n^{(i)}$.} We give types to all the non-root vertices of $T_0,$ $\tau_A$ and $\tau_B$ by declaring each ancestor of a copy of $\tau_j$ to have type $j$, for $1 \leq j \leq \# \mathcal B.$ Then, starting our recursive construction of $(T_n^{(i)})_{n\geq 0}$ from $T_0^{(i)}:=\tau_i$, the types are recursively attributed as follows: given $T_{n-1}^{(i)}$ and given that the root of the brick tree added at step $n$ is branched on an edge of $T_{n-1}^{(i)}$ which is the ``parent" of a vertex of type $j$, the new created branchpoint is of type $j$; the other new vertices are vertices of the added brick tree but its root and arrive with a type assigned according to the rule above; the vertices already present in $T_{n-1}^{(i)}$ keep their types. The root of $T_{n}^{(i)}$ does not have a type. Note that with this rule, the ancestor of $T_{n}^{(i)}$ is of type $i$ for all $n\geq 0$.

\bigskip

\noindent \textbf{Assignment of sizes to the vertices of $T_n^{(i)}$.} In the construction process of $(T_n^{(i)})_n$ we decide to color in red the branchpoints created when gluing the roots of the successive brick trees on the structure, so that $T_n^{(i)}$ possesses $n$ red vertices. For us, the size of a vertex of $T_n^{(i)}$ is the number of red vertices amongst its descendants, including itself. 

\bigskip

\noindent \textbf{The MB property of a family of reduced trees.} For each $i \in [\# \mathcal B]$ and each $n\geq 1$, let $T_n^{(i),\dag}$ be the tree obtained from $T_n^{(i)}$ by removing the root and its adjacent edge as well as all vertices of size 0 and their descending edges. The root of $T_n^{(i),\dag}$ is thus the ancestor of $T_n^{(i)}$, and it has size $n$ and type $i$. The MB property of the family $(T_n^{(i),\dag},n\geq 1,i\in [\# \mathcal B])$ then follows readily since the selected edge on which is glued the next brick tree in the construction process is chosen uniformly and the type of the root always corresponds to the initial tree: the MB property is therefore simply due to the fact that the restriction of the uniform distribution to a subset is still uniform in that subset. We let $q_n^{(i)}$ denote the distribution on $\overline{\mathcal P}_n$ of the couples of sizes and types of the vertices above the root of $T_n^{(i),\dag}$, ranked according to the usual rule - note that they may be non-conservative. We then let $\bar \nu^{(i)}_n$ be the push-forward of $q_n^{(i)}$ by the map which divides the size-parts by $n$.

\begin{lem} For all $i \in [\# \mathcal B]$,
\label{lem:cvnuGT}
\begin{equation*}
\label{cv:nuGT}
n^{\frac{1}{\mathbb E[N]+1}}\left(1-s_1\mathbf{1}_{\{i_1=i\}}\right)\bar{\nu}_n^{(i)}(\mathrm d \bar{\mathbf{s}}) \underset{n\to\infty}{\overset{weakly}{\longrightarrow}} \left(1-s_1\mathbf{1}_{\{i_1=i\}}\right)\bar{\nu}_{\mathrm{growth}}^{(i)}(\mathrm d \bar{\mathbf{s}}).
\end{equation*} 
\end{lem}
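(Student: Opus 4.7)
The plan is to reinterpret the reduced tree $T_n^{(i),\dag}$ via a P\'olya-type urn on the subtrees of the current ancestor, and to decompose according to the time of the \emph{last} stem gluing. Let $a_t$ denote the ancestor of $T_t^{(i)}$, and for each subtree $S_t^{(j)}$ of $a_t$ track the pair $(w_t^{(j)}, r_t^{(j)})$ where $w_t^{(j)}$ is its number of edges and $r_t^{(j)}$ its number of red vertices. Initially $(w_0^{(j)}, r_0^{(j)}) = (n_{i,j}, 0)$, and at each step the brick tree (of $N_{t+1}$ edges) is grafted on a uniform edge of $T_t^{(i)}$: with probability $w_t^{(j)}/(1+\sum_k w_t^{(k)})$ it falls inside $S_t^{(j)}$, increasing $w_t^{(j)}$ by $N_{t+1}+1$ and $r_t^{(j)}$ by $1$; with probability $1/(1+\sum_k w_t^{(k)})$ it falls on the stem edge, creating a new ancestor $v$ of type $i$ whose children consist of the previous structure collapsed into a single legacy subtree of type $i$ (with weight equal to the previous total weight), together with the $p_A$ or $p_B$ subtrees of the root of the brick tree (with types $\mathbf{i}_{A/B}$ and initial weights $\mathbf{n}_{A/B}$). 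The measure $\bar{\nu}_n^{(i)}$ is then the law of the ranked sequence $(r_n^{(j)}/n, \mathrm{type}(c_n^{(j)}))_j$, where $c_n^{(j)}$ is the top vertex of $S_n^{(j)}$. Because each non-stem gluing increments exactly one $w^{(j)}$ by $N+1$ and the corresponding $r^{(j)}$ by one, an application of Lemma~\ref{lem:DF} subtree by subtree ensures that $r_n^{(j)}/\sum_k r_n^{(k)}$ and $w_n^{(j)}/\sum_k w_n^{(k)}$ share the same almost sure limit.

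Let $t_K$ be the time of the last stem gluing (with $t_K:=0$ if none occurs). Since $1+\sum_j w_t^{(j)} \sim (\mathbb{E}[N]+1)\,t$, the probability of selecting the stem at step $t$ is asymptotic to $\gamma/t$ with $\gamma := 1/(\mathbb{E}[N]+1)$. A standard product calculation involving $\prod_{s=1}^{t}(1-\gamma/s) \sim c\, t^{-\gamma}$ then yields
\[
\mathbb{P}(t_K = 0) \underset{n\to\infty}\sim \ell_0\, n^{-\gamma}, \qquad n^{\gamma}\,\mathbb{P}\bigl(t_K = k,\ \text{brick tree at time } t_K \text{ equals } \tau_{A/B}\bigr) \underset{n\to\infty}\longrightarrow \ell_k\, q_{A/B}, \quad k \geq 1,
\]
where the nonzero quantities $\ell_k$ are precisely those invoked in Definition~\ref{def:randdisloc} (and produced by the forthcoming Lemma~\ref{lemcv}(i)). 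In the deterministic case $n_A = n_B$ one has $S_{k-1} = (k-1)(n_A+1)$, and the $\ell_k$ reduce to explicit Gamma-ratios matching the second formula of Definition~\ref{def:randdisloc}.

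Conditionally on $\{t_K = k\}$ with $k$ finite, the remaining $n - t_K \to \infty$ non-stem gluings drive an urn with random increments distributed as $N+1$ and initial weights $(n_i + S_{k-1}, \mathbf{n}_{A/B})$ if $k \geq 1$, or $\mathbf{n}_i$ if $k=0$. Crucially, conditioning on ``no further stem gluing'' does not alter the law of the resulting trajectory in the urn coordinates, since the conditional probability of selecting subtree $j$ equals $w_t^{(j)}/\sum_k w_t^{(k)}$, the standard urn selection probability. The almost sure convergence of weight ratios to $\mathrm{Urn}_{N+1}(\cdot)$ recalled in Section~\ref{sec:disloc}, together with Pemantle's positivity~\eqref{diff} and the Dubins--Freedman argument above, yields the corresponding limit for the size-ratios $(r_n^{(j)}/n)_j$ with types preserved. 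Summing the conditional limits over $k$ and applying the rank map $\mathsf{rank}$ from~\eqref{def:rank} produces the target measure $\bar{\omega}_{\mathrm{growth}}^{(i)\downarrow} = \bar{\nu}_{\mathrm{growth}}^{(i)}$.

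The main technical obstacle is the exchange of limit and summation over $k$, since the total mass of $\bar{\omega}_{\mathrm{growth}}^{(i)}$ may well be infinite. This is precisely where the weight $(1 - s_1\mathbf{1}_{\{i_1=i\}})$ comes into play: for large $k$, the legacy subtree (of type $i$) dominates the urn $\mathrm{Urn}_{N+1}(n_i+S_{k-1}, \mathbf{n}_{A/B})$ because its initial mass grows linearly in $k$, so its limiting proportion is $1 - O(k^{-1})$ and therefore $\mathbb{E}[1 - s_1\mathbf{1}_{\{i_1=i\}}]$ under the $k$-th urn is $O(k^{-1})$. Combined with the growth $\ell_k = O(k^{\gamma-1})$ and $\gamma < 1$, this makes $\int(1 - s_1\mathbf{1}_{\{i_1=i\}})\,\bar{\omega}_{\mathrm{growth}}^{(i)}(\mathrm{d}\bar{\mathbf{s}})$ finite by the integrability statement provided in the forthcoming Lemma~\ref{lemsum}; controlling the tail contributions uniformly in $n$ and invoking dominated convergence then finishes the proof of the weak convergence of the weighted measures.
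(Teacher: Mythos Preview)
Your approach is exactly the paper's: decompose according to the time $J_n$ of the last stem gluing, identify the conditional dynamics as a P\'olya urn with random $(N{+}1)$-increments, sum over $k$ with the weight $(1-s_1\mathbf 1_{\{i_1=i\}})$, and finally apply the $\mathsf{rank}$ map. You also correctly isolate the only real difficulty, namely interchanging the $n\to\infty$ limit with the sum over $k$.

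Where your write-up falls short is precisely at that interchange. You assert ``controlling the tail contributions uniformly in $n$'' but only argue the decay of the \emph{limiting} quantities: $\ell_k=O(k^{\gamma-1})$ and $\E[1-s_1\mathbf 1_{\{i_1=i\}}]=O(k^{-1})$ under the $k$-th limit urn. Dominated convergence needs the two bounds \emph{uniformly in $n$}, and neither is automatic. For the first, the paper proves $\sup_{n}n^{\gamma}\,\pr(J_n=k)\le c\,k^{-(1-\gamma)}$ (Lemma~\ref{lemcv}(ii)); in the random-increment case your ``standard product calculation $\prod(1-\gamma/s)\sim c\,t^{-\gamma}$'' is only a heuristic, and the actual argument rewrites the product inside an expectation and controls the fluctuations of $S_j/j$ around $\E[N]+1$ via a Hoeffding-based exponential bound (Lemma~\ref{lemHoeffding}). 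For the second, the paper proves $\sup_{n}\int(1-s_1\mathbf 1_{\{i_1=i\}})\,\bar\omega^{(i)}_{n\mid J_n=k}\le(\E[N]+2)/k$ (Lemma~\ref{cvurns}(ii)) by a short martingale/coupling computation on the urn; the limit bound alone does not give this. These two uniform estimates together yield the summable majorant $c\,k^{-(2-\gamma)}$ that makes Lemma~\ref{lemsum} go through.

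One smaller point: applying $\mathsf{rank}$ at the end is not free, since $\mathsf{rank}$ (and the correction factor $g^{(i)}$ relating the two weights) are discontinuous on $\overline{\mathcal S}$. The paper handles this by observing they are continuous on $\overline{\mathcal S}_{\neq}=\{\bar{\mathbf s}:s_i\ne s_j\ \forall i\ne j\}$ and that $\bar\omega^{(i)}_{\mathrm{growth}}(\overline{\mathcal S}\setminus\overline{\mathcal S}_{\neq})=0$ thanks to Pemantle's property~\eqref{diff}. You cite \eqref{diff} earlier for positivity but should invoke its \emph{distinctness} part here.
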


This lemma will be proved in the next section. We finish this section by noticing that it implies Theorem \ref{theo:recursif}. 

\bigskip 

\textbf{From Lemma \ref{lem:cvnuGT}  to Theorem \ref{theo:recursif}.} A direct consequence of Lemma \ref{lem:cvnuGT} and Theorem \ref{theo:critique} is that  if $\mu_n^{(i),\dag}$ denotes the probability measure that assigns the weight $n^{-1}$ to each red vertex of $T_n^{(i),\dag}$ and the weight 0 to each other vertex, we have that
\[\big(n^{-\frac{1}{\mathbb E[N]+1)}} \cdot T_n^{(i),\dag},\mu_n^{(i),\dag}\big) \overset{(d)}{\underset{n\rightarrow \infty}{\longrightarrow}} \big(\T^{(i)}_{\gamma,\bnu_{\mathrm{growth}}},\mu^{(i)}_{\gamma,\bnu_{\mathrm{growth}}}\big),
\]
for the GHP-topology, where $\bnu_{\mathrm{growth}}=\big(\bar \nu_{\mathrm{growth}}^{(j)},j \in [\# \mathcal B]\big)$. This in turn implies the convergence in distribution of $\big(n^{-\frac{1}{\mathbb E[N]+1}} \cdot T_n^{(i)}, \mu_n^{(i)}\big)$  to $\big(\T^{(i)}_{\gamma,\bnu_{\mathrm{growth}}},\mu^{(i)}_{\gamma,\bnu_{\mathrm{growth}}}\big)$, where $ \mu_n^{(i)}$ denotes the uniform probability on the vertices of  $T_n^{(i)}$. Indeed, note first that the subtrees removed above the vertices of size 0 to get $T_n^{(i),\dag}$ from $T_n^{(i)}$, $n\geq 1$,  all have a number of edges smaller than $\max(n_A,n_B,n_i)$, which means that the Hausdorff distance between $T_n^{(i),\dag}$ and $T_n^{(i)}$ is bounded and therefore that $n^{-\frac{1}{\mathbb E[N]+1}} \cdot T_n^{(i)}$ converges in distribution to $\T^{(i)}_{\gamma,\bnu_{\mathrm{growth}}}$ for the Gromov-Hausdorff topology.
Second, to incorporate the measures, one could note that the strong law of large numbers implies the convergences $\mu_n^{(i)}(T^{(i)}_{n,r_j})/\mu_n^{(i), \dag}(T^{(i)}_{n,r_j})\rightarrow 1$ a.s.\ for all $j\geq 1$, where $r_1,r_2,\ldots$ denotes the red vertices by order of appearance and $T^{(i)}_{n,r_j}$ the subtree in $T_n^{(i)}$ of descendants of $r_j$ including itself (this subtree exists for $n\geq j$). One could then conclude by using the Skorokhod representation theorem and a tightness argument, but this would be a bit long technically, so we prefer to use the following shortcut: Proposition 1 of  S\'enizergues \cite{S18}  implies the a.s.\ convergence of \linebreak $\big(n^{-\frac{1}{\mathbb E[N]+1}} \cdot T_n^{(i)}, \mu_n^{(i)}\big)$ and having a look at the way things proceed in Section 5.1 and Section 5.2 within, we see that one jointly gets the a.s.\ convergence of $\big(n^{-\frac{1}{\mathbb E[N]+1}} \cdot T_n^{(i)}, \mu_n^{(i),\dag}\big)$ to the same limit, which is then necessarily distributed as $\big(\T^{(i)}_{\gamma,\bnu_{\mathrm{growth}}},\mu^{(i)}_{\gamma,\bnu_{\mathrm{growth}}}\big)$. This leads to Theorem \ref{theo:recursif} since  $T_0=T_0^{(1)}$ and the sequences $(T_n)_n$ and $(T_n^{(1)})_n$ have the same distribution. 

\subsection{Convergence of the splitting distributions: proof of Lemma \ref{lem:cvnuGT}}
\label{sec:cvSD}

We fix a type $i \in [\# \mathcal B]$. Our goal is to prove Lemma \ref{lem:cvnuGT}. The global strategy is to first prove the convergence of non-ordered versions of the measures  $\bar{\nu}_n^{(i)}$ (the $\bar \omega_n^{(i)}$ introduced below), and, in fact, we will first focus on conditioned versions of these measures $\bar \omega_n^{(i)}$ given the index of the last brick tree glued next to the root in the construction of $T_n^{(i)}$.  We start by introducing the notations and then turn to the different steps of the proof. 

\subsubsection{Non-monotonic sequences and introduction of the main notation} 
\label{sec:notation}

We fix a type $i \in [\# \mathcal B]$ and an integer $n\geq 0$. 

\bigskip

\noindent \textbf{Index of the last brick glued next to the root.}
Let $J_n\in \{0,1,\ldots,n\}$ denote the random variable that corresponds to the rank of the last step in the construction of $T^{(i)}_n$ at which the new brick is glued next to the root. Specifically, $J_n$ is equal to $k \geq 1$ if at step $k$ the new brick is glued on the edge adjacent to the root of $T^{(i)}_{k-1}$ and if for $k+1\leq j \leq n$ the new brick is \emph{not} glued on the edge adjacent to the root of $T^{(i)}_{j-1}$, and we let $J_n=0$ in the case where none of the brick trees are glued next to the root up to step $n$. Let $(S_n)$ be a random walk starting from $S_0=0$ with increments distributed as $N+1$. Then, clearly,  $\mathbb P(J_n=k)=0$ if $k>n$, $\mathbb P(J_n=0)=0$ if $p_i=0$ and
$$
\mathbb P(J_n=0)=\mathbb E\left[ \prod_{j=0}^{n-1} \frac{n_i+S_{j}-1}{n_i+S_j} \right] \quad \text{if } p_i \geq 1,
$$
and for $1 \leq k\leq n$
$$
\mathbb P(J_n=k)=\mathbb E\left[\frac{1}{n_i+S_{k-1}} \prod_{j=k}^{n-1} \frac{n_i+S_{j}-1}{n_i+S_j} \right]. 
$$

\bigskip

\noindent \textbf{Notation for the sizes of the subtrees descending from the ancestor of $T_n^{(i)}$.} When $p_i \geq 1$ and $J_n=0$, the ancestor of  $T_n^{(i)}$ is the ancestor of $T_0^{(i)}=\tau_i$, which, with the notation introduced at the beginning of Section \ref{sec:App1}, splits in $p_i$ subtrees with sizes and types sequence $(\mathbf{n}_{i},\mathbf{i}_i)$. For the subtree with index $j$, $1\leq j\leq p_i$, we let $W^{(i)}_{n,j}$ denote its number of edges and $R^{(i)}_{n,j}$ its number of red vertices. Note that if $R^{(i)}_{n,j}\geq 1$, $i_{i,j}$ is the type of its ancestor. We then let $\mathbf W_n^{(i)}:=(W^{(i)}_{n,j})_{1 \leq j \leq p_i}$ and $\mathbf R_n^{(i)}:=(R^{(i)}_{n,j})_{1 \leq j \leq p_i}$. When $J_n=k \geq 1$ and the brick tree glued on $T^{(i)}_{k-1}$ at step $k$ is $\tau_A$, the ancestor of $T_n^{(i)}$ splits in $p_A+1$ subtrees. We decide to give the index $j+1$ to the subtree ``built on the subtree above the root of $\tau_A$ with initial size $n_{A,j}$ and ancestor type $i_{A,j}$", $1 \leq j \leq p_A$, and to give the index 1 to the remaining tree (built on a subtree above the ancestor of $T_k^{(i)}$ which is identical to $T_{k-1}^{(i)}$). For the subtree with label $j$, $1\leq j\leq p_A+1$, we let $W^{(i,A)}_{n,j}(k)$ denote its number of edges and $R^{(i,A)}_{n,j}(k)$ its number of red vertices. The notations $\mathbf W_n^{(i,A)}(k)$ and $\mathbf R_n^{(i,A)}(k)$ denote the corresponding sequences of length $p_A+1$. 
We proceed similarly when the  brick tree glued on $T^{(i)}_{k-1}$ at step $k$ is $\tau_B$, replacing in all notations the letter $A$ by $B$. 

\bigskip

\noindent \textbf{The splitting measure $\bar \omega^{(i)}_n$.} We first define conditional versions of this probability measure on $\overline{\mathcal S}$. With the notation introduced above, we let $\bar \omega^{(i)}_{n  |J_n=0}$ denote the distribution of $(n^{-1}\mathbf R^{(i)}_{n},\mathbf i_{i})$ when $p_i\geq 1$ and the null measure when $p_i=0$, and for $1\leq k\leq n,$
$$\bar \omega^{(i)}_{n  | J_n=k}:=q_A\delta_{(n^{-1}\mathbf R^{(i,A)}_{n}(k),(i,\mathbf i_A))}+q_B\delta_{(n^{-1}\mathbf R^{(i,B)}_{n}(k),(i,\mathbf i_B))}.$$
We then define
$$
\bar \omega^{(i)}_n:=\sum_{k=0}^{n}  \mathbb P(J_n=k) \bar \omega^{(i)}_{n |J_n=k}.
$$
Note that the measure $\bar \nu^{(i)}_n$ introduced in Section \ref{subsec:MB} is the push-forward of $\bar \omega^{(i)}_n$ by the map $\mathsf{rank}$ introduced in Section \ref{sec:disloc}.

\bigskip

In order prove Lemma \ref{lem:cvnuGT}, we will proceed in four steps. We will first determine the asymptotic behaviour of the probabilities $\mathbb P(J_n=k), k \geq 0$, as $n \rightarrow \infty$. Second we will determine the asymptotic behaviour of the conditional probabilities $\bar \omega^{(i)}_{n |J_n=k}$ for each $k\geq 0$, and then the asymptotic behaviour of $\bar \omega^{(i)}_n$ by summing over $k$. Last, ranking the sequences in decreasing order, we will deduce the expected behaviour of $(\bar \nu^{(i)}_n)$.

\subsubsection{Asymptotic behaviour of $\mathbb P(J_n=k)$ when $n \rightarrow \infty$} 

In this section we prove the following lemma, and as a corollary build the $\ell_k, k\geq 0$ involved in the Definition \ref{def:randdisloc} of the dislocation-like measures $\bar \omega_{{\mathrm{growth}}}^{(i)}$. 

\begin{lem}
\label{lemcv}
\emph{(i)} For $k \geq 1$, and  $k=0$ when $p_i\geq 1$, there exists $\ell_k \in (0,\infty)$ such that 
$$n^{\frac{1}{\mathbb E[N]+1}} \cdot \mathbb P(J_n=k) \underset{n \rightarrow \infty}{\longrightarrow} \ell_k.$$ 
\noindent \emph{(ii)} Moreover, there exists a constant $c \in (0,\infty)$ such that
\begin{equation*}
\label{eq:domination}
\sup_{n \geq 1} n^{\frac{1}{\mathbb E[N]+1}} \cdot \mathbb P(J_n=k) \leq \frac{c}{k^{1-\frac{1}{\mathbb E[N]+1}}}, \quad \forall k \geq 1.
\end{equation*}
\end{lem}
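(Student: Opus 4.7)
The plan is to reduce everything to asymptotics of the random product $\pi_n(k) := \prod_{j=k}^{n-1}(1 - 1/(n_i+S_j))$, so that $\mathbb{P}(J_n=k) = \mathbb{E}[\pi_n(k)/(n_i+S_{k-1})]$ for $k\geq 1$ (and $\mathbb{P}(J_n=0) = \mathbb{E}[\pi_n(0)]$ when $p_i\geq 1$). Let $\alpha := \mathbb{E}[N]+1$; since the increments $N+1\geq 1$, one has $S_j \geq j$ throughout. The first goal is the almost-sure asymptotic $n^{1/\alpha}\pi_n(k) \to M_k$ with $M_k$ strictly positive. Expanding $\log(1-x)=-x-\psi(x)$ with $\psi(x)=O(x^2)$, I would write
\[
\log\pi_n(k) \;=\; -\sum_{j=k}^{n-1}\frac{1}{n_i+S_j} \;-\; \sum_{j=k}^{n-1}\psi\!\left(\frac{1}{n_i+S_j}\right).
\]
The $\psi$-series converges absolutely a.s.\ because $S_j\geq j$ gives $|\psi(1/(n_i+S_j))|=O(1/j^2)$. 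For the dominant sum, I would split $1/(n_i+S_j) = 1/(\alpha j) + (\alpha j - n_i - S_j)/[\alpha j(n_i+S_j)]$: the first piece sums to $(1/\alpha)\log n + O(1)$, while for the correction, $N$ being bounded yields $\mathrm{Var}(S_j)=O(j)$, hence $\mathbb{E}|S_j-\alpha j|=O(\sqrt j)$ and $\sum_j \mathbb{E}|S_j-\alpha j|/j^2 < \infty$ gives a.s.\ absolute convergence. This would produce $\log\pi_n(k) = -(1/\alpha)\log n + L_k + o(1)$ a.s.

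To upgrade to $L^1$ and extract $\ell_k$, I would prove the family $(n^{1/\alpha}\pi_n(k))_n$ is bounded in $L^2$. Using $(1-x)^2 \leq 1 - 2x + x^2$, $\pi_n(k)^2$ is bounded by a product whose expectation, by the same analysis with exponent $2/\alpha$ in place of $1/\alpha$, is $O(n^{-2/\alpha})$. Dividing by $n_i + S_{k-1}\geq n_i$ preserves $L^2$-boundedness, hence uniform integrability, so $n^{1/\alpha}\mathbb{P}(J_n=k) \to \ell_k := \mathbb{E}[M_k/(n_i+S_{k-1})]$, and $M_k>0$ a.s.\ ensures $\ell_k \in (0,\infty)$, establishing (i).

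For (ii) I would exploit the Markov property at time $k-1$. With a fresh walk $(\tilde S_\ell)_{\ell\geq 0}$ and $h(a,m) := \mathbb{E}\!\left[\prod_{\ell=1}^m(1-1/(a+\tilde S_\ell))\right]$, one has $\mathbb{P}(J_n=k) = \mathbb{E}[h(n_i+S_{k-1}, n-k)/(n_i+S_{k-1})]$. The central analytic input is a bound
\[
h(a,m) \;\leq\; C\,\Big(\frac{a}{a+m}\Big)^{1/\alpha} \qquad \text{for all } a\geq 1,\ m\geq 0,
\]
which I would obtain by redoing the first-paragraph analysis while tracking dependence of the constants on the starting point $a$: in the deterministic case $\tilde S_\ell=\alpha\ell$ the product is a ratio of Gamma functions which by Stirling is $\asymp (a/(a+m))^{1/\alpha}$, and the random fluctuation contributes an $L^2$-bounded factor uniformly in $a$, again because $\mathrm{Var}(\tilde S_\ell)=O(\ell)$. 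Combined with $S_{k-1}\geq k-1$ and $\mathbb{E}[(n_i+S_{k-1})^{1/\alpha-1}] \leq c_1 k^{1/\alpha-1}$, this gives
\[
n^{1/\alpha}\mathbb{P}(J_n=k) \;\leq\; C\,n^{1/\alpha}\,\mathbb{E}\!\left[\frac{(n_i+S_{k-1})^{1/\alpha-1}}{(n_i+S_{k-1}+n-k)^{1/\alpha}}\right] \;\leq\; c\, k^{1/\alpha-1}.
\]
The main obstacle I anticipate is precisely this uniform control of $h(a,m)$: the bulk asymptotic is given by the first step, but keeping the error terms uniform in the starting point $a$ (especially for small $a$) is where the real technical work concentrates.
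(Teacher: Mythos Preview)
Your overall strategy --- write $\mathbb P(J_n=k)=\mathbb E[\pi_n(k)/(n_i+S_{k-1})]$, expand $\log\pi_n(k)$, and split $\sum_j 1/(n_i+S_j)$ into $\sum_j 1/(\alpha j)$ plus a correction --- is exactly what the paper does. The gap is in how you pass from almost-sure convergence to convergence of expectations.

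Your proposed route to uniform integrability is $L^2$-boundedness of $n^{1/\alpha}\pi_n(k)$, claimed to follow ``by the same analysis with exponent $2/\alpha$''. But $(n^{1/\alpha}\pi_n(k))^2 = C_{n,k}\exp(2R_n)$ up to deterministically bounded factors, where $R_n=\sum_j(\alpha j-n_i-S_j)/[\alpha j(n_i+S_j)]$ is the random correction. You need $\sup_n\mathbb E[e^{2R_n}]<\infty$, i.e.\ an \emph{exponential} moment for $\sum_j|S_j-\alpha j|/j^2$. Your variance argument gives only $\mathbb E\big[\sum_j|S_j-\alpha j|/j^2\big]<\infty$, which is $L^1$ control of $R_\infty$, not $e^{2R_\infty}\in L^1$. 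So the $L^2$ step as written does not go through.

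The paper closes this gap with a single lemma (its Lemma~\ref{lemHoeffding}): by Hoeffding's inequality for bounded increments, the random variable $Y:=\sup_{j\geq 1}j^{\varepsilon}|S_j/j-\alpha|$ has exponential moments of all orders for any $\varepsilon\in(0,1/2)$. Then $|R_n|\leq Y\sum_j 1/[(n_i+S_j)j^{\varepsilon}]\leq CY$ uniformly in $n$ and $k$, so the integrand in $a_{n,k}$ is dominated by $\exp(CY)/(n_i+S_{k-1})$, which is integrable. This yields (i) by dominated convergence. It also yields (ii) for free: since $S_{k-1}\geq k-1$ deterministically, $1/(n_i+S_{k-1})\leq 1/k$, so $k\,a_{n,k}\leq \mathbb E[\exp(CY)]$ uniformly in $n\geq k\geq 1$. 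Your separate Markov-property route to (ii) via a uniform bound $h(a,m)\leq C(a/(a+m))^{1/\alpha}$ is therefore unnecessary --- and, as you yourself note, making that bound uniform in $a$ is essentially the same difficulty you already face, so it would not be a shortcut.
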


\bigskip

When $n_B=n_A$ this lemma is in fact easy to prove and the limits are explicit, since
$$
\mathbb P(J_n=k)=\left\{\begin{array}{l}
 \vspace{0.3cm} 0  \quad \text{if } k=0 \quad \text{and} \quad p_i=0
 \\
 \vspace{0.3cm}
\frac{\Gamma \left( \frac{n_i-1}{n_A+1}+n\right)}{\Gamma \left( \frac{n_i}{n_A+1}+n\right)} \cdot \frac{\Gamma \left( \frac{n_i}{n_A+1}\right)}{\Gamma \left(\frac{n_i-1}{n_A+1} \right)} \quad \text{if } k=0  \quad \text{and} \quad p_i \geq 1 \\ 
\frac{\Gamma \left(  \frac{n_i-1}{n_A+1}+n\right)}{\Gamma \left(\frac{n_i}{n_A+1}+n \right)} \cdot \frac{\Gamma \left(  \frac{n_i}{n_A+1}+k\right)}{\Gamma \left( \frac{n_i-1}{n_A+1}+k \right)} \cdot \frac{1}{n_i+(n_A+1)(k-1)}\quad \text{if } 1\leq k \leq n. \end{array}\right.
$$
By Stirling's formula, we then have
\begin{equation*}
\mathbb P(J_n=0) \underset{n\rightarrow \infty}\sim \frac{\Gamma \left( \frac{n_i}{n_A+1}\right)}{\Gamma \left(\frac{n_i-1}{n_A+1} \right)} \cdot n^{-\frac{1}{n_A+1}}\mathbf 1_{\{p_i\geq 1\}} \quad \text{and} \quad 
\mathbb P(J_n=k) \underset{n\rightarrow \infty}\sim \frac{1}{n_A+1} \cdot \frac{\Gamma \left( \frac{n_i}{n_A+1}+k-1\right)}{\Gamma \left(\frac{n_i-1}{n_A+1}+k \right)} \cdot n^{-\frac{1}{n_A+1}}
\end{equation*}
for $k\geq 1$, and then
\begin{equation}
\label{explicitell}
\ell_0= \frac{\Gamma \left( \frac{n_i}{n_A+1}\right)}{\Gamma \left(\frac{n_i-1}{n_A+1} \right)}\mathbf 1_{\{p_i\geq 1\}} \quad \text{and} \quad \ell_k= \frac{1}{n_A+1} \cdot \frac{\Gamma \left( \frac{n_i}{n_A+1}+k-1\right)}{\Gamma \left(\frac{n_i-1}{n_A+1}+k \right)}, \quad k\geq 1.
\end{equation}

\bigskip

To prove Lemma \ref{lemcv} in the general setting, the key point is the following consequence of Hoeffding's inequality. The notation $(S_{j})_{j\geq 0}$ still refers to a random walk starting from 0 and with increments distributed as $N+1$.
\begin{lem}
\label{lemHoeffding}
Let $\varepsilon \in (0,1/2)$ and $\lambda \geq 0$. Then,
$$
\mathbb E\left[\exp\left(\lambda \sup_{j\geq 1}j^{\varepsilon} \left|\frac{S_j}{j}-(\mathbb E[N]+1) \right| \right)\right]<+\infty.
$$
\end{lem}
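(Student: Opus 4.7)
\textbf{Proof plan for Lemma \ref{lemHoeffding}.} The plan is to exploit the fact that the increments of $(S_j)$ are deterministically bounded (they take values in $\{n_A+1, n_B+1\}$), so that the centered walk $\tilde S_j := S_j - j(\mathbb E[N]+1) = \sum_{i=1}^j X_i$ with $X_i := N_i+1-\mathbb E[N]-1$ is a martingale with bounded symmetric differences. I will then combine Azuma--Hoeffding's maximal inequality with a dyadic peeling of the index $j$, to obtain a sub-Gaussian tail bound on $M := \sup_{j \geq 1} j^{\varepsilon}|S_j/j - (\mathbb E[N]+1)|$, which trivially yields exponential integrability at every rate $\lambda \geq 0$.

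More concretely, let $\sigma$ be any almost sure bound on $|X_i|$ (e.g.\ $\sigma = |n_B - n_A|$). Azuma's maximal inequality gives, for each integer $m \geq 1$ and each $x > 0$,
\[
\mathbb P\Bigl(\max_{j \leq m} |\tilde S_j| \geq x\Bigr) \leq 2\exp\bigl(-x^2/(2 m \sigma^2)\bigr).
\]
I then partition the range of $j$ into dyadic blocks $2^k \leq j < 2^{k+1}$, $k \geq 0$ (handling $j = 1$ separately). On such a block, $j^{1-\varepsilon} \geq 2^{k(1-\varepsilon)}$, so
\[
\Bigl\{\sup_{2^k \leq j < 2^{k+1}} j^{\varepsilon}|S_j/j - (\mathbb E[N]+1)| \geq t\Bigr\} \subset \Bigl\{\max_{j \leq 2^{k+1}} |\tilde S_j| \geq t \, 2^{k(1-\varepsilon)}\Bigr\},
\]
whose probability is at most $2\exp(-c\,t^2\,2^{k(1-2\varepsilon)})$ for a constant $c > 0$ depending only on $\sigma$. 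Since $\varepsilon < 1/2$, the exponent grows geometrically in $k$.

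Summing over $k \geq 0$ and using that a geometric tail of Gaussian exponentials is dominated by its first term (up to a multiplicative constant), I obtain constants $C, c' > 0$ such that
\[
\mathbb P(M \geq t) \leq C \exp(-c' t^2) \quad \text{for all } t \geq 1.
\]
The conclusion follows at once, since a sub-Gaussian tail implies $\mathbb E[\exp(\lambda M)] < \infty$ for every $\lambda \geq 0$ via $\mathbb E[\e^{\lambda M}] = 1 + \int_0^\infty \lambda \e^{\lambda t} \mathbb P(M \geq t)\,\mathrm d t$. I do not anticipate a real obstacle here: the only point requiring a moment's care is the choice of the dyadic block size, which must be fine enough so that the loss $j^{1-\varepsilon} \approx 2^{k(1-\varepsilon)}$ within a block is negligible, but coarse enough so that the union bound over blocks is summable; the dyadic choice $2^k$ achieves both simultaneously as soon as $1-2\varepsilon > 0$, which is exactly the hypothesis $\varepsilon < 1/2$.
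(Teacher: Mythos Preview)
Your proof is correct and follows essentially the same strategy as the paper: use a Hoeffding-type bound on the bounded increments to obtain a sub-Gaussian tail $\mathbb P(M \geq t) \leq C\exp(-c't^2)$ for $t \geq 1$, then integrate via $\mathbb E[\e^{\lambda M}] = 1 + \lambda\int_0^\infty \e^{\lambda t}\mathbb P(M \geq t)\,\mathrm dt$. The only difference is cosmetic: the paper skips the maximal inequality and the dyadic peeling, applying the basic Hoeffding bound to each $j$ and union-bounding directly over all $j \geq 1$, using that for $u \geq 1$ one has $\exp(-cu^2 j^{1-2\varepsilon}) \leq \exp(-(c/2)u^2)\exp(-(c/2)j^{1-2\varepsilon})$, which makes the sum over $j$ a finite constant.
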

\begin{proof}
The increments of the random walk being deterministically bounded, we know, according to Hoeffding's inequality, that there exists  $c>0$ such that 
$$\mathbb P\left( \left|\frac{S_n}{n}-(\mathbb E[N]+1) \right| \geq un^{-\varepsilon}\right) \leq 2\exp(-cu^2n^{1-2\varepsilon}), \quad \forall u\in [0,\infty) \text{ and } \forall n\geq 1.$$
Consequently, for $u \in [1,\infty)$,
\begin{eqnarray*}
\mathbb P\left( \sup_{j\geq 1}j^{\varepsilon}\left|\frac{S_j}{j}-(\mathbb E[N]+1) \right| \geq u\right) &\leq& \sum_{j=1}^{\infty } 2\exp(-cu^2j^{1-2\varepsilon}) \\
&\underset{\text{since }u\geq 1}\leq & 2\exp(-(c/2)u^2)  \sum_{j=1}^{\infty } \exp(-(c/2)j^{1-2\varepsilon}) \\
&\leq & d \exp(-(c/2)u^2), 
\end{eqnarray*}
with $d$ finite, independent of $u\geq 1$.
We then use that for any positive random variable $X$
$$
\mathbb E[\exp(\lambda X)]=1+{\lambda}\int_0^{\infty} \exp(\lambda u) \mathbb P(X \geq u) \mathrm du
$$
(found easily by e.g. integrating by parts). Hence
\begin{eqnarray*}
\mathbb E\left[\exp\left(\lambda \sup_{j\geq 1}j^{\varepsilon} \left|\frac{S_j}{j}-(\mathbb E[N]+1) \right| \right)\right]-1&=&\lambda \int_0^{\infty} \exp(\lambda u) \mathbb P\left(\sup_{j\geq 1}j^{\varepsilon} \left|\frac{S_j}{j}-(\mathbb E[N]+1) \right| \geq u \right) \mathrm du \\
&\leq & \lambda \int_0^{1} \exp(\lambda u) \mathrm du+\lambda \int_1^{\infty} \exp(\lambda u)d \exp(-(c/2)u^2) \mathrm du,
\end{eqnarray*}
which is finite, as expected.
\end{proof}

\noindent \textit{Proof of Lemma \ref{lemcv}}. Fix $k\geq 1$, let for $n\geq k$
$$
a_{n,k}:=\exp^{\frac{1}{\mathbb E[N]+1}  \sum_{j=k}^{n} \frac{1}{j}} \cdot \mathbb E\left[\frac{1}{n_i+S_{k-1}}\prod_{j=k}^n \frac{n_i+S_j-1}{n_i+S_j} \right]
$$
and note that (i) will be proved (for $k\geq 1$) if we show that $a_{n,k}$ has a finite nonzero limit when $n \rightarrow \infty$, and that (ii) will be proved if we show that $\sup_{n\geq k\geq 1}ka_{n,k}$ is finite. We will do both simultaneously. We start by rewriting
\begin{eqnarray*}
a_{n,k}&=&\mathbb E\left[\frac{1}{n_i+S_{k-1}}\exp\left(\sum_{j=k}^n \left(\frac{1}{j(\mathbb E[N]+1)}+\ln \left(1-\frac{1}{n_i+S_j}\right)\right) \right) \right] \\
&=& \mathbb E\left[\frac{1}{n_i+S_{k-1}}\exp\left(\frac{1}{\mathbb E[N]+1}\sum_{j=k}^n \frac{1}{(n_i+S_j)}\left(\frac{S_j}{j}-(\mathbb E[N]+1)\right)+ \frac{n_i}{\mathbb E[N]+1} \sum_{j=k}^n\frac{1}{j(n_i+S_j)} \right. \right.\\
&& \hspace{3cm} + \left. \left.\sum_{j=k}^n \left(\frac{1}{n_i+S_j}+\ln \left(1-\frac{1}{n_i+S_j}\right)\right) \right) \right].
\end{eqnarray*}
Since $S_j \geq j$ for all $j\geq 1$, and $\ln(1+x)=x+O(x^2)$ $(x \rightarrow 0)$, the sum $\sum_{j=k}^n \big( (n_i+S_j)^{-1}+\ln \big(1-(n_i+S_j)^{-1}\big)\big) $ converges almost surely as $n \rightarrow \infty$ and is deterministically bounded in $n\geq k \geq 1$. This also implies that the sum $\sum_{j=k}^n(j(n_i+S_j))^{-1}$ converges almost surely as $n \rightarrow \infty$ and is deterministically bounded in $n\geq k \geq 1$. Next, by Lemma \ref{lemHoeffding}, the sum $\sum_{j=k}^n (n_i+S_j)^{-1} (S_j j^{-1}-(\mathbb E[N]+1))$ is also convergent and is bounded from above, in absolute value, by a random variable which has exponential moments of all orders. Last, the term $(n_i+S_{k-1})^{-1}$ is smaller than $(n_i+k-1)^{-1}$, which is deterministic. We can therefore conclude by dominated convergence that $a_{n,k}$ converges to a finite limit when $n \rightarrow \infty$. This limit is the expectation of the exponential of a finite random variable, hence it is nonzero. Hence (i). Moreover these few lines also show that $\sup_{n\geq k \geq 1}ka_{n,k}$ is smaller than the expectation of the exponential of a random variable which has exponential moments of all orders. Hence (ii).

The proof of (i) holds similarly when $k=0$ and $p_i \geq 1$. This is left to the reader.
$\hfill \square$

\subsubsection{Embedded urn models and the asymptotic behaviour of $\bar \omega^{(i)}_{n |J_n=k}$} 

The notation $(S_{k})_{k\geq 0}$ still refers to a random walk starting from 0, with increments distributed as $N+1$.

\begin{lem} 
\label{cvurns}
\emph{(i)}
For $k\geq 1$,
\begin{eqnarray*}
\bar \omega^{(i)}_{n | J_n=k} &\underset{n\rightarrow \infty}{\overset{\mathrm{weakly}}\longrightarrow}& 
q_A \cdot \big(\mathrm{Urn}_{N+1}(S_{k-1}+n_{i},\mathbf n_A); (i,\mathbf i_A)\big) + q_B \cdot \big(\mathrm{Urn}_{N+1}(S_{k-1}+n_{i},\mathbf n_B); (i,\mathbf i_B)\big).
\end{eqnarray*}
For $k=0$ and $p_i \geq 1$, 
$$\bar \omega^{(i)}_{n | J_n=0} \underset{n\rightarrow \infty}{\overset{\mathrm{weakly}}\longrightarrow} \big(\mathrm{Urn}_{N+1}(\mathbf n_i); \mathbf i_i\big).$$ 

\noindent \emph{(ii)} Moreover,
$$
\sup_{n\geq 1}\int_{\overline S}(1-s_1\mathbf 1_{\{i_1=i\}}) \bar \omega^{(i)}_{n | J_n=k}(\mathrm d \overline{\mathsf s}) \leq \frac{\mathbb E[N]+2}{k}, \quad \forall k\geq 1.
$$
\end{lem}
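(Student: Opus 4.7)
The plan is to exploit the generalised P\'olya-urn structure of the construction once the last root-adjacent gluing has taken place. First I would fix $k\geq 1$ and focus on the $\tau_A$-branch (the $\tau_B$-case and the $k=0,\,p_i\geq 1$ case being handled analogously). On the event $\{J_n=k,\tau_k=\tau_A\}$, immediately after step $k$ the ancestor of $T_n^{(i)}$ is the newly created red vertex, and $p_A+1$ subtrees descend from it: the ``first'' subtree carries $n_i+S_{k-1}$ edges (the full contents of $T_{k-1}^{(i)}$ plus half of the split root-adjacent edge), while the remaining $p_A$ come from $\tau_A$ with edge counts $\mathbf n_A$. For each subsequent step $\ell>k$ the new brick must land on a non-root-adjacent edge; given the brick sizes, this amounts to picking bin $j$ with probability proportional to its current edge weight $W_{\ell-1,j}^{(i,A)}(k)$ and adding $N_\ell+1$ edges together with one red vertex to that bin. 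Thus $(W_{\ell,j}^{(i,A)}(k))_{\ell\geq k}$ evolves as a generalised P\'olya urn with initial weights $(n_i+S_{k-1},\mathbf n_A)$ and i.i.d.\ increments distributed as $N+1$.

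I would then apply the almost sure urn convergence recalled in (\ref{cvurnR}) to deduce that $(n-k)^{-1}(M^A_{n-k,1},\ldots,M^A_{n-k,p_A+1})$ converges almost surely to the random vector $\mathrm{Urn}_{N+1}(n_i+S_{k-1},\mathbf n_A)$, where $M^A_{n-k,j}$ is the number of bricks glued in bin $j$ during steps $k+1,\ldots,n$. Since $R^{(i,A)}_{n,1}(k)=(k-1)+M^A_{n-k,1}$, $R^{(i,A)}_{n,j}(k)=M^A_{n-k,j}$ for $j\geq 2$, and $k$ is fixed, dividing by $n$ produces the almost sure (hence weak) convergence of $n^{-1}\mathbf R^{(i,A)}_n(k)$ to the same limit; mixing the $\tau_A$- and $\tau_B$-branches with weights $q_A,q_B$ proves (i). The $k=0$, $p_i\geq 1$ case is even simpler, since it involves only the $p_i$ subtrees of $\tau_i$ without any ``absorbing'' first bin and starts the urn from initial weights $\mathbf n_i$.

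For (ii), notice that $1-s_1\mathbf 1_{i_1=i}=n^{-1}(n-R^{(i,\tau)}_{n,1}(k))$ in both branches (since $i_1=i$ by construction). I would then write $n-R^{(i,A)}_{n,1}(k)=(n-k-M^A_{n-k,1})+1$ and invoke the martingale property of the bin-1 edge proportion $W^{(i,A)}_{\ell-1,1}(k)/B_{\ell-1}$, which yields
\[
\mathbb P\bigl(\text{bin}\neq 1\text{ at step }\ell\bigm|S_{k-1}\bigr)=\frac{n_A}{n_i+S_{k-1}+n_A}\leq \frac{n_A}{k},
\]
via the deterministic bound $S_{k-1}\geq k-1$. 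Summing over $\ell=k+1,\ldots,n$, dividing by $n$ and averaging over the $q_A,q_B$ mixture using $q_An_A+q_Bn_B=\mathbb E[N]$ gives a bound of order $(\mathbb E[N]+1)/k$, comfortably below $(\mathbb E[N]+2)/k$.

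The main obstacle is to justify rigorously the reduction to a standard P\'olya urn under the conditioning on $\{J_n=k\}$ (and $\{\tau_k=\tau_A\}$), since $J_n$ is not independent of the brick sizes through the edge counts $T_{\ell-1}=n_i+S_{\ell-1}$. The key point to articulate is that, given the brick sizes $(N_1,\ldots,N_n)$, the gluing locations at different steps are independent uniforms on the respective edge sets, so $\{J_n=k\}$ factorises as an intersection of per-step location events; conditioning on this intersection leaves the bin selection at each step $\ell>k$ intact as a uniform choice among non-root-adjacent edges, i.e.\ exactly the urn step described above. This lets one identify the weak limit with the law defined from the unbiased random walk $S_{k-1}$ of Definition~\ref{def:randdisloc}, as claimed.
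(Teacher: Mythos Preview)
Your approach is essentially the paper's. For (i) you identify the generalised P\'olya urn on $\{J_n=k,\tau_k=\tau_A\}$ and invoke the draw-count convergence (\ref{cvurnR}); the paper does exactly this. For (ii) you bound $\mathbb E[1-s_1]$ via the martingale property of the bin-$1$ edge proportion; the paper packages the same computation as a two-colour coupling $(R_n,W_n)$ with transition probability $W_n/T_n$ and initial non-bin-$1$ mass $X_k=N_k+1$, whereas you work directly with the $(p_A{+}1)$-colour urn whose initial non-bin-$1$ mass is $n_{\tau_k}$. The two arguments are equivalent, and yours yields the slightly sharper constant $(\mathbb E[N]+1)/k$.

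You are right to flag the conditioning subtlety in your last paragraph --- the paper simply writes ``it is then clear'' without comment. Your partial resolution is correct as far as it goes: given the brick sizes $(N_1,\ldots,N_n)$, the event $\{J_n=k\}$ constrains only the root-adjacent indicators, and conditioning on those leaves the bin selections at steps $\ell>k$ uniform among the non-root edges, so the urn dynamics are intact. What is \emph{not} settled by this is your final sentence. Conditioning on $\{J_n=k\}$ does bias the law of $(N_j)$ itself: the Radon--Nikodym factor $\mathbb P(J_n=k\mid(N_j))/\mathbb P(J_n=k)$ is genuinely random when $n_A\neq n_B$, and after rescaling by $n^{1/(\mathbb E[N]+1)}$ it converges to a nondegenerate limit (this is precisely the content of the proof of Lemma~\ref{lemcv}). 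So both the initial weight $S_{k-1}$ and the subsequent increments feeding the urn are, a priori, tilted, and your identification of the weak limit with $\mathrm{Urn}_{N+1}(S_{k-1}+n_i,\mathbf n_A)$ for the \emph{unbiased} $(N_j)$ does not yet follow from ``given the sizes the urn is standard''. The paper's proof has the same lacuna; a clean way through is to avoid splitting into the two factors and instead show directly, by conditioning on $(N_j)$ and using the domination from Lemma~\ref{lemHoeffding}, that $n^{1/(\mathbb E[N]+1)}\mathbb P(J_n=k)\,\bar\omega^{(i)}_{n|J_n=k}$ converges --- this is all that Lemma~\ref{lemsum} actually consumes.
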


\bigskip

The proof of this result relies on urn models involved in our recursive scheme. 

\textit{Proof of Lemma \ref{cvurns}.}
(i) Let $n\geq k \geq 1$. We work conditionally on $J_n=k$ and use the notation for the sizes of the subtrees descending from the ancestor of $T_n^{(i)}$ introduced in Section \ref{sec:notation}. For the moment, we also work conditionally on the fact that the brick tree at step $k$ is $\tau_A$. It is then clear that  $\mathbf W^{(i,A)}_{n}(k)$ is distributed as a P\'olya urn model  with random increments distributed as $N+1$ and initial (random) weights $(S_{k-1}+n_i,\mathbf n_A)$, after $n-k$ steps, and that $\mathbf R^{(i,A)}_{n}(k)-(k-1,0,\ldots,0)$ is the corresponding sequence of numbers of times each ``color" has been drawn. Hence, by (\ref{cvurnR}), 
$$n^{-1}\cdot \mathbf R^{(i,A)}_{n}{(k)} \overset{\mathrm{law}}{\underset{n \rightarrow \infty}\longrightarrow} \mathrm{Urn}_{N+1}(S_{k{\color{blue}-1}}+n_{i},\mathbf n_A).$$
Moreover, still given that the grafted tree at step $k$ is $\tau_A$, the sequence of types of the ancestors of the subtrees above the ancestor of $T_n^{(i)}$ is equal to $(i,\mathbf i_{A})$.
The arguments are similar when replacing $A$ by $B$.  Since the grafted brick tree at step $k$ is $\tau_A$ with probability $q_A$ and $\tau_B$ with probability $q_B$, this gives the statement for $k\geq 1$.

When $k=0$ and $n\geq 0$ we proceed similarly with the random sequence $(\mathbf W_{n}^{(i)},\mathbf R_{n}^{(i)})$. 

\bigskip

\noindent (ii) Fix $k\geq 1$. We use a coupling argument.  Let $(X_j,j\geq 1)$ be a sequence of i.i.d. random variables distributed as $N+1$, and $T_n:=\sum_{j=1}^n X_j, n \geq 1$, $T_0:=0$. Then let $(R_n,W_n)_{n\geq k}$ be a random sequence evolving as follows, conditionally on $(X_j,j\geq 1)$: $R_k=k-1,W_k=T_{k-1}$ and given $(R_i,W_i)_{k \leq i \leq n}$, 
$$
\begin{array}{llll}
R_{n+1}= R_n+ 1& \text{and} & W_{n+1} = W_{n}+X_{n+1} & \text{with probability } \frac{W_n}{T_n} \\
R_{n+1}= R_n & \text{and} & W_{n+1} = W_n &  \text{with probability } 1-\frac{W_n}{T_n}. 
\end{array}
$$
The sequence $\big(\frac{W_n}{T_n}\big)_{n \geq k}$ being a martingale, we have that $\mathbb E[R_{n+1}-R_n]=\mathbb E\big[\frac{W_n}{T_n}\big]=\mathbb E\big[\frac{W_k}{T_k}\big]=\mathbb E \big[\frac{T_{k-1}}{T_k}\big]$ for all $n\geq k$. Hence, $\mathbb E[R_n]=\mathbb E[R_k]+(n-k)\mathbb E \big[\frac{T_{k-1}}{T_k}\big]$ and
\begin{eqnarray*}
\mathbb E\left[1-\frac{R_n}{n}\right]&=&1-\frac{k-1}{n}-\mathbb E\left[\frac{T_{k-1}}{T_k}\right]+\frac{k}{n}\mathbb E\left [\frac{T_{k-1}}{T_k}\right] \\
&\underset{T_{k-1}\leq T_k}\leq & \frac{1}{n} +\mathbb E\left[\frac{T_{k}-T_{k-1}}{T_k}\right] \\
&\underset{T_k\geq k}\leq& \frac{1}{n}+\frac{\mathbb E[X_k]}{k} \\
& \underset{n\geq k}\leq& \frac{1}{k}+\frac{\mathbb E[N]+1}{k}. 
\end{eqnarray*}
To conclude we notice that for all $n\geq k$, $R_n/n$ is distributed as the push-forward of $\bar \omega^{(i)}_{n | J_n=k}$ by the map $\bar{\mathbf s} \in \overline{\mathcal S} \mapsto  s_1$, which gives us
$$
\sup_{n\geq 1}\int_{\overline S}(1-s_1) \bar \omega^{(i)}_{n | J_n=k}(\mathrm d \overline{\mathsf s})\leq \frac{\mathbb E[N]+2}{k}, \quad \forall k \geq 1
$$
as expected (note that $i_1=i$ \ $ \bar \omega^{(i)}_{n | J_n=k}$-a.e. when $k \geq 1$).
$\hfill \square$

\subsubsection{Summing over $k$ and the asymptotic behaviour of $\bar{\omega}^{(i)}_n$}

We can now deduce from the previous sections that $n^{\frac{1}{\mathbb E[N]+1}} \cdot \bar{\omega}^{(i)}_n$ approximates the dislocation-like measure $\bar{\omega}^{(i)}_{\mathrm{growth}}$ of Definition \ref{def:randdisloc} in the following manner.

\begin{lem}
\label{lemsum}
Let $f:\overline{\mathcal S} \rightarrow \mathbb R$ be a bounded continuous function. Then, 
$$
n^{\frac{1}{\mathbb E[N]+1}} \int_{\overline{\mathcal S}}(1-s_1\mathbf 1_{\{i_1=i\}})f(\overline{\mathbf s}) \bar \omega^{(i)}_{n}(\mathrm d \overline{\mathbf s})
\underset{n\rightarrow \infty}\longrightarrow \int_{\overline{\mathcal S}}(1-s_1\mathbf 1_{\{i_1=i\}})f(\overline{\mathsf s}) \bar{\omega}_{\mathrm{growth}}^{(i)}(\mathrm d \overline{\mathsf s}),
$$ 
the integral in the limit being well-defined and finite.
\end{lem}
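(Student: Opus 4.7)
The strategy is to decompose $\bar\omega^{(i)}_n$ according to the value of $J_n$, pass to the limit in each term using Lemmas \ref{lemcv} and \ref{cvurns}, and then interchange limit with the summation over $k$ by a dominated convergence argument on $\mathbb N$. Concretely, for any bounded continuous $f$ on $\overline{\mathcal S}$, writing $g(\bar{\mathbf s}) := f(\bar{\mathbf s})(1-s_1\mathbf 1_{\{i_1=i\}})$, the definition of $\bar\omega^{(i)}_n$ gives
$$ n^{\frac{1}{\mathbb E[N]+1}} \int g \, d\bar\omega^{(i)}_n \;=\; \sum_{k=0}^n a_n(k), \qquad a_n(k) \;:=\; n^{\frac{1}{\mathbb E[N]+1}} \mathbb P(J_n = k)\,\int g \, d\bar\omega^{(i)}_{n\mid J_n=k}. $$

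For each fixed $k$, Lemma \ref{lemcv}(i) yields $n^{1/(\mathbb E[N]+1)} \mathbb P(J_n=k) \to \ell_k$, while Lemma \ref{cvurns}(i) yields weak convergence of the conditional measure $\bar\omega^{(i)}_{n\mid J_n=k}$ to the $k$-th mixture appearing in Definition \ref{def:randdisloc}. To pass this through the test function $g$, one observes that for $k\geq 1$ the first type $i_1$ equals $i$ almost surely under both the prelimit and the limit measure (by construction the first labelled subtree is the one built on the stem $T_{k-1}^{(i)}$), so $g = (1-s_1)f$ $\bar\omega^{(i)}_{n\mid J_n=k}$-a.e.; moreover, the convergence in Lemma \ref{cvurns}(i) is in reality the finite-dimensional convergence of the vector $n^{-1}\mathbf R$ provided by \eqref{cvurnR}, so $g$ is continuous on the relevant (finite-dimensional) support. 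The case $k=0$ with $p_i \geq 1$ is handled in the same way, and if $p_i = 0$ then $a_n(0) \equiv 0$. One thus obtains $a_n(k) \to \ell_k I_k$ for every $k\geq 0$, where $I_k$ denotes the integral of $g$ against the corresponding urn mixture.

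The key technical step is justifying the interchange of limit and summation. Lemma \ref{lemcv}(ii) gives $n^{1/(\mathbb E[N]+1)} \mathbb P(J_n=k) \leq c\, k^{-(1-1/(\mathbb E[N]+1))}$ for $k\geq 1$, while Lemma \ref{cvurns}(ii) (combined with the remark that $1-s_1\mathbf 1_{\{i_1=i\}} = 1-s_1$ $\bar\omega^{(i)}_{n\mid J_n=k}$-a.e. for $k\geq 1$) gives $\int(1-s_1\mathbf 1_{\{i_1=i\}})\, d\bar\omega^{(i)}_{n\mid J_n=k} \leq (\mathbb E[N]+2)/k$. Multiplying these two bounds and using $|f|\leq \|f\|_\infty$,
$$ |a_n(k)| \;\leq\; C\,\|f\|_\infty \cdot k^{-(2-1/(\mathbb E[N]+1))}, \qquad k\geq 1, $$
which is summable since $\mathbb E[N] > 0$ forces the exponent to exceed $1$. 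The $k=0$ contribution is uniformly bounded in $n$ by Lemma \ref{lemcv}(i) together with $|g|\leq \|f\|_\infty$ and the fact that $\bar\omega^{(i)}_{n\mid J_n=0}$ is a sub-probability measure. Dominated convergence on $\mathbb N$ then yields
$$ \lim_{n\to\infty}\, n^{\frac{1}{\mathbb E[N]+1}} \int g \, d\bar\omega^{(i)}_n \;=\; \sum_{k=0}^\infty \ell_k\, I_k \;=\; \int g \, d\bar\omega^{(i)}_{\mathrm{growth}}, $$
the last equality being precisely Definition \ref{def:randdisloc}, and the finiteness of this integral being automatic from the same domination.

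The main obstacle is the uniform-in-$n$ control of the tails of the series in $k$, which is exactly what the second parts of Lemmas \ref{lemcv} and \ref{cvurns} were designed to provide. A minor subtlety, namely that the coordinate $s_1$ is not a continuous function on all of $\overline{\mathcal S}$ under its Prokhorov-type metric, is bypassed by noting that each conditional measure $\bar\omega^{(i)}_{n\mid J_n=k}$ is supported on sequences of a deterministic finite length with a deterministic type vector, so that convergence genuinely takes place in a Euclidean space on which $g$ is continuous.
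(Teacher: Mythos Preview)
Your proof is correct and follows essentially the same approach as the paper: decompose $\bar\omega^{(i)}_n$ according to $J_n$, use Lemmas \ref{lemcv}(i) and \ref{cvurns}(i) for termwise convergence, combine Lemmas \ref{lemcv}(ii) and \ref{cvurns}(ii) to obtain the summable majorant $C k^{-(2-1/(\mathbb E[N]+1))}$, and conclude by dominated convergence. Your added remarks on the continuity of $g$ via the finite-dimensional support of the conditional measures make explicit a point the paper passes over silently.
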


\begin{proof} With no loss of generality we can assume that $f$ is positive and bounded from above by 1. Recall that 
$$
\bar \omega^{(i)}_n=\sum_{k=0}^{n}  \mathbb P(J_n=k) \bar \omega^{(i)}_{n |J_n=k}.
$$
From Lemma \ref{lemcv} (i) and Lemma \ref{cvurns} (i), we know that for all $k\geq 0$
$$
n^{\frac{1}{\mathbb E[N]+1}} \int_{\overline{\mathcal S}}(1-s_1\mathbf 1_{\{i_1=i\}})f(\overline{\mathbf s}) \mathbb P(J_n=k)\bar \omega^{(i)}_{n | J_n=k}(\mathrm d \overline{\mathbf s}) \underset{n\rightarrow \infty}\longrightarrow \int_{\overline{\mathcal S}}(1-s_1\mathbf 1_{\{i_1=i\}})f(\overline{\mathbf s}) \ell_k \cdot \bar \omega^{(i)}_{{\mathrm{growth}},k}(\mathrm d \overline{\mathbf s}),$$ 
where $\bar \omega^{(i)}_{{\mathrm{growth}},k}$ is the measure 
\begin{eqnarray*}
 q_A \cdot \big(\mathrm{Urn}_{N+1}(S_{k-1}+n_{i},\mathbf{n}_A), (i,\mathbf i_A)\big) + q_B \cdot \big(\mathrm{Urn}_{N+1}(S_{k-1}+n_{i},\mathbf n_B), (i,\mathbf i_B)\big),
\end{eqnarray*} 
when $k\geq 1$, the measure $(\mathrm{Urn}_{N+1}(\mathbf n_i), \mathbf i_i)$ when $k=0$ and $p_i \geq 1$ and the null measure when $k=0$ and $p_i =0$.
If we can sum the above convergences over $k \in \mathbb Z_+$, we will have the expected result. 
By Lemma \ref{lemcv} (ii) and Lemma \ref{cvurns} (ii) we have the existence of some $c\in (0,\infty)$ such that  for all $k\geq 1$
$$\sup_{n\geq 1} n^{\frac{1}{\mathbb E[N]+1}} \int_{\overline{\mathcal S}}(1-s_1\mathbf 1_{\{i_1=i\}})f(\overline{\mathbf s}) \mathbb P(J_n=k)\bar \omega^{(i)}_{n | J_n=k}(\mathrm d \overline{\mathbf s}) \leq \frac{c}{k^{2-\frac{1}{\mathbb E[N]+1}}}.$$ Since $\mathbb E[N]>0$, $2-1/(\mathbb E[N]+1)>1$ and we can apply the dominated convergence theorem to conclude.
\end{proof}

\subsubsection{Back to $\bar \nu^{(i)}_n$}

For all $n$ in $\N,$ the measure $\bar \nu^{(i)}_n$ is the push-forward of $\bar \omega^{(i)}_n$ by the function $\mathsf{rank}$ defined in (\ref{def:rank}), and similarly $\bar \nu^{(i)}_{\mathrm{growth}}$ is the push-forward of $\bar \omega^{(i)}_{\mathrm{growth}}$ by $\mathsf{rank}$. In order to get Lemma \ref{lem:cvnuGT} from Lemma \ref{lemsum}, we then note that 
$$
\int_{\overline{\mathcal S}^{\downarrow}}\left(1-s_1 \mathbf 1_{\{i_1=i\}} \right)f(\overline{\mathbf s}) \overline{\nu}^{(i)}_n(\mathrm d \overline{\mathbf s})=\int_{\overline{\mathcal S}}\left(1-s_1 \mathbf 1_{\{i_1=i\}} \right)g^{(i)}(\overline{\mathbf s})(f\circ \mathsf{rank})(\overline{\mathsf s}) \overline \omega^{(i)}_n(\mathrm d \overline{\mathbf s}), \quad \forall n \geq 1
$$
for any bounded continuous function $f:\overline{\mathcal S} \rightarrow \mathbb R$, where for $\overline{\mathbf s}=(s_n,n\in \mathbb N)\in \overline{\mathcal S}$
\begin{eqnarray*}
g^{(i)}(\overline{\mathbf s}):=\left\{\begin{array}{ll}\frac{1-\max_{j\geq 1} s_j \mathbf 1_{\{i_{\max}(\overline{\mathbf s})=i\}}}{1-s_1 \mathbf 1_{\{i_1=i\}}} & \text{if }s_1\neq 1 \text{ or }i_1\neq i \\1 & \text{if }s_1=1 \text{ and }i_1=i \end{array}\right.
\end{eqnarray*}
and $i_{\max}(\overline{\mathbf s}):=\max\{i_j : s_j=\max_{k \in \mathbb N}s_k,  j \in \mathbb N \}$ is the largest index of the largest size-term of $\overline{\mathbf s}$. A similar identity holds for the limiting measures $\overline{\nu}^{(i)}_{\mathrm{growth}}$ and $\overline{\omega}^{(i)}_{\mathrm{growth}}$. 

The function $i_{\max}$ is not continuous on $\overline{\mathcal S}$ but is continuous on $\overline{\mathcal S}_{\neq}:=\{\overline{\mathbf s} \in \overline{\mathcal S} :s_i \neq s_j, \forall i\neq j\}$, and consequently the function $g^{(i)}$ is also continuous on $\overline{\mathcal S}_{\neq}$. The function $f\circ \mathsf{rank}$ is also  continuous on that set. Since $ \overline \omega^{(i)}_{\mathrm{growth}}\left(\overline{\mathcal S}\backslash \overline{\mathcal S}_{\neq} \right)=0$ by the property (\ref{diff}) applied to the urn models involved in the definition of $\overline \omega^{(i)}_{\mathrm{growth}}$, we indeed get the convergence of Lemma \ref{lem:cvnuGT} as a consequence of Lemma \ref{lemsum}.

\subsection{Starting with an initial tree with root degree larger than 1}

Keeping the notations of the introduction of Section \ref{sec:App1}, we have assumed until now that the sequence $(T_n)_{n \in \mathbb N}$ was built on an initial tree $T_0$ with root degree 1. In fact, Theorem \ref{theo:recursif} extends easily to the case where the root of $T_0$ has a degree $d \geq 2$.  This root gives rise to $d$ \emph{planted} trees that we denote $(T_{0,1}, \ldots, T_{0,d})$, in arbitrary order. We let $(a_1,\ldots,a_d)$ denote their respective numbers of edges. Clearly, the respective numbers of trees glued on these $d$ subtrees in our construction process correspond to the respective numbers of drawn colors in an urn model with initial weight $(a_1,\ldots,a_d)$ and random increments distributed as $N+1$. By (\ref{diff}), we know that their proportions converge almost surely to a random variable with distribution $\mathrm{Urn}_{N+1}(a_1,\ldots,a_d)$. It is also clear that given these proportions, each of the $d$ subtrees evolve according to our construction scheme, independently of the others.  Consequently, in this case,
\begin{equation*}
\big(n^{-\frac{1}{\mathbb E[N]+1}} \cdot T_n,\ \mu_n \big)\  \overset{\mathrm{a.s.}}{\underset{n \rightarrow \infty}{\longrightarrow}}\ \left(\mathcal T, \mu \right)
\end{equation*}
where the limiting tree is obtained by identifying at their roots $d$ multi-type rescaled fragmentions trees $((W_j)^{1/(\mathbb E[N]+1)} \cdot \mathcal T_{j}, \ W_j \cdot \mu_{j}), 1 \leq j \leq d$, where: 
\begin{enumerate}
\item[$\bullet$] $(W_1,\ldots,W_d) \sim \mathrm{Urn}_{N+1}(a_1,\ldots,a_d)$, is independent of $(\mathcal T_{j},\mu_{j},1 \leq j \leq d)$
\item[$\bullet$] the $(\mathcal T_{j},\mu_{j}), 1 \leq j \leq d$ \ are independent multi-type fragmentation trees, all of index $(\mathbb E[N]+1)^{-1}$, such that $(\mathcal T_{j},\mu_{j})$  has a set of types given by
$$
\mathcal B_{j}=\left\{\tau_{v},v \in \tau_A\backslash{\{\rho_A\}}\cup \tau_B\backslash{\{\rho_B\}} \cup T_{0,j} \backslash\{{\rho_{0,j}}\}\right\}\
$$
-- where, as before, $\tau_v$ is a planted version of the subtree descending from $v$ -- and associated $\# B_{j}$ dislocation measures defined as in Definition \ref{def:randdisloc2}. The type of the root of $\mathcal T_{j}$ is the one corresponding to $T_{0,j}$.
\end{enumerate}

\section{Application 2: Multi-type Galton-Watson trees}
\label{sec:App2}

Let $\zeta=(\zeta^{(i)},i\in[\kappa])$ be a set of offspring distributions for a $\kappa$-type Galton-Watson tree: for each $i\in[\kappa]$, $\zeta^{(i)}$ is a probability distribution on $(\Z_+)^\kappa$, and we consider a branching population such that, for $i\in[\kappa]$ and $\mathbf{z}=(z_1,\ldots,z_{\kappa})\in (\Z_+)^{\kappa}$, the probability for an individual with type $i$ to have $z_j$ individuals of type $j$ for all $j\in[{\kappa}]$ is $\zeta^{(i)}(\mathbf{z}).$ We call $T^{(i)}$ the Galton-Watson tree with set of offspring distributions $\zeta$, started at an individual with type $i$, and recall that we consider unordered versions of all trees.

Let $M=(m_{i,j})_{i,j\in[{\kappa}]}$ be the mean matrix of $\zeta$, defined by
\[m_{i,j}=\sum_{\mathbf{z}\in(\Z_+)^{\kappa}} \zeta^{(i)}(\mathbf{z})z_j.\]
We make the standard assumptions that $\zeta$ is \emph{non-singular} (for at least one type, there is a possibility of having two or more children), that $M$ is \emph{finite} and \emph{irreducible} in the Perron-Frobenius sense, and that $\zeta$ is \emph{critical}, meaning that the largest eigenvalue of $M$ is $1$. We let $\mathbf{a}$ and $\mathbf{b}$ be the positive corresponding left and right positive eigenvectors of $M$, normalised such that $\sum_{i=1}^{\kappa} a_i =\sum_{i=1}^{\kappa} a_ib_i=1$.

We also assume that $\zeta^{(i)}$ has finite second moments for all $i\in[{\kappa}]$, and define the following quantities:
\begin{align*}
&Q^{(i)}_{j,k}:=\sum_{\mathbf{z}\in(\Z_+)^{\kappa}} \zeta^{(i)}(\mathbf{z}) z_j\big(z_k-\mathbf 1_{\{k=j\}}\big),\quad i,j,k\in[{\kappa}], \\
&\sigma^2:=\sum_{1 \leq i,j,k \leq \kappa} a_ib_jb_kQ^{(i)}_{j,k}.
\end{align*}

Under these assumptions the trees $T^{(i)}$ are all a.s.\ finite and our aim is to prove a scaling limit theorem for $T^{(i)}$ conditioned to be large, in the sense that we condition it on having $n$ vertices of type $1$ and let $n$ tend to infinity. For this we will use the underlying multi-type MB structure of these trees. Below, we use the notation $\#_1 T^{(i)}$ to refer to the number of vertices of type $1$ in $T^{(i)}$, and also make the simplifying aperiodicity assumption that $\pr(\#_1 T^{(i)}=n)>0$ for all $i\in[{\kappa}]$ and $n$ large enough, otherwise we would have to restrict $n$ to a sublattice of $\Z$.

Our theorem is the following:
\begin{thm}\label{thGW} For $i\in [{\kappa}]$, let $T_n^{(i)}$ be a version of $T^{(i)}$ conditioned on having $n$ vertices of type $1$, equipped with the uniform measure $\mu_n^{(i)}$ on these vertices. We then have the following convergence in distribution for the GHP-topology:
\[\bigg(\frac{T_n^{(i)}}{\sqrt{n}},\mu_n^{(i)}\bigg)\overset{(d)}{\underset{n \rightarrow \infty} \longrightarrow} \bigg( \frac{2}{\sigma\sqrt{a_1}} \cdot \T_{\mathrm{Br}},\mu_{\mathrm{Br}} \bigg),\]
where $(\T_{\mathrm{Br}},\mu_{\mathrm{Br}})$ is the Brownian CRT.
\end{thm}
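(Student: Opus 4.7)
The strategy is to view $T_n^{(i)}$ as a multi-type MB tree whose notion of size is the number of type-$1$ vertices, and then to apply Theorem \ref{theo:melange} in the mixing regime with $\gamma = 1/2$ and $\beta = 0$. Writing $p_n^{(i)} := \pr(\#_1 T^{(i)} = n)$, the independence of the subtrees of the root in a Galton--Watson tree combined with Bayes' formula yields an explicit expression for the splitting probabilities in terms of $\zeta^{(i)}$ and the family $(p_m^{(j)})_{m,j}$; assumption \eqref{hyp:principal} then follows immediately from the non-singularity and irreducibility of $\zeta$.

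For the matrix hypothesis \eqref{hypocvmixing2}, I would argue that when a type-$i$ root carries $n \gg 1$ type-$1$ descendants, one of its subtrees absorbs $n - O(1)$ of them while the others remain of bounded size. A size-biasing argument based on the local limit theorem below shows that the type of this ``giant'' subtree converges to the spinal distribution $\tilde P(i,j) := m_{ij} b_j/b_i$ --- the standard type-transition matrix along a spine in a critical multi-type Galton--Watson tree. Thus $P_n \to \tilde P$, and $Q := \tilde P - I$ is an irreducible $\mathsf Q$-matrix (its zero row sums coming from $M \mathbf b = \mathbf b$) whose unique invariant distribution is $\chi_i = a_i b_i$, via $\mathbf a M = \mathbf a$ and the normalization $\sum_i a_i b_i = 1$.

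The main technical point is \eqref{hypocvmixing}. The crucial input is a multi-type local limit theorem
\[
p_n^{(i)} \;\underset{n\to\infty}\sim\; C_i \, n^{-3/2} \qquad \text{with} \qquad C_i = \frac{b_i \sqrt{a_1}}{\sigma \sqrt{2\pi}},
\]
classical in the finite second moment regime and derivable from Miermont's multi-type \L ukasiewicz walk analysis, or from a multi-type Dwass--Otter identity combined with a bivariate local CLT. Combined with the explicit formula for $q_n^{(i)}$, a macroscopic binary split at ratio $(x, 1-x)$ with child types $(j, k)$ contributes to the limit of $n^{1/2} \nu_n^{(i)}$ a density (on $[1/2, 1)$ with respect to $dx$)
\[
\frac{Q^{(i)}_{j,k} \, C_j C_k}{C_i} \, \bigl( x(1-x) \bigr)^{-3/2},
\]
while splits with three or more macroscopic parts are of order $o(n^{-1/2})$ by the same local estimate. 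Summing over $(j, k)$ identifies each $\nu^{(i)}$ as a positive multiple of the binary Brownian dislocation measure \eqref{Brdisloc}.

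Theorem \ref{theo:melange} then delivers a monotype fragmentation tree of index $1/2$ with dislocation measure $\nu := \sum_i \chi_i \nu^{(i)}$. Substituting $\chi_i = a_i b_i$ and the values of $C_i$, the triple sum collapses via the identity $\sigma^2 = \sum_{i,j,k} a_i b_j b_k Q^{(i)}_{j,k}$ to give $\nu = \frac{\sigma \sqrt{a_1}}{2} \nu_{\mathrm{Br}}$. By the scaling property $\T_{1/2, c \nu_{\mathrm{Br}}} \eqdist c^{-1} \T_{\mathrm{Br}}$ of self-similar fragmentation trees, this translates into the announced convergence to $(2/(\sigma \sqrt{a_1})) \T_{\mathrm{Br}}$. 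The principal obstacle is establishing \eqref{hypocvmixing}: beyond quoting the local limit theorem, one must provide uniform-in-$n$ dominations controlling both the small-largest-child binary regime and the non-binary macroscopic splits, in order to justify passing to the limit on the non-compact space $\mathcal S^{\downarrow}$.
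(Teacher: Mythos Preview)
Your plan is essentially the paper's own strategy: identify the MB structure with size equal to the number of type-$1$ descendants, check the mixing-regime hypotheses of Theorem~\ref{theo:melange} with $\gamma=1/2$ and $\beta=0$, obtain the spinal transition matrix $\tilde P(i,j)=m_{ij}b_j/b_i$ with invariant law $\chi_i=a_ib_i$, use the $n^{-3/2}$ local limit estimate to show each $\nu^{(i)}$ is a multiple of $\nu_{\mathrm{Br}}$, and collapse the mixture via $\sigma^2=\sum_{i,j,k}a_ib_jb_kQ^{(i)}_{j,k}$. The constants you compute all match.

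There is, however, one genuine gap. Theorem~\ref{theo:melange} is stated for MB trees whose vertices have \emph{positive} sizes, but with your notion of size a vertex may well have size $0$: any subtree containing no type-$1$ vertex does. You cannot simply apply Theorem~\ref{theo:melange} to $T_n^{(i)}$ as is. The paper handles this by first working with the extended partition set $\overline{\p}^0_n$ allowing zero parts, then pruning all size-$0$ vertices to obtain a tree $\mathring T_n^{(i)}$ to which Theorem~\ref{theo:melange} legitimately applies, and finally proving (Proposition~\ref{prop:zero}) that $d_{\mathrm{GHP}}(T_n^{(i)},\mathring T_n^{(i)})=O(\log n)$ with high probability. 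That last step is not automatic: one must show that a Galton--Watson tree conditioned to contain no type-$1$ vertex is \emph{subcritical} (so its height has exponential tails), and separately bound the number of such grafted subtrees. Without this, the metric contribution of the size-$0$ pieces is uncontrolled and the GHP conclusion does not follow.

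A smaller point: the uniformity you flag as ``uniform-in-$n$ dominations'' is really uniformity in the offspring vector $\mathbf z$ of the root. The paper isolates this as Proposition~\ref{prop:betterbyword}, an estimate for $\pr^{(\mathbf z)}(\#_1 F=n)$ with an error term controlled uniformly over $|\mathbf z|\le \veps\sqrt n$; this is what makes the passage to the limit on $\mathcal S^\downarrow$ rigorous, and is exactly the refinement of the pointwise local limit theorem you cite.
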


The one-type version of Theorem \ref{thGW} is the classical theorem of Aldous (\cite{Ald93}).  A slightly weaker multi-type version is obtained by Miermont in \cite{M08}, where it is also assumed that the offspring distributions have some finite exponential moments. Both Aldous and Miermont's proofs consisted of studying the contour functions of the trees.  We use here a different strategy, based on the MB property. We will see that we are in the mixing regime, and Theorem \ref{thGW} will be proved by using Theorem \ref{theo:melange}. In \cite{HM12}, Aldous' theorem was recovered similarly by using the monotype MB property of monotype Galton-Watson trees.

\textbf{Organization of the rest of the section.} After recalling some technical details, our proof will be split into four main parts. First we identify the MB structure of the Galton-Watson trees and give their splitting distributions, in a slightly more general setting than Section \ref{sec:the model} since we will allow for vertices with  size 0. Then we verify assumption (\ref{hypocvmixing2}) (showing that the types mix, with $\beta=0$), and then assumption (\ref{hypocvmixing}) (showing that we have a convergence to $\nu_{\mathrm{Br}}$ with $\gamma=1/2.$) Finally, since we use an alternate MB structure, we need one additional section to show that the vertices with size 0 do not alter the metric structure on the $n^{1/2}$ scale.

\begin{rem}
Several steps of our proof are easily, if not immediately, adapted to other frameworks (conditionings other than by the number of vertices of a given type, or the case where the second moments may be infinite). However, not all do, and in particular proving that assumption (\ref{hypocvmixing}) holds in a more general framework is delicate, and will be the subject of future work. In particular, this should allow us to describe the scaling limits of multi-type Galton-Watson trees with offspring distributions in the domain of attraction of stable laws, hence completing the work of Berzunza \cite{Berzunza18} on scaling limits of forests of such Galton-Watson trees. 
\end{rem}

\subsection{Elementary preliminaries}

We recall that $(\mathbf{e}_i,i\in[{\kappa}])$ denotes the canonical basis of $\R^{\kappa}.$

\noindent{\textbf{Forest notation.}} To simplify notation, we will write, for $\mathbf{z}\in(\Z_+)^{\kappa}$, $\pr^{(\mathbf{z})}$ for a probability distribution under which the variable $F$ is a forest made of $$|\textbf z|:=\sum_{i=1}^{\kappa} z_i$$ independent Galton-Watson trees, with $z_i$ trees having root of type $i$ for all $i\in[{\kappa}]$. In the case where $\mathbf z=\mathbf e_i$ for some  $i\in[{\kappa}]$, we will keep the notation $T^{(i)}$ instead of $F$.

We now recall a few useful tools from \cite{M08} and \cite{Steph15} concerning the multi-type Galton-Watson structure.

\noindent{\textbf{Reduced forests and trees.}} For any ${\kappa}$-type forest $F$, we let $\Pi^{(1)}(F)$ be the monotype forest whose vertices are exactly the vertices of type $1$ in $F$, and which has the same ancestral relations as $F$. It was shown in \cite{M08} that, in the case where $F=T^{(1)},$ $\Pi^{(1)}(T^{(1)})$ is also a critical Galton-Watson tree, and the variance $\sigma_1^2$ of its offspring distribution -- denoted by $\bar{\zeta}_{1,1}$ -- satisfies
\[\sigma_1^2=\frac{\sigma^2}{a_1b_1^2}.\]

\noindent{\textbf{First generation of type 1.}} One operation which will be useful is, in a tree or a forest, to look at all the vertices of type $1$ which have no ancestors of that type. We call this the first generation of type $1$ in the tree or forest. If $F$ has distribution $\pr^{(\mathbf{z})}$ we call 
\begin{center}
$\zeta_{\mathbf{z},1}$: the distribution of the number of vertices in the first generation of type $1$ of $F$. 
\end{center}
Note that $\zeta_{\mathbf e^{(1)},1}=\delta_1$: if the type of the root of a tree is $1$, then this root is the first generation of type $1.$ By \cite[Proposition 2.1, (i)]{Steph15} the mean of $\zeta_{\mathbf{z},1}$ is $b_{\mathbf{z}}/b_1,$ where $b_{\mathbf{z}}$ is defined as
\[b_{\mathbf{z}}:=\mathbf{b}\cdot \mathbf{z}=\sum_{j=1}^{{\kappa}} b_jz_j.\] It is immediate that, in the finite variance case, the variance of $\zeta_{\mathbf{z},1}$ is smaller than $C|\mathbf{z}|$ for a certain $C>0$.

\noindent\textbf{General estimate for the distribution of number of vertices of type $1$.} It was proved in \cite[Section 4.1]{Steph15} that for all $\mathbf{z}\in(\Z_+)^{\kappa}$ and $q\in\Z$, 
\begin{equation}\label{byword}
\pr^{(\mathbf{z})}(\#_1 F=n+q) \underset{n\to\infty}{\sim} \frac{b_{\mathbf{z}}}{b_1} \pr \big(\#_1 T^{(1)}=n\big).
\end{equation}
A significant part of the proof of Theorem \ref{thGW} will consist in obtaining and using a refined version of this, see Proposition \ref{prop:betterbyword}  below.

\subsection{The MB property}

For any ${\kappa}$-type tree $T$ (or forest $F$), we let $\#_1 T$ (or $\#_1 F)$ be its number of vertices of type $1$. For a vertex $u\in T$, we let $T_u$ be the subtree of $T$ rooted at $u$. In the case where $u$ is a vertex of $T_n^{(i)},$ we give $u$ a size equal to $\#_1(T_n^{(i)})_u.$ Note that this might be $0.$ Hence we need to use here an extended notion of MB tree where we allow individuals to have size $0$, and also to reproduce into children of size $0$. To fit with this,  we also use the different set of partitions $\overline{\p}^0_n,$ which is the same set as $\overline{\p}_n$ except for the fact that partitions can have parts of the form $(0,j)$ for $j\in[{\kappa}]$. In Section \ref{sec:GWzero}, we will show that pruning away the zero-size vertices does not change the GHP limit, and will actually apply Theorem \ref{theo:melange} to the pruned tree. Note that, unlike in Section \ref{sec:App1}, the fragments with size $0$ have a role in the combinatorial structure and so we only do this pruning after obtaining the wanted limit properties of the splitting distributions.

For $\bar{\lambda}\in\overline{\p}^0_n$ for any $n$, let $\mathbf{z}(\bar{\lambda})=(z_j(\bar{\lambda}),j\in[{\kappa}])$ where for all $j\in[{\kappa}]$, $$z_j(\bar{\lambda})=\#\{m\in\{1,\ldots,p(\bar{\lambda})\}:i_m=j\}$$ is the number of parts of $\bar{\lambda}$ with type $j$. We also let $m_{(\ell,j)}(\bar{\lambda)}$ be the multiplicity of the term $(\ell,j)\in{\Z_+\times [{\kappa}]}$ in $\bar{\lambda}.$

\begin{prop}\label{prop:GWisMB} The sequence $(T_n^{(i)})$ is a multi-type \emph{MB}-sequence with splitting distributions $(q_n^{(i)})$ given for each $i \in [{\kappa}],n \in \mathbb N$ and each $\bar{\lambda}=(\lambda_k,i_k)_{1\leq k \leq p(\bar{\lambda})}\in\overline{\p}^0_n$ such that $\sum_{k=1}^{p(\bar{\lambda})} \lambda_k=n-\mathbf 1_{\{i=1\}}$ by:
\[ q_n^{(i)}(\bar{\lambda}):=\zeta^{(i)}(\mathbf{z}(\bar{\lambda}))\frac{\prod_{j\in[{\kappa}]} z_j(\bar{\lambda})!}{\displaystyle\prod_{{(l,j)}\in\Z_+\times[{\kappa}]}m_{(\ell,j)}(\bar{\lambda})!}\frac{\prod_{k=1}^{p(\bar{\lambda})}\pr\left(\#_1 T^{(i_k)}=\lambda_k\right)}{\pr\left(\#_1 T^{(i)}=n\right)},\]
and 0 otherwise.
\end{prop}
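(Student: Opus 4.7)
The plan is to derive the formula by a direct application of Bayes' rule combined with the branching property of the Galton-Watson tree, with the only delicate point being a combinatorial factor accounting for the fact that our trees are unordered.

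First I would set up the unconditional joint law. Conditional on the root of $T^{(i)}$ having $\mathbf z=(z_1,\ldots,z_\kappa)\in(\Z_+)^\kappa$ offspring of each type --- an event of probability $\zeta^{(i)}(\mathbf z)$ --- the $|\mathbf z|$ subtrees rooted at these offspring are mutually independent, each of type $j$ being distributed as $T^{(j)}$, for any $1\le j \le \kappa$. In particular the $z_j$ quantities $\#_1 T^{(j)}$ associated with the children of type $j$ are i.i.d.\ with the same law as $\#_1 T^{(j)}$.

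Next, for a partition $\bar\lambda=((\lambda_1,i_1),\ldots,(\lambda_p,i_p))\in \overline{\mathcal P}_n^0$, I would compute the probability that the (size, type)-signature of the root's children equals $\bar\lambda$ and that $\#_1 T^{(i)}=n$. Since the root contributes $\mathbf 1_{\{i=1\}}$ to the total count of type-$1$ vertices and each child contributes the type-$1$ count of its subtree, the condition $\#_1 T^{(i)}=n$ is equivalent to $\sum_k \lambda_k = n-\mathbf 1_{\{i=1\}}$, which is built into the statement. For the signature, we need $\mathbf z=\mathbf z(\bar\lambda)$, and for each type $j$ the multiset of $\#_1$-values of the $z_j$ i.i.d.\ type-$j$ subtrees must coincide with the multiset $\{\lambda_k:i_k=j\}$. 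A standard multinomial count of orderings realising this multiset gives the factor $z_j(\bar\lambda)!/\prod_\ell m_{(\ell,j)}(\bar\lambda)!$, so that
\[
\pr\bigl(\text{root's children have signature } \bar\lambda\bigr)
= \zeta^{(i)}(\mathbf z(\bar\lambda))\,
\frac{\prod_{j\in[\kappa]} z_j(\bar\lambda)!}{\prod_{(\ell,j)\in\Z_+\times[\kappa]} m_{(\ell,j)}(\bar\lambda)!}\,
\prod_{k=1}^{p(\bar\lambda)}\pr\bigl(\#_1 T^{(i_k)}=\lambda_k\bigr).
\]
Dividing by $\pr(\#_1 T^{(i)}=n)$ then yields the claimed formula for $q_n^{(i)}(\bar\lambda)$.

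It remains to check the Markov Branching property, i.e.\ that conditional on the signature $\bar\lambda$ of the root's children, the subtrees rooted at these children are mutually independent, each distributed as $T^{(i_k)}$ conditioned on $\#_1 T^{(i_k)}=\lambda_k$. This is immediate from the independence of the subtrees under $\pr$: conditioning on the multiset of $(\#_1, \text{type})$-values of the children factorises into independent conditionings on each subtree's type-$1$ count. Iterating, the descendants of any vertex $u$ with size $\lambda$ and type $j$ form a copy of $T^{(j)}$ conditioned on $\{\#_1=\lambda\}$, and its children's signature has distribution $q_\lambda^{(j)}$, independently of the rest of the tree --- which is exactly the MB property for the extended set of partitions $\overline{\mathcal P}^0_n$. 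The main (mild) obstacle throughout is bookkeeping with the combinatorial factor, which must correctly weight the permutations of identically-typed children while not overcounting identical signatures; the expression $\prod_j z_j!/\prod_{(\ell,j)} m_{(\ell,j)}!$ is precisely the required multinomial coefficient.
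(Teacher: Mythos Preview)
Your proposal is correct and is precisely the straightforward argument the paper alludes to: the paper omits the proof entirely, calling it ``straightforward'' and referring to \cite{HM12} for the monotype case, and your Bayes' rule plus branching-property computation with the multinomial bookkeeping is exactly that argument carried out in the multi-type setting.
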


The proof is straightforward, and we omit it here, referring to \cite{HM12} for a proof in the monotype case. 
This result is more practically interpreted thus: if $\bar{\Lambda}$ has distribution $q_n^{(i)},$ then the distribution of $\mathbf{z}(\bar{\Lambda})$ is given by
\[q_n^{(i)}(\mathbf{z}(\bar{\Lambda})=\mathbf{z})=\zeta^{(i)}(\mathbf{z})\frac{\pr^{(\mathbf{z})}(\#_1 F=n-\mathbf{1}_{\{i=1\}})}{\pr^{(i)}(\#_1 T=n)}\]
for $\mathbf{z}\in(\Z_+)^{\kappa}.$ Conditionally on $\mathbf{z}(\bar{\Lambda})=\mathbf{z},$ let $F=(T_1,\ldots,T_{|\mathbf{z}|})$ have distribution $\pr^{(\mathbf{z})}$ but be conditioned on satisfying $\#_1 F=n-\mathbf{1}_{\{i=1\}},$ and let $i_k$ be the type of the root of $T_k$ for $1\leq k \leq |\mathbf{z}|.$ Then $\bar{\Lambda}$ has the same distribution as the lexicographically decreasing rearrangement of $\big((\#_1 T_k,i_k),1\leq k\leq |\mathbf{z}|\big)$.

\subsection{Change of type}

We show here assumption (\ref{hypocvmixing2}), with $\beta=0$. This stays in fact true without the second moment condition. We also identify the invariant distribution of the resulting $\mathsf Q$-matrix.

\begin{prop}\label{prop:cvtypesGW} For all types $i$ and $j$, we have
\[
\underset{n\to\infty}\lim q_n^{(i)}(i_1=j)=\frac{1}{b_i} \sum_{\mathbf{z}\in (\Z_+)^{\kappa}}\zeta^{(i)}(\mathbf{z})z_j b_j.
\]
\end{prop}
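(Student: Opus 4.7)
The plan is to use the explicit MB formula from Proposition \ref{prop:GWisMB} combined with the asymptotic equivalence (\ref{byword}) and a single-big-jump type argument. Recalling that, conditionally on $\mathbf{z}(\bar{\Lambda})=\mathbf{z}$, the sizes and root-types of the children decompose as a forest under $\pr^{(\mathbf{z})}$ conditioned on $\#_1 F = m := n - \mathbf{1}_{\{i=1\}}$, I would write
\[q_n^{(i)}(i_1=j) \;=\; \sum_{\mathbf{z}\in(\Z_+)^{\kappa}} \zeta^{(i)}(\mathbf{z})\cdot\frac{\pr^{(\mathbf{z})}(\#_1 F = m)}{\pr^{(i)}(\#_1 T = n)}\cdot P_n(\mathbf{z},j),\]
where $P_n(\mathbf{z},j)$ is the conditional probability that, in such a forest, the subtree with the largest number of type-$1$ vertices has root of type $j$. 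The ratio in the middle factor converges to $b_{\mathbf{z}}/b_i$ by a direct application of (\ref{byword}), since both numerator and denominator are equivalent to fixed multiples of $\pr(\#_1 T^{(1)}=n)$.

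The heart of the matter is the asymptotic $P_n(\mathbf{z},j) \to z_j b_j / b_{\mathbf{z}}$. Writing the forest as $(T_1,\ldots,T_{|\mathbf{z}|})$ with types $i_1,\ldots,i_{|\mathbf{z}|}$, setting $X_k := \#_1 T_k$, and fixing a truncation $R$, I would decompose
\[\pr\!\left(X_k = \max_\ell X_\ell,\ X_k \geq m-R,\ \textstyle\sum_\ell X_\ell = m\right) \;=\; \sum_{s=0}^{R} \pr^{(i_k)}(\#_1 T = m-s)\cdot\pr^{(\mathbf{z}-\mathbf{e}_{i_k})}(\#_1 F = s),\]
valid for $n$ large enough that $m-R>R$, which forces $X_k$ to be the strict maximum. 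Applying (\ref{byword}) termwise, and using that $\pr(\#_1 T^{(1)}=m-s)/\pr(\#_1 T^{(1)}=m)\to 1$ for each fixed $s$ (itself a consequence of (\ref{byword}) specialised to $\mathbf{z}=\mathbf{e}_1$), the right-hand side is equivalent, as $n \to \infty$, to $(b_{i_k}/b_1)\,\pr(\#_1 T^{(1)}=m)\,\pr^{(\mathbf{z}-\mathbf{e}_{i_k})}(\#_1 F \leq R)$. Sending $R \to \infty$ -- and using that $\#_1 F$ is a.s.\ finite in the critical case -- yields $\pr(X_k=\max_\ell X_\ell,\ \sum_\ell X_\ell = m)\sim (b_{i_k}/b_1)\,\pr(\#_1 T^{(1)}=m)$. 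Summing over those $k$ with $i_k = j$ (there are $z_j$ such indices) and dividing by $\pr^{(\mathbf{z})}(\#_1 F = m) \sim (b_{\mathbf{z}}/b_1)\,\pr(\#_1 T^{(1)}=m)$ gives the claimed limit $z_j b_j/b_{\mathbf{z}}$.

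To conclude by dominated convergence one needs a uniform bound on each summand. Using $P_n(\mathbf{z},j)\leq 1$, it is enough to dominate $\pr^{(\mathbf{z})}(\#_1 F = m)/\pr^{(i)}(\#_1 T = n)$ by a constant times $b_{\mathbf{z}}$, uniformly in $n$ large -- a uniform strengthening of (\ref{byword}) extractable from its proof in \cite{Steph15}. Since
\[\sum_{\mathbf{z}} \zeta^{(i)}(\mathbf{z})\, b_{\mathbf{z}} \;=\; \sum_{j=1}^{\kappa} m_{i,j}\, b_j \;=\; b_i \;<\; \infty\]
by criticality ($M\mathbf{b}=\mathbf{b}$), dominated convergence yields
\[q_n^{(i)}(i_1=j) \;\longrightarrow\; \sum_{\mathbf{z}} \zeta^{(i)}(\mathbf{z})\cdot\frac{b_{\mathbf{z}}}{b_i}\cdot\frac{z_j b_j}{b_{\mathbf{z}}} \;=\; \frac{1}{b_i}\sum_{\mathbf{z}} \zeta^{(i)}(\mathbf{z})\, z_j b_j,\]
as desired. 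I expect the main technical obstacle to be the careful justification of the interchange of the $n\to\infty$ and $R\to\infty$ limits in the single-big-jump step, together with securing the uniform domination bound; both are routine given a local-limit-type estimate strong enough to produce (\ref{byword}) uniformly in $\mathbf{z}$.
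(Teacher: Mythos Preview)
Your argument is correct but takes a different route from the paper's. The paper works at the level of the full typed partition: fixing $a\in\N$ and a typed partition $\bar{\lambda}'$ with $\sum_m\lambda_m=a-\mathbf{1}_{\{i=1\}}$, it shows directly from (\ref{byword}) that $q_n^{(i)}\big((n-a,j),\bar{\lambda}'\big)$ converges, and then identifies the family of limits as a genuine probability distribution --- the joint law, in the Kesten tree $\widehat{T}^{(i)}$, of the type of the second spine vertex together with the sizes and types of the remaining offspring of the root. Pointwise convergence of probability mass functions to a probability distribution gives convergence in distribution (Scheff\'e), so neither a single-big-jump analysis nor a uniform domination bound is needed; the marginal convergence of $i_1$ then falls out. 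Your coarser decomposition by $\mathbf{z}$ followed by a conditional big-jump argument works, but both technical points you flag are genuine extra work; in particular the uniform bound $\pr^{(\mathbf{z})}(\#_1 F=m)/\pr^{(i)}(\#_1 T=n)\leq C\,b_{\mathbf{z}}$ is available in the paper as a consequence of Proposition~\ref{prop:betterbyword} (via $g\geq 0$), but that proposition relies on the second-moment hypothesis, whereas the Kesten-tree proof of Proposition~\ref{prop:cvtypesGW} does not, as the paper notes just before the statement. So the paper's approach is both shorter and strictly more general.
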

This is not surprising: the limit is none other than the probability that the second element of the spine in the size-biased Kesten tree $\widehat{T}^{(i)}$ associated to $\zeta$ has type $j$, and it is known from \cite{Steph15} that, when $n$ is large, $T_n^{(i)}$ is locally close in distribution to this Kesten tree. Let us recall the construction of $\widehat{T}^{(i)}$: it possesses an infinite line of descent called a \emph{spine}, along which the offspring distributions are the size-biased version of $\zeta$, defined by 
\begin{equation}
\label{eq:biasspine}
\widehat{\zeta}^{(j)}(\mathbf{z}):=\frac{b_{\mathbf{z}}}{b_j}\zeta^{(j)}(\mathbf{z})
\end{equation}
for $j\in[{\kappa}]$ and $\mathbf{z}\in(\Z_+)^{\kappa}.$
Outside of the spine, the genealogy uses the usual offspring distributions $\zeta$, and finally, given that the offspring of one element of the spine is $\mathbf{z}$, its successor in the spine has type $j$ with probability proportional to $z_jb_j$. This description will be useful in the proof.

\begin{proof} Fix the types $i,j$, and then fix $a\in\N$ and $\bar{\lambda}'=\left((\lambda_2,i_2),(\lambda_3,i_3),\ldots,(\lambda_p,i_p)\right)$ such that $\sum_{m=2}^p \lambda_m= a-\mathbf{1}_{\{i=1\}}$, and consider $\bar{\lambda}=\left((n-a,j),\bar{\lambda'}\right) \in \overline{\p}^0_n.$ Call $\mathbf{z}=\mathbf{z}(\bar{\lambda}'),$ such that $\mathbf{z}(\bar{\lambda})=\mathbf{z}+\mathbf{e}_{j}.$ Let us show that $q_n^{(i)}(\bar{\lambda})$ converges as $n$ tends to infinity, and that the limit is a probability distribution (formally a distribution on the set $[{\kappa}]\times \cup_{n=1}^{\infty}\overline{\p}^0_n$). First, noting that, for $n$ large, $m_{(n-a,j)}(\bar{\lambda})=1,$ rewrite $q_n^{(i)}(\bar{\lambda})$ as

\[
q_n^{(i)}(\bar{\lambda})=\zeta^{(i)}(\mathbf{z}(\bar{\lambda}))\frac{\prod_{k\in[{\kappa}]} z_k(\bar{\lambda})!}{\displaystyle\prod_{{(\ell,k)}\in\Z_+\times[{\kappa}]}m_{(\ell,k)}(\bar{\lambda}')!}\frac{\pr\left(\#_1 T^{(j)}=n-a\right)}{\pr\big(\#_1 T^{(i)}=n\big)}\prod_{k=2}^{p}\pr\big(\#_1 T^{(i_k)}=\lambda_k\big). \]

By (\ref{byword}), the second fraction here converges to $\frac{b_j}{b_i}.$ We then have 
\begin{align*}
q_n^{(i)}\big((n-a,j),\bar{\lambda}'\big)&\underset{n\to\infty}\longrightarrow
 \frac{b_j}{b_i}\zeta^{(i)}(\mathbf{z}+\mathbf{e}_j)(z_j+1)
\frac{\displaystyle\prod_{k\in[{\kappa}]} z_k!}{\displaystyle\prod_{{(\ell,k)}\in\Z_+\times[{\kappa}]}m_{(l,k)}(\bar{\lambda}')!}\prod_{k=2}^{p}\pr\big(\#_1 T^{(i_k)}=\lambda_k\big).
\end{align*}
However, this limit is exactly the probability that, in $\widehat{T}^{(i)}$, the second element of the spine has type $j$, while the types of the rest of the offspring of the root, and the sizes of the corresponding subtrees are given by $\bar{\lambda}'$. Since these add up to $1$, we have shown that, if $\bar{\Lambda}$ has law $q_n^{(i)}$, then $\big(i_1,\big((\lambda_2,i_2),\ldots,(\lambda_{p(\bar{\lambda})},i_{p(\bar{\lambda})})\big)\big)$ converges in distribution (in $[{\kappa}]\times \cup_{n=1}^{\infty}\overline{\p}^0_n$). In particular, $i_1$ by itself converges in distribution, to the type of the second element of the spine.
\end{proof}

We let $Q$ be the $\mathsf Q$-matrix such that $$Q_{i,j}:=\frac{1}{b_i} \sum_{\mathbf{z}\in (\Z_+)^{\kappa}}\zeta^{(i)}(\mathbf{z})z_j b_j =\frac{b_j m_{i,j}}{b_i}\quad for \quad i\neq j.$$ It is then easily checked that $Q$ is irreducible, and that its invariant distribution $\chi$ satisfies 
\[\chi_i=a_ib_i \qquad \text{ for all } i\in[{\kappa}].\]

\subsection{Convergence of the splitting distributions}

For $n\in\N$ and $i\in[{\kappa}]$, let $\nu_n^{(i)}$ be the distribution of $\frac{1}{n}\Lambda,$ without the types, where $\bar{\Lambda}$ has distribution $q_n^{(i)}$ of Proposition \ref{prop:GWisMB}. Our main objective here is to prove assumption (\ref{hypocvmixing}) in this form:
\begin{prop}\label{prop:cvbrownien}
On $\mathcal S^{\downarrow}$, for all $i \in [{\kappa}]$,
\[\sqrt{n}(1-s_1)\nu_n^{(i)} \ \underset{n\to\infty}{\overset{weakly}{\longrightarrow}} \ \frac{\sum_{1 \leq j,k \leq {\kappa}} b_jb_k Q^{(i)}_{j,k}}{2b_1b_i\sigma_1}(1-s_1)\nu_{\mathsf{Br}}.\]
\end{prop}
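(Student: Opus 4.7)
The plan is to combine the explicit splitting formula of Proposition \ref{prop:GWisMB} with a local limit theorem (LLT) for $\#_1 T^{(j)}$. Under the aperiodicity hypothesis, $\Pi^{(1)}(T^{(1)})$ is a critical monotype Galton-Watson tree with offspring variance $\sigma_1^2$, so the classical LLT for its total progeny gives $\mathbb{P}(\#_1 T^{(1)} = n) \sim 1/(\sigma_1\sqrt{2\pi n^3})$. Plugging this into (\ref{byword}) yields
\[\mathbb{P}(\#_1 T^{(j)} = n) \underset{n\to\infty}{\sim} \frac{b_j}{b_1\sigma_1\sqrt{2\pi n^3}},\quad j \in [{\kappa}],\]
which will be the key analytic input.

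\textbf{Decomposition and reduction to binary splits.} Fix $g$ bounded continuous on $\mathcal{S}^{\downarrow}$, set $F(\mathbf{s}) := g(\mathbf{s})(1-s_1)$, and let $\bar\Lambda \sim q_n^{(i)}$. Using Proposition \ref{prop:GWisMB}, I would first rewrite
\[\sqrt{n}\,\mathbb{E}_n^{(i)}[F(\bar\Lambda^{\downarrow}/n)] = \frac{\sqrt{n}}{\mathbb{P}(\#_1 T^{(i)} = n)} \sum_{\mathbf{z}} \zeta^{(i)}(\mathbf{z})\,\mathbb{E}^{(\mathbf{z})}\!\left[F(\mathsf{rank}(\mathbf{X}/n))\mathbf{1}_{\{\sum_\ell X_\ell = n - \mathbf{1}_{\{i=1\}}\}}\right],\]
where $X_\ell := \#_1 T_\ell$ are the independent type-1 counts of the $|\mathbf{z}|$ subtrees rooted at the children of the root. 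The factor $(1-s_1)$ in $F$ annihilates configurations where a single subtree dominates, and a short LLT-based count (probabilities of order $n^{-3k/2}$ for $k$ macroscopic parts against a lattice volume of order $n^{k-1}$ in the conditioning) shows that configurations with three or more macroscopic parts contribute $O(n^{-1})$ to the expectation, which vanishes after multiplication by $\sqrt{n}$. Only two macroscopic parts will therefore matter in the limit.

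\textbf{The two-large contribution.} I would then sum over ordered pairs of child indices $(\ell_1, \ell_2)$ of respective types $(j_1, j_2)$, over sizes $\lambda_1,\lambda_2$ with $\lambda_1 + \lambda_2 = n - \mathbf{1}_{\{i=1\}} - S$, and over the bounded remainder of total size $S$. The LLT converts the $\lambda_1$-sum into a Riemann sum, yielding asymptotically
\[\sum_{\lambda_1}\mathbb{P}(\#_1 T^{(j_1)} = \lambda_1)\mathbb{P}(\#_1 T^{(j_2)} = \lambda_2)\,F(\cdots) \sim \frac{b_{j_1}b_{j_2}}{2\pi(b_1\sigma_1)^2 n^2}\int_0^1 \frac{F(\mathsf{rank}(x,1-x,0,\ldots))}{(x(1-x))^{3/2}}\,dx,\]
while the small-part sum tends to $\mathbb{P}^{(\mathbf{z}-\mathbf{e}_{j_1}-\mathbf{e}_{j_2})}(\#_1 F < \infty)=1$. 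Summing over $(j_1, j_2)$ and $\mathbf{z}$, the identity $\sum_{\mathbf{z}}\zeta^{(i)}(\mathbf{z})z_{j_1}(z_{j_2}-\mathbf{1}_{\{j_1=j_2\}}) = Q^{(i)}_{j_1, j_2}$ produces $\sum_{j,k}b_jb_k Q^{(i)}_{j,k}$. The ordered-pair summation double-counts each physical binary configuration, yielding the $\tfrac12$ prefactor, and the explicit form (\ref{Brdisloc}) of $\nu_{\mathrm{Br}}$ gives $\int_0^1 F(\mathsf{rank}(x,1-x,0,\ldots))/(x(1-x))^{3/2}\,dx = \sqrt{2\pi}\int F\,d\nu_{\mathrm{Br}}$. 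All factors of $\sqrt{2\pi}$ telescope against those from the LLT, delivering the claimed constant $\frac{\sum_{j,k}b_jb_k Q^{(i)}_{j,k}}{2b_1b_i\sigma_1}$.

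\textbf{Main obstacles.} The hard part will be justifying the uniformity of these asymptotics. Specifically I will need (i) a uniform LLT for $\#_1 T^{(j)}$ so that the Riemann-sum approximation of the $\lambda_1$-sum comes with an error uniform in $\lambda_1 \in [\alpha n,(1-\alpha)n]$; (ii) dominated convergence in the $\mathbf{z}$-sum, secured by the finite second-moment hypothesis through $\mathbb{E}_{\zeta^{(i)}}[b_{\mathbf{Z}}^2]<\infty$; and (iii) a uniform tail bound on the small-part sum $\sum_{\ell\notin\{\ell_1,\ell_2\}} X_\ell$, which follows from the variance of $\zeta_{\mathbf{z},1}$ being at most $C|\mathbf{z}|$. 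The combinatorial bookkeeping for the $\mathsf{rank}$ map (matching ordered pairs of child indices against the symmetry $x \leftrightarrow 1-x$ of the Brownian integral) is delicate but routine once the LLT is in place.
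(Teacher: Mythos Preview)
Your strategy is essentially the same as the paper's: use a local limit theorem for $\#_1 T^{(j)}$, reduce to configurations with exactly two macroscopic parts, and convert the resulting two-point sum into a Riemann integral against the Brownian density $(x(1-x))^{-3/2}$. The constants also assemble the same way, via $\sum_{\mathbf z}\zeta^{(i)}(\mathbf z)z_{j}(z_{k}-\mathbf 1_{\{j=k\}})=Q^{(i)}_{j,k}$.

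The organizational difference is that the paper works with the \emph{size-biased reordering} of the partition rather than summing over ordered pairs of child indices. It proves the equivalent statement for $\big((1-s_1)\nu_n^{(i)}\big)^*$ and then transfers back. Concretely, Lemma~\ref{biglemmabrownien} controls four regions of the size-biased sequence $(x_1^*,x_2^*,\ldots)$: (i) $x_1^*$ close to $1$, (ii) $x_1^*$ tiny, (iii) $x_1^*+x_2^*$ bounded away from $1$, and (iv) the main region $\{x_1^*<1-\eta,\,x_1^*+x_2^*>1-\beta_\eta\}$. This packaging replaces your ``three or more macroscopic parts'' power-counting and your ``remainder sums to $1$'' argument by a single truncation in the size-biased coordinates, which interacts more cleanly with the uniformity issues.

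On those uniformity issues: what you flag as obstacle~(i) is exactly the content of the paper's Proposition~\ref{prop:betterbyword}, which gives $\pr^{(\mathbf z)}(\#_1 F=n)=\frac{b_{\mathbf z}}{b_1}\frac{1}{\sqrt{2\pi\sigma_1^2 n^3}}\big(1-g(\mathbf z,n)+o(1)\big)$ with $o(1)$ uniform in $\mathbf z$ and $g(\mathbf z,n)\to 0$ as $|\mathbf z|/\sqrt n\to 0$. This is the real technical input, and the paper proves it from the Otter--Dwass formula plus the standard finite-variance LLT, together with a Bienaym\'e--Chebyshev bound on the first generation of type~$1$ (your obstacle~(iii)). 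Your obstacle~(ii), dominated convergence in $\mathbf z$, is handled in the paper not by a dominating function but by first restricting to $|\mathbf z|\le\veps\sqrt n$ via Lemma~\ref{lem:partsnumber} (which shows $\sqrt n\,q_n^{(i)}(|\mathbf z|>\veps\sqrt n)\to 0$ using only the finite second moment); on that range the uniform LLT applies directly. Your direct-summation approach would need this same truncation, since the number of ordered pairs scales like $|\mathbf z|^2$ and can be of order $n$ without it.

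In short: same proof, different bookkeeping. The size-biased viewpoint is slightly more economical, but your ordered-pairs route would also go through once Proposition~\ref{prop:betterbyword} and the $|\mathbf z|\le\veps\sqrt n$ cutoff are in place.
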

This does fit with Theorem \ref{thGW} as, taking the average with respect to the distribution $\chi$ found in the previous section, we have
\begin{equation}\label{eq:melangebrownien}
\frac{1}{2b_1\sigma_1}\sum_{1\leq i,j,k \leq {\kappa}} \frac{\chi_ib_jb_kQ^{(i)}_{j,k}}{b_i}=\frac{\sqrt{a_1}}{2\sigma}\sum_{1 \leq i,j,k \leq {\kappa}} a_ib_jb_kQ^{(i)}_{j,k}=\frac{\sigma\sqrt{a_1}}{2},
\end{equation}
and it is readily checked that, for any multi-type fragmentation tree $\T_{\gamma,\bnu}$ and any $c>0$, $\T_{\gamma,c\bnu}\overset{(d)}=\frac{1}{c}\T_{\gamma,\bnu}.$
After proving Proposition \ref{prop:cvbrownien}, the only part missing in the proof of Theorem \ref{thGW} will then be the problem of subtrees with size $0$, which is treated in Section \ref{sec:GWzero}.

\subsubsection{Precise estimate for the number of vertices of type $1$}
The proof of Proposition \ref{prop:cvbrownien} hinges on the following improvement of Equation (\ref{byword}), which is where the more interesting aspects of the multi-type structure appear:
\begin{prop}\label{prop:betterbyword} 
For $\mathbf{z}\in (\Z_+)^{\kappa}$,  we have
\[\pr^{(\mathbf{z})}\big(\#_1 F=n\big) = \frac{b_{\mathbf{z}}}{b_1}\frac{1}{\sqrt{2\pi\sigma_1^2n^3}}\big(1-g(\mathbf{z},n)+o(1)\big)\]
where:
\begin{itemize}
\item $o(1)$ is uniform in $\mathbf{z}:$ it is a function $h$ on $(\Z_+)^{\kappa}\times \N$ such that, for all $\veps>0$, there exists $N\in\N$ such that for $n>N$, $|h(\mathbf{z},n)|<\veps$ independently of $\mathbf{z}\in(\Z_+)^{\kappa}.$
\item $g$ is a  function on $(\Z_+)^{\kappa}\times \N$ with values in $[0,1]$ which is $o(1)$ as $|\mathbf{z}|n^{-1/2}$ tends to $0$: for all $\eta>0,$ there exists $\veps>0$ , for $n\in\N$ and $\mathbf{z}\in(\Z_+)^{\kappa}$ with $|\mathbf{z}|\leq \veps\sqrt{n}$, $g(\mathbf{z},n)\leq \eta.$
\end{itemize}
\end{prop}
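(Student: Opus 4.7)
The strategy is to reduce to a monotype local limit theorem via the projection $\Pi^{(1)}$, apply a sharp estimate at fixed ``initial size'', and then average over the random number of type-$1$ roots produced by the first type-$1$ generation of $F$, carefully tracking uniformity in $\mathbf{z}$.

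First, by the branching property, $\Pi^{(1)}(F)$ is a forest of $M(\mathbf{z})$ independent copies of the monotype critical Galton--Watson tree $\Pi^{(1)}(T^{(1)})$, where $M(\mathbf{z}) \sim \zeta_{\mathbf{z},1}$ counts the first type-$1$ generation of $F$ and has mean $b_{\mathbf{z}}/b_1$ and variance at most $C|\mathbf{z}|$. The offspring distribution $\bar{\zeta}_{1,1}$ of $\Pi^{(1)}(T^{(1)})$ has mean $1$ and variance $\sigma_1^2 = \sigma^2/(a_1 b_1^2)$, so the Otter--Dwass cycle identity gives, conditionally on $M(\mathbf{z}) = m$,
\[\pr\bigl(\#_1 F = n \,\big|\, M(\mathbf{z}) = m\bigr) = \frac{m}{n}\,\pr(S_n = n - m),\]
with $S_n$ a sum of $n$ i.i.d.\ copies of $\bar{\zeta}_{1,1}$. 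The aperiodicity hypothesis $\pr(\#_1 T^{(1)} = n) > 0$ eventually forces $\bar{\zeta}_{1,1}$ to span $\Z$, and Gnedenko's uniform local limit theorem then yields
\[\pr(S_n = n - m) = \frac{1}{\sqrt{2\pi \sigma_1^2 n}}\, e^{-m^2/(2\sigma_1^2 n)} + r_n(m), \qquad \sqrt{n}\sup_{m \geq 0} |r_n(m)| \underset{n\to\infty}{\longrightarrow} 0.\]

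Averaging the Otter--Dwass formula over $M(\mathbf{z})$ and substituting the Gnedenko expansion gives
\[\pr^{(\mathbf{z})}(\#_1 F = n) = \frac{1}{\sqrt{2\pi\sigma_1^2 n^3}}\,\E\!\left[M(\mathbf{z})\, \exp\!\left(-\frac{M(\mathbf{z})^2}{2\sigma_1^2 n}\right)\right] + \frac{1}{n}\,\E\bigl[M(\mathbf{z})\, r_n(M(\mathbf{z}))\bigr].\]
The remainder is bounded in absolute value by $\veps_n b_{\mathbf{z}}/(b_1 n^{3/2})$ with $\veps_n \to 0$ independent of $\mathbf{z}$, which after factoring out $b_{\mathbf{z}}/(b_1\sqrt{2\pi\sigma_1^2 n^3})$ becomes a uniform $o(1)$ in $\mathbf{z}$. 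The main-term expectation rewrites as $(b_{\mathbf{z}}/b_1)(1 - g(\mathbf{z},n))$ with
\[g(\mathbf{z},n) := \frac{\E\!\left[M(\mathbf{z})\bigl(1 - e^{-M(\mathbf{z})^2/(2\sigma_1^2 n)}\bigr)\right]}{\E[M(\mathbf{z})]} \in [0,1],\]
producing the announced form.

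To establish that $g(\mathbf{z},n)$ vanishes whenever $|\mathbf{z}|/\sqrt{n} \to 0$, one uses $1 - e^{-x} \leq \min(1,x)$ and splits the numerator at $\{M(\mathbf{z}) \leq A\sqrt{n}\}$; combined with the bounds $\E[M(\mathbf{z})^2] \leq C(1 + |\mathbf{z}|^2)$ and $\E[M(\mathbf{z})] \geq c|\mathbf{z}|$ (the latter following from $b_j \geq b_{\min} > 0$), this yields $g(\mathbf{z},n) \leq C'(1 + |\mathbf{z}|)/\sqrt{n}$, and the required uniformity follows from the integrality of $|\mathbf{z}|$: the constraints $|\mathbf{z}| \leq \veps\sqrt{n}$ and $|\mathbf{z}| \geq 1$ force $1/\sqrt{n} \leq \veps$ as well. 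The main technical obstacle is the pair of uniformities: uniformity in $m$ in the local limit theorem is essentially built into Gnedenko's classical statement, while the control of $g$ uniformly in $\mathbf{z}$ must be achieved without appealing to moments of $\zeta$ of order higher than two, and so relies on the variance bound inherited by $\zeta_{\mathbf{z},1}$ rather than on any third-moment input.
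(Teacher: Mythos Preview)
Your proof is correct and follows essentially the same route as the paper: reduce to the monotype tree $\Pi^{(1)}(F)$ via the first type-$1$ generation $M(\mathbf{z})\sim\zeta_{\mathbf{z},1}$, apply Otter--Dwass together with the uniform local limit theorem, and define $g$ exactly as the paper does. The only minor difference is in the control of $g$: the paper splits at the threshold $\{P<\delta|\mathbf{z}|\}$ and uses Chebyshev plus Cauchy--Schwarz on the tail, obtaining the $\eta$--$\veps$ statement directly, whereas you split at $\{M\leq A\sqrt{n}\}$ and bound both pieces by a multiple of $\E[M^2]/\sqrt{n}$, yielding the sharper quantitative estimate $g(\mathbf{z},n)\leq C'(1+|\mathbf{z}|)/\sqrt{n}$; either cut works and neither requires moments beyond the second.
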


\begin{proof} Note that if $\mathbf{z}=(0,\ldots,0)$ then there is nothing to say, so we assume $\mathbf{z}\neq(0,\ldots,0).$ We start with the case where $\mathbf{z}=(p,0,0\ldots,0),$ with $p\in\N.$ In this case we can directly work on the monotype reduced forest $\Pi^{(1)}(F)$, and it is well known from the Otter-Dwass formula that
\[\pr^{(p,0,\ldots)}(\#_1 F=n)=\frac{p}{n}\pr(S_n=-p)\]
where $(S_r,r\geq 0)$ is a random walk with step distribution $(\bar{\zeta}_{1,1}(k+1),k\geq -1).$ Moreover the local limit theorem, in the non-lattice, finite variance case also tells us that
\[\pr(S_n=-p)=\frac{1}{\sqrt{2\pi n\sigma_1^2}}e^{-p^2/2n\sigma_1^2}+o\left(\frac{1}{\sqrt{n}}\right),\]
where $o\big(\frac{1}{\sqrt{n}}\big)$ is uniform in $p$, and setting $g((p,0,\ldots),n)=1-e^{-p^2/2n\sigma_1^2}$ ends our first case.

Now take general $\mathbf{z}$. To study the number of vertices of type $1$ in the forest, we first go to its first generation of type $1$, its size having distribution $\zeta_{\mathbf{z},1}$ and expectation $b_{\mathbf{z}}/b_1.$ We have	
\begin{align*}
\pr^{(\mathbf{z})}(\#_1 F=n)&=\sum_{p=0}^{\infty} \zeta_{\mathbf{z},1}(p)\pr^{(p,0,\ldots)}(\#_1 F=n)\\
	&=\sum_{p=0}^{\infty} \zeta_{\mathbf{z},1}(p)\frac{p}{\sqrt{2\pi\sigma_1^2n^3}}(e^{-p^2/2n\sigma_1^2}+o(1))\\
	&=\frac{1}{\sqrt{2\pi\sigma_1^2n^3}}\left(\sum_{p=0}^{\infty} \zeta_{\mathbf{z},1}(p)p(1+o(1))-\sum_{p=0}^{\infty} \zeta_{\mathbf{z},1}(p)p(1-e^{-p^2/2n\sigma_1^2})\right)\\
	&=\frac{1}{\sqrt{2\pi\sigma_1^2n^3}}\left( \frac{b_{\mathbf{z}}}{b_1}(1+o(1))-\sum_{p=0}^{\infty} \zeta_{\mathbf{z},1}(p)p(1-e^{-p^2/2n\sigma_1^2})\right).
\end{align*}
Note that the last step is justified by the uniformity in $p$ of $o(1).$ Let us therefore set
\[g(\mathbf{z},n)=\frac{b_1}{b_{\mathbf{z}}}\sum_{p=0}^{\infty} \zeta_{\mathbf{z},1}(p)p(1-e^{-p^2/2n\sigma_1^2})=\frac{b_1}{b_{\mathbf{z}}}\E\Big[P(1-e^{-P^2/2n\sigma_1^2})\Big]\]
where $P$ is a variable with distribution $\zeta_{\mathbf{z},1}$, and we now want to check that $g(\mathbf{z},n)$ has limit $0$ when $|\mathbf{z}|n^{-1/2}$ tends to $0$. We know that there is $c>0$ such that $\E[P]\leq c|\mathbf{z}|$ and $\Var(P)\leq c|\mathbf{z}|$ for all $\mathbf{z}.$ Thus, for $\delta>c$, $\pr(P\geq \delta |\mathbf{z}|\,)\leq \frac{c}{(\delta-c)^2|\mathbf{z}|}.$ Up to taking larger $c$ such that $b_1\leq c b_i$ for all $i$ and $c\geq 1,$ we then have
\begin{align*}
g(\mathbf{z},n)&\leq \frac{c}{|\mathbf{z}|}\E\Big[P(1-e^{-P^2/2n\sigma_1^2})\Big] \\
               &\leq \frac{c}{|\mathbf{z}|}\left(\E\left[P(1-e^{-P^2/2n\sigma_1^2})\mathbf{1}_{\{P< \delta |\mathbf{z}|\}}\right]+\E\left[P(1-e^{-P^2/2n\sigma_1^2})\mathbf{1}_{\{P\geq\delta |\mathbf{z}|\}}\right]\right) \\
               &\leq \frac{c}{|\mathbf{z}|}\left(\delta |\mathbf{z}|(1-e^{-\delta^2 |\mathbf{z}|^2/2n\sigma_1^2})+\E\left[P\mathbf{1}_{\{P\geq\delta |\mathbf{z}|\}}\right]\right)\\
               &\leq c\delta(1-e^{-\delta^2 |\mathbf{z}|^2/2n\sigma_1^2})+\frac{c}{|\mathbf{z}|}\sqrt{\E[P^2]\pr(P\geq \delta |\mathbf{z}|)}\\
               &\leq  c\delta(1-e^{-\delta^2 |\mathbf{z}|^2/2n\sigma_1^2}) + \frac{c\sqrt{c(c^2|\mathbf{z}|^2+c|\mathbf{z}|)}}{(\delta-c)|\mathbf{z}|^{3/2}}\\
               &\leq  c\delta(1-e^{-\delta^2 |\mathbf{z}|^2/2n\sigma_1^2}) + \frac{c^{5/2}\sqrt{2}}{\delta-c}.
\end{align*}
The last inequality comes from the fact that $c\geq1$ and $|\mathbf{z}|\geq 1$ (the latter being a nonzero integer). Now let $\eta>0$, and take $\delta$ large enough that $\frac{c^{5/2}}{\delta-c}\leq \frac{\eta}{2}.$ Now take $\veps$ small enough such that $c\delta(1-e^{-\delta^2\veps^2/2\sigma_1^2})\leq \frac{\eta}{2},$ then we do have $g(\mathbf{z},n)\leq \eta$ whenever $|\mathbf{z}|\leq \veps\sqrt{n}.$
\end{proof}

\subsubsection{Proof of Proposition \ref{prop:cvbrownien}}

Once we have Proposition \ref{prop:betterbyword} in hand, we can prove Proposition \ref{prop:cvbrownien}. A monotype version of this result is proved in \cite[Proposition 39]{HM12}. We will use the same structure as the monotype proof, relying on size-biased reorderings and appropriate truncations. 

We start with a lemma on the number of children of the root in $T_n^{(i)}$. Recall that for a partition $\bar{\lambda},$ $p(\bar{\lambda})=|\mathbf{z}(\bar{\lambda})|$ denotes its number of parts.
\begin{lem}\label{lem:partsnumber} 
For any $\veps>0$,
\[\sqrt{n} q_n^{(i)}(p(\bar{\lambda})>\veps\sqrt{n})\underset{n\to\infty} \longrightarrow 0\]
\end{lem}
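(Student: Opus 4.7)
The plan is to start from the explicit formula given by Proposition \ref{prop:GWisMB} for the law of $\mathbf{z}(\bar{\Lambda})$ when $\bar{\Lambda} \sim q_n^{(i)}$, namely
\[
q_n^{(i)}\big(p(\bar{\Lambda}) > \varepsilon\sqrt{n}\big) = \sum_{\mathbf{z} : |\mathbf{z}| > \varepsilon\sqrt{n}} \zeta^{(i)}(\mathbf{z}) \, \frac{\pr^{(\mathbf{z})}\big(\#_1 F = n-\mathbf{1}_{\{i=1\}}\big)}{\pr\big(\#_1 T^{(i)}=n\big)},
\]
and to estimate the ratio on the right-hand side uniformly in $\mathbf{z}$ by means of Proposition \ref{prop:betterbyword}.

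The key observation is that the function $g(\mathbf{z},n)$ in Proposition \ref{prop:betterbyword} takes values in $[0,1]$, so that $1-g(\mathbf{z},n)\leq 1$ for every $\mathbf{z}$ and every $n$, while the corresponding factor for the denominator satisfies $g(\mathbf{e}_i,n)\to 0$ because $|\mathbf{e}_i|/\sqrt{n}\to 0$. Consequently, the uniform-in-$\mathbf{z}$ $o(1)$-control given by the proposition yields a constant $C>0$ (independent of $n$ and $\mathbf{z}$) such that, for all $n$ sufficiently large and all $\mathbf{z}\in(\Z_+)^{\kappa}\setminus\{\mathbf 0\}$,
\[
\frac{\pr^{(\mathbf{z})}\big(\#_1 F = n-\mathbf{1}_{\{i=1\}}\big)}{\pr\big(\#_1 T^{(i)}=n\big)} \;\leq\; C\,\frac{b_{\mathbf{z}}}{b_i}.
\]

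Using $b_{\mathbf{z}}\leq (\max_j b_j)|\mathbf{z}|$ and the elementary inequality $|\mathbf{z}|\leq |\mathbf{z}|^2/(\varepsilon\sqrt{n})$ on the event $\{|\mathbf{z}|>\varepsilon\sqrt{n}\}$, this gives
\[
\sqrt{n}\, q_n^{(i)}\big(p(\bar{\Lambda})>\varepsilon\sqrt{n}\big) \;\leq\; \frac{C'}{\varepsilon} \sum_{\mathbf{z} : |\mathbf{z}|>\varepsilon\sqrt{n}} \zeta^{(i)}(\mathbf{z})\,|\mathbf{z}|^2
\]
for some constant $C'>0$. Since $\zeta^{(i)}$ has finite second moments by assumption, the tail sum on the right tends to $0$ as $n\to\infty$, which yields the claim.

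The only mildly delicate point is the uniformity in $\mathbf{z}$ for the ratio above; this is precisely what the refinement proven in Proposition \ref{prop:betterbyword} is designed to provide (and is why one cannot simply rely on the weaker estimate \eqref{byword}). Once that uniformity is in hand, the argument reduces to a standard ``Markov/second moment'' tail bound on the offspring distribution.
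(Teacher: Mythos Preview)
Your proof is correct and follows essentially the same approach as the paper: both use Proposition~\ref{prop:betterbyword} (specifically that $g\in[0,1]$ and the $o(1)$ is uniform in $\mathbf{z}$) to bound the ratio by $C\,b_{\mathbf{z}}/b_i$, and then exploit the finite second moment of $\zeta^{(i)}$ to control the tail. The paper phrases the last step via the size-biased measure $\widehat{\zeta}^{(i)}$ (noting $\widehat{\zeta}^{(i)}(|\mathbf{z}|>p)=o(p^{-1})$ since $|\mathbf{z}|$ has finite mean under $\widehat{\zeta}^{(i)}$), which is exactly your inequality $|\mathbf{z}|\leq |\mathbf{z}|^2/(\varepsilon\sqrt{n})$ written differently.
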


\begin{proof} Since the function $g$ appearing in Proposition \ref{prop:betterbyword} is nonnegative, we have, for large enough $n$,
\[q_n^{(i)}(\mathbf{z}(\bar \lambda)=\mathbf{z})=\zeta^{(i)}(\mathbf{z})\frac{\pr^{(\mathbf{z})}(\#_1 F=n-\mathbf{1}_{\{i=1\}})}{\pr^{(i)}(\#_1 T=n)}\leq 2\widehat{\zeta}^{(i)}(\mathbf{z}), \quad \forall \mathbf z \in (\mathbb Z_+)^{\kappa}\]
with $\widehat{\zeta}^{(i)}(\mathbf{z})$ defined by (\ref{eq:biasspine}). 
However since under $\widehat{\zeta}^{(i)}$, $|\mathbf{z}|$ has finite mean (since ${\zeta}^{(i)}$ has finite second moments), we have $\widehat{\zeta}^{(i)}(|\mathbf{z}|>p)=o(p^{-1}),$ and the result follows.
\end{proof}
\noindent \textbf{Size-biased ordering.} Given a finite or infinite indexed set $(x_r,r\in\mathcal{R})$ of nonnegative numbers with finite sum, we can define its size-biased ordering $(x_1^*,x_2^*,\ldots)$ the following way: let $r^*$ be a random index such that 
\[ \pr(r^*=r)=\frac{x_r}{\sum_{n \in \mathbb N} x_n},\] 
let $x_1^*=x_{r^*}$, remove $r^*$ from $\mathcal{R}$ and proceed inductively.

This procedure can also be applied to measures on the set of sequences \[\s=\bigg\{\mathbf{x}=(x_n)_{n\in\N}\in [0,1]^{\N}: \sum_{n\in \mathbb N} x_n \leq 1\bigg\}\]
(which is a compact metric space when endowed with the metric $d$ defined by $d(\mathbf{x},\mathbf{y}):=\linebreak \sum_n 2^{-n} |x_n-y_n|$). If $\mu$ is a measure on $\s^{\downarrow}$, then define $\mu^*$ by $\mu^*(f)=\int_{\s}\mathrm d\mu (\mathbf{s}) f(\mathbf s^*)$ where $\mathbf s^*$ is the random sequence $(s_1^*,s_2^*,\ldots).$ 
Note that $((1-s_1)\mu)^*=(1-\max \mathbf{x})\mu$ (in particular this holds for $\nu_{\mathrm{Br}}$) and that
\[\nu_{\mathsf{Br}}^*(f)=\frac{\sqrt{2}}{\sqrt{\pi}}\int_0^1 \frac{f(x,1-x,0,\ldots)}{x^{1/2}(1-x)^{3/2}}\mathrm d x.
\]
Following \cite[Lemma 38]{HM12}, Proposition \ref{prop:cvbrownien} can be proved by instead showing
\begin{equation}
\label{cv:biased}
\sqrt{n}\big((1-s_1)\nu_n^{(i)}\big)^* \ \underset{n\to\infty}{\overset{weakly}{\longrightarrow}} \ \frac{\sum_{j,k} b_jb_k Q^{(i)}_{j,k}}{2b_1b_i\sigma_1}\big((1-s_1)\nu_{\mathsf{Br}}\big)^*.
\end{equation}
The following lemma contains the tools necessary to prove this. It relies strongly on Proposition \ref{prop:betterbyword}, and its consequence Lemma \ref{lem:partsnumber}. 

\begin{lem}\label{biglemmabrownien} We have the following limiting properties of $(\nu_n^{(i)})^*$:
\begin{itemize}
\item[\emph{(i)}] $\underset{\eta\to 0}\lim\,\underset{n\to\infty}\limsup \;\sqrt{n}(\nu_n^{(i)})^*\big((1-x_1)\mathbf{1}_{\{x_1>1-\eta\}}\big)=0.$
\item[\emph{(ii)}] $\underset{n\to\infty}\lim \sqrt{n}(\nu_n^{(i)})^*\big(\mathbf{1}_{\{x_1<n^{-7/8}\}}\big)=0.$
\item[\emph{(iii)}] For any $\eta>0$, $\underset{n\to\infty}\lim \sqrt{n}(\nu_n^{(i)})^*(\mathbf{1}_{\{x_1+x_2<1-\eta\}})=0$
\item[\emph{(iv)}] There exists a function $\beta_{\eta}$ which is $o(\eta)$ as $\eta \to 0$, such that, for any function $f$ on $\s$ which can be written as $f(\mathbf{x})=(1- \max \mathbf{x})h(\mathbf{x})$ with $h$ continuous, we have
\begin{align*}
\underset{\eta \to 0}\lim\underset{n\to\infty}\liminf\; \sqrt{n}(\nu_n^{(i)})^*(f\mathbf{1}_{\{x_1<1-\eta,x_1+x_2>1-\beta_{\eta}\}})
&=\underset{\eta \to 0}\lim\underset{n\to\infty}\limsup\; \sqrt{n}(\nu_n^{(i)})^*(f\mathbf{1}_{\{x_1<1-\eta,x_1+x_2>1-\beta_{\eta}\}})\\
&=  \frac{\sum_{j,k}b_jb_kQ_{j,k}^{(i)}}{b_1b_i\sigma_1\sqrt{2\pi}}\int_0^1 \frac{f(x,1-x,0,\ldots)}{x^{1/2}(1-x)^{3/2}}\mathrm d x.  
\end{align*}
\end{itemize}
\end{lem}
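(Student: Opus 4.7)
The plan is to adapt the strategy of Haas--Miermont \cite[Proposition 39]{HM12} from the monotype case, with the refined estimate Proposition \ref{prop:betterbyword} playing the role of the classical local limit theorem for the Otter--Dwass walk. The common starting point, for any bounded measurable $f$ on $\s$, is the identity
\[
(\nu_n^{(i)})^*(f) \;=\; \frac{1}{n'}\,\mathbb{E}_{q_n^{(i)}}\!\Big[\sum_{m=1}^{p(\bar\Lambda)}\lambda_m\, f(\lambda_m^*/n,\ldots)\Big],\qquad n':=n-\mathbf{1}_{\{i=1\}},
\]
which, after using Proposition \ref{prop:GWisMB} and summing over the unpicked parts, rewrites each summand as
\[
\zeta^{(i)}(\mathbf z+\mathbf e_j)(z_j+1)\,\pr^{(j)}(\#_1 T=\ell)\,\pr^{(\mathbf z)}(\#_1 F=n'-\ell)\big/\pr^{(i)}(\#_1 T=n),
\]
where $(\ell,j)$ are the size and type of the size-biased fragment and $\mathbf z$ is the sequence of offspring counts outside it. Proposition \ref{prop:betterbyword} supplies both a leading-order equivalent and a uniform upper bound $O(b_{\cdot}/(\cdot)^{3/2})$ for each of the three Galton--Watson probabilities; Lemma \ref{lem:partsnumber} lets us restrict from the start to $|\mathbf z|\leq\varepsilon\sqrt n$, where the $g$-error in Proposition \ref{prop:betterbyword} is uniformly small.

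Parts (i) and (ii) are tail bounds. For (i), the event $\{x_1^*>1-\eta\}$ forces $n'-\ell\leq n\eta$, and the factor $(1-x_1^*)$ contributes an extra $(n'-\ell)/n$; plugging in $\pr^{(\mathbf z)}(\#_1 F=k)\leq Cb_{\mathbf z}k^{-3/2}$ collapses the sum to $O(n^{-1}\sum_{k\leq n\eta}k\cdot k^{-3/2})=O(\sqrt{\eta/n})$, so that $\sqrt n$ times this vanishes as $\eta\to 0$. For (ii), $\{x_1^*<n^{-7/8}\}$ forces $\ell\leq n^{1/8}$ and a similar estimate gives $O(n^{-1}\sum_{\ell\leq n^{1/8}}\ell\cdot\ell^{-3/2})=O(n^{-15/16})=o(n^{-1/2})$.

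Part (iii) is the main obstacle and encodes the binary nature of $\nu_{\mathrm{Br}}$. By Proposition \ref{prop:betterbyword}, conditional on $\mathbf z$ and on the picked fragment having type $j$ and size $\ell=xn$, the residual subtree sizes $(\ell_1,\ldots,\ell_{|\mathbf z|-1})$ have joint law proportional to $\prod_s b_{r_s}\ell_s^{-3/2}\mathbf{1}_{\{\sum\ell_s=n'-\ell\}}$ to leading order, which is the discrete analogue of the jumps of a $3/2$-stable subordinator bridge. Such distributions are in the ``single big jump'' regime: with probability $1-o(1)$, uniformly in $x\in(\eta,1-\eta)$, all but $o(n)$ of the residual mass $\sim(1-x)n$ concentrates in a single subtree. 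Multiplying by the size-bias factor and the $O(n^{-3/2})$ ratio of Galton--Watson probabilities, the $\sqrt n$-scaled contribution of $\{x_1^*+x_2^*<1-\eta\}$ vanishes, giving (iii). The difficulty lies entirely in making this uniformity quantitative, which is why the truncation $|\mathbf z|\leq\varepsilon\sqrt n$ from Lemma \ref{lem:partsnumber} is indispensable.

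Finally, part (iv) identifies the limit on the binary-like set $\{x_1^*\in(\eta,1-\eta),\,x_1^*+x_2^*>1-\beta_\eta\}$. There the split is asymptotically a two-fragment split into sizes $xn$ (type $j$) and $(1-x)n$ (type $k$). Substituting the three equivalents $\pr^{(j)}(\#_1 T=xn)\sim(b_j/b_1)(2\pi\sigma_1^2)^{-1/2}(xn)^{-3/2}$, $\pr^{(k)}(\#_1 T=(1-x)n)\sim(b_k/b_1)(2\pi\sigma_1^2)^{-1/2}((1-x)n)^{-3/2}$ and $\pr^{(i)}(\#_1 T=n)\sim(b_i/b_1)(2\pi\sigma_1^2)^{-1/2}n^{-3/2}$ from Proposition \ref{prop:betterbyword} into the starting identity, using the combinatorial identity
\[
\sum_{\mathbf z}\zeta^{(i)}(\mathbf z+\mathbf e_j+\mathbf e_k)(z_j+1+\mathbf{1}_{\{k=j\}})(z_k+1)\;=\;Q^{(i)}_{j,k},
\]
and recognising a Riemann sum over $\ell/n=x$, one obtains
\[
\frac{\sum_{j,k}b_jb_kQ^{(i)}_{j,k}}{b_1b_i\sigma_1\sqrt{2\pi}}\int_\eta^{1-\eta}\frac{f(x,1-x,0,\ldots)}{x^{1/2}(1-x)^{3/2}}\,\d x,
\]
which converges as $\eta\to 0$ to the stated integral against $\nu_{\mathrm{Br}}^*$.
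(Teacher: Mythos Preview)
Your approach is essentially that of the paper: expand via the size-biased formula for the conditioned Galton--Watson forest, use Proposition~\ref{prop:betterbyword} for each factor, and truncate via Lemma~\ref{lem:partsnumber}. Two points, however, are not yet proofs.

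\textbf{Part (iii).} Invoking the ``single big jump'' phenomenon for stable-type bridges is a heuristic, not an argument: the residual subtree sizes are only \emph{asymptotically} $\ell^{-3/2}$-tailed, the asymptotic is not uniform down to small $\ell$, and you need uniformity in $|\mathbf z|$ up to $\varepsilon\sqrt n$. What actually works is precisely the double-size-biased expansion you already use for (iv): write out $\pr(X_1^*=m_1,X_2^*=m_2\mid S_{\mathbf z}=n')$ explicitly, bound $\pr^{(\mathbf z'')}(\#_1 F=n'-m_1-m_2)\leq C\,b_{\mathbf z''}(\eta n)^{-3/2}$ since $n'-m_1-m_2>\eta n$ on the event, and use your truncation $|\mathbf z|\leq\varepsilon\sqrt n$ to replace $b_{\mathbf z''}$ by $C\varepsilon\sqrt n$ (this is exactly why the truncation is indispensable; without it one would need a third moment of $\zeta^{(i)}$). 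The remaining sums over $m_1,m_2$ then collapse to $O(n^{-1})$ and the whole expression is $O(\varepsilon\eta^{-3/2}n^{-1/2})$ after the $\sqrt n$ scaling.

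\textbf{Part (iv).} You do not explain where the constraint $\beta_\eta=o(\eta)$ comes from; your $\liminf$ sketch would go through for any $\beta_\eta\to 0$. The point appears in the $\limsup$: after fixing $X_1^*=m_1\leq(1-\eta)n$, the second pick satisfies $m_2\geq (n'-m_1)-\beta_\eta n\geq(1-\beta_\eta/\eta)(n'-m_1)$, and one must replace the factor $m_2^{-1/2}$ coming from $\pr^{(k)}(\#_1 T=m_2)$ by $(1-\beta_\eta/\eta)^{-1/2}(n'-m_1)^{-1/2}$ to obtain a genuine Riemann sum in $m_1/n$. This introduces an error governed by $\beta_\eta/\eta$, which must vanish. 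Also, your ``substitute three equivalents'' shortcut hides the residual-forest factor $\pr^{(\mathbf z'')}(\#_1 F=n'-m_1-m_2)$: it does not disappear by substitution but by \emph{summing} over $m_2$ close to $n'-m_1$, i.e., $\sum_{m_2}\pr^{(\mathbf z'')}(\#_1 F=n'-m_1-m_2)=\pr^{(\mathbf z'')}(\#_1 F\leq\beta_\eta n)\to 1$ uniformly over $|\mathbf z|\leq\varepsilon\sqrt n$ (again via Proposition~\ref{prop:betterbyword}). Only after this summation does your combinatorial identity $\sum_{\mathbf z}\zeta^{(i)}(\mathbf z+\mathbf e_j+\mathbf e_k)(z_j+1+\mathbf 1_{\{k=j\}})(z_k+1)=Q^{(i)}_{j,k}$ appear cleanly.
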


\begin{proof} The proofs of each item all use the same tools, namely Proposition \ref{prop:betterbyword} and some elementary properties of the size-biased ordering. Thus we will only focus on (ii) and (iv), and let the reader fill the rest in.

For any $\mathbf{z}\in (\Z_+)^{\kappa}$, we let $(X^{(j)}_m,j\in[{\kappa}],1\leq m \leq z_j)$ be independent variables such that $X^{(j)}_m$ has the distribution of $\#_1 T^{(j)}$. We call $(X^*_m,1\leq m\leq |\mathbf{z}|)$ their size-biased ordering and also let $(i^*_m,1\leq m\leq |\mathbf{z}|)$ be the matching ordering of their types. To be specific, this means that the size-biased order is obtained by taking $\mathcal{R}=\{(m,j)\in \N\times [{\kappa}]:1\leq m \leq z_k\}$, then $(X^*_1,i^*_1)=(X_{m^*}^{(i^*)},i^*)$ where $r^*=(m^*,i^*),$ and we proceed inductively. We also call $S_{\mathbf{z}}=\sum_{m=1}^{|\mathbf{z}|} X^*_m.$ Note that $S_{\mathbf{z}}$ has the same distribution as $\#_1 F$ under $\pr^{(\mathbf{z})},$ and note the relation 
\[\pr(X_1^*=m,i_1^*=j,S_{\mathbf{z}}=n)=\frac{z_jm}{n}\pr(\#_1 T^{(j)}=m)\pr(S_{\mathbf{z}-\mathbf{e}_j}=n-m)\]
for $n\in\N,$ $m\leq n$ and $j\in[{\kappa}],$ which follows from the definition of the size-biased order. We obtain $(ii)$ by writing, for $n'=n-\mathbf{1}_{\{i=1\}}$ and $\mathbf{z}'=\mathbf{z}-\mathbf{e}_j,$
\begin{align*}
\sqrt{n}(\nu_n^{(i)})^*&(\mathbf{1}_{\{x_1<n^{-7/8}\}})= \sqrt{n}\sum_{\mathbf{z}\in(\Z_+)^{\kappa}}q_n^{(i)}(\mathbf{z}(\bar{\Lambda})=\mathbf{z})\sum_{m=1}^{n^{1/8}}\sum_{j\in[{\kappa}]}\frac{\pr(X_1^*=m,i_1^*=j,S_{\mathbf{z}}=n')}{\pr(S_{\mathbf{z}}=n') } \\
         &= \sqrt{n}\sum_{\mathbf{z}\in(\Z_+)^{\kappa}}\zeta^{(i)}(\mathbf{z})\frac{\pr^{(\mathbf{z})}(\#_1 F=n')}{\pr^{(i)}(\#_1 T=n)}\sum_{m=1}^{n^{1/8}}\sum_{j\in[{\kappa}]}\frac{z_jm}{n}\frac{\pr(\#_1 T^{(j)}=m)\pr^{(\mathbf{z}')}(\#_1 F=n'-m)}{\pr^{(\mathbf{z})}(\#_1 F=n')}.
\end{align*}

Since the function $g$ in Proposition \ref{prop:betterbyword} is nonnegative, we have the existence of a constant $C$ such that $\pr^{(\mathbf{z}')}(\#_1 F=n'-m) \leq C|\mathbf{z}|\pr^{(i)}(\#_1 T=n)$ uniformly in $i,j,n,m$ (with $m\leq n^{1/8}$) and $\mathbf{z}.$ Bounding moreover $\pr(\#_1 T^{(j)}=m)$ by $1$ yields
\[
\sqrt{n}(\nu_n^{(i)})^*\big(\mathbf{1}_{\{x_1<n^{-7/8}\}}\big)\leq Cn^{-3/8} \sum_{\mathbf{z}\in(\Z_+)^{\kappa}} \zeta^{(i)}(\mathbf{z}) |\mathbf{z}|^2,\]
which ends the proof of (ii) because the sum is finite.

As for (iv), we will focus on treating its $\liminf$ part. Let $\eta>0$ be fixed, and let $\veps>0$ and $0<\eta'<\eta$, to be specified later. By Propositions \ref{prop:GWisMB} and \ref{prop:betterbyword}, we have, keeping the same notation as earlier,
\begin{align} 
\label{eq:step1}
\sqrt{n}&(\nu_n^{(i)})^*(f)=\\
\nonumber
&\sqrt{n}\sum_{\mathbf{z}\in (\Z_+)^{\kappa}}\zeta^{(i)}(\mathbf{z})\,\frac{b_{\mathbf{z}}}{b_i}\left(\frac{n}{n'}\right)^{3/2}\frac{1-g(\mathbf{z},n')+o(1)}{1-g(\mathbf e_{i},n)+o(1)}\E\left[f\left(\frac{X_1^*}{n},\ldots,\frac{X_{|\mathbf{z}|}^*}{n},0,\ldots\right)\mid S_{\mathbf{z}}=n'\right].
\end{align}
Setting 
$$
\E_{\mathbf z,n,n'}^{\eta,\eta'}[f]:=\E\left[f\left(\frac{X_1^*}{n},\ldots,\frac{X_{|\mathbf{z}|}^*}{n},0,\ldots)\right)\mathbf{1}_{\{ n^{1/8}\leq X_1^*< (1-\eta)n,X_1^*+X_2^*>(1-\eta')n\}}\mid S_{\mathbf{z}}=n'\right],
$$
replacing $f$ by $f\mathbf{1}_{\{x_1<1-\eta,x_1+x_2>1-\eta'\}}$ in (\ref{eq:step1}) and using Lemma \ref{lem:partsnumber} and point (ii), we then have
\begin{align*}
\sqrt{n}&(\nu_n^{(i)})^*(f\mathbf{1}_{\{x_1<1-\eta,x_1+x_2> 1-\eta'\}})\\
&=\sqrt{n}\left(\frac{n}{n'}\right)^{3/2}\sum_{\mathbf{z}:2\leq |\mathbf{z}|\leq \veps n^{1/2}} \zeta^{(i)}(\mathbf{z})\frac{b_{\mathbf{z}}}{b_i}\frac{1-g(\mathbf{z},n')+o(1)}{1-g(\mathbf e_{i},n)+o(1)} \E_{\mathbf z,n,n'}^{\eta,\eta'}[f]+o(1). 
\end{align*}
By the properties of $g$ given in Proposition \ref{prop:betterbyword}, taking $\veps$ small enough, we have $\frac{1-g(\mathbf{z},n')+o(1)}{1-g(\mathbf e_{i},n)+o(1)}\geq 1-\eta$ for all $\mathbf z$ in the sum, for all $n$ large enough. Thus
\begin{equation*} 
\underset{n\to\infty}\liminf \sqrt{n}(\nu_n^{(i)})^*(f\mathbf{1}_{\{x_1<1-\eta,x_1+x_2>1-\eta'\}})\geq (1-\eta)\underset{n\to\infty}\liminf\;\sqrt{n}\sum_{\mathbf{z}:2\leq |\mathbf{z}|\leq \veps n^{1/2}} \zeta^{(i)}(\mathbf{z})\frac{b_{\mathbf{z}}}{b_i} 
\E_{\mathbf z,n,n'}^{\eta,\eta'}[f].
\end{equation*}
For each $\mathbf{z},$ rewrite $\E_{\mathbf z,n,n'}^{\eta,\eta'}[f]$ as
\begin{align*}
&\underset{\begin{subarray}{c}
  n^{1/8}\leq m_1<(1-\eta)n \\
  (1-\eta')n<m_1+m_2\leq n'
  \end{subarray}}\sum \sum_{j,k\in[{\kappa}]} \E\left[f\left(\frac{m_1}{n},\frac{m_2}{n},\frac{X_3^*}{n},\ldots \right) \mid X_1^*=m_1, X_2^*=m_2,i_1^*=j,i_2^*=k,S_{\mathbf{z}}=n'\right] \\
 & \hspace{4cm} \times\pr\left(X_1^*=m_1, X_2^*=m_2,i_1^*=j,i_2^*=k \mid S_{\mathbf{z}}=n'\right).
\end{align*}
Since $f$ is uniformly continuous on the subset of $\s$ where $x_1+x_2>3/4$, we have for $\eta'$ small enough,
\[\left|f\left(\frac{m_1}{n},\frac{m_2}{n},\frac{m_3}{n},\ldots\right)-f\left(\frac{m_1}{n},\frac{n-m_1}{n},0,\ldots\right) \right|\leq \eta\]
for any $(m_1,m_2,m_3,\ldots)$ with total sum $n'$ and $m_1+m_2\geq (1-\eta')n$. Elementary manipulations also show that
\begin{align*}
\pr(X_1^*=m_1, X_2^*=m_2,i_1^*&=j,i_2^*=k \mid S_{\mathbf{z}}=n')= \\
&\frac{z_j m_1}{n'}\pr(\#_1 T^{(j)}=m_1)\frac{z'_km_2}{n'-m_1}\pr(\#_1 T^{(k)}=m_2)\frac{\pr^{(\mathbf{z}'')}(\#_1 F=n'-m_1-m_2)}{\pr^{(\mathbf{z})}(\#_1 F=n')}
\end{align*}
where $\mathbf{z}'=\mathbf{z}-\mathbf{e}_j$ and $\mathbf{z}''=\mathbf{z}-\mathbf{e}_j-\mathbf{e}_k.$ Now,
notice that all $m_1$ and $m_2$ involved in our sum have lower bounds which tend to infinity. We can then apply Proposition \ref{prop:betterbyword} to obtain for large $n$
\begin{equation}\label{eq:bornefraction}
\frac{\pr(\#_1 T^{(j)}=m_1)\pr(\#_1 T^{(k)}=m_2)}{\pr^{(\mathbf{z})}(\#_1 F=n')}\geq (1-\eta)\frac{b_j b_k}{b_1 b_\mathbf{z}}\frac{1}{\sqrt{2\pi \sigma_1^2}}\left(\frac{n}{m_1m_2}\right)^{3/2},
\end{equation}  
uniformly in $j,k \in [\kappa]$ and $\mathbf z: |\mathbf{z}| \geq 2$.
We can then write for $n$ large enough, uniformly on $\mathbf{z}$ with $ |\mathbf{z}|\geq 2$,
\begin{align*}
\pr(&X_1^*=m_1, X_2^*=m_2,i_1^*=j,i_2^*=k \mid S_{\mathbf{z}}=n') \\
&\geq (1-\eta)z_jz'_k\frac{b_j b_k}{b_1 b_\mathbf{z}}\frac{1}{\sqrt{2\pi \sigma_1^2}}\left(\frac{n}{m_1m_2}\right)^{1/2}\frac{1}{n-m_1}\pr^{(\mathbf{z}'')}(\#_1 F=n'-m_1-m_2) \\
&\geq (1-\eta)z_jz'_k\frac{b_j b_k}{b_1 b_\mathbf{z}}\frac{1}{\sqrt{2\pi \sigma_1^2}} \frac{1}{n^{3/2}}\frac{1}{(m_1/n)^{1/2}(1-m_1/n)^{3/2}}\pr^{(\mathbf{z}'')}(\#_1 F=n'-m_1-m_2).\stepcounter{equation}\tag{\theequation}\label{myeq1}
\end{align*}
Putting everything together, we have
\begin{align*} 
& \underset{n\to\infty}\liminf  \sqrt{n}(\nu_n^{(i)})^*(f\mathbf{1}_{\{x_1<1-\eta,x_1+x_2>1-\eta'\}})\geq  \\
&(1-\eta)^2\sum_{j,k\in[{\kappa}]}\frac{b_j b_k}{b_ib_1\sqrt{2\pi\sigma_1^2}} \\ 
&\times \underset{n\to\infty}\liminf \hspace{-0.4cm} \sum_{\mathbf{z}:2\leq |\mathbf{z}|\leq \veps n^{1/2}} \hspace{-0.4cm} \zeta^{(i)}(\mathbf{z})z_jz'_k \hspace{-0.4cm} \sum_{n^{1/8}\leq m_1\leq (1-\eta)n} \hspace{-0.4cm} \left(f\left(\frac{m_1}{n},1-\frac{m_1}{n},0,\ldots\right)-\eta\right)\frac{1}{n}\frac{1}{(m_1/n)^{1/2}(1-m_1/n)^{3/2}} \\
&\times \sum_{(1-\eta')n'-m_1 \leq m_2 \leq n'-m_1}\pr^{(\mathbf{z}'')}(\#_1 F =n'-m_1-m_2).
\end{align*}
The last sum is equal to $\pr^{(\mathbf{z}'')}(\#_1 F\leq \eta'n')$, and can be made bigger than $1-\eta$ uniformly in our choice of $\mathbf{z}$ for large $n$, up to taking $\veps$ yet smaller. Indeed, letting $\mathbf{1}=(1,1,\ldots,1)\in(\Z_+)^{\kappa}$, we have $\pr^{(\mathbf{z}'')}(\#_1 F> \eta'n')\leq \pr^{(\lfloor \veps n^{1/2}\rfloor\mathbf{1})}(\#_1 F> \eta'n').$ By Proposition \ref{prop:betterbyword}, the latter term is, for large $n$, smaller than $C\veps\sqrt{n}\sum_{k\geq \eta' n'} k^{-3/2}\sim C'\veps,$ for two constants $C$ and $C'$, and thus choosing $\veps<\eta/C'$ fits.
Thus we can now write
\begin{align*} 
\underset{n\to\infty}\liminf\sqrt{n}&(\nu_n^{(i)})^*(f\mathbf{1}_{\{x_1<1-\eta,x_1+x_2>1-\eta'\}})\geq  \\
&(1-\eta)^3\,\underset{n\to\infty} \liminf \sum_{j,k\in[{\kappa}]}\frac{b_j b_k}{b_ib_1\sqrt{2\pi\sigma_1^2}} \sum_{\mathbf{z}:2\leq |\mathbf{z}|\leq \veps n^{1/2}}\zeta^{(i)}(\mathbf{z})z_jz'_k \\ 
&\times \sum_{n^{1/8}\leq m_1\leq (1-\eta)n} \left(f\left(\frac{m_1}{n},1-\frac{m_1}{n},0,\ldots\right)-\eta\right)\frac{1}{n}\frac{1}{(m_1/n)^{1/2}(1-m_1/n)^{3/2}} .
\end{align*}
The first sum converges to $\sum_{j,k\in[{\kappa}]}\frac{b_jb_kQ^{(i)}_{j,k}}{b_ib_1\sigma_1\sqrt{2\pi}},$ while the second one is a Riemann sum 
and thus converges to $\int_{0}^{1-\eta} x^{-1/2}(1-x)^{-3/2}(f(x,1-x,0,\ldots)-\eta) \mathrm dx.$ Letting $\eta$ tend to zero then ends the proof of the $\liminf$.

The proof of the $\limsup$ functions the same way. Proposition \ref{prop:betterbyword} yields upper bounds involving $1+\eta$ as it yielded lower bounds involving $1-\eta.$ The main differences are that the upper bound version of Equation (\ref{eq:bornefraction}) requires $|\mathbf{z}|\leq \veps\sqrt{n},$ and that the upper bound version of (\ref{myeq1}) uses $m_2^{-1/2}\leq (1-\eta'/\eta)^{-1/2}(n'-m_1)^{-1/2},$ thus involving the term $1-\eta'/\eta,$ hence we need to take $\eta'=o(\eta).$
\end{proof}

\noindent\textit{End of the proof of Proposition \ref{prop:cvbrownien}.} Recall that we just need to prove the size-biased convergence (\ref{cv:biased}). Let  $f(\mathbf{x})=(1- \max \mathbf{x})h(\mathbf{x})$ with $h$ continuous on $\s.$ For $\eta>0$ and $\eta'>0$, write
\[\Big|(\nu_n^{(i)})^*(f)-(\nu_n^{(i)})^*(f\mathbf{1}_{\{x_1<1-\eta,x_1+x_2>1-\eta'\}})\Big|\leq (\nu_n^{(i)})^*(|f|\mathbf{1}_{\{x_1\geq 1-\eta\}})+(\nu_n^{(i)})^*(|f|\mathbf{1}_{\{x_1+x_2\leq 1-\eta'\}}).\]
Let $\veps>0$. By Lemma \ref{biglemmabrownien}, we can choose $\eta$ and $\eta'$ such that 
$(\nu_n^{(i)})^*(|f|\mathbf{1}_{\{x_1\geq 1-\eta\}})\leq \veps$ and 
\[\Big|(\nu_n^{(i)})^*(f\mathbf{1}_{\{x_1<1-\eta,x_1+x_2>1-\eta'\}})-\frac{\sum_{j,k}b_jb_kQ_{j,k}^{(i)}}{b_1b_i\sigma_1\sqrt{2\pi}}\int_0^1 \frac{\mathrm d x}{x^{1/2}(1-x)^{3/2}}f(x,1-x,0,\ldots)\Big| \leq \veps \]
for large enough $n$. We then get
\[\Big|(\nu_n^{(i)})^*(f)-\frac{\sum_{j,k}b_jb_kQ_{j,k}^{(i)}}{b_1b_i\sigma_1\sqrt{2\pi}}\int_0^1 \frac{\mathrm d x}{x^{1/2}(1-x)^{3/2}}f(x,1-x,0,\ldots)\Big|\leq 2\veps + \sqrt{n}(\nu_n^{(i)})^*(|f|\mathbf{1}_{\{x_1+x_2\leq 1-\eta'\}}),\]
and since the last term tends to $0$, again by Lemma \ref{biglemmabrownien}, we finally obtain Proposition \ref{prop:cvbrownien}. \qed

\subsection{Zero mass subtrees are small}\label{sec:GWzero}

The MB approach to the study of Galton-Watson trees fails to capture the behaviour of subtrees which contain no vertices of type $1$, since Theorem \ref{theo:melange} does not normally allow for zero-mass vertices in the tree. This section is dedicated to showing that the subtrees with no vertices of type $1$ are small enough to disappear in the scaling limit, thus justifying the use of Theorem \ref{theo:melange}. Specifically, we prove the following:

\begin{prop}\label{prop:zero}
Let $\mathring{T}_n^{(i)}$ be the same tree as $T^{(i)}_n$, except that we have removed all vertices with size $0$, and endow both trees with the uniform measure on their vertices of type 1. Then there exists $C>0$ such that
\[\pr\big(d_{\mathrm{GHP}}(T^{(i)}_n,\mathring{T}^{(i)}_n )\leq C\log n\big)\underset{n\to\infty} \longrightarrow 1,\]
where $d_{\mathrm{GHP}}$ denotes the Gromov-Hausdorff-Prokhorov distance.
\end{prop}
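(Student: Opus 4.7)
The proof has two easy pieces and one substantive one. First, since every type-1 vertex has size at least 1, none is removed when passing from $T_n^{(i)}$ to $\mathring T_n^{(i)}$, and the uniform measures $\mu_n^{(i)}$ on the two trees coincide under the canonical isometric embedding $\mathring T_n^{(i)} \hookrightarrow T_n^{(i)}$. The Prokhorov contribution to $d_{\mathrm{GHP}}$ thus vanishes, and it suffices to bound the Hausdorff distance. The complement $T_n^{(i)} \setminus \mathring T_n^{(i)}$ is a disjoint union of ``zero subtrees'' $(\tau_\alpha)$, each rooted at a vertex whose parent lies in $\mathring T_n^{(i)}$ and consisting entirely of size-$0$ vertices; hence $d_H(T_n^{(i)}, \mathring T_n^{(i)}) \leq 1 + \max_\alpha \mathrm{height}(\tau_\alpha)$, and the target is to show $\max_\alpha \mathrm{height}(\tau_\alpha) = O(\log n)$ with probability tending to 1. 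Because $\#_1 T^{(i)} = \#_1 \mathring T^{(i)}$, conditioning on $\#_1 T^{(i)} = n$ only reweights the distribution of $\mathring T^{(i)}$: given $\mathring T_n^{(i)}$ together with the attachment data (at each vertex, the number and types of its zero-subtree children), the $\tau_\alpha$ are conditionally independent, each distributed as an unconditioned Galton-Watson tree of some type $j \neq 1$ conditioned on containing no type-1 vertex.

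The substantive step is an exponential tail bound on the height of a single such subtree, which comes from the subcriticality of the restricted mean matrix $M^0 := (m_{k,l})_{k,l \neq 1}$. I would prove $\lambda := \rho(M^0) < 1$ as follows. Let $\mathbf u \in \R_+^{\kappa-1}\setminus\{0\}$ be a Perron right eigenvector of $M^0$ with eigenvalue $\lambda$, and extend it to $\tilde{\mathbf u} \in \R^\kappa$ by $\tilde u_1 := 0$. A direct computation gives $M\tilde{\mathbf u} = \lambda \tilde{\mathbf u} + a\,\mathbf e_1$ with $a := \sum_{l\neq 1} m_{1,l} u_l \geq 0$. Pairing with the left Perron eigenvector $\mathbf a$ of $M$ (for which $\mathbf a^T M = \mathbf a^T$) yields $(1-\lambda)\,\mathbf a^T \tilde{\mathbf u} = a_1 a \geq 0$, whence $\lambda \leq 1$; equality would force $a=0$, so $\tilde{\mathbf u}$ would be a non-negative right eigenvector of $M$ with eigenvalue $1$, which by Perron-Frobenius uniqueness for the irreducible $M$ must be a positive multiple of $\mathbf b$, contradicting $\tilde u_1 = 0 < b_1$. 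Hence $\lambda < 1$. Consider now the Galton-Watson process killed upon producing a type-1 particle: starting from a single particle of type $j\neq 1$, the expected generation-$h$ population is bounded by $\sum_l ((M^0)^h)_{j,l} \leq C(\lambda')^h$ for any $\lambda' \in (\lambda,1)$. Since the event $\{\mathrm{height}(T^{(j)}) \geq h\} \cap \{T^{(j)} \text{ has no type-1 vertex}\}$ implies that the killed generation-$h$ population is nonempty, and since the conditioning event has positive probability $q_j > 0$, Markov's inequality gives
\[
\pr\big(\mathrm{height}(T^{(j)}) \geq h \,\big|\, T^{(j)} \text{ has no type-1 vertex}\big) \leq C'_j (\lambda')^h.
\]

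To conclude, let $N_n$ denote the total vertex count of $T_n^{(i)}$. Standard estimates on multi-type critical Galton-Watson trees conditioned by the number of vertices of a given type (obtainable by combining the local-limit tails behind Proposition~\ref{prop:betterbyword} with the relation $\pr^{(i)}(\#_1 T = n) \sim c_i n^{-3/2}$) give $\pr(N_n > n^K) \to 0$ for some constant $K$, which is vastly more than needed. The number of zero subtrees is at most $N_n$, and a union bound combined with the previous paragraph yields
\[
\pr\Big(\max_\alpha \mathrm{height}(\tau_\alpha) > C \log n\Big) \leq \pr(N_n > n^K) + n^K \cdot \Big(\max_{j\neq 1} C'_j\Big) (\lambda')^{C \log n},
\]
which tends to zero once $C > K/\log(1/\lambda')$. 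Combined with the first-paragraph reduction, this proves the proposition. The main obstacle is the subcriticality statement $\rho(M^0) < 1$ of the middle paragraph: it is intuitive, since type 1 acts as a ``drain'' on the restricted process, but extracting it from Perron-Frobenius theory in a way that avoids assuming irreducibility of $M^0$ requires the slightly delicate extension-and-pairing argument above. Everything else is a routine union bound over an exponential tail.
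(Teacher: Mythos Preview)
Your proof is correct and shares the paper's overall architecture: reduce to bounding the heights of the removed zero-subtrees, establish an exponential tail on these heights via a subcriticality argument, and finish with a union bound over a polynomially bounded number of subtrees. The Prokhorov reduction and the final union bound match the paper closely (the paper records the union bound as a separate lemma and proves the polynomial vertex-count bound with exponent $K=3$ by a direct first-moment argument on the i.i.d.\ ``descendants per type-1 ancestor'' variables, where you cite it as standard).

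The substantive difference is in how subcriticality is extracted. The paper first identifies the conditioned zero-subtree as a genuine Galton--Watson tree with a new offspring distribution $\zeta_{\cross}$, computes its mean matrix $M^{\cross}$ (related to $M$ through the fixed point $(p_L,\ldots,p_\kappa)$ of the generating functions), and proves $\rho(M^{\cross})<1$ by a contradiction argument exploiting convexity of the generating functions on the segment joining the fixed point to $\mathbf 1$. You bypass all of this: you work directly with the principal submatrix $M^0=(m_{k,l})_{k,l\neq 1}$ of the \emph{original} mean matrix, prove $\rho(M^0)<1$ by the clean pairing with the left Perron eigenvector $\mathbf a$ of $M$, and then bound the height of the \emph{killed} process (type-1 particles removed) rather than the conditioned one; this suffices because the conditioned-height event is contained in the killed-survival event up to the harmless factor $1/q_j$. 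Your argument is shorter and purely linear-algebraic, and it handles the possible reducibility of $M^0$ without any extra work. The paper's route, on the other hand, yields the slightly stronger intermediate statement that the conditioned zero-subtree is itself a subcritical Galton--Watson tree, which may be of independent interest. Both approaches are valid.
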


This will be proved by showing that the subtrees appended to $\mathring{T}^{(i)}_n$ to obtain $T^{(i)}_n$ are all small in distribution, and there are not too many of them. Note that the Prokhorov part of this convergence is in fact immediate, since these subtrees have no mass.

We assume that there exists at least one $i\in[{\kappa}]$ such that $\pr(\#_1 T^{(i)}=0)>0$, otherwise $T^{(i)}_n=\mathring{T}^{(i)}_n$ for all $n$ and there is nothing to do. We recall that given their number and types, the subtrees removed from $T^{(i)}_n$ to get $\mathring{T}_n^{(i)}$ are independent Galton-Watson trees conditioned on not having vertices of type 1.

\noindent\textbf{Trees conditioned to be missing a type are subcritical.} Up to reordering the types, we can assume that $\pr(\#_1 T^{(i)}=0)>0$ if and only if $i\geq L$ for some $L\in\{2,\ldots,{\kappa}\}$. We let, for $i$ in $\{L,\ldots,{\kappa}\}$, $T^{(i)}_{\cross}$ be a ${\kappa}$-type Galton-Watson tree with offspring distributions $\zeta$ and root of type $i$, conditioned on not having any vertices of type $1$. By default, $T^{(i)}_{\cross}$ then only has vertices of types $L$ to ${\kappa}$.

\begin{prop} 
\label{prop:subcritical}
The tree $T^{(i)}_{\cross}$ is a subcritical ${\kappa}-L+1$-type Galton-Watson tree.
\end{prop}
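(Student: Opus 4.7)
The plan is to perform a Doob $h$-transform / conditioning argument to identify $T^{(i)}_{\cross}$ as a multi-type Galton--Watson tree with explicit tilted offspring distributions, and then deduce subcriticality from a Perron--Frobenius comparison of Jacobians. Set
$$p_j := \pr\big(\#_1 T^{(j)} = 0\big), \qquad j \in [\kappa],$$
so that $p_j = 0$ iff $j < L$, while the irreducibility of $M$ forces $p_j < 1$ for every $j \geq L$ (since type $1$ is then reachable as a descendant from any type with positive probability). Conditioning on the first generation of the root gives the fixed-point relation $p_i = \sum_{\mathbf{z}} \zeta^{(i)}(\mathbf{z}) \prod_j p_j^{z_j}$ for every $i \geq L$.

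Next I would identify the branching structure of $T^{(i)}_{\cross}$. By the branching property of $T^{(i)}$ and the fact that $\{\#_1 T^{(i)} = 0\}$ factors as the intersection of the independent events $\{\#_1 T^{(i_k)} = 0\}$ over the root's children, conditioning produces at a type-$i$ vertex ($i \geq L$) the tilted offspring law
$$\tilde{\zeta}^{(i)}(\mathbf{z}) = \zeta^{(i)}(\mathbf{z}) \cdot \frac{\prod_j p_j^{z_j}}{p_i},$$
which is automatically supported on those $\mathbf{z} \in (\Z_+)^\kappa$ with $z_j = 0$ for all $j < L$. Hence $T^{(i)}_{\cross}$ is a multi-type Galton--Watson tree on the reduced type set $\{L,\ldots,\kappa\}$.

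The associated $(\kappa - L + 1) \times (\kappa - L + 1)$ mean matrix $\tilde{M}$ satisfies, for $i,j \in \{L,\ldots,\kappa\}$,
$$\tilde{m}_{i,j} = \frac{1}{p_i}\sum_{\mathbf{z}} \zeta^{(i)}(\mathbf{z})\, z_j \prod_k p_k^{z_k} = \frac{p_j}{p_i} \cdot \frac{\partial G_i}{\partial s_j}(\mathbf{s}^*), \qquad \mathbf{s}^* := (0,\ldots,0,p_L,\ldots,p_\kappa),$$
where $G_i(\mathbf{s}) := \sum_{\mathbf{z}} \zeta^{(i)}(\mathbf{z}) \prod_k s_k^{z_k}$ denotes the generating function of $\zeta^{(i)}$. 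Thus $\tilde{M}$ is conjugate, via the positive diagonal matrix $D = \mathrm{diag}(p_L,\ldots,p_\kappa)$, to the principal $\{L,\ldots,\kappa\}$-submatrix $J$ of the Jacobian $DG(\mathbf{s}^*)$, so that the spectral radii agree: $\rho(\tilde{M}) = \rho(J) \leq \rho(DG(\mathbf{s}^*))$.

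To conclude I would compare $DG(\mathbf{s}^*)$ with the critical mean matrix $M = DG(\mathbf{1})$. Since every coordinate of $\mathbf{s}^*$ is strictly less than $1$, the non-singularity of $\zeta$ (pick $i_0$ and $\mathbf{z}_0$ with $\zeta^{(i_0)}(\mathbf{z}_0) > 0$ and $|\mathbf{z}_0| \geq 2$, and any $j_0$ with $z_{0,j_0} \geq 1$) produces an entry $(i_0,j_0)$ at which $(DG(\mathbf{s}^*))_{i_0,j_0} < m_{i_0,j_0}$, while $DG(\mathbf{s}^*) \leq M$ componentwise. Combined with the irreducibility of $M$ and $\rho(M)=1$, the standard monotonicity principle of Perron--Frobenius theory then yields $\rho(DG(\mathbf{s}^*)) < 1$, so $\rho(\tilde{M}) < 1$ and $T^{(i)}_{\cross}$ is indeed subcritical. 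The main obstacle is this final strict spectral-radius comparison: both the non-singularity of $\zeta$ and the irreducibility of $M$ are needed to ensure that the pointwise strict inequality propagates to the spectral radius of the principal submatrix $J$.
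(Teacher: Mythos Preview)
Your argument is correct, and the first half (the $h$-transform identification of the tilted offspring law and the conjugacy $\tilde M = D^{-1}JD$ with $J$ the $\{L,\ldots,\kappa\}$-principal submatrix of the Jacobian $DG(\mathbf{s}^*)$) coincides with the paper's. The divergence is in how you establish $\rho(J)<1$.

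The paper works \emph{inside} the submatrix $N=J$: assuming by contradiction that $\rho(N)\geq 1$, it extracts (via Perron--Frobenius on an irreducible block of $N$) a nonnegative vector $\mathbf{x}$ with $N\mathbf{x}\geq\mathbf{x}$, then pushes the line $t\mapsto \mathbf{s}^*+t\mathbf{x}$ through the generating functions, using convexity together with the fixed-point relation $\mathbf{f}(\mathbf{s}^*)=\mathbf{s}^*$ to force $f_j(\mathbf{1})>1$ for some $j$; this requires a short but slightly delicate case analysis to find a coordinate where the inequality becomes strict, exploiting that the block structure of $N$ cannot be closed under $\zeta$.

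You instead step \emph{outside} the submatrix: you bound $\rho(J)\leq \rho(DG(\mathbf{s}^*))$ (principal submatrix of a nonnegative matrix) and compare the full $\kappa\times\kappa$ Jacobian with $M=DG(\mathbf{1})$. Since every coordinate of $\mathbf{s}^*$ is strictly below $1$ and non-singularity supplies one entry with strict inequality, the strict monotonicity of the Perron root under the irreducible dominant matrix $M$ gives $\rho(DG(\mathbf{s}^*))<\rho(M)=1$. This is shorter and sidesteps the reducibility of $J$ entirely, at the cost of quoting the strict spectral-radius monotonicity (for $0\leq A\leq B$, $A\neq B$, $B$ irreducible $\Rightarrow$ $\rho(A)<\rho(B)$) as a black box. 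The paper's route is more self-contained but longer; yours is cleaner once that standard lemma is granted.
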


We recall that irreducible Galton-Watson trees are called subcritical if the Perron eigenvalue of their mean matrix is strictly less than $1.$ However, the mean matrix of $T^{(i)}_{\cross},$ which we call $M^{\cross},$ is not necessarily irreducible, and so we need a more general notion of subcriticality. We sort the elements of $\{L,\ldots,\kappa\}$ into \emph{irreducible components} by saying that two types $i$ and $j$ are in the same component if there exists integers $n$ and $m$ such that $(M^{\cross})^n(i,j)>0$ and $(M^{\cross})^m(j,i)>0.$ In this case, up to reordering the set $\{L,\ldots,\kappa\}$, we can assume
\[M^{\cross}=\left({\begin{smallmatrix}B_{1}&*&*&\cdots &*\\0&B_{2}&*&\cdots &*\\\vdots &\vdots &\vdots &&\vdots \\0&0&0&\cdots &*\\0&0&0&\cdots &B_{h}\end{smallmatrix}}\right),\]
where the blocks correspond to each irreducible component. The eigenvalues of $M^{\cross}$ are then those of the $(B_k),$ and we say that the tree is \emph{subcritical} if the Perron eigenvalues of each are all strictly smaller than $1.$

\begin{proof} It is straightforward to see that the conditioning does not lose the branching property, and thus $T^{(i)}_{\cross}$ is a Galton-Watson tree. We call $\zeta_{\cross}^{(i)}$, for $i\in\{L,\ldots,\kappa\}$ the corresponding offspring distributions. Calling $p_i=\pr(\#_1 T^{(i)}=0)$, we have by definition the relation
\begin{equation}\label{eq:relationzeta}
\zeta_{\cross}^{(i)}(z_L,\ldots,z_{\kappa})=\frac{1}{p_i}\zeta^{(i)}(0,\ldots,0,z_L,\ldots,z_{\kappa})\prod_{j=L}^{\kappa}p_j^{z_j}
\end{equation}
for $\mathbf{z}=(z_L,\ldots,z_{\kappa})\in \mathbf{z}\in(\Z_+)^{\{L,\ldots,\kappa\}}.$ 
Define the generating functions $\mathbf{f}=(f_1,\ldots,f_{\kappa})$ and $\mathbf{f}^{\cross}=(f^{\cross}_L,\ldots,f^{\cross}_{\kappa})$ by, for $\mathbf{x}\in (\R_+)^{\kappa}$ and $i\in[\kappa],$
\[f_i(\mathbf{x})=\sum_{\mathbf{z}\in(\Z_+)^{\kappa}} \zeta^{(i)}(\mathbf{z})\prod_{j=1}^{\kappa} x_j^{z_j}\]
and, for $\mathbf{x}\in(\R_+)^{\{L,\ldots,\kappa\}}$ and $i\geq L,$
\[f^{\cross}_i(\mathbf{x})=\sum_{\mathbf{z}\in(\Z_+)^{\{L,\ldots,\kappa\}}} \zeta_{\cross}^{(i)}(\mathbf{z})\prod_{j=L}^{\kappa} x_j^{z_j}.\]
Note that we have $p_i=\sum_{\mathbf{z}\in(\Z_+)^{\{L,\ldots,\kappa\}}}\zeta^{(i)}(0,\ldots,0,z_L,\ldots,z_{\kappa})\prod_{k=L}^{\kappa}p_j^{z_j}$ for all $i$, implying the fixed point equation $\mathbf{f}(0,\ldots,p_L,\ldots,p_{\kappa})=(0,\ldots,p_L,\ldots,p_{\kappa}).$ Also, equation (\ref{eq:relationzeta}) implies
\[f^{\cross}_i(\mathbf{x})=\frac{1}{p_i}f_i(0,0,\ldots,p_Lx_L,\ldots,p_{\kappa}x_{\kappa}).\]
The matrix $M^{\cross}$ is the differential of $\mathbf{f}^{\cross}$ at $\mathbf{1}=(1,\ldots,1).$ Differentiating the above we have the equality of $\{L,\ldots,\kappa\}\times\{L,\ldots,\kappa\}$-indexed matrices:
\[M^{\cross}=P^{-1}N P\]
where $N$ is the matrix with entries $N_{i,j}=\partial_j f_i (0,\ldots,0,p_L,\ldots,p_K),$ and $P$ is the diagonal matrix with entries $(p_L,\ldots,p_{\kappa}).$ As a consequence, $M^{\cross}$ has the same eigenvalues as $N$, and we will focus on showing that the eigenvalues of $N$ are strictly less than one. 

Assume by contradiction that this is not the case, then by applying the Perron-Frobenius theorem in the irreducible component of $N$ whose largest eigenvalue is the highest (note that these components same as those of $M^{\cross}$), there exists a vector $\mathbf{x}=(x_L,\ldots,x_{\kappa})$ such that $N\mathbf{x}\geq \mathbf{x}$ (where the relation $\geq$ between vectors means that the comparison holds for each entry). Moreover $\mathbf{x}$ has nonnegative entries, and its nonzero entries correspond to a single irreducible component of $M^{\cross}.$

Now consider for $t\geq 0$ the vector $\mathbf{x}(t)=(0,\ldots,0,p_L+tx_L,\ldots,p_{\kappa}+tx_{\kappa}),$ and notice that $\mathbf{x}(t)\leq \mathbf{1}$ if $t\leq t_k:=\frac{1-p_k}{x_k}$ for all $k$ (and $t_k:=\infty$ if $x_k=0$ or $x_k$ is undefined). Choose $j$ for which $t_j$ is minimal, and consider the function with one variable: $g_j: t \mapsto f_j(\mathbf{x}(t)).$ As a polynomial in $t$ with nonnegative coefficients, $g_j$ is convex, and so $g_j(0)=p_j$ and $g_j'(0)\geq x_j$ imply together that $g_j(t_j)=f_j(\mathbf{x}(t_j))\geq 1.$ Now notice that $f_j$ is nondecreasing in each variable, and is strictly increasing in the $k$-th variable if the probability of an individual of type $j$ to have at least one child of type $k$ is nonzero. Thus, if there exists such a $k$ which also satisfies $t_k>t_j,$ then $p_k+t_jx_k<1$, and by (strict) monotonicity $f_j(\mathbf{1})>1,$ our wanted contradiction. If we cannot find a $k$ satisfying this, it means that for all types $j'$ of possible children of an individual of type $j$, we have $t_{j'}=t_j.$ Now we can apply the previous reasoning to the functions $g_{j'}$, and repeat inductively, stopping when we find a $j$ and $k$ such that $t_j$ is minimal, $t_k$ is not minimal, and the probability that an individual of type $j$ has a child of type $k$ is nonzero. To conclude, we need to check that this procedure does end - if it doesn't, then $t_j$ will be the same value for all $j$ in one same irreducible component of $M^{\cross},$ and none of these types can give birth (under $\zeta^{(j)}$) to children outside of that component, a contradiction.
\end{proof}

\noindent\textbf{The height of subcritical trees has exponential moments.} Let $(\xi^{(i)},i\in[{\kappa}])$ be a set of subcritical offspring distributions (i.e.\ the corresponding Galton-Watson tree is subcritical, as defined above), without any assumption of irreducibility. We consider Galton-Watson trees $(S^{(i)},i\in[{\kappa}])$ with offspring distributions $(\xi^{(i)},i\in[{\kappa}]),$ such that $S^{(i)}$ has root of type $i$, for all $i\in[{\kappa}].$

\begin{prop} 
\label{prop:heightsubcritical}
There exists $\lambda>0$ such that, for all $i\in[{\kappa}]$, 
\[\E\big[e^{\lambda\mathrm{ht}(S^{(i)})}\big]<\infty.\]
\end{prop}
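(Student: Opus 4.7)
The plan is to establish exponential tail decay for the height by the classical first-moment method on the size of the $n$-th generation, and then deduce exponential moments by summation. The only ingredient that requires a brief comment is that the notion of subcriticality recalled just above the proposition implies that the spectral radius $\rho(M)$ of the mean matrix $M$ of $\xi$ is strictly less than $1$: since the block-triangular form displayed for $M^{\cross}$ (and obtained for any nonnegative matrix after relabelling the types, by the Frobenius normal form) has spectrum equal to the union of the spectra of the diagonal blocks $B_k$, subcriticality forces $\rho(M)=\max_k\rho_k<1$.

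First I would note that for each $i\in[\kappa]$ and each $n\in\N$, the event $\{\mathrm{ht}(S^{(i)})\ge n\}$ is contained in the event that the $n$-th generation of $S^{(i)}$ is nonempty. Writing $|\mathrm{gen}_n(S^{(i)})|$ for the size of that generation, the standard branching identity $\E\bigl[|\mathrm{gen}_n(S^{(i)})|\bigr]=\sum_{j\in[\kappa]}(M^n)_{ij}$ together with Markov's inequality yields
$$
\pr\bigl(\mathrm{ht}(S^{(i)})\ge n\bigr)\ \le\ \sum_{j\in[\kappa]}(M^n)_{ij}\ \le\ \kappa\,\|M^n\|_\infty.
$$
Since $\rho(M)<1$, Gelfand's formula provides, for any fixed $\rho'\in(\rho(M),1)$, a constant $C=C(\rho')<\infty$ such that $\|M^n\|_\infty\le C(\rho')^n$ for all $n\ge 0$, uniformly in $i$. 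Hence the height has geometric tail bound $\pr(\mathrm{ht}(S^{(i)})\ge n)\le C\kappa\,(\rho')^n$.

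Finally, for any $\lambda\in(0,-\log\rho')$ I will convert this tail bound into a moment bound by writing
$$
\E\bigl[e^{\lambda\,\mathrm{ht}(S^{(i)})}\bigr]=\sum_{n\ge 0}e^{\lambda n}\pr\bigl(\mathrm{ht}(S^{(i)})=n\bigr)\ \le\ \sum_{n\ge 0}e^{\lambda n}\pr\bigl(\mathrm{ht}(S^{(i)})\ge n\bigr)\ \le\ C\kappa\sum_{n\ge 0}(e^{\lambda}\rho')^n,
$$
which is finite because $e^{\lambda}\rho'<1$. The bound is uniform in $i\in[\kappa]$, so the same $\lambda$ works for every starting type. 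There is no real obstacle here: the argument is a direct application of the first-moment method and linear algebra. The only place that deserves a sentence of justification is the deduction $\rho(M)<1$ from the block-triangular definition of subcriticality used in the paper.
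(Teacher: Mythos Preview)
Your proof is correct and takes a genuinely different route from the paper's. The paper argues via the generating-function recursion: setting $\mathbf{y}_n=\mathbf{1}-\mathbf{x}_n$ with $x_n^i=\pr(\mathrm{ht}(S^{(i)})\le n)$, it uses the Galton--Watson fixed-point relation $\mathbf{x}_{n+1}=\mathbf{f}(\mathbf{x}_n)$ and a Taylor expansion at $\mathbf{1}$ to obtain $\mathbf{y}_{n+1}=N\mathbf{y}_n+o(\mathbf{y}_n)$, then deduces geometric decay of $\mathbf{y}_n$ from $\rho(N)<1$. You instead bound the tail by the expected size of the $n$-th generation via Markov's inequality, obtaining $\pr(\mathrm{ht}\ge n)\le (M^n\mathbf{1})_i$ directly and then invoking Gelfand. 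Your route is arguably more elementary: it avoids the Taylor step and the (slightly terse in the paper) passage from $\mathbf{y}_{n+1}=N\mathbf{y}_n+o(\mathbf{y}_n)$ to summability of $a^{-n}\mathbf{y}_n$. The paper's approach, on the other hand, works directly with the exact tail probabilities and would extend more naturally to settings where first moments alone give a weaker bound than the actual tail decay; here, though, both give the same geometric rate and your argument is the cleaner one.
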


\begin{proof}
Let, for $n\in\Z_+$ and $i\in[{\kappa}]$, $x^i_n=\pr (\mathrm{ht}(S^{(i)})\leq n)$ and then $\mathbf{x}_n=(x^1_n,\ldots,x^{\kappa}_n).$ By the Galton-Watson property, we have for all $n$ 
\[\mathbf{x}_{n+1}=\mathbf{f}(\mathbf{x}_n)
\]
where $\mathbf{f}=(f_1,\ldots,f_{\kappa})$ is the moment generating function defined by $f_i(\mathbf{x})=\sum_{\mathbf{z}\in(\Z_+)^{\kappa}} \xi^{(i)}(\mathbf{z})\prod_{j=1}^{\kappa} x_j^{z_j}.$ Noticing that $\mathbf{x}_n$ tends to $\mathbf{1}:=(1,\ldots,1)$ as $n$ tends to infinity, we let $\mathbf{y}_n=\mathbf{1}-\mathbf{x}_n$, and we then have the first order expansion as $n\rightarrow \infty$
\begin{align*}
\mathbf{y}_{n+1}&=N\mathbf{y}_n + o(\mathbf{y}_n),
\end{align*}
where $N$ is the mean matrix of $(\xi^{(i)},i\in[{\kappa}])$. By subcriticality, the largest eigenvalue $\rho$ of $N$ satisfies $\rho<1$, and taking $a<1$ with $a>\rho$, the series $\sum_n a^{-n}\mathbf{y}_n$ converges. Letting $\lambda=-\log a>0$, we then have $\E[e^{\lambda\mathrm{ht}(S^{(i)})}]<\infty$ for all $i\in[{\kappa}].$
\end{proof}

\noindent\textbf{There are not too many vertices.} Now knowing that the subtrees without any vertices of type $1$ have relatively small heights, we want to check that they are not too numerous. The following rough estimate will be enough for our purposes. We use the notation $\#_j$ for the function which maps a $\kappa$-type tree to its number of vertices with type $j$, for $j\in[\kappa].$
\begin{lem}\label{lem:crudebound} Let $i,j$ be two types, with $j\neq 1$. We have
\[\pr\big( \#_j T^{(i)}>n^3\mid \#_1 T^{(i)}=n\big)\underset{n\to\infty}\longrightarrow 0.\]
\end{lem}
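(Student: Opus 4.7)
The plan is to decompose $T^{(i)}$ around its type-$1$ vertices and to exploit the subcriticality of the non-type-$1$ part of the tree. Every non-type-$1$ vertex $v$ of $T^{(i)}$ either has no type-$1$ ancestor -- in which case it lies in an \emph{initial gap} $\mathcal{G}_0$ -- or admits a unique most recent type-$1$ ancestor $u$, in which case it belongs to the \emph{gap} $\mathcal{G}_u$. This gives the a.s.\ bound
\[
\#_j T^{(i)}\ \leq\ |\mathcal{G}_0|\ +\ \sum_{u \text{ type } 1 \text{ in } T^{(i)}} |\mathcal{G}_u|,
\]
so it suffices to control the two terms on the right.

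The control on the gaps will come from the same mechanism underlying Proposition \ref{prop:subcritical}: removing the first row and column from the critical irreducible matrix $M$ produces a subcritical matrix $\tilde M = (m_{k,l})_{k,l \neq 1}$, so the modified Galton-Watson process in which one grows $T^{(i)}$ but treats every type-$1$ child as a leaf is subcritical. Proposition \ref{prop:heightsubcritical}, applied to this process, then shows that each gap $|\mathcal{G}_u|$ (and $|\mathcal{G}_0|$) has exponential moments. Moreover, letting $K_u$ denote the number of children of $u$ in the reduced tree $\Pi^{(1)}(T^{(i)})$ -- equivalently, the number of first type-$1$ descendants of $u$ reachable from $u$ via a path of non-type-$1$ vertices -- the pair $(|\mathcal{G}_u|, K_u)$ has finite joint exponential moments. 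The crucial estimate I intend to establish is the linear bound
\[
\E\big[\,|\mathcal{G}_u|\,\big|\,K_u = k\,\big]\ \leq\ C(1+k),
\]
uniform in $k$; this will follow from the branching decomposition, since conditional on $u$'s offspring $\mathbf{Z}_u$ both $|\mathcal{G}_u|$ and $K_u$ are sums of i.i.d.\ pairs $(X^{(l)},Y^{(l)})_{l\neq 1}$ describing the ``first-type-$1$ count'' and ``non-type-$1$ count'' of subtrees rooted at $u$'s non-type-$1$ children.

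Next, I will condition on the reduced tree $\Pi^{(1)}(T^{(i)})$. Under $\{\#_1 T^{(i)} = n\}$ this reduced forest has exactly $n$ vertices and therefore $\sum_u K_u \leq n$, and by the branching property the $(|\mathcal{G}_u|)_u$ are conditionally independent given $\Pi^{(1)}(T^{(i)})$. The linear bound above then gives
\[
\E\bigg[\sum_u |\mathcal{G}_u|\,\bigg|\,\Pi^{(1)}(T^{(i)})\bigg]\ \leq\ C\bigg(n + \sum_u K_u\bigg)\ =\ O(n),
\]
uniformly over reduced trees with $n$ vertices, and conditional Markov yields $\pr\big(\sum_u |\mathcal{G}_u| > n^3 \,\big|\, \Pi^{(1)}(T^{(i)})\big) = O(n^{-2})$; averaging over the reduced tree conditional on $\#_1 T^{(i)} = n$ gives $\pr\big(\sum_u |\mathcal{G}_u| > n^3 \,\big|\, \#_1 T^{(i)} = n\big) = O(n^{-2}) \to 0$.

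It remains to handle the initial gap (trivially empty when $i=1$). Using the asymptotic (\ref{byword}) applied to the forest of $K_0$ independent copies of $T^{(1)}$ hanging below the first type-$1$ generation, the conditional law of $K_0$ given $\#_1 T^{(i)} = n$ converges as $n \to \infty$ to the size-biased law of $K_0$, and since $(|\mathcal{G}_0|,K_0)$ has exponential joint moments this yields $\E[|\mathcal{G}_0|\mid \#_1 T^{(i)} = n] = O(1)$, whence $\pr(|\mathcal{G}_0| > n^3 \mid \#_1 T^{(i)} = n) = O(n^{-3}) \to 0$. Combining the two bounds proves the lemma. The main technical obstacle is the linear conditional bound $\E[|\mathcal{G}_u| \mid K_u = k] \leq C(1+k)$: while the joint exponential moments of $(|\mathcal{G}_u|, K_u)$ are an immediate consequence of subcriticality, turning them into a sharp bound on the conditional expectation requires carefully exploiting the shared i.i.d.\ structure underlying $|\mathcal{G}_u|$ and $K_u$.
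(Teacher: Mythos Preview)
Your strategy is more elaborate than needed, and the proof has a genuine gap at precisely the step you label the ``main technical obstacle'': the bound $\E[\,|\mathcal{G}_u|\mid K_u=k\,]\leq C(1+k)$. Two problems compound here. First, the exponential-moment claims are unjustified: Proposition~\ref{prop:heightsubcritical} concerns the \emph{height} of a subcritical tree, not its total size, and under the paper's standing assumption of finite second moments only, the size of a subcritical Galton--Watson tree need not have any exponential moment. What one does obtain from second moments is $\E[|\mathcal{G}_u|^2]<\infty$, hence $\E[|\mathcal{G}_u|K_u]<\infty$; but this only controls the size-biased average $\sum_k k\,\pr(K_u=k)\,\E[|\mathcal{G}_u|\mid K_u=k]$, not the individual conditional expectations. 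If $\pr(K_u=k)$ decays fast in $k$ (which nothing in the hypotheses prevents), $\E[|\mathcal{G}_u|\mid K_u=k]$ is not controlled by any of the moment bounds you have available. Establishing the linear conditional bound would require a separate argument (some form of spine decomposition conditioned on $K_u=k$), and you have not supplied one; without it, the bound $\E[\sum_u|\mathcal{G}_u|\mid \Pi^{(1)}(T^{(i)})]=O(n)$ is unproven and the whole scheme collapses.

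The paper sidesteps all of this by a short de-conditioning trick that never conditions on $\Pi^{(1)}$. For $i=1$, embed $T^{(1)}$ as the first of an i.i.d.\ sequence of copies, list all type-$1$ vertices of the whole sequence, and let $A_k$ be the number of type-$j$ vertices whose highest type-$1$ ancestor is the $k$-th listed vertex. Then the $(A_k)$ are \emph{unconditionally} i.i.d.\ with finite mean $a_j/a_1$ (by \cite[Proposition~4]{M08}), and on $\{\#_1 T_1=n\}$ one has $\#_j T_1 = A_1+\cdots+A_n$, so
\[
\pr\big(\#_j T_1>n^3,\ \#_1 T_1=n\big)\ \leq\ \pr\big(A_1+\cdots+A_n>n^3\big)\ \leq\ \frac{a_j}{a_1}\,n^{-2}
\]
by Markov. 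Dividing by $\pr(\#_1 T_1=n)\sim c\,n^{-3/2}$ (Proposition~\ref{prop:betterbyword}) gives the conditional probability $O(n^{-1/2})\to 0$. For $i\neq 1$ one simply adds one further independent term with finite mean, counting the type-$j$ vertices between the root and the first type-$1$ generation. This argument uses only $\E[A_1]<\infty$ and no conditional estimate whatsoever.
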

A rough but useful consequence is that the probability that the number of subtrees removed from $T^{(i)}_n$ to get $\mathring{T}_n^{(i)}$ is larger than $n^3$, given $\#_1 T^{(i)}=n$, converges to 0 as $n \rightarrow \infty$.

\begin{proof} We use the same idea as the proof of \cite[Lemma 6.7]{Steph15} where a much stronger result, but assuming  exponential moments, is obtained. We start with the case where $i=1$. Consider a sequence of independent trees $(T_k,k\in\N),$ all distributed as $T^{(1)},$  and list their vertices of type $1$ in an arbitrary order, but such that all the vertices of type $1$ of $T_k$ are listed before those of $T_{k+1}$ for all $k$. Let then $A_k$ be number of vertices of type $j$ whose highest ancestor of type $1$ is the $k-$th element of this list. The $(A_k)$ are then all i.i.d., and by \cite[Proposition 4]{M08}, have expectation $\frac{a_j}{a_1}.$ The event $\{\#_j T_1>n^3\}\cap\{ \#_1 T_1=n\}$ is moreover included in the event $\{A_1+\ldots+A_n> n^3\}.$ However by Markov's inequality,
\[\pr (A_1+\ldots+A_n> n^3)\leq \frac{1}{n^2}\frac{a_j}{a_1},\]
and recalling that, from Proposition \ref{prop:betterbyword}, $\pr(\#_1 T^{(1)}=n)\sim \frac{c}{n^{3/2}}$ for some $c>0,$ we get
\[\pr(\#_j T_1>n^3\mid \#_1 T_1=n)\leq \frac{a_j}{ca_1 \sqrt{n}},\]
ending the proof of this case.

If $i\neq 1$, then we can write
\[\pr( \#_j T^{(i)}>n^3\cap \#_1 T^{(i)}=n)\leq \pr (X+A_1+\ldots+A_n> n^3),\]
where $X$ has the distribution of the number of vertices with type $j$ of $T^{(i)}$ standing between its root and first generation of type $1$, and is independent from the $(A_k,k\in\N)$. By \cite[Proposition 4]{M08}, $X$ also has a finite expectation, hence we can end the proof the same way.
\end{proof}

\noindent\textbf{Proof of Proposition \ref{prop:zero}.} Combining the previous results with the following lemma will give the proof of the proposition.

\begin{lem}\label{lem:expo} For $n\in\N$, let $(X_n(k),k\in\N)$ and $N_n$ be $\N$-valued random variables on a certain probability space and $\mathcal{F}_n$ a sub-$\sigma$-algebra such that $N_n$ is $\mathcal{F}_n$-measurable, and we have the two following conditions:
\begin{itemize}
\item[\emph{(i)}] There exists $b\in\N$ such that $\pr(N_n\leq n^b)\to 1$ as $n\to\infty$.
\item[\emph{(ii)}] Conditionally on $\mathcal{F}_n$, the $(X_n(k),k\leq N_n)$ are independent and there exist $A>0$ and $\lambda>0$ such that $\E[e^{\lambda X_n(k)}|\mathcal{F}_n]\leq A$ a.s.\ for all $k\leq N_n$.
\end{itemize}
Then there exists $C>0$ such that
\[\pr \big(\max\{X_n(1),\ldots,X_n(N_n)\}\leq C\log n \big)\to 1.\]
\end{lem}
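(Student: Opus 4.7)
The plan is a textbook exponential Markov bound combined with a union bound, leveraging the two hypotheses in sequence.

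First I would apply Markov's inequality to the exponential moment. For any $t > 0$ and any $k \leq N_n$, hypothesis (ii) gives
\[
\pr\bigl(X_n(k) > t \,\big|\, \mathcal{F}_n\bigr) = \pr\bigl(e^{\lambda X_n(k)} > e^{\lambda t} \,\big|\, \mathcal{F}_n\bigr) \leq A e^{-\lambda t} \quad \text{a.s.}
\]
Since $N_n$ is $\mathcal{F}_n$-measurable and the $X_n(k)$ are conditionally independent, a union bound over $k \leq N_n$ yields
\[
\pr\bigl(\max_{k \leq N_n} X_n(k) > t \,\big|\, \mathcal{F}_n\bigr) \leq A N_n e^{-\lambda t} \quad \text{a.s.}
\]

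Next I would plug in $t = C \log n$ for a constant $C$ to be chosen, and cut the probability according to whether $N_n \leq n^b$ or not. On the event $\{N_n \leq n^b\}$ the conditional bound becomes $A n^b n^{-\lambda C} = A n^{b - \lambda C}$, which is deterministic. Taking expectations and using hypothesis (i),
\[
\pr\bigl(\max_{k \leq N_n} X_n(k) > C \log n\bigr) \leq \pr(N_n > n^b) + A n^{b - \lambda C}.
\]

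Finally, I would choose $C > b/\lambda$, so that the exponent $b - \lambda C$ is strictly negative and the second term tends to $0$; the first term tends to $0$ by hypothesis (i). This gives the desired conclusion. There is no real obstacle here: the statement is a quantitative version of the elementary fact that the maximum of polynomially many random variables with uniformly subexponential tails is of order $\log n$, and all the work has been shifted into producing the hypotheses in the preceding lemmas (bounding $N_n$ via Lemma \ref{lem:crudebound} and bounding heights of zero-mass subtrees via Proposition \ref{prop:heightsubcritical}).
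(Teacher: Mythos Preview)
Your proof is correct and follows essentially the same route as the paper: Markov's inequality on the exponential moment, restriction to the event $\{N_n\leq n^b\}$, and the choice $C>b/\lambda$. The only cosmetic difference is that the paper bounds the complementary probability from below via the product $\prod_{k\leq N_n}(1-\pr(X_n(k)>C\log n\mid\mathcal{F}_n))\geq (1-A n^{-\lambda C})^{n^b}$, whereas you use the equivalent union bound; your version is in fact slightly cleaner since it does not even require the conditional independence.
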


\begin{proof}
Take any $C>0$ and $n$ large enough such that $A <n^{\lambda C}$. Conditioning on $\mathcal{F}_n,$ we have, using (ii) and the Markov inequality:
\begin{align*}
\pr \big(\max\{X_n(1),\ldots,X_n(N_n)\}\leq C\log n\mid \mathcal{F}_n\big)&= \prod_{k=1}^{N_n}\big(1-  \pr[X_n(k)> C\log n\mid \mathcal{F}_n]\big) \\
   &\geq \prod_{k=1}^{N_n}\big(1-e^{-\lambda C \log n} A\big)\\
   &\geq \left(1-\frac{A}{n^{\lambda C}}\right)^{N_n}\\
   &\geq \mathbf{1}_{\{N_n\leq n^b\}} \left(1-\frac{A}{n^{\lambda C}}\right)^{n^b}.
\end{align*}
Removing the conditioning, we obtain
\[\pr \big(\max\{X_n(1),\ldots,X_n(N_n)\}\leq C\log n\big)\geq \pr \big(N_n\leq n^b \big)\left(1-\frac{A}{n^{\lambda C}}\right)^{n^b},\]
and this tends to $1$ if we choose $C>b/\lambda.$
\end{proof}

\noindent\textit{Proof of Proposition \ref{prop:zero}.} Fix $i\in[{\kappa}]$. For $n\in\N$ and $j\in[{\kappa}]\setminus\{1\}$, let $Z_n^{(i)}(j)$ be the number of vertices of $T_n^{(i)}$ of type $j$ which have no vertices of type $1$ as descendants, but such that their parent does have at least one descendant with type $1$. Let then $\mathcal{F}_n^{(i)}=\sigma\big(Z_n^{(i)}(2),\ldots,Z_n^{(i)}({\kappa})\big)$ and $N_n^{(i)}=Z_n^{(i)}(2)+\ldots+Z_n^{(i)}({\kappa}).$ Conditionally on $\mathcal{F}_n^{(i)},$ $T_n^{(i)}$ can then be obtained from $\mathring{T}_n^{(i)}$ by grafting at its leaves $N_n^{(i)}$ independent trees, the heights of which all have a uniformly bounded exponential moment, by Proposition \ref{prop:subcritical} and Proposition \ref{prop:heightsubcritical}. Simply bounding $N_n^{(i)}$ by $\# T_n^{(i)}$, Lemma \ref{lem:crudebound} and Lemma \ref{lem:expo} conclude the proof.  \qed

\subsection{End of the proof of Theorem \ref{thGW}}

By construction, the sequence of reduced trees $(\mathring{T}_n^{(i)})$, introduced in Proposition \ref{prop:zero}, is MB, and its splitting distributions $(\mathring{q}_n^{(i)})$ are the push-forwards of $(q_n^{(i)})$ by the operation of removing the parts with zero size. The results of Proposition \ref{prop:cvtypesGW} and Proposition \ref{prop:cvbrownien} then pass on to $\mathring{q}_n^{(i)}$, and thus we have, by Theorem \ref{theo:melange} and the calculation in (\ref{eq:melangebrownien}):
\[\left(\frac{\mathring T_n^{(i)}}{\sqrt{n}},\mu_n^{(i)}\right) \ \underset{n \rightarrow \infty}{\overset{(d)}\longrightarrow} \ \frac{2}{\sigma\sqrt{a_1}}(\T_{\mathrm{Br}},\mu_{\mathrm{Br}}).\]
By Proposition \ref{prop:zero}, we will get the same limit if we replace $\mathring T_n^{(i)}$ by $T_n^{(i)}$, ending the proof. \qed

\section{Scaling limits of multi-type MB trees: preliminary work}
\label{sec:proofs1}

The aim of this section is to set up some definitions and results that prepare for the proofs of Theorems \ref{theo:critique} and \ref{theo:melange} in the next section. Several lines of our proofs will not differ too much from those of the proof of Theorem 5 of \cite{HM12} in the monotype setting, so we will mostly only give the reader details when the multi-type structure comes into play. Throughout the section $(q_n^{(i)})$ is a sequence of splitting distributions that satisfies (\ref{hyp:principal}) and either the hypotheses of Theorem \ref{theo:critique} or Theorem \ref{theo:melange} (this will be specified each time), and $(T_n^{(i)})$ is an associated sequence of MB trees. Below we start by showing in Section \ref{sec:assumptions} that we can restrict ourselves to conservative cases where particles of size $1$ die without reproducing. We then introduce in Section \ref{sec:prem} the partition-valued multi-type fragmentation processes associated to our trees and their continuous counterpart, and set up preliminary results. In Section \ref{sec:heightmoments} we show some necessary fine bounds on the moments of the heights of the trees, that will be needed to obtain a tightness criterion for the rescaled trees. This ends up being more difficult to prove than it the monotype setting, in particular in the mixing regime when some of the limiting measures $(\nu^{(i)},i\in[\kappa])$ are null. These bounds will also allow us to improve some results of \cite{HS18} on the asymptotic description of bivariate Markov chains, that will be needed in Section \ref{sec:proofs2} to evaluate the scaling limits of typical paths of our multi-type MB trees (the results of \cite{HS18} are not strong enough in the mixing case if some of the limiting measures are null).

\subsection{Simplifying the assumptions}
\label{sec:assumptions}

We use two simple couplings to show that we can limit ourselves to offspring distribution sequences $(q_n^{(i)})$ such that:
\begin{equation}
\label{eq:simple}
q_1^{(i)}(\emptyset)=1 \text{ for all } i\in[\kappa] \quad \text{and} \quad q_n^{(i)} \text{ is conservative for all }i\in[\kappa] \text{ and }n\geq 2.
\end{equation}

\noindent\textbf{Particles of size $1$ die without reproducing.} Starting from a sequence $(q_n^{(i)})$ satisfying (\ref{hyp:principal}), we let for $n\in\N$ and $i\in[\kappa]$, ${\accentset{\bullet}{q}}_n^{(i)}$ be the splitting distribution such that:
\begin{equation*}
\accentset{\bullet}{q}_1^{(i)}(\emptyset)=1 \quad \text{and} \quad {\accentset{\bullet}{q}}_n^{(i)}(\bar{\lambda})={q}_n^{(i)}(\bar{\lambda}) \text{ for }\bar{\lambda}\in \overline\p_n \text{ if } n\geq 2 .
\end{equation*}
Let $(\accentset{\bullet}{T}_n^{(i)})$ be a MB tree sequence with this splitting distribution sequence. Then there is a natural coupling of $\accentset{\bullet}{T}_n^{(i)}$ and $T_n^{(i)}$ such that $T_n^{(i)}$ is obtained by grafting independent copies of the $(T_1^{(j)},j\in[\kappa])$ onto the leaves of size $1$ of $\accentset{\bullet}{T}_n^{(i)},$ and there are at most $n$ such leaves.
Notice however that, for any $j\in[\kappa]$, $T_1^{(j)}$ is essentially an a.s.\ killed Markov chain on $[\kappa]$. It is then well-known that its death time (i.e.\ the height of $T_1^{(j)}$) has exponential moments. Thus the maximum of $n$ independent such variables is at most of order $\log n$, i.e.\ there exists $C>0$ such that 
\[\pr\Big(d_{\mathrm{GHP}}\big(T_n^{(i)},{\accentset{\bullet}{T}}_n^{(i)}\big)\leq C\log n\Big)\underset{n\to\infty}\longrightarrow 1,\]
where we recall that $d_{\mathrm{GHP}}$ denotes the Gromov-Hausdorff-Prokhorov distance. 
As a consequence, proving Theorems \ref{theo:critique} and \ref{theo:melange} for $(\accentset{\bullet}{q}_n^{(i)})$ will also prove them for $({q}_n^{(i)})$.

\bigskip

\noindent\textbf{Conservation of mass.} Now we give a simple coupling between non-conservative MB trees and conservative ones which shows that the non-conservative cases of Theorem \ref{theo:critique} and Theorem \ref{theo:melange} are consequences of the conservative cases.
Take a sequence of offspring distributions $(q_n^{(i)})$ satisfying (\ref{hyp:principal}) and $q_1^{(i)}(\emptyset)=1$ for all  $i\in[\kappa]$, that are not necessarily conservative. For $n \geq 2$ and $\bar{\lambda}\in\overline \p_n$ such that $\lambda_0=n-\sum_{m=1}^{p(\bar{\lambda})}\lambda_m \neq 0$, let 
\[\mathring{\bar{\lambda}}:=\left((\lambda_1,i_1),\ldots,(\lambda_{p(\bar{\lambda})},i_{p(\bar{\lambda})}),(1,1),\ldots,(1,1)\right),\]
 where $(1,1)$ has been repeated $\lambda_0$ times, while if $\lambda_0=0$ let $\mathring{\bar{\lambda}}={\bar{\lambda}}$. Let then $\mathring{q}_n^{(i)}$ be the image measure of $q_n^{(i)}$ by the mapping $\bar{\lambda}\to \mathring{\bar{\lambda}}$, which is conservative. We have by construction $d(n^{-1} \cdot \bar{\lambda}, n^{-1} \cdot \mathring{\bar{\lambda}})\leq n^{-1}$, where $d$ is the metric (\ref{def:metric}). Thus if $(q_n^{(i)})$ satisfies the limiting assumption of Theorem \ref{theo:critique} or Theorem \ref{theo:melange}, and if it is conservative for $n$ large or $\gamma<1$, then $(\mathring{q}_n^{(i)})$ also does.
 
Moreover, calling $(\mathring{T}_n^{(i)})$ a MB tree sequence with offspring distribution sequence $(\mathring{q}_n^{(i)})$, there is a natural coupling between $\mathring{T}_n^{(i)}$ and $T_n^{(i)}$ such that $d_{\mathrm{GHP}}(\mathring{T}_n^{(i)},T_n^{(i)})\leq 1$ a.s. Thus proving Theorems \ref{theo:critique} and \ref{theo:melange} for $(\mathring{q}_n^{(i)})$ will also prove it for $({q}_n^{(i)})$. 

\bigskip 

Consequently, \textbf{in the following we can restrict ourselves to sequences $(q_n^{(i)})$ satisfying (\ref{eq:simple})}.
 
\bigskip

\subsection{Fragmentation processes and trees}
\label{sec:prem}

In this section we start by introducing the discrete partition-valued fragmentation processes associated to our trees $(T_n^{(i)})$, and then turn to the main definitions of the theory of continuous-time multi-type fragmentation processes and trees. We also set up some results that will be needed later, in particular concerning the so-called tagged fragment process. Throughout, the starting type $i\in[\kappa]$ will be fixed.

\subsubsection{Details about $\kappa$-type partitions}
\label{sec:partitions}


We extend the concept of $\kappa$-type partitions to subsets of $\mathbb N$, and define their paintbox measures. 

\medskip

\textbf{Typed partitions of subsets of $\mathbb N$.} Let $B$ be a subset of $\N$. We call $\overline{\p}_{B}$ the set of $\kappa$-type partitions of $B,$ which are objects of the form $\bar{\pi}=(\pi,\mathbf{i})=(\pi_n,i_n)_{n\in\N}$, where $\pi$ is a classical partition of $B$, its blocks $\pi_1,\pi_2,\ldots$ being listed in increasing order of their least element, and $i_n\in\{0,\ldots,\kappa\}$ is the type of the $n$-th block for all $n\in\N$, with $i_n=0$ if and only if $\pi_n$ is empty. For $n\in B$, we also use the notation $\bar{\pi}_{(n)}=(\pi_{(n)},i_{(n)})$ for the block of $\bar{\pi}$ containing $n$ and its type. Finally, if it exists, we write $|B|$ for the asymptotic frequency of $B$: the limit of $n^{-1}\#(B\cap[n])$ as $n$ tends to infinity.

\bigskip

\textbf{Paintbox measures.} We will need to use ``paintbox'' random partitions: for $\bar{\mathbf{s}}=(\mathbf{s},\mathbf{i})\in \overline{\s}^{\downarrow}$ such that $\sum s_n=1$, let $(U_n,n\in\N)$ be an i.i.d sequence of uniform random variables on $[0,1]$, and define a random partition $\Pi_{\bar{\mathbf{s}}} \in \overline{\p}_{\mathbb N}$ by declaring two integers $n$ and $m$ to be in the same block if there exists $k\in\N$ such that $U_n$ and $U_m$ are both in the interval $\big[\sum_{\ell=1}^{k-1}s_\ell, \sum_{\ell=1}^k s_{\ell}\big)$, and the type of this block is then $i_k$. We let $\kappa_{\bar{\mathbf{s}}}$ be the distribution of $\Pi_{\bar{\mathbf{s}}}$ and call it the paintbox measure associated to $\bar{\mathbf{s}}.$

\subsubsection{Finite-dimensional marginals of $T_n^{(i)}$ and discrete fragmentation processes}

Under our extra assumptions (\ref{eq:simple}), $T_n^{(i)}$ has exactly $n$ leaves. We label them $L_1,\ldots,L_n$ with a uniform random order.

\medskip

\textbf{Associated discrete fragmentation processes.} This labelling of leaves lets us define a $\overline{\p}_{[n]}$-valued process $\overline{\Pi}_n=\left(\overline{\Pi}_n(\ell),\ell\geq 0\right)$ by saying that two integers $p\leq n$ and $q\leq n$ are in the same block of $\overline{\Pi}_n(\ell)$ if and only if the highest common ancestor of $L_p$ and $L_q$ in $T_n^{(i)}$ has height at least $\ell$, and the type of this block is then the type of their common ancestor with height $\ell.$ In particular, $p$ is in a singleton if and only if $\ell$ is at least equal to the height of $L_p,$ and we  still need to give it a type in this case: we choose the type of $L_p$ for all $\ell$ larger or equal to the height of $L_p$.  This defines an exchangeable Markov chain on $\overline{\p}_{[n]}$ with a natural branching property. We let $p_n^{(i)}$ be the distribution of $\overline{\Pi}_n(1).$ 

For $m\in[n]$, we let $D_{\{m\}}^{(n)}$ be the height of $L_m$, and for a subset $B\subset [n]$ with cardinality at least $2$, we let $D_B^{(n)}=\inf\left\{\ell\in \N: B\cap\Pi_n(\ell)\neq B\right\}$ the time at which $\overline{\Pi}_n$ splits $B$. In the case where $B=[k]$ for some $2\leq k\leq n$, we let $D_k^{(n)}:=D_{[k]}^{(n)}$, and  $D_1^{(n)}:=D_{\{1\}}^{(n)}$.

\bigskip

\textbf{Finite-dimensional marginals.} For $B\subset [n]$, we let $$\mathcal{R}(T_n^{(i)},B) \text{ be the  \emph{$B$-marginal} of } T_n^{(i)},$$
which is its smallest subtree to contain the root and each $L_m$ for $m\in B$. 

\bigskip

\textbf{The tagged fragment chain.} We call thus the Markov chain $(X_n,J_n)$ on $\Z_+\times [\kappa]$ of the typed block containing the integer 1 which is such that, for $\ell\leq D_{1}^{(n)},$ 
\begin{equation}
\label{def:discretetag}
\left(X_n,J_n\right)(\ell):=\left(\#(\Pi_n(\ell))_{1},(\mathbf i_n(\ell))_1\right)
\end{equation} 
(where $\overline \Pi_n(\ell)=( \Pi_n(\ell),\mathbf i_n(\ell))$) and which is stationary starting from $D_{1}^{(n)}$.  Its transition probabilities, which we call $(p_{m,j}(\ell,k))$ (probability of going from $(m,j)$ to $(\ell,k)$), do not depend on $n$ or $i$ and are given for $m\geq 2$, $1 \leq \ell \leq m$  and  $j,k\in [\kappa]$ by
\begin{equation}
\label{transitions}
p_{m,j}(\ell,k)=\sum_{\bar{\lambda}\in\overline{\mathcal{P}}_m}q_m^{(j)}(\bar{\lambda})\frac{\ell}{m}m_{(\ell,k)}(\bar{\lambda}),
\end{equation}
where $m_{(\ell,k)}(\bar{\lambda})$ is the number of occurrences of $(\ell,k)$ in the sequence $\bar{\lambda}$, and $p_{1,j}(1,j)=1$ for all $j \in [\kappa]$. (In all other cases these probabilities are null.)

The following lemma gives us some information about the distribution of this chain up to $D_k^{(n)}, 2 \leq k \leq n$ and what happens at that time. This result echoes a similar result in the monotype setting, \cite[Lemma 27]{HM12}. The same proof will work, with a light modification to take the types into account. We will not reproduce it here, and simply point out that it follows easily from noticing that
\[\pr\big(D_k^{(n)}\geq r \mid X_n(r'),0\leq r'\leq r-1\big)=\frac{(X_n(r-1)-1)_{k-1}}{(n-1)_{k-1}},\]
which is a variant of Lemma 13 in \cite{HM12}. We recall the factorial notation $(x)_r=x(x-1)\ldots(x-r+1)$ for $x\in\R$, $r\in\N$.

\begin{lem}\label{lem:separationdiscret} Let $2 \leq k \leq n$ and $\bar{\pi}'\in\overline{\mathcal{P}}_{[k]}$ with $b\geq 2$ blocks. For measurable nonnegative functions $f,g,h$ on the appropriate spaces, we have
\begin{align*}
&\E\Big[f\big(D_k^{(n)}\big)g\big((X_n,J_n)(\cdot\wedge D_k^{(n)}-1)\big)h\big((\#\Pi_n(D_k^{(n)}))_m,1\leq m\leq b\big)\mathbf{1}_{\{[k]\cap\overline{\Pi}_n(D^{(n)}_k))=\bar{\pi}'\}}\Big] \\
&=\sum_{r \in \mathbb N}f(r)\E\left[\frac{(X_n(r-1)-1)_{k-1}}{(n-1)_{k-1}}
         g\big((X_n,J_n)(\cdot\wedge(r-1))\big)p_{X_n(r-1)}^{(J_n(r-1))}\left(h(\#\pi_m,1\leq m\leq b)\mathbf{1}_{\{[k]\cap\bar{\pi}=\bar{\pi}'\}}\right)\right].
\end{align*}
\end{lem}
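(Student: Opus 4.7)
The plan is to adapt the proof of \cite[Lemma 27]{HM12} to the typed setting. I would start by decomposing the LHS through the value of $D_k^{(n)}$:
\begin{equation*}
\text{LHS}=\sum_{r\geq 1} f(r)\,\E\Bigl[\mathbf 1_{\{D_k^{(n)}=r\}}\,g\bigl((X_n,J_n)(\cdot\wedge(r-1))\bigr)\,h\bigl(\cdots\bigr)\,\mathbf 1_{\{[k]\cap\overline\Pi_n(r)=\bar\pi'\}}\Bigr],
\end{equation*}
and then, for each fixed $r$, split the event $\{D_k^{(n)}=r\}$ into the past event $\{D_k^{(n)}\geq r\}$ (controlled up to time $r-1$) and the jump producing the split at step $r$.

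Next I would invoke two observations. First, the combinatorial identity
\begin{equation*}
\pr\bigl(D_k^{(n)}\geq r \,\big|\, (X_n,J_n)(\cdot\wedge(r-1))\bigr) = \frac{(X_n(r-1)-1)_{k-1}}{(n-1)_{k-1}},
\end{equation*}
which is the typed variant of \cite[Lemma 13]{HM12} flagged in the statement. This follows because under the uniform random labelling of the $n$ leaves, the $n-1$ leaves distinct from leaf $1$ are exchangeable, so conditional on the tagged chain up to time $r-1$ with $X_n(r-1)=m$, the event that $\{2,\dots,k\}$ all lie in the tagged block has probability $\binom{m-1}{k-1}/\binom{n-1}{k-1}$; this ratio depends only on sizes along the chain and not on types. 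Second, given $(X_n,J_n)(\cdot\wedge(r-1))$ with $X_n(r-1)=m$, $J_n(r-1)=j$ and given $\{D_k^{(n)}\geq r\}$, the branching/Markov property of $\overline\Pi_n$ says that the split of the tagged block at step $r$ is drawn independently of the past from $q_m^{(j)}$; by exchangeability of the labels inside the tagged block, the induced typed partition of the block has law $p_m^{(j)}$. Hence the conditional expectation of $h\bigl((\#\Pi_n(r))_m,1\leq m\leq b\bigr)\mathbf 1_{\{[k]\cap\overline\Pi_n(r)=\bar\pi'\}}$ equals $p_m^{(j)}\!\bigl(h(\#\pi_m,1\leq m\leq b)\mathbf 1_{\{[k]\cap\bar\pi=\bar\pi'\}}\bigr)$. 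Substituting these two observations inside each summand and collecting yields the claimed identity.

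The main obstacle is the first observation: one must verify that conditioning on the full typed tagged history (not merely on the sizes) still leaves the identity of the $m-1$ other elements of the tagged block uniform among the $\binom{n-1}{m-1}$ possibilities, so that types do not enter the factorial ratio. I would prove this by an induction on $r$, using that each transition of $\overline\Pi_n$ resamples the split shape of each block according to $q_{\cdot}^{(\cdot)}$ in a way that is, conditionally on the shape and types, uniform over labellings of the resulting subblocks. Once this is secured, the remainder is a line-by-line translation of the monotype argument in \cite{HM12}.
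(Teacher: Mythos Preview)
Your proposal is correct and follows precisely the approach the paper indicates: the paper does not give a full proof but refers to \cite[Lemma 27]{HM12}, singling out the key identity $\pr\big(D_k^{(n)}\geq r \mid X_n(r'),0\leq r'\leq r-1\big)=\frac{(X_n(r-1)-1)_{k-1}}{(n-1)_{k-1}}$ as the only point needing adaptation to types. You reproduce exactly this structure, and you are in fact slightly more careful than the paper in noting (and resolving) that the conditioning must be on the full typed history $(X_n,J_n)$ rather than on sizes alone for the argument to go through with $g$ depending on types.
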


\subsubsection{Construction and properties of multi-type fragmentation processes and trees}
\label{sec:construcmultMB}

We formally build the multi-type fragmentation processes and trees mentioned in Section \ref{sec:multi-type frag}. This part is quite brief, we refer to \cite{Steph17} for more details. We also give definitions and results that will be needed later, and that are similar to the ones given above in the discrete setting. In the following, $\gamma>0$, $\bnu=(\bar{\nu}^{(j)},j \in [\kappa])$ is a vector of dislocation measures and $(\T^{(i)}_{\gamma,\bnu}, \mu^{(i)}_{\gamma,\bnu})$ a multi-type fragmentation trees with characteristics $(\gamma,\bnu)$. The unbold notation $(\T_{\gamma,\nu}, \mu_{\gamma,\nu})$ denotes the monotype case where there is a unique dislocation measure $\nu$. 

\bigskip

\textbf{Multi-type fragmentation processes and trees.} The tree $\T^{(i)}_{\gamma,\bnu}$ is typically built out of a so-called $(-\gamma)$-self-similar fragmentation process $\overline{\Pi}=(\overline{\Pi}(t),t\geq 0),$  with dislocation measures $\bnu$ and initial type $i$,
which is a $\overline{\p}_{\N}$-valued process of which we will now give the construction. It is a Lamperti transform of a homogeneous fragmentation process $\overline{\Pi}^{(\mathrm{hom})}$ which we construct first. 

For $\bar{\mathbf{s}}\in\overline{\s}^{\downarrow}$ with total sum 1, recall that $\kappa_{\bar{\mathbf{s}}}$ is the paintbox measure on $\overline{\p}_{\N}$ associated to $\bar{\mathbf{s}},$ and for $j\in[\kappa]$, let $$\kappa_{\bar{\nu}^{(j)}}:=\int_{\overline{\s}^{\downarrow}}\kappa_{\bar{\mathbf{s}}}\bar{\nu}^{(j)}(\mathrm d\bar{\mathbf{s}}).$$ 
The process $\overline{\Pi}^{(\mathrm{hom})}$ describes the evolution of typed blocks such that a block $B$ of type $j$ splits into typed blocks $((B\cap \Delta_m, \delta_m),m\geq 1),$ where $\overline \Delta=(\Delta_n,\delta_m)_m \in \overline{\p}_{\N},$ at rate $\kappa_{\bar{\nu}^{(j)}}(\mathrm d \overline \Delta)$. Formally it can be constructed from Poisson point processes as follows.
For $n\in\N$ and $j\in [\kappa]$, we let $(\overline{\Delta}^{(n,j)}(t),t\geq0)=\big((\Delta^{(n,j)}(t),\delta^{(n,j)}(t)),t\geq0 \big)$ be a Poisson point process with intensity $\kappa_{\bar{\nu}^{(j)}}$, which we all take independent. 
The process $\overline{\Pi}^{(\mathrm{hom})}$ is then built thus:
\begin{itemize}
\item Start with $\overline{\Pi}^{(\mathrm{hom})}(0)=(\N,i),$ the partition with only one block, which has type $i$.
\item For $t\geq0$ such that there is an atom $\overline{\Delta}^{(n,j)}(t)$, if $i_n(t-)=j$: replace the typed block $\overline{\Pi}^{(\mathrm{hom})}_n(t-)$ by the typed blocks $\big(\Pi^{(\mathrm{hom})}_n(t-)\cap \Delta^{(n,j)}_m(t),\delta^{(n,j)}_m(t)\big)$, $m\in\N$ and reorder them appropriately; if $i_n(t-)\neq j$, $\overline{\Pi}^{(\mathrm{hom})}(t)=\overline{\Pi}^{(\mathrm{hom})}(t-)$.
\end{itemize}
Note that this is well defined despite the arrival times of the involved Poisson point processes possibly having accumulation points, as shown in \cite{Ber08}. The self-similar fragmentation process $\overline{\Pi}$ is then obtained from $\overline{\Pi}^{(\mathrm{hom})}$ by using the Lamperti time-change: for $n\in\N$ and $t\geq0,$ let
\[\tau_n(t) = \inf \left\{u, \int_0^u \big|\Pi^{(\mathrm{hom})}_{(n)}(r)\big|^{\gamma} \mathrm{d}r >t\right\}
\] 
and then define the block of $\overline{\Pi}$ containing $n$ as
\[\overline{\Pi}_{(n)}(t):=\overline{\Pi}^{(\mathrm{hom})}_{(n)}(\tau_n(t))\]
with the convention $\overline{\Pi}^{(\mathrm{hom})}_{(n)}(\tau_n(t))=\{\{n\},0\}$ when $\tau_n(t)=+\infty$.
The process $\overline{\Pi}$ is \emph{self-similar} in the following sense: for $t\geq 0$, conditionally on $(\overline{\Pi}(s),s\leq t)$, with $\overline{\Pi}(t)=\bar{\pi}=(\pi,\mathbf{i})$, the processes $(\overline{\Pi}(t+\cdot)\cap \pi_m, m\in\N)$ are independent, and each one has the distribution of $\overline{\Pi}^{(i_m)}(|\pi_m|^{\gamma}\cdot)\cap \pi_m$, where $\overline{\Pi}^{(i_m)}$ is a copy of $\overline{\Pi}$ starting with type $i_m.$

We define analogous notations to the discrete case: for $n\in\N$, $D_{\{n\}}$ is the time at which $n$ is sent into a singleton, and for $B\subset \N$ with $\#B\geq 2$, $D_B$ is the time at which $B$ is split by $\overline{\Pi}$. Also, $D_{k}:=D_{[k]}$ for $k\geq 2$ and $D_1:=D_{\{1\}}$. We have that $D_k<D_1$ a.s.\ for $k\geq 2$.

The fragmentation tree $\T^{(i)}_{\gamma,\bnu}$ is then, as explained in \cite{Steph17}, a compact real tree which is the family tree of $\overline{\Pi}$ in the following sense: it has distinguished leaves $(L_n,n\in\N)$ such that the height of $L_n$ is $D_{\{n\}}$ for all $n\in\N$ and, for $m\neq n,$ the paths from the root to $L_n$ and $L_m$ split at height $D_{\{n,m\}}.$ The measure $\mu^{(i)}_{\gamma,\bnu}$ is then characterized by the following property: for all integers $n$ and all positive real numbers $t$ strictly smaller than the height of $L_n$,  if $L_n(t)$ denotes the unique ancestor of $L_n$ at height $t$ and $\mathcal T_{L_n(t)}$ the subtree of descendants of $L_n(t)$, one has $$\mu_{\gamma,\bnu}(\mathcal T_{L_n(t)})=|\Pi_{(n)}(t-)|>0.$$

\bigskip

\textbf{Finite dimensional marginals.} We let for $B \subset \mathbb N$
$$\mathcal{R}(\T^{(i)}_{\gamma,\bnu},B) \text{ be the  \emph{$B$-marginal} of } \T^{(i)}_{\gamma,\bnu},$$ which is its smallest subtree to contain the root and each $L_n$ for $n\in B$.

\bigskip

\textbf{The tagged fragment process as a Lamperti transform of a Markov additive process.} Still as in the discrete cases, we are interested in the evolution of a typical block: the typed block containing the integer 1. We let for $t \geq 0$ 
$$(X(t),J(t)) \text{ be the size and type of the block containing } 1 \text{ at time }t,$$ with in particular that $(X(t),J(t))=(0,0)$ for $t\geq D_1$. It is known from \cite{Ber08} and \cite{Steph17} that $(X,J)$ is the $\gamma$-Lamperti transform of a \emph{Markov additive process}, i.e.\ $(X,J)=(\exp(-\xi_\rho),K_\rho)$ 
where 
\[\rho(t) := \inf \left\{u, \int_0^u (\exp(-\gamma \xi_r)) \mathrm{d}r >t\right\},
\] 
and $\left((\xi_t,K_t),t \geq 0 \right)$ is a  Markov process on $\mathbb R_+ \times \{1,\ldots,\kappa\}\cup\{(+\infty,0)\}$ such that, if $\mathbb P_{(x,i)}$ denotes its distribution when starting at a point $(x,i)$, for all $t \in \mathbb R_+$ and all $(x,i) \in \mathbb R_+ \times \{1,\ldots,\kappa\}$,
\[
\left(\left(\xi_{t+s}-\xi_t,K_{t+s}),s \geq 0\right) \ | \ (\xi_u,K_u),u\leq t , \xi_t<\infty \right) \text{ under } \mathbb P_{(x,i)} \text{ has distribution } \mathbb P_{(0,K_t)},
\]
and $(+\infty,0)$ is an absorbing state. Note that the process $\xi$ is nonincreasing, and, when $\kappa=1$, it is simply a subordinator. For general $\kappa$, it is known (we refer to Asmussen \cite[Chapter XI]{asmussen} for background on  Markov additive processes) that the distribution of the process $(\xi,K)$ is characterized by three families of parameters which here are:
\begin{itemize}
\item For all $i\in[\kappa]$
\[ \psi_i(q)=\int_{\overline{\s}^{\downarrow}} \bigg(\sum_{n=1}^{\infty}\big(s_n-s_n^{1+q}\big)\mathbf{1}_{\{i_n=i\}}\bigg)\bar{\nu}^{(i)}(\mathrm d \bar{\mathbf{s}}),\qquad q\geq0,\]
which is the Laplace exponent of the subordinator which governs the local dynamics of the component $\xi$ when the type is $i$. 
\item For all $i\neq j \in[\kappa],$ the transition rate $\lambda_{i,j}$ of types from $i$ to $j$ and the distribution $B_{i,j}$ of the jump (that may be null) of the component $\xi$ when the type changes from $i$ to $j$, which are defined by 
\[ \lambda_{i,j}\int_{0}^{\infty}e^{-qx}B_{i,j}(\mathrm dx):=\int_{\overline{\s}^{\downarrow}}\bigg(\sum_{n=1}^{\infty}s_n^{1+q}\mathbf{1}_{\{i_n=j\}}\bigg)\bar{\nu}^{(i)}(\mathrm d \bar{\mathbf{s}}),\qquad q\geq0.\]
\end{itemize}
Since these parameters are all constructed from the vector $\bnu$, the following definition is justified.
\begin{defn}
We say that the distribution of the tagged fragment process $(X,J)$ is that of a \emph{$\gamma$-Lamperti transform of a Markov additive process with characteristics $\bnu$} (and characteristic $\nu$ if there is a unique type and the dislocation measure is $\nu$).  
\end{defn}

We now give a version of Lemma \ref{lem:separationdiscret} for this setting. For $k\geq 2$ we let $A_k=\{\bar{\pi}\in\overline{\mathcal P}_{\mathbb N},[k]\cap \pi\neq [k]\}$ be the set of partitions which split the first $k$ integers. 
\begin{lem}\label{lem:fragsplit} Let $k\geq 2$. For nonnegative measurable functions $f,g,h$ on the appropriate spaces, we have
\begin{align*}
&\E\left[f(D_k)g\big(|\overline{\Pi}_{(1)}(\cdot\wedge D_k-)|\big)h\big(\overline{\Delta}^{(1,i_1(D_k-))}(D_k)\big)\right] \\
&=\E\left[\int_0^{\infty}\mathrm du\ f(u)|\Pi_{(1)}(u)|^{k-1-\gamma}\mathbf 1_{\{|\Pi_{(1)}(u)|>0\}}g(|\overline{\Pi}_{(1)}(\cdot\wedge u)|)\kappa_{\nu^{(i_1(u))}}(h\mathbf{1}_{A_k})\right].
\end{align*}
\end{lem}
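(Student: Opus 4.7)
This is the continuous analogue of Lemma~\ref{lem:separationdiscret}, and the strategy follows the same pattern: isolate the Palm/compensation part (picking out the first splitting event affecting $[k]$) from the geometric factor saying that $[k]$ is still in the tagged block at that time. Because the fragmentation is defined as a Lamperti time-change of a homogeneous one, the cleanest route is to first prove the identity in the homogeneous frame and then transport it, and it is precisely in the transport that the exponent $k-1-\gamma$ (rather than $k-1$) appears.

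In the homogeneous frame, let $\tilde D_k$ denote the first time at which the block of $1$ in $\overline{\Pi}^{(\mathrm{hom})}$ is dislocated so as to separate $[k]$, so that $D_k=\int_0^{\tilde D_k}|\Pi^{(\mathrm{hom})}_{(1)}(r)|^\gamma\,\mathrm{d}r$. The block of $1$ is driven by a Poisson point process whose conditional intensity at time $v$, given that the current type $i_1(v-)$ equals $j$, is $\kappa_{\bar\nu^{(j)}}$, and $\tilde D_k$ is the first atom of this process lying in $A_k$. The Mecke (compensation) formula for the first atom in $A_k$ of a Poisson point process with predictable intensity yields
\[
E\bigl[f(\tilde D_k)\,g(|\overline\Pi^{(\mathrm{hom})}_{(1)}(\cdot\wedge\tilde D_k-)|)\,h(\overline\Delta(\tilde D_k))\bigr] = \int_0^\infty\!\mathrm{d}v\,f(v)\,E\bigl[\mathbf 1_{\{\tilde D_k\geq v\}}\,g(|\overline\Pi^{(\mathrm{hom})}_{(1)}(\cdot\wedge v-)|)\,\kappa_{\bar\nu^{(i_1(v-))}}(h\mathbf 1_{A_k})\bigr].
\]

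The key step is then to rewrite the indicator $\mathbf 1_{\{\tilde D_k\geq v\}}=\mathbf 1_{\{[k]\subset\Pi^{(\mathrm{hom})}_{(1)}(v-)\}}$ as something measurable with respect to the size-and-type path alone. Using the explicit paintbox construction based on i.i.d.\ uniforms $(U_n)_{n\in\mathbb N}$, conditionally on the size-and-type process $|\overline\Pi^{(\mathrm{hom})}_{(1)}(\cdot)|$ the events $\{n\in\Pi^{(\mathrm{hom})}_{(1)}(v-)\}$ for $n\in\{2,\ldots,k\}$ are independent (each being determined by $U_n$), and each has probability $|\Pi^{(\mathrm{hom})}_{(1)}(v-)|$. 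Consequently
\[
P\bigl([k]\subset\Pi^{(\mathrm{hom})}_{(1)}(v-)\,\big|\,|\overline\Pi^{(\mathrm{hom})}_{(1)}(\cdot)|\bigr)=|\Pi^{(\mathrm{hom})}_{(1)}(v-)|^{k-1},
\]
and since both $g(|\overline\Pi^{(\mathrm{hom})}_{(1)}(\cdot\wedge v-)|)$ and $\kappa_{\bar\nu^{(i_1(v-))}}(h\mathbf 1_{A_k})$ are measurable with respect to this path, the indicator can be replaced by this factor inside the expectation.

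Finally, the Lamperti change of variable $u=\int_0^v|\Pi^{(\mathrm{hom})}_{(1)}(r)|^\gamma\,\mathrm{d}r$, for which $\mathrm{d}v=|\Pi_{(1)}(u)|^{-\gamma}\,\mathrm{d}u$ and $|\Pi^{(\mathrm{hom})}_{(1)}(v-)|=|\Pi_{(1)}(u)|$ for Lebesgue-almost every $u<D_1$, transports the identity to the self-similar frame and produces the exponent $k-1-\gamma$; the indicator $\mathbf 1_{\{|\Pi_{(1)}(u)|>0\}}$ just encodes the restriction $u<D_1$. The only non-routine step will be to check that the conditional-probability identity above meshes correctly with the Mecke formula used just before, but this is straightforward once the paintbox construction and the independence of the $(U_n)$ are written out explicitly.
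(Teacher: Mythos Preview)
Your proposal is correct and follows essentially the same route as the paper: reduce to the homogeneous case, identify the law of the first $A_k$-atom via the Poisson structure, replace $\mathbf 1_{\{[k]\subset\Pi^{(\mathrm{hom})}_{(1)}(v-)\}}$ by $|\Pi^{(\mathrm{hom})}_{(1)}(v-)|^{k-1}$ using exchangeability, and then Lamperti time-change. The only cosmetic difference is that the paper makes the Poisson step explicit by thinning $\kappa_{\bar\nu^{(j)}}$ into its $A_k$ and $A_k^c$ parts and writing down the density of the first $A_k$-arrival, whereas you invoke the compensation formula for a marked point process with predictable intensity $\kappa_{\bar\nu^{(i_1(v-))}}$ directly; these are the same computation packaged differently.
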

\begin{proof} This a multi-type version of the monotype result \cite[Proposition 18]{HM12}. We only detail the main differences of the proofs. As in the monotype setting, moving from the $\gamma=0$ case to general $\gamma$ is easily done by using the Lamperti time change. We can therefore focus on the $\gamma=0$ case. For all $i\in[\kappa],$ let $\kappa^b_{\bar{\nu}^{(i)}}(\mathrm d\bar{\pi}):=\kappa_{\bar{\nu}^{(i)}}(\mathrm d\bar{\pi})\mathbf{1}_{\{\bar{\pi}\in A_k\}}$ and $\kappa^a_{\bar{\nu}^{(i)}}:=\kappa_{\bar{\nu}^{(i)}}-\kappa^b_{\bar{\nu}^{(i)}},$ the former having finite total mass. By standard properties of Poisson point processes, we can realise the pair $\big(\overline{\Pi}(\cdot\wedge D_k-),\overline{\Delta}^{(1,i_1(D_k-))}(D_k)\big)$ the following way:
\begin{itemize}
\item Let $\overline{\Pi}^a=(\Pi^a,\mathbf{i}^a)$ be a homogeneous fragmentation process constructed the same way as $\overline{\Pi}$, except that we use Poisson point processes with intensity $(\kappa^a_{\bar{\nu}^{(i)}})$ instead of $(\kappa_{\bar{\nu}^{(i)}}).$
\item Conditionally on $\overline{\Pi}^a,$ let $T$ be a positive r.v. such that, for $t\geq 0$, $\pr(T\geq t)= e^{-\int_0^t \kappa_{\bar{\nu}^{(i_1^a(u))}}(A_k) \mathrm d u}$. Equivalently, $T$ is the first arrival time of a Poisson process with inhomogeneous rate $\kappa_{\bar{\nu}^{(i_1^a(t))}}(A_k)$ at time $t.$
\item Conditionally on $\overline{\Pi}^a$ and $T$, let $\bar{\pi}$ be a random partition with distribution $\kappa_{\bar{\nu}^{(i_1(T))}}(\cdot\mid A_k):=\kappa_{\bar{\nu}^{(i_1(T))}}(\cdot\,\cap A_k)/\kappa_{\bar{\nu}^{(i_1(T))}}(A_k)$.
\end{itemize}
The triplet $\big(T,\overline{\Pi}^a_{(1)}(\cdot\wedge T),\bar{\pi}\big)$ then has the same distribution as $\big(D_k,\overline{\Pi}_{(1)}(\cdot\wedge D_k-),\overline{\Delta}^{(1,i_1(D_k-))}(D_k)\big)$. Thus we have, using Fubini's theorem and the fact that the conditional density of $T$ is \linebreak $\kappa_{\bar{\nu}^{(i_1(u))}}(A_k) e^{-\int_0^u \kappa_{\bar{\nu}^{(i_1^a(s))}}(A_k)\mathrm d s}$:
\begin{align*}
\E\Big[f(D_k)g\big(|\overline{\Pi}_{(1)}&(\cdot\wedge D_k-)|\big)h(\overline{\Delta}^{(1,i_1(D_k-))}(D_k))\Big]  \\
&=\E\left[\int_0^{\infty}\mathrm d u \;f(u)\kappa_{\bar{\nu}^{(i_1^a(u))}}(A_k) e^{-\int_0^u \kappa_{\bar{\nu}^{(i_1(s))}}(A_k)\mathrm d s}\kappa_{\bar{\nu}^{(i_1^a(u))}}(h\mid A_k)g\big(|\overline{\Pi}^a_{(1)}(\cdot\wedge u)|\big)\right] \\
&=\E\left[\int_0^{\infty}\mathrm d u \;f(u)\kappa_{\bar{\nu}^{(i_1^a(u))}}(h\mathbf{1}_{A_k}) e^{-\int_0^u \kappa_{\bar{\nu}^{(i_1^a(s))}}(A_k)\mathrm d s}g\big(|\overline{\Pi}^a_{(1)}(\cdot\wedge u)|\big)\right] \\
&=\int_0^{\infty}\mathrm d u\;f(u)\E\left[ \kappa_{\bar{\nu}^{(i_1^a(u))}}(h\mathbf{1}_{A_k}) \pr(T>u\mid \overline{\Pi}^a_{(1)})\;g\big(|\overline{\Pi}^a_{(1)}(\cdot\wedge u)|\big)\right] \\
&=\int_0^{\infty}\mathrm d u\;f(u)\E\left[ \kappa_{\bar{\nu}^{(i_1^a(u))}}(h\mathbf{1}_{A_k}) \mathbf{1}_{\{T> u\}}g\big(|\overline{\Pi}^a_{(1)}(\cdot\wedge u)|\big)\right] \\
&=\int_0^{\infty}\mathrm d u\;f(u)\E\left[ \kappa_{\bar{\nu}^{(i_1(u))}}(h\mathbf{1}_{A_k}) \mathbf{1}_{\{D_k> u\}}g\big(|\overline{\Pi}_{(1)}(\cdot\wedge u)|\big)\right].
\end{align*}
The proof is then ended by noting that, by exchangeability, $\pr\left(D_k>u \mid \, |\overline{\Pi}_{(1)}(\cdot\wedge u)|\right)=|\Pi_{(1)}(u)|^{k-1}.$ For the details about this quite intuitive fact, we refer to the proof of \cite[Proposition 18]{HM12}, which is quite similar.
\end{proof}

\textbf{The multi-type fragmentation tree is the limit of its marginals.} By construction, the tree \ $\T^{(i)}_{\gamma,\bnu}$ \ is the limit of its marginals $\mathcal{R}(\T^{(i)}_{\gamma,\bnu},[k])$ in the Gromov-Hausdorff sense. We will need to incorporate the measures:
\begin{lem}\label{lem:marginalstowholetree} Let, for $k\in\N,$ $\eta_k$ be the uniform measure on the leaves $L_1,\ldots,L_k.$ We then have the following a.s.\ convergence for the GHP topology:
\[\big(\mathcal{R}(\T^{(i)}_{\gamma,\bnu},[k]),\eta_k\big){\underset{k\rightarrow \infty}{\longrightarrow}} \big(\T^{(i)}_{\gamma,\bnu},\mu^{(i)}_{\gamma,\bnu}\big).\]
\end{lem}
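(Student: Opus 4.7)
The plan is to split the argument into two independent tasks: the pure Gromov--Hausdorff convergence of the marginals to the whole tree, and the Prokhorov convergence of the uniform leaf measures $\eta_k$ to $\mu^{(i)}_{\gamma,\bnu}$. The first task is essentially already built into the construction of $\T^{(i)}_{\gamma,\bnu}$ in \cite{Steph17}: the tree is obtained as the completion of the increasing union of the skeleta $\mathcal{R}(\T^{(i)}_{\gamma,\bnu},[k])$, so under the natural isometric embedding $\mathcal{R}(\T^{(i)}_{\gamma,\bnu},[k]) \hookrightarrow \T^{(i)}_{\gamma,\bnu}$ it suffices to check that the Hausdorff distance between the two vanishes, which follows from the compactness of $\T^{(i)}_{\gamma,\bnu}$ together with the fact that every point of $\T^{(i)}_{\gamma,\bnu}$ is a limit of leaves $L_n$. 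Working under this common embedding reduces the GHP statement to showing weak convergence (equivalently, Prokhorov convergence, since $\T^{(i)}_{\gamma,\bnu}$ is a fixed compact Polish space) of $\eta_k$ to $\mu^{(i)}_{\gamma,\bnu}$ almost surely.

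For the measure convergence, the crucial observation is the following. For each $n\in\N$ and each rational $t\in(0,D_{\{n\}})$, by the defining property of $\mu^{(i)}_{\gamma,\bnu}$ recalled in Section \ref{sec:construcmultMB} one has
\[
\mu^{(i)}_{\gamma,\bnu}\big(\mathcal{T}_{L_n(t)}\big)=|\Pi_{(n)}(t-)|,
\]
while at the discrete level
\[
\eta_k\big(\mathcal{T}_{L_n(t)}\big)=\frac{1}{k}\,\#\bigl([k]\cap \Pi_{(n)}(t-)\bigr)\underset{k\to\infty}{\longrightarrow}|\Pi_{(n)}(t-)|
\]
by the very definition of the asymptotic frequency of a block. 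Hence, almost surely and simultaneously for this countable family of pairs $(n,t)$, the $\eta_k$-mass of every such ancestral subtree converges to its $\mu^{(i)}_{\gamma,\bnu}$-mass.

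To upgrade this pointwise mass convergence to weak convergence, the cleanest route is via exchangeability: conditionally on the $\sigma$-algebra $\mathcal G$ generated by the metric-measure tree $(\T^{(i)}_{\gamma,\bnu},\mu^{(i)}_{\gamma,\bnu})$, the sequence $(L_n)_{n\geq 1}$ is exchangeable (this is a direct consequence of the paintbox construction of $\overline{\Pi}^{(\mathrm{hom})}$ from i.i.d.\ uniform variables), and its directing de Finetti measure is pinned down by the identities above to be $\mu^{(i)}_{\gamma,\bnu}$ itself, since the ancestral subtrees $\mathcal{T}_{L_n(t)}$ separate leaves and $\mu^{(i)}_{\gamma,\bnu}$ is supported on the leaves. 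Varadarajan's theorem then yields $\eta_k \Rightarrow \mu^{(i)}_{\gamma,\bnu}$ almost surely, conditionally on $\mathcal G$ and hence unconditionally. Combining with the Hausdorff statement and the bound $d_{\mathrm{GHP}} \leq \max(d_{\mathrm H},d_{\mathrm P})$ under a common embedding closes the argument.

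The main obstacle is the last step: rigorously identifying the directing measure of the exchangeable leaf sequence with $\mu^{(i)}_{\gamma,\bnu}$. This requires a monotone class argument showing that the algebra generated by the ancestral subtrees $\{\mathcal{T}_{L_n(t)}\}_{n\in\N,\,t\in\Q_+}$ is a determining class for probability measures on $\T^{(i)}_{\gamma,\bnu}$ supported off the skeleton, which in turn relies on the fact that, $\mu^{(i)}_{\gamma,\bnu}$-almost every leaf is separated from every other leaf by some ancestral subtree. This part is essentially combinatorial in the structure of compact real trees and carries over from monotype fragmentation trees with minimal modification.
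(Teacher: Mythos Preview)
Your argument is correct in outline, but the paper takes a much shorter route: it simply invokes Lemma~3.11 of \cite{Steph13} as a black box, noting that the hypotheses of that lemma (compactness of the tree, existence of asymptotic frequencies for all blocks of $\overline\Pi(t)$ and $\overline\Pi(t-)$, and left-continuity of $t\mapsto|\Pi_{(i)}(t-)|$) are all verified here via \cite{Steph17} and the Poissonian construction. No further work is done in the paper itself.

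Your approach, by contrast, unpacks the underlying mechanism directly: Hausdorff convergence from compactness plus density of the $L_n$, and Prokhorov convergence via exchangeability of the leaf sequence and identification of its directing measure with $\mu^{(i)}_{\gamma,\bnu}$ on the $\pi$-system of ancestral subtrees. This is essentially what the cited lemma does internally, so you are reproving a special case of it. The advantage of your route is self-containment; the cost is that the identification step (exchangeability in a \emph{random} ambient space, and the monotone-class argument for the determining class) is genuinely delicate and would need to be spelled out carefully --- in particular, applying de Finetti and Varadarajan when the target space is itself random requires either embedding everything into a fixed Polish space or working conditionally on the realisation of the tree. The paper avoids this entirely by deferring to the existing reference.
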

This is a straightforward consequence of Lemma 3.11 in \cite{Steph13}, which treats this type of convergence in a general case and says, with our notation, that the conclusion holds  as soon as the tree is compact and the blocks of the partitions $\overline \Pi(t), \overline \Pi(t-)$ all have asymptotic frequencies for all $t\geq 0$ and the process $t\mapsto |\overline \Pi_{(i)}(t-)|$ is left continuous for all $i$. The compactness of the tree is proved in \cite{Steph17} and the other assumptions can all be checked using the Poissonian construction.

\subsection{On the moments of the height of $T_n^{(i)}$}\label{sec:heightmoments}

We now turn to a crucial result, which will be needed both to prove the tightness of the sequences $(T_n^{(i)}/n^{\gamma})_n$ and to improve some results of \cite{HS18} on the scaling limits of bivariate Markov chains, which are two essential points of the proofs of our theorems undertaken in Section \ref{sec:proofs2}. We recall that we are working under Assumption (\ref{eq:simple}). 

\begin{prop}
\label{Hmoments}
Assume that either  the hypotheses of Theorem \ref{theo:critique} or those of Theorem \ref{theo:melange} are satisfied, and let $H_n^{(i)}$ denote the height of $T_n^{(i)}$, $n\geq 1$, $i \in [\kappa]$. Then for all types $i \in [\kappa]$ and all $p>0$,
$$
\sup_{n\geq 1}\frac{\mathbb E\big[\big(H_n^{(i)}\big)^p\big]}{n^{p\gamma}}<\infty.
$$ 
\end{prop}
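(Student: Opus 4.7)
The strategy is to argue by strong induction on $n$, invoking the Markov branching property at the first \emph{macroscopic split} of $T_n^{(i)}$. By H\"older's inequality, the map $p \mapsto (\E[|X|^p])^{1/p}$ is non-decreasing, so it is enough to establish the bound for $p$ large; I shall therefore assume throughout that $p\gamma > 1$, an inequality that will be used in a crucial convexity step below.

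Set $f(n) := \sup_{i \in [\kappa]} \E[(H_n^{(i)})^p]$ and let $S_n^{(i)}$ denote the first generation at which some vertex of $T_n^{(i)}$ either has at least two children or a single child of size strictly less than $n$. Under the simplifying assumption (\ref{eq:simple}), below generation $S_n^{(i)}$ the tree $T_n^{(i)}$ is a chain of vertices each having one child of size $n$. The first (and most delicate) step is to show
$$
\E\bigl[(S_n^{(i)})^p\bigr] \leq C_p\,n^{p\gamma}, \qquad n \geq 1,\ i \in [\kappa].
$$
In the critical and solo regimes, condition (iii) of the definition of dislocation measures in Section \ref{sec:multi-type frag} ensures $\bar{\nu}^{(i)}(s_1 < 1) > 0$ for every $i$, and then (\ref{hypocvcritique}) yields a per-generation macroscopic-split probability bounded below by $c\,n^{-\gamma}$, uniformly in the current type; this dominates $S_n^{(i)}$ by a geometric random variable with parameter $c\,n^{-\gamma}$. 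In the mixing regime, some measures $\nu^{(i)}$ may be null and then the macroscopic-split rate from such a type is only $o(n^{-\gamma})$, so direct geometric domination fails; one must then combine (\ref{hypocvmixing2}) with $\beta < \gamma$ to show that, in $O(n^\beta)$ generations in $p$-th moment, the tagged type reaches some $j$ with $\nu^{(j)} \neq 0$, after which $O(n^\gamma)$ further generations suffice. A careful two-scale analysis of the tagged bivariate chain---type mixing on scale $n^\beta$, size splitting on scale $n^\gamma$---then gives the moment bound.

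The second step uses the MB property at generation $S_n^{(i)}$: writing $((\lambda_k, i_k))_k$ for the sizes and types of the children of the splitting vertex,
$$
H_n^{(i)} = S_n^{(i)} + \max_k H_{\lambda_k}^{(i_k)},
$$
with $(H_{\lambda_k}^{(i_k)})_k$ conditionally independent given $\bar\Lambda$. Combining the elementary inequalities $(a+b)^p \leq (1+\varepsilon) b^p + C_{p,\varepsilon}\,a^p$ (for $a, b \geq 0$, $p \geq 1$) and $(\max_k x_k)^p \leq \sum_k x_k^p$, together with the inductive bound $f(m) \leq C_1 m^{p\gamma}$ for $m < n$, yields
$$
f(n) \leq (1+\varepsilon)\,C_1\,\E\Bigl[\textstyle\sum_k \lambda_k^{p\gamma}\Bigr] + C_{p,\varepsilon}\,C_p\,n^{p\gamma}.
$$
To close the induction, it suffices to show $\E\bigl[\sum_k (\lambda_k/n)^{p\gamma}\bigr] \leq 1 - 2\delta$ for some $\delta > 0$ uniform in $i$ and in $n$ large. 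Since $p\gamma > 1$ and $\sum_k \lambda_k = n$, the pointwise inequality $1 - \sum_k s_k^{p\gamma} \geq c_{p\gamma}(1 - s_1)$, valid for any probability sequence $s_1 \geq s_2 \geq \cdots$ with some constant $c_{p\gamma} > 0$, reduces the problem to a uniform positive lower bound on $\E[1 - \max_k \lambda_k/n]$ at the first macroscopic split; this follows from (\ref{hypocvcritique}) (resp.\ (\ref{hypocvmixing})) combined with the asymptotic positivity of $n^\gamma \bar\nu_n^{(i)}(s_1 < 1)$, via a ratio argument. Finitely many small values of $n$ are absorbed by taking $C_1$ large.

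The main obstacle is the $S_n^{(i)}$ moment bound in the mixing regime with some $\nu^{(i)}$ null: here direct geometric domination fails and one must delicately couple the fast type-mixing dynamics (at scale $n^\beta$) to the slow size-splitting dynamics (at scale $n^\gamma$) in the tagged bivariate chain. Once this two-scale estimate is in hand, the rest of the proof is essentially a convexity-plus-induction routine.
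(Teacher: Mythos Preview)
Your induction-closing step contains a genuine gap. You need the contraction estimate
\[
\E\Bigl[\textstyle\sum_k (\lambda_k/n)^{p\gamma}\,\Big|\,\text{split}\Bigr]\le 1-2\delta
\]
uniformly in $n$, where the conditioning is on the event defining $S_n^{(i)}$ (namely $\lambda_1<n$). But this fails in general: the hypotheses (\ref{hypocvcritique}) or (\ref{hypocvmixing}) control only the \emph{weighted} measure $n^\gamma(1-s_1)\nu_n^{(i)}$, not the split probability $q_n^{(i)}(\lambda_1<n)$ itself. Concretely, take one type, $\gamma=1$, and let $q_n$ put mass $n^{-1}$ on $(n/2,n/2)$, mass $n^{-1/2}$ on $(n-1,1)$, and the rest on $(n)$. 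Then $n(1-s_1)\nu_n\Rightarrow \tfrac12\delta_{(1/2,1/2)}$, so (\ref{hypocvcritique}) holds; yet conditionally on $\{\lambda_1<n\}$ the split is $(n-1,1)$ with probability $1-O(n^{-1/2})$, giving
\[
\E\Bigl[\textstyle\sum_k(\lambda_k/n)^{p\gamma}\,\Big|\,\lambda_1<n\Bigr]=1-O(n^{-1/2})\longrightarrow 1.
\]
So your ``ratio argument'' cannot produce a uniform lower bound on $\E[1-\lambda_1/n\mid\text{split}]$: microscopic splits can dominate the conditional law while being invisible to the weighted hypothesis. (Redefining $S_n^{(i)}$ as the first \emph{macroscopic} split does not help either, since then the tree below $S_n^{(i)}$ is no longer a chain and the decomposition $H_n^{(i)}=S_n^{(i)}+\max_k H_{\lambda_k}^{(i_k)}$ breaks.)

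The paper avoids this by never conditioning on a split. It works one generation at a time via $H_n^{(i)}\stackrel{d}{=}1+\max_\ell H_{\Lambda_\ell}^{(i_\ell)}$ and proves by induction a tail bound of the form
\[
\mathbb P\bigl(H_n^{(i)}\ge xn^\gamma\bigr)\le \frac{D_p+D_p^{1-1/2p}C_i\,n^{\beta-\gamma}}{x^p}.
\]
The crucial estimate is the \emph{unconditional} one
\[
\E_{q_n^{(i)}}\Bigl[1-\textstyle\sum_\ell(\lambda_\ell/n)^{p\gamma}\Bigr]\ge \frac{d}{n^\gamma},
\]
which does follow from the weighted hypotheses (since the microscopic splits contribute $o(n^{-\gamma})$ here), and this $n^{-\gamma}$ gain exactly compensates the ``$+1$'' via the elementary $(x-1)^{-p}\le x^{-p}+2p\,x^{-p-1}$. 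In the mixing regime with some $\nu^{(i)}$ null, the paper handles the slow types not through a two-scale waiting-time analysis but by introducing type-dependent constants $C_i>0$ (with $C_1=0$ for a non-null type) chosen via a sub-Perron--Frobenius argument on the $\mathsf Q$-matrix $Q$, so that the type-change term in the one-step recursion becomes a net gain; this is where the correction $C_i\,n^{\beta-\gamma}$ in the tail bound comes from. You correctly located the difficulty in the null-$\nu^{(i)}$ case, but the fix lives inside the one-step recursion, not in a separate moment bound for a waiting time.
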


A similar result was proved in the monotype setting, first in  \cite{HM12} by evaluating the tails of the variables $H_n^{(i)}$ and then more elegantly in \cite{Dadoun17+} by using martingale arguments. These approaches could be adapted here to prove the above proposition in the critical or solo regimes, but cannot be used directly in the mixing regime when at least one of the measures $\nu^{(i)}$ arising in (\ref{hypocvmixing}) is the zero measure. The issue is that a type $i$ corresponding to a zero measure $\nu^{(i)}$ then tends to slow down the process, which means that the trees tend to be larger. We give below a detailed proof of this most delicate mixing regime and then explain quickly how to adapt it to the critical and solo regimes.

So, throughout this section, except the last subsection, we are in the mixing regime and assume (\ref{hypocvmixing}) and (\ref{hypocvmixing2}), with at least the measure $\nu^{(1)}$ a true (i.e.\ nonzero) dislocation measure.
Note that by Jensen's inequality, it is sufficient to prove Proposition \ref{Hmoments} for $p$ large enough. In the following $p$ is fixed such that 
\begin{equation*}
\label{hypp}
p>1 \quad \text{and } \quad p\gamma+\beta-\gamma>1,
\end{equation*}
which in particular implies that 
\begin{equation}
\label{majo}
\sum_{\ell \geq 1} \lambda_\ell^{p\gamma+\beta-\gamma} \leq n^{p\gamma+\beta-\gamma} \text{ as soon as } \sum_{\ell \geq 1} \lambda_\ell \leq n,
\end{equation}
for any sequence $(\lambda_{\{\ell\}})_{\ell \geq 1}$ of non-negative terms, an argument we will use at several places. In this mixing regime, we will have to take into account that on average an individual of size $n$ needs a time of order $n^{\beta}$ to change type which leads us to prefer the tails control approach. Our strategy is to prove by induction on $n$ ($p$ being fixed) that 
\begin{equation}
\label{HypH_n}
\mathbb P\big( H_n^{(i)} \geq x n^{\gamma}\big) \leq \frac{D_p + D_p^{1-\frac{1}{2p}}\cdot C_i \cdot n^{\beta-\gamma}}{x^p}  \quad\text{ for all } x>0 \text{ and all types } i \in [\kappa], \tag{$\mathsf H_n$}
\end{equation}
where $D_p$ and $C_i, i \in [\kappa]$ are all finite and independent of $n$, and must be chosen subtly for the induction to work (see Section \ref{sec:prelim-constants} below). Clearly if such inequalities are valid for all $p$ large enough, Proposition \ref{Hmoments} holds since $\beta<\gamma$. 

In order to get (\ref{HypH_n}) for all $n$, we start below by setting some preliminary results, in particular we will choose the constants appearing in (\ref{HypH_n}) and set up some useful bounds. We then proceed to the proof of (\ref{HypH_n}) by induction, and finally explain how to adapt our proof to the critical and solo regimes.

\subsubsection{Preliminary work and notation}
\label{sec:prelim-constants}

\textbf{Finiteness of $\mathbb E[(H_n^{(i)})^p]$ for all $n\geq 1$.} We first note $H_1^{(i)}=0$ for all $i$, by (\ref{eq:simple}), and that for all $n\geq 1$ the height of the first branch-point above the root of $T_n^{(i)}$ (i.e.\ the number of steps needed to split $n$ starting from $(n,i)$) has an exponential moment,  by (\ref{hyp:principal}). The MB property and a straightforward induction then imply the finiteness of $\mathbb E[(H_n^{(i)})^p]$.

\bigskip 

\textbf{Choice of the constants $C_i,i \in [\kappa]$.} Let $Q$ be the irreducible $\mathrm Q$-matrix of dimension $\kappa \times \kappa$ arising in (\ref{hypocvmixing2}). 

\begin{lem}
\label{lem:PF}
There exist some strictly positive terms $(C_i,i\in[\kappa]\backslash\{1\})$ such that for all $i\in[\kappa]\backslash\{1\}$,
\begin{equation}\label{eq:Ci}
-Q(i,i) C_{i} > \sum_{j \in [\kappa]\backslash \{i,1\} } C_{j}Q(i,j).
\end{equation}
\end{lem}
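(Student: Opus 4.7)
The plan is to choose $C_i$ to be the expected hitting time of state $1$ for a continuous-time Markov chain with generator $Q$, starting from state $i$, and then derive the required strict inequality via a one-step analysis.

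Let $(Y_t)_{t\geq 0}$ denote a continuous-time Markov chain on $[\kappa]$ with $\mathsf{Q}$-matrix $Q$, and set $T_1 := \inf\{t\geq 0 : Y_t = 1\}$. Since $Q$ is irreducible on the finite set $[\kappa]$, the chain is (positive) recurrent, and $\mathbb{E}_i[T_1]$ is finite for every $i\in [\kappa]$. Moreover, for $i \neq 1$, the chain does not start at $1$ and must wait an exponential time of strictly positive mean before its first jump, so $\mathbb{E}_i[T_1] > 0$. I define $C_i := \mathbb{E}_i[T_1]$ for all $i \in [\kappa]\setminus\{1\}$; these are the candidate strictly positive constants.

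Next, I apply a first-step analysis to $(Y_t)$. Starting from $i \neq 1$, the chain remains at $i$ for an $\mathrm{Exp}(-Q(i,i))$ time and then jumps to $j \neq i$ with probability $Q(i,j)/(-Q(i,i))$. Combined with the strong Markov property and the convention $\mathbb{E}_1[T_1]=0$, this gives
\[
C_i \;=\; \frac{1}{-Q(i,i)} \;+\; \sum_{j \neq i,\, j \neq 1} \frac{Q(i,j)}{-Q(i,i)}\, C_j,
\]
that is,
\[
-Q(i,i)\, C_i \;=\; 1 \;+\; \sum_{j \neq i,\, j \neq 1} Q(i,j)\, C_j.
\]
Since the right-hand side exceeds $\sum_{j \neq i,\, j \neq 1} Q(i,j)\, C_j$ by exactly $1$, inequality \eqref{eq:Ci} follows.

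There is no real obstacle here: the only points to verify are the finiteness and strict positivity of the $C_i$, both of which follow immediately from irreducibility of $Q$ on the finite state space $[\kappa]$. Equivalently (and this is essentially a Perron--Frobenius-type statement), one may note that the principal submatrix $\tilde Q$ obtained from $Q$ by deleting the first row and column is the generator of the chain killed upon hitting $1$; by irreducibility this killing is a.s.\ finite, so $-\tilde Q$ is an invertible $M$-matrix with nonnegative inverse, and one may set $C := (-\tilde Q)^{-1}\mathbf{1}$, recovering the same vector.
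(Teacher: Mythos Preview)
Your proof is correct. The choice $C_i=\E_i[T_1]$ is well-defined, strictly positive, and finite by irreducibility on a finite state space (note $Q(i,i)<0$ for all $i$ when $\kappa\geq 2$, since otherwise $i$ would be absorbing), and the one-step decomposition immediately yields $-Q(i,i)C_i = 1 + \sum_{j\neq i,1}Q(i,j)C_j$, which is the desired strict inequality with slack exactly $1$.

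The paper takes a genuinely different route. After rescaling so that $P:=I+Q$ is stochastic, it restricts $P$ to $[\kappa]\setminus\{1\}$, perturbs this substochastic matrix slightly to make it irreducible while keeping its row sums at most $1$ (and at least one strictly below $1$), and then invokes Perron--Frobenius to get a positive right eigenvector $\mathbf C$ with eigenvalue $\lambda<1$; the inequality $\sum_{j\neq 1}C_jQ(i,j)\leq (\lambda-1)C_i<0$ follows. Your argument is more elementary and more transparent: it avoids the perturbation step and the appeal to Perron--Frobenius, replacing them by a direct probabilistic identity that even quantifies the gap in \eqref{eq:Ci}. The $M$-matrix remark you add at the end is essentially the linear-algebraic shadow of the paper's approach, so the two viewpoints are consistent; but the hitting-time computation is the shortest path to the lemma.
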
 

\begin{proof}
Since both sides of (\ref{eq:Ci}) are linear in $Q,$ we can, up to dividing it by a large constant, assume without loss of generality that $\max_{i\in[\kappa]} |Q(i,i)|<1.$ Consider then the probability transitions $P(i,j):=Q(i,j)$, $i\neq j$, and $P(i,i):=1+Q(i,i)$, and $\overline P$ the restriction of the matrix $P$ to $[\kappa]\backslash\{1\} \times [\kappa]\backslash\{1\}$. For each $i\in [\kappa]\backslash\{1\}$, we have $\sum_{j=2}^{\kappa} \overline P(i,j)\leq 1$. We let $I\subset [\kappa]\backslash\{1\}$ be the subset of indices $i$ such that this sum is strictly smaller than 1. It is nonempty by irreducibility of $P$. 

Let then $\varepsilon>0$. From $\overline P$ and $\varepsilon$ we construct another matrix $\overline P^{\varepsilon}$ with nonnegative terms, by setting $\overline P^{\varepsilon}(i,j):=\varepsilon$ for all $(i,j)$ such that $i \in I$, $j\neq i$ and $\overline P(i,j)=0$, and $ \overline P^{\varepsilon}(i,j):=\overline P(i,j)$ otherwise. Note that $\varepsilon$ may be chosen small enough so that $\sum_{j=2}^{\kappa} \overline P^{\varepsilon}(i,j)\leq 1$ for all $i \in [\kappa]\backslash\{1\}$ and $\sum_{j=2}^{\kappa} \overline P^{\varepsilon}(i,j)<1$ for at least one $i$, which we assume from now on.  Note also that $\overline P^{\varepsilon}$ is irreducible by irreducibility of $P$. We can therefore apply the Perron-Frobenius theorem to $\overline P^{\varepsilon}$, from which we get the existence of $0<\lambda<1$ and a vector $\textbf C=(C_i,i\in[\kappa]\backslash\{1\})$ of strictly positive terms such that  $\overline P^{\varepsilon} \mathbf C=\lambda \mathbf C$. So we have for all $i \in [\kappa]\backslash\{1\}$
\begin{eqnarray*}
\sum_{j \in  [\kappa]\backslash\{1\}} C_{j}Q(i,j)&=&\sum_{j \in  [\kappa]\backslash\{1\}} C_{j}P(i,j)-C_i \\
&\leq & \sum_{j \in  [\kappa]\backslash\{1\}} C_{j}\overline P^{\varepsilon}(i,j)-C_i \\
&=& \lambda C_i-C_i<0,
\end{eqnarray*}
as expected.
\end{proof}

From now on, $C_i,i\in [\kappa]\backslash\{1\}$ is a fixed sequence satisfying the inequalities of Lemma \ref{lem:PF} and we set $C_1:=0$. We also fix $\varepsilon>0$ small enough such that for all $i \in [\kappa] \backslash \{1\}$,
\begin{equation}
\label{C+eps}
\bigg(\sum_{j \in [\kappa] \backslash\{i\}}Q(i,j)-\varepsilon\bigg)C_{i}>\sum_{j \in [\kappa]\backslash\{i,1\}}(Q(i,j)+\varepsilon)C_j.
\end{equation}

\bigskip

\textbf{Some bounds.} We derive now some bounds from Hypotheses (\ref{hypocvmixing}) and (\ref{hypocvmixing2}), using (\ref{majo}). First, there exist $d_1,d_2<\infty,d_3>0$ such that for $n$ large enough
\begin{equation}
\label{inegH1}
\sum_{\bar \lambda \in \overline{\mathcal P}_n} q_n^{(1)}(\bar \lambda) \frac{\lambda_1^{p\gamma+\beta-\gamma}}{n^{p\gamma}}\mathbbm 1_{\{i_1\neq 1\}} \leq n^{\beta-\gamma} \cdot q_n^{(1)}(i_1 \neq 1) \leq n^{\beta-\gamma}  \cdot \frac{d_1}{n^{\beta}} =\frac{d_1}{n^{\gamma}},
\end{equation}
and
\begin{equation}
\label{inegH2}
\sup_{i\in [\kappa]}\sum_{\bar \lambda \in \overline{\mathcal P}_n} q_n^{(i)}(\bar \lambda) \sum_{\ell \geq 2}\frac{\lambda^{p\gamma+\beta-\gamma}_\ell}{n^{p\gamma}} \leq n^{\beta-\gamma} \cdot \frac{d_2}{n^{\gamma}},
\end{equation}
(here we use (\ref{hypocvmixing}) and the fact that the function $\mathbf s \mapsto \sum_{\ell \geq 2} s_{\ell}^{p\gamma+\beta-\gamma}(1-s_1)^{-1}\mathbbm 1_{\{s_1<1\}}$ is continuous on $\mathcal S^{\downarrow}$ since $p\gamma+\beta-\gamma>1$)
and
\begin{equation}
\label{inegH3}
\sum_{\bar \lambda \in \overline{\mathcal P}_n} q_n^{(1)}(\bar \lambda) \Bigg(1-\sum_{\ell \geq 1} \frac{\lambda^{p\gamma}_\ell}{n^{p\gamma}}\Bigg) \geq \frac{d_3}{n^{\gamma}}
\end{equation}
(again we use (\ref{hypocvmixing}) with $\nu^{(1)}$ nontrivial, and the fact that the function $\mathbf s \mapsto$ $(1-\sum_{\ell \geq 1} s_{\ell}^{p\gamma})(1-s_1)^{-1}\mathbbm 1_{\{s_1<1\}}$ is continuous on $\mathcal S^{\downarrow}\backslash\{(1,0,\ldots)\}$ -- note that $p\gamma>1$ here).
Second, using (\ref{hypocvmixing2}), we get that for $i\neq 1,j\neq 1, j \neq i$ and $\varepsilon>0$ chosen so that (\ref{C+eps}) holds, we have for $n$ large enough
\begin{equation}
\label{inegH4}
\sum_{\bar \lambda \in \overline{\mathcal P}_n} q_n^{(i)}(\bar \lambda)   \frac{\lambda^{p\gamma+\beta-\gamma}_1}{n^{p\gamma}}\mathbbm 1_{\{i_1= j\}} \leq n^{-\gamma} \cdot (Q(i,j)+\varepsilon)
\end{equation}
and for $i\neq 1$
\begin{equation}
\label{inegH5}
\sum_{\bar \lambda \in \overline{\mathcal P}_n} q_n^{(i)}(\bar \lambda) \frac{\lambda^{p\gamma+\beta-\gamma}_1}{n^{p\gamma}}\mathbbm 1_{\{i_1=i\}} \leq n^{\beta-\gamma} - n^{-\gamma}\cdot \bigg(\sum_{j\in [\kappa] \backslash \{i\}}Q(i,j)-\varepsilon \bigg).
\end{equation}
\bigskip

\textbf{Choice of an intermediate fixed constant: $n_0$.} From now on $n_0$ denotes an integer large enough so that the inequalities (\ref{inegH1}),  (\ref{inegH2}),  (\ref{inegH3}),  (\ref{inegH4}),  (\ref{inegH5}) hold for $n \geq n_0$ and
$$
\bigg(\sum_{j \in [\kappa] \backslash\{i\}}Q(i,j)-\varepsilon\bigg)C_{i}-\sum_{j \in [\kappa]\backslash\{i,1\}}(Q(i,j)+\varepsilon)C_j-\frac{\max_{i \in [\kappa] \backslash \{1\}}C_i d_2}{n_0^{\gamma-\beta}}>0
$$ 
(recall that $\beta<\gamma$, hence $n_0$ indeed exists).

\bigskip

\textbf{Choice of $D_p$.} The  $C_i,i\in [\kappa]\backslash\{1\}$, $\varepsilon>0$, $n_0 \in \mathbb N$ and $d_1,d_2\geq 0,d_3>0$ being fixed, we now choose a positive real number $D_p\geq 1$ large enough such that  the four following inequalities hold:
\begin{equation}
\label{inegC1}
d_3 \geq \frac{2p}{D_p^{1/p}}+\frac{\max_{i\in [\kappa]\backslash\{1\}} C_i}{D_p^{1/2p}} \cdot (d_1+d_2)\left(1+2p\right),\
\end{equation}

\smallskip

\begin{equation}
\label{inegC3}
\sup_{1\leq n \leq n_0} \frac{\mathbb E[(H_{n}^{i})^p]}{n^{\gamma p}} \leq D_p,
\end{equation}

\smallskip

\begin{equation}
\label{inegC4}
\frac{1}{(x-1)^p} \leq \frac{1}{x^p}+ \frac{2p}{x^{p+1}}, \quad \forall x \geq D^{1/p}_p,
\end{equation}
and
\begin{eqnarray}
\label{inegC2}
\left(\sum_{j \in [\kappa] \backslash\{i\}}Q(i,j)-\varepsilon\right)C_{i}-\sum_{j \in [\kappa]\backslash\{i,1\}}(Q(i,j)+\varepsilon)C_j-\frac{\max_{i \in [\kappa] \backslash \{1\}}C_i d_2}{n_0^{\gamma-\beta}} \\
\geq 2p \left(\frac{1}{{D_p}^{1/2p}}+\frac{\max_{i\in[\kappa] \backslash \{1\}}C_i}{{D_p}^{1/p}n_0^{\gamma-\beta}}\right).
\end{eqnarray}

\subsubsection{Proof of $(\mathsf H_n)$ by induction}

We will in fact prove by induction on $n \geq 1$ and $k\geq 1$ that 
\begin{equation}
\label{hyp:rechauteur}
\mathbb P\big( H_n^{(i)} \geq x n^{\gamma}\big) \leq \frac{D_p + D_p^{1-\frac{1}{2p}} \cdot C_i \cdot n^{\beta-\gamma}}{x^p}, \quad \forall x\in (0,k/n^{\gamma}), \quad \forall i \in [\kappa] 
\tag{$\mathsf H_{n,k}$}.
\end{equation}

\bigskip

\textbf{Initialisation.} By (\ref{inegC3}) and Markov's inequality, $(\mathsf H_n)$ holds for all $n \leq n_0$. Also, for all $n\geq 1$, $(\mathsf H_{n,k})$ holds for all $k\leq D_p^{1/p} n^{\gamma}$ since $\mathbb P\big( H_n^{(i)} \geq x n^{\gamma}\big) \leq 1 \leq D_p/x^p$ for all $x\leq D_p^{1/p}$. In particular for all $n\geq 1$, $(\mathsf H_{n,1})$ holds, since we have chosen $D_p \geq 1$.

\bigskip

\textbf{Induction.} Let $n \geq  n_0+1$. We assume that $(\mathsf H_m)$ holds for all $m\leq n-1$ and that $(\mathsf H_{n,k})$ holds for some $k\geq 1$. We want to prove $(\mathsf H_{n,k+1})$, which is sufficient to conclude. In that aim, we use the following consequence of the MB property: for all types $i \in [\kappa]$
$$
H_n^{(i)}\overset{\mathrm{law}}=1+\max_{\ell \geq 1} H_{\Lambda_\ell}^{(i_\ell)}
$$
where $(\Lambda,\mathbf i) \sim q_n^{(i)}$ and the heights in the right-hand side are independent given $(\Lambda,\mathbf i)$. 

Now fix $x \in \left[\frac{k}{n^{\gamma}}, \frac{k+1}{n^{\gamma}}\right)$. We want to prove that $\mathbb P\big( H_n^{(i)} \geq x n^{\gamma}\big) \leq \frac{D_p + D_p^{1-1/2p} \cdot C_i \cdot n^{\beta-\gamma}}{x^p}$ for all types $i$. If $x\leq D_p^{1/p}$ this is obvious as noticed above. We can therefore assume that $x>D_p^{1/p}$and therefore apply (\ref{inegC4}) when needed.
Then for any type $i$, by the induction hypotheses (since $n^{\gamma}x-1< k)$, we have that
\begin{eqnarray*}
\mathbb P(H_n^{(i)} < n^{\gamma} x) &=& \sum_{\bar \lambda \in \overline{\mathcal P}_n} q_n^{(i)}(\bar \lambda) \prod_{\ell \geq 1} \mathbb P(H_{\lambda_{\ell}}^{(i_{\ell})} < n^{\gamma}x-1) \\
&\geq & \sum_{\bar \lambda \in \overline{\mathcal P}_n} q_n^{(i)}(\bar \lambda) \prod_{\ell \geq 1} \left(1- \frac{(D_p+D_p^{1-1/2p} C_{i_{\ell}} \cdot \lambda_{\ell}^{\beta-\gamma})\lambda_{\ell}^{\gamma p}}{(n^{\gamma}x-1)^p}\right)_+ \\
&\geq & \sum_{\bar \lambda \in \overline{\mathcal P}_n} q_n^{(i)}(\bar \lambda) \left( 1-\sum_{\ell \geq 1} \frac{(D_p+D_p^{1-1/2p} C_{i_{\ell}}\cdot \lambda_{\ell}^{\beta-\gamma})\lambda_{\ell}^{\gamma p}}{(n^{\gamma}x-1)^p}\right)\\
&\underset{(\text{by } (\ref{inegC4}))}{\geq} & \sum_{\bar \lambda \in \overline{\mathcal P}_n} q_n^{(i)}(\bar \lambda)  \left( 1-\frac{1}{(xn^{\gamma})^p}\left(1+\frac{2p}{xn^{\gamma}}\right)
\sum_{\ell \geq 1}  (D_p+D_p^{1-1/2p} C_{i_{\ell}} \cdot \lambda_{\ell}^{\beta-\gamma})\lambda_{\ell}^{\gamma p}\right) \\
&\geq & 1-\frac{D_p}{x^p}+\frac{D_p}{x^p} \sum_{\bar \lambda \in \overline{\mathcal P}_n} q_n^{(i)}(\bar \lambda) \left(1-\sum_{\ell \geq 1} \frac{\lambda^{p\gamma}_\ell}{n^{p\gamma}}\right)-\frac{2pD_p}{x^{p+1}n^{\gamma}}\sum_{\bar \lambda \in \overline{\mathcal P}_n} q_n^{(i)}(\bar \lambda)\sum_{\ell \geq 1} \frac{\lambda^{p\gamma}_\ell}{n^{p\gamma}} \\
&-&\frac{D_p^{1-1/2p}}{x^p}\left(1+\frac{2p}{xn^{\gamma}}\right) \sum_{j\in [\kappa]\backslash \{1\}} C_{j}  \sum_{\bar \lambda \in \overline{\mathcal P}_n} q_n^{(i)}(\bar \lambda)  \frac{\lambda^{p\gamma+\beta-\gamma}_1}{n^{p\gamma}}\mathbbm 1_{\{i_1 = j\}} \\
&-&\frac{D_p^{1-1/2p}}{x^p}\left(1+\frac{2p}{xn^{\gamma}}\right) \max_{j\in [\kappa]\backslash \{1\}} C_{j}  \sum_{\bar \lambda \in \overline{\mathcal P}_n} q_n^{(i)}(\bar \lambda) \sum_{\ell \geq 2} \frac{\lambda^{p\gamma+\beta-\gamma}_\ell}{n^{p\gamma}}
\end{eqnarray*}
(in the last inequality we have used that $C_1=0$). Note that $\sum_{\ell \geq 1} \frac{\lambda^{p\gamma}_\ell}{n^{p\gamma}} \leq 1$ since $p\gamma \geq 1$. 

$\bullet$ When $i=1$, using (\ref{inegH1}), (\ref{inegH2}) and (\ref{inegH3}), this leads to
\begin{eqnarray*}
\mathbb P(H_n^{(1)} < n^{\gamma} x) &\geq& 1-\frac{D_p}{x^p}+\frac{1}{x^p n^{\gamma}}\left(D_p d_3-\frac{2pD_p}{x} -D_p^{1-1/2p}\max_{j\in [\kappa]\backslash \{1\}} C_{j} (d_1+d_2)\left(1+\frac{2p}{x n^{\gamma}}\right)\right) \\
&\geq &  1-\frac{D_p}{x^p}+\frac{D_p}{x^p n^{\gamma}}\left(d_3-\frac{2p}{D_p^{1/p}} -\frac{\max_{j\in [\kappa]\backslash \{1\}} C_{j}}{D_p^{1/2p}} (d_1+d_2)\left(1+2p\right)\right),
\end{eqnarray*}
since $x\geq D_p^{1/p}$ and $n\geq n_0 \geq 1$. The term in the large brackets on the right-hand side of the second inequality is nonnegative by (\ref{inegC1}), hence the expected lower bound for $\mathbb P(H_n^{(1)} < n^{\gamma} x)$.

$\bullet$ When $i \neq 1$, by (\ref{inegH5}), (\ref{inegH4}) and (\ref{inegH2}), we get that 
\begin{eqnarray*}
&& \mathbb P(H_n^{(i)} < n^{\gamma} x) \\
&\geq& 1-\frac{D_p}{x^p}-\frac{1}{x^p n^{\gamma}}\frac{2pD_p}{x}-\frac{D_p^{1-1/2p}}{x^p}\left(1+\frac{2p}{xn^{\gamma}}\right) \cdot C_i \cdot n^{\beta-\gamma} \\
&& \hspace{-0.5cm} + \frac{D_p^{1-1/2p}}{x^p n^{\gamma}} \left(1+\frac{2p}{xn^{\gamma}}\right)\Bigg(\bigg(\sum_{j \in [\kappa] \backslash\{i\}}Q(i,j)-\varepsilon\bigg)C_{i} -\sum_{j \in [\kappa]\backslash\{i,1\}}(Q(i,j)+\varepsilon) C_j-\frac{\max_{j\in [\kappa]\backslash \{1\}} C_{j} d_2}{n^{\gamma-\beta}}\Bigg)\\
&\geq & 1-\frac{D_p}{x^p}-\frac{D_p^{1-1/2p}}{x^p} \cdot C_i \cdot n^{\beta-\gamma} -\frac{D_p^{1-1/2p}}{x^p n^{\gamma}} 2p \left(\frac{D_p^{1/2p}}{x}+\frac{C_i}{xn^{\gamma-\beta}}\right)\\
&&  \hspace{-0.5cm} + \frac{D_p^{1-1/2p}}{x^p n^{\gamma}} \left(1+\frac{2p}{xn^{\gamma}}\right)  \Bigg(\bigg(\sum_{j \in [\kappa] \backslash\{i\}}Q(i,j)-\varepsilon\bigg)C_{i} -\sum_{j \in [\kappa]\backslash\{i,1\}}(Q(i,j)+\varepsilon) C_j -\frac{\max_{j\in [\kappa]\backslash \{1\}} C_{j} d_2}{n^{\gamma-\beta}}\Bigg) 
\end{eqnarray*}
which is larger than $1-\frac{D_p}{x^p}-\frac{D_p^{1-1/2p} \cdot C_{i} \cdot n^{\beta-\gamma}}{x^p} $ (as expected) by (\ref{inegC2}) and since $x \geq D_p^{1/p}$ and $n\geq n_0$.

\subsubsection{Critical and solo regimes}

Under Hypotheses (\ref{hypocvcritique}) and (\ref{hypocvcritique2}), or (\ref{hypocvcritique}) and (\ref{hypocvcritique3}), the proof of Proposition \ref{Hmoments} follows the same lines as in the mixing regime but is much easier, essentially because we do not need to introduce the constants $C_i,i \in [\kappa]$. In fact, the proof here consists simply in showing by induction on $n$ that
$$
\mathbb P\big( H_n^{(i)} \geq x n^{\gamma}\big) \leq \frac{D_p}{x^p}  \quad\text{ for all } x>0 \text{ and all types } i \in [\kappa], 
$$ 
for all $p$ such that $p\gamma>1$. To prove this, we do not need as many inequalities as before, the only point is to note the following version of (\ref{inegH3}): there exists a constant $d>0$ such that \emph{for all} $i\in [\kappa]$
$$
\sum_{\bar \lambda \in \overline{\mathcal P}_n} q_n^{(i)}(\bar \lambda) \Bigg(1-\sum_{\ell \geq 1} \frac{\lambda^{p\gamma}_\ell}{n^{p\gamma}}\Bigg) \geq \frac{d}{n^{\gamma}}, \quad \text{for all } n \text{ large enough.}
$$
This is true since each $\bar \nu^{(i)}$ in the convergence (\ref{hypocvcritique}) is assumed to be a dislocation measure, and in particular  $\bar \nu^{(i)}(s_1<1)>0$. Then, as before, we need to choose an integer $n_0$ large enough so that these inequalities hold for all $n \geq n_0$, and a constant $D_p$ large enough such that (\ref{inegC1}) (with each $C_i$ replaced by 0 and $d_3$ by $d$), (\ref{inegC3}) and (\ref{inegC4}) hold. The induction then follows easily. 

\subsection{Scaling limits of bivariate Markov chains in the mixing regime}

As already mentioned, a key point in the proofs of our theorems is to describe the scaling limits of typical paths of multi-type MB trees. This will rely on previous results obtained in \cite{HS18} on the scaling limits of  bivariate Markov chains, which are informally a $1$-dimensional version of this work. The results of \cite{HS18} are however not sufficient to treat the mixing regime situation where some of the measures $\nu^{(i)}$ in the convergence (\ref{hypocvmixing}) are null.  One consequence of Proposition \ref{Hmoments} is the improvement of the results of \cite{HS18} to this case. We first quote the $1$-dimensional convergences as they appear in \cite{HS18}, and then their extension to the general situation that we need here.

\begin{thm}[\cite{HS18},Theorem 4.1 and Theorem 4.2]\label{th:old1dmixing} Let $(p_{n,i}(m,j))$ be the transition rates of a bivariate Markov chain $(X,J)$ on $\Z_+\times [\kappa]$ with $X$ nonincreasing. Let also, for $n,m\in \Z_+$ and $i\in[\kappa],$ $p_n^{(i)}(m)=\sum_{j\in[\kappa]}p_{n,i}(m,j).$ Assume that there exists $0\leq\beta<\gamma$ such that:
\begin{enumerate}
\item[\emph{(i)}] There exist finite measures $(\mu^{(i)},i \in [\kappa])$ on $(0,1),$ at least one of which is nontrivial, such that, for all continuous functions $f:[0,1]\rightarrow \mathbb R$,
\[
n^{\gamma} \sum_{m=0}^n f\left(\frac{m}{n}\right) \left(1-\frac{m}{n}\right)p_{n}^{(i)}(m) \underset{n\rightarrow \infty}\longrightarrow \int_{[0,1]} f(x)\mu^{(i)}(\mathrm dx). 
\]
\item[\emph{(ii)}]  Moreover, there exists an irreducible $\mathsf Q$--matrix $Q=(q_{i,j})_{i,j\in[\kappa]}$ such that
\[
n^{\beta}(P_n-I) \underset{n\rightarrow \infty}\longrightarrow Q.
\]
\end{enumerate}
Let, for $i\in[\kappa]$ and $\lambda\geq 0$, $\psi_i(\lambda)=\int_0^1(1-x^{\lambda})\frac{\mu^{(i)}(\mathrm d x)}{1-x},$ and then define the mixed Laplace exponent $\psi=\sum_{i\in[\kappa]}\chi_i\psi_i$ where $\chi$ is the invariant measure of $Q.$
Then, calling $(X_n^{(i)},J_n^{(i)})$ a version of the Markov chain starting from $(n,i)\in\Z_+\times [\kappa],$ we have the following convergence in distribution for the Skorokhod topology:
\[
\left(\frac{X^{(i)}_n(\lfloor n^{\gamma}t \rfloor)}{n}, t\geq 0\right) \ \overset{\mathrm{(d)}}{\underset{n \rightarrow \infty} \longrightarrow} (X(t),t\geq 0),
\]
where $(X(t),t\geq 0$ is the $\gamma$-Lamperti transform of a subordinator with Laplace exponent $\psi.$

Moreover, \emph{if all the $\mu^{(i)}$ are nontrivial}, then calling $A_n^{(i)}$ the absorption time of $X_n^{(i)}$ and $A$ that of $X$, we have, jointly with the above,
\[ \frac{A_n^{(i)}}{n^{\gamma}}\overset{\mathrm{(d)}}{\underset{n \rightarrow \infty} \longrightarrow}A,\]
and for all $p \geq 0$,
\[\mathbb E\left[\left(\frac{A_n^{(i)}}{n^{\gamma}}\right)^p \right] {\underset{n \rightarrow \infty} \longrightarrow} \mathbb E \left[ A^p\right].\]
\end{thm}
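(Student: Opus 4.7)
The plan is to use a time-change/averaging approach with two separated time scales. First I would recast the chain in Lamperti form: since $X$ is nonincreasing and (i) says the ``jump rate'' on scale $n^{\gamma}$ is governed by the finite measures $\mu^{(i)}$, it is natural to first prove the analogous statement for the \emph{homogeneous} version $\widetilde X_n(t):=X_n(\lfloor t\rfloor)/n$ without the $n^{\gamma}$ time rescaling, and show that
$$
\bigl(\widetilde X_n(\lfloor n^{\gamma}t\rfloor)\bigr)_{t\geq 0} \ \overset{(d)}{\longrightarrow}\ (\exp(-\xi_t))_{t\geq 0},
$$
where $\xi$ is a subordinator with Laplace exponent $\psi$. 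The $\gamma$-Lamperti time-change then produces $X(t)=\exp(-\xi_{\rho(t)})$ in the limit by a standard continuous-mapping argument (e.g.\ \cite{BertoinBook}), because $\xi$ is a.s.\ strictly increasing off a Lebesgue-null set.

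The heart of the matter is the averaging. At each step, conditionally on being in state $(m,j)$, the decrement $1-X_n(\ell+1)/X_n(\ell)$ has (approximately) distribution $m^{-\gamma}\mu^{(j)}(\mathrm d x)/(1-x)$ times a rate $n^{\gamma}m^{-\gamma}$ in rescaled time. If only the type $J$ were frozen at the value $j$, the limit would be the $\gamma$-Lamperti transform of a subordinator with Laplace exponent $\psi_j$. But (ii) forces $J$ to jump at rate $\Theta(n^{\beta})$, and since $\beta<\gamma$ the chain $J$ executes $\Theta(n^{\gamma-\beta})\to\infty$ transitions per unit of rescaled time. By the ergodic theorem for $Q$ with unique invariant law $\chi$, for any bounded test function $f$ on $[0,1]\times[\kappa]$,
$$
\frac{1}{n^{\gamma}t}\sum_{\ell=0}^{\lfloor n^{\gamma}t\rfloor}\mathbf 1_{\{J_n(\ell)=j\}}f\!\left(X_n(\ell)/n\right) \ \longrightarrow\ \chi_j\int_0^t f(X(s),j)\,\mathrm ds
$$
in probability. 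Plugging this averaging into the predictable compensator of $\widetilde X_n$ and invoking standard semimartingale convergence (Jacod--Shiryaev) identifies the limit as a subordinator with the mixed Laplace exponent $\psi=\sum_j\chi_j\psi_j$. This is the main technical obstacle: the averaging must be carried out uniformly in the size $X_n$, which requires controlling the residence time of $J$ in each state conditionally on $X_n$ being close to a given value; here one exploits that the dependence of $Q$ on $n$ is negligible compared to $n^{\gamma-\beta}$, decoupling the two components on the relevant scale.

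For the absorption-time part, assume now that every $\mu^{(i)}$ is nontrivial. Tightness of $A_n^{(i)}/n^{\gamma}$ follows from Proposition \ref{Hmoments} (or rather its direct one-dimensional analog, obtained by the same induction), which supplies the moment bounds $\sup_n\mathbb E[(A_n^{(i)}/n^{\gamma})^p]<\infty$ for every $p$; in fact the nontriviality of all $\mu^{(i)}$ is exactly what kills the correction $C_i n^{\beta-\gamma}$ appearing in $(\mathsf H_n)$ and allows a clean uniform $L^p$ bound. Joint convergence with the trajectory then follows by noting that absorption corresponds to the first hitting time of $0$, and using that $X$ in the limit hits $0$ a.s.\ at a single time $A$ (the Lamperti time of the first passage of $\xi$ to $+\infty$, equivalently $\int_0^{\infty}\exp(-\gamma\xi_r)\mathrm dr$), so the hitting time is a continuous functional of the path at the limit point. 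Convergence of $\mathbb E[(A_n^{(i)}/n^{\gamma})^p]$ to $\mathbb E[A^p]$ then comes from the $L^{p+1}$ boundedness via uniform integrability.
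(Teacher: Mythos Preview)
This theorem is not proved in the present paper; it is quoted verbatim from \cite{HS18} (Theorems 4.1 and 4.2 there). So there is no in-paper proof to compare against, but your outline does capture the correct mechanism and is presumably close in spirit to what \cite{HS18} does: the separation of scales $\beta<\gamma$ forces the type component $J$ to equilibrate to its invariant law $\chi$ on the time scale $n^{\gamma}$ relevant for macroscopic moves of $X$, so the effective one-component dynamics are driven by the mixture $\sum_j\chi_j\mu^{(j)}$, and the Lamperti representation then identifies the limit as $X=\exp(-\xi_{\rho})$.

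There is, however, a genuine gap in your absorption-time argument. The first hitting time of $0$ is \emph{not} a continuous functional on Skorokhod space, even when the limiting process hits $0$ cleanly at a single time: nothing in the Skorokhod convergence $X_n(\lfloor n^{\gamma}\cdot\rfloor)/n\to X$ rules out the scenario where $X_n$ reaches level $\veps n$ at time $\approx A n^{\gamma}$ and then lingers near $0$ for an additional time that is large on scale $n^{\gamma}$ before actually absorbing. Lower semicontinuity gives $\liminf A_n^{(i)}/n^{\gamma}\geq A$ for free, but the reverse inequality needs an extra ingredient. Your moment bound does supply it, but not through ``continuity of the functional'': one applies the Markov property at the first time $X_n$ drops below $\veps n$, and then the residual time to absorption has expectation at most $C(\veps n)^{\gamma}$ by the Hmoments-type bound (uniformly over types, which is exactly where the nontriviality of \emph{all} $\mu^{(i)}$ is used). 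This yields $\limsup A_n^{(i)}/n^{\gamma}\leq A+C\veps^{\gamma}$ in probability, and letting $\veps\downarrow 0$ closes the argument. The paper carries out precisely this kind of reasoning (in the Lamperti-integral form $A_n^{(i)}/n^{\gamma}=\int_0^{\infty}(Z_n^{(i)})^{\gamma}\mathrm dr$, combining Fatou with a stopping-time argument) in the proof of Corollary~\ref{cor:tempsabsorption}, which extends the absorption-time statement to the harder case where some $\mu^{(i)}$ may vanish.
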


The extra assumption in the second part, that all the measures are nontrivial, is in fact unnecessary:

\begin{cor}\label{cor:tempsabsorption} The full conclusions of Theorem \ref{th:old1dmixing} stay true if we only assume that at least one of the measures $\mu^{(i)}, i \in [\kappa]$ is nontrivial.
\end{cor}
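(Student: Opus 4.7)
The strategy is to obtain moment bounds on $A_n^{(i)}/n^\gamma$ by adapting the inductive tail-bound argument of Proposition \ref{Hmoments} to the one-dimensional bivariate-chain setting, and then to combine these bounds with the process convergence already provided by Theorem \ref{th:old1dmixing}.

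First, I would rerun the induction of Section 5.3 with $p_{n,i}$ in place of $q_n^{(i)}$. Since the chain has a single successor at each step, the recursion takes the particularly simple form
$$\mathbb P\big(A_n^{(i)}\geq xn^\gamma\big)=\sum_{\ell,j} p_{n,i}(\ell,j)\,\mathbb P\big(A_\ell^{(j)}\geq xn^\gamma-1\big),$$
which is the one-dimensional analog of the recursion used for tree heights, with the product over siblings collapsed to a single term (in particular no product-to-sum bound is needed and the analog of (\ref{inegH2}) is vacuous). The nonincrease of $X$ gives $\ell\leq n$, so $(\ell/n)^{p\gamma+\beta-\gamma}\leq n^{\beta-\gamma}$ just as $\lambda_1/n$ is bounded in the tree case; assumptions (i) and (ii) of Theorem \ref{th:old1dmixing} (with $\mu^{(1)}$ taken to be a nontrivial measure, without loss of generality) then yield directly the analogs of (\ref{inegH1}), (\ref{inegH3}), (\ref{inegH4}) and (\ref{inegH5}) for $p_{n,i}$. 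The constants $(C_i)_{i\in[\kappa]\setminus\{1\}}$ produced by Lemma \ref{lem:PF} depend only on $Q$, and the subsequent choices of $\varepsilon, n_0$ and $D_p$ transfer verbatim. Running the same induction on $(n,k)$ yields
$$\mathbb P\big(A_n^{(i)}\geq xn^\gamma\big)\leq \frac{D_p+D_p^{1-1/(2p)}\,C_i\,n^{\beta-\gamma}}{x^p}$$
for every $p$ large enough and all $x>0$, hence $\sup_n\mathbb E\big[(A_n^{(i)}/n^\gamma)^p\big]<\infty$ for every $p\geq 0$.

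Second, for the convergence statements themselves, I would observe that the limit $X$ is the Lamperti transform of the (a.s.\ non-degenerate) subordinator $\xi$ with Laplace exponent $\psi=\sum_i\chi_i\psi_i$, which is nonzero because at least one $\mu^{(i)}$ is nontrivial and $\chi_i>0$. Consequently $X$ is a.s.\ strictly positive on $[0,A)$ and absorbed at $0$ on $[A,\infty)$, so the absorption-time map is a.s.\ continuous at the path of $X$ for the Skorokhod topology. The process convergence of Theorem \ref{th:old1dmixing} and the continuous mapping theorem then give joint convergence in distribution of $(n^{-1}X_n^{(i)}(\lfloor n^\gamma\cdot\rfloor),A_n^{(i)}/n^\gamma)$ to $(X,A)$. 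Finally, the uniform moment bound from the first step provides uniform integrability of $(A_n^{(i)}/n^\gamma)^p$ for every $p\geq 0$, upgrading this weak convergence to convergence of all moments.

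The main technical obstacle is to verify that the chain of constant choices and estimates of Section 5.3 depends only on the data (i)--(ii) of Theorem \ref{th:old1dmixing}, and in no way on the branching structure or on \emph{all} $\mu^{(i)}$ being nontrivial. The proof of Lemma \ref{lem:PF} is purely spectral in $Q$, and the analogs of (\ref{inegH1})--(\ref{inegH5}) use only (i), (ii) and $\ell\leq n$, so this translation should go through without incident.
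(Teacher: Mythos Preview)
Your first step---rerunning the induction of Section~\ref{sec:heightmoments} for the linear chain---is correct and is precisely what the paper does: it simply observes that $A_n^{(i)}$ is the height of a linear (one-child-per-vertex) MB tree satisfying the hypotheses of Theorem~\ref{theo:melange}, so Proposition~\ref{Hmoments} applies verbatim and gives $\sup_n n^{-p\gamma}\mathbb E[(A_n^{(i)})^p]<\infty$ for all $p\geq 0$. Your remark that the branching bounds collapse is the content of that observation.

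The gap is in your second step. The absorption-time functional $f\mapsto \inf\{t:f(t)=0\}$ is \emph{not} a.s.\ continuous at the paths of $X$ for the Skorokhod topology: it is only lower semicontinuous. Given any typical path of $X$ (positive on $[0,A)$, zero on $[A,\infty)$), one can produce deterministic c\`adl\`ag paths $f_n$ with $f_n\to X$ uniformly but $f_n$ staying at level $1/n$ on $[A,A+1]$ before hitting $0$, so that their absorption times converge to $A+1$. Nothing in the property ``$X>0$ on $[0,A)$ and $X=0$ on $[A,\infty)$'' rules this out, and this is exactly the phenomenon one fears when some $\mu^{(i)}$ vanish: the discrete chain can reach a small size quickly and then spend a long time in the ``slow'' types before absorbing. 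The continuous mapping theorem therefore gives only $\liminf_n A_n^{(i)}/n^\gamma \geq A$ in distribution, not the matching upper bound.

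The paper closes this gap by working instead with the Lamperti-inverse $Z_n^{(i)}$ and the integral representation $A_n^{(i)}/n^\gamma=\int_0^\infty (Z_n^{(i)}(r))^\gamma\,\mathrm dr$. Tightness (from your moment bound) gives subsequential limits $(\mathcal X,\mathcal Z,\mathcal A)$; Fatou yields $\mathcal A\geq \int_0^\infty \mathcal Z^\gamma$, which is the easy direction. For the hard direction the paper uses the \emph{Markov property} of the discrete chain at the stopping time $\lfloor n^\gamma\tau_n^{(i)}(t_0)\rfloor$ together with the moment bound $c_A=\sup_{m,j}m^{-\gamma}\mathbb E[A_m^{(j)}]$ to get
\[
\mathbb E\Big[\frac{A_n^{(i)}}{n^\gamma}\Big]\ \leq\ \mathbb E[\tau_n^{(i)}(t_0)]\ +\ c_A\,\mathbb E\big[(Z_n^{(i)}(t_0))^\gamma\big],
\]
and then sends $n\to\infty$ and $t_0\to\infty$ to obtain $\mathbb E[\mathcal A]\leq \mathbb E[\int_0^\infty \mathcal Z^\gamma]$. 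This step genuinely uses the Markov structure of the chain and the uniform moment bound on the \emph{remaining} absorption time from an arbitrary state; it is not a consequence of process-level convergence alone.
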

\begin{proof} The key point to adapt the proofs of Theorem 4.1 and Theorem 4.2 of \cite{HS18}  to our more general situation is to show that, for all $p\geq 0$ and all $i\in [\kappa]$, $n^{-p\gamma}\mathbb E[(A_n^{(i)})^{p}]$ is uniformly bounded in $n$. This is a straightforward consequence of Proposition \ref{Hmoments}, since  $A_n^{(i)}$ is in fact the height of a linear MB tree satisfying the assumptions of Theorem \ref{theo:melange}. In the following we let
$$
\mathrm c_A:=\sup_{i \in [\kappa], n\geq 1} \frac{\mathbb E[A_n^{(i)}]}{n^{\gamma}} <\infty.
$$
Introduce then for all $n\geq 1,i \in [\kappa]$ and $t\geq 0$, 
$$\tau_n^{(i)}(t):=\inf \left\{u\geq 0 : \int_0^u \left(\frac{X^{(i)}_n(\lfloor n^{\gamma} r\rfloor)}{n}\right)^{-\gamma}\mathrm dr>t \right\}, \qquad  Z_n^{(i)}(t):=\frac{X^{(i)}_n(\lfloor n^{\gamma} \tau^{(i)}_n(t)\rfloor)}{n}$$
and note that 
\begin{equation*}
\tau^{(i)}_n(t)=\int_0^t (Z^{(i)}_n(r))^{\gamma}\mathrm dr, \quad \forall t \geq 0,  \qquad \text{and} \qquad \frac{A_n^{(i)}}{n^{\gamma}}=\int_0^{\infty}(Z^{(i)}_n)^{\gamma}(r) \mathrm dr.
\end{equation*}
Theorem 4.1 of \cite{HS18} in fact asserts that, as soon as at least one of the measures $\mu^{(i)}, i \in [\kappa]$ is nontrivial,
\begin{equation}
\label{cvjointHS18}
\left(\frac{X^{(i)}_n(\lfloor n^{\gamma} \cdot \rfloor)}{n},Z^{(i)}_n \right) \underset{n \rightarrow \infty}{\overset{\mathrm(d)}\longrightarrow} (X,Z), \quad \text{ for all types }i \in [\kappa]
\end{equation}
where $X$ is defined in Theorem \ref{th:old1dmixing}, $Z:=X \circ \tau$ and $\tau(t):=\inf\{u \geq 0:\int_0^u (X(r))^{-\gamma} \mathrm dr>t\}$. We recall that $A$ denotes the absorption time $X$. Similarly to the discrete case, we have that $\tau(t)=\int_0^{t} (Z(r))^{\gamma}\mathrm dr, \forall t \geq 0$ and $A=\int_0^{\infty}(Z(r))^{\gamma} \mathrm dr$. Our goal is to deduce from this and the finiteness of $\mathrm c_A$ the joint convergence of 
$$
\left(\frac{X^{(i)}_n(\lfloor n^{\gamma} \cdot \rfloor)}{n},\frac{A^{(i)}_n}{n^{\gamma}} \right) \quad \text{to} \quad (X,A)
$$
(the convergence of the moments $\mathbb E[n^{-\gamma p}(A_n^{(i)})^p]$ will then follow immediately from the boundedness of these moments for all $p\geq 1$). By the Skorokhod representation theorem we may assume that (\ref{cvjointHS18}) holds almost surely, which we do from now on. Note in particular that a.s. for almost every $r \geq 0$, $Z_n^{(i)}(r) \rightarrow Z(r)$, and that all these quantities are between 0 and 1.

From (\ref{cvjointHS18}) and  the finiteness of $\mathrm c_A$, we see that the sequence $(n^{-1}X^{(i)}_n(\lfloor n^{\gamma}\cdot \rfloor), Z_n^{(i)}, n^{-\gamma}A^{(i)}_n)_n$ is tight. Let $(\mathcal X,\mathcal Z,\mathcal A)$ denote a possible limit of a subsequence, say along the subsequence $(\sigma(n))_n$. If we prove that $(\mathcal X,\mathcal Z,\mathcal A)$ is distributed as $(X,Z, A)$, the proof of the corollary will be finished. By (\ref{cvjointHS18}), $(\mathcal X, \mathcal Z)$ is distributed as $(X,Z)$ and so our goal is to prove that $\mathcal A = \int_{0}^{\infty} (\mathcal Z(r))^{\gamma} \mathrm dr$ a.s. Note that, by (\ref{cvjointHS18}) again and Fatou's lemma, $\int_{0}^{\infty} (\mathcal Z(r)^{\gamma} \mathrm dr \leq \mathcal A$ a.s. It is therefore sufficient to prove that $\mathbb E[\mathcal A] \leq \mathbb E[ \int_{0}^{\infty} (\mathcal Z(r))^{\gamma} \mathrm dr]$.
In that aim, note that for all $t_0 \geq 0$, by dominated convergence
$$
\mathbb E\big[\tau_{n}^{(i)}(t_0)\big]=\mathbb E\bigg[\int_0^{t_0}(Z_{n}^{(i)}(r))^{\gamma}\mathrm dr\bigg] \underset{n \rightarrow \infty}\longrightarrow \mathbb E\bigg[\int_0^{t_0}(Z(r))^{\gamma}\mathrm dr\bigg]=\mathbb E[\tau(t_0)]. 
$$
Then, a key observation is that for all $t_0 \geq 0$,  $\lfloor n^{\gamma}\tau^{(i)}_n(t_0) \rfloor$ is a stopping time with respect to the filtration generated by $(X_n^{(i)},J_n^{(i)})$ and thus:
$$
A^{(i)}_n=\lfloor n^{\gamma}\tau^{(i)}_n(t_0) \rfloor+\tilde A^{(J_n^{(i)}(\lfloor n^{\gamma}\tau^{(i)}_n(t_0)\rfloor ))}_{X_n^{(i)}(\lfloor n^{\gamma}\tau^{(i)}_n(t_0)\rfloor)}
$$
where $\tilde A$ is distributed as $A$ and independent of $(X_n^{(i)},J_n^{(i)})(\lfloor n^{\gamma}\tau^{(i)}_n(t_0)\rfloor)$. Consequently,
$$
\mathbb E\Bigg[\frac{A^{(i)}_n}{n^{\gamma}} \Bigg] \leq \mathbb E\big[\tau^{(i)}_n(t_0)\big]+ \mathrm c_A \mathbb E\big[\big(Z^{(i)}_n(t_0)\big)^{\gamma}\big].
$$
For all $\varepsilon>0$, we have that $\mathrm c_A \mathbb E[(Z(t_0))^{\gamma}] \leq \varepsilon$ for $t_0$ large enough (since $Z$ is non-increasing and converges to 0) and therefore for a.e. $t_0$ large enough,
$$
\limsup_{n\rightarrow \infty} \mathbb E\Bigg[\frac{A^{(i)}_n}{n^{\gamma}}\Bigg] \leq \mathbb E[\tau(t_0)]+\varepsilon \leq  \mathbb E\Bigg[\int_0^{\infty} (Z(r))^{\gamma} \mathrm dr\Bigg] + \varepsilon.
$$
Besides, $\sigma(n)^{-\gamma} A^{(i)}_{\sigma(n)}$ converges  in distribution to $\mathcal A$ and the moments $n^{-p\gamma}\mathbb E[(A_n^{(i)})^{p}]$ are all bounded, so $\mathbb E[\sigma(n)^{-\gamma} A^{(i)}_{\sigma(n)}]$ converges to $\mathbb E[\mathcal A]$. 

Finally we have shown that $\mathbb E[\mathcal A] \leq \mathbb E[\int_0^{\infty} (Z(r))^{\gamma} \mathrm dr] + \varepsilon$ \ for all $\varepsilon>0$, hence  $\mathbb E[\mathcal A] \leq \mathbb E[\int_0^{\infty} (Z(r))^{\gamma} \mathrm dr]$ as required.
\end{proof}

\section{Scaling limits of MB trees: proofs of Theorems \ref{theo:critique} and \ref{theo:melange}}
\label{sec:proofs2}

For the main part of the proofs, we follow the standard structure where one proves the convergence of the finite-dimensional marginals (in the sense of Aldous: the $k$-dimensional marginal is the subtree spanned by $k$ exchangeable leaves) and then establishes a tightness property to show that the whole tree is close to the marginals with large dimension. As  already mentioned, some lines of our proofs will not differ too much from those in the monotype setting, and we mainly detail the difference arising in the multi-type framework. The proof of the convergence of the finite-dimensional marginals is done by induction on $k$. We will start for $k=1$ with the asymptotic description of a bivariate Markov chain that describes the sizes and types of the embedded subtrees containing a typical leaf, relying on results of \cite{HS18} (though those results are not strong enough in the mixing case if some of the limiting measures are null, thus we use the improvement from Corollary \ref{cor:tempsabsorption}). The induction process is then based on the MB property. For this, the multi-type structure is more involved in the critical case than in the mixing case, and so we will at times focus specifically on it. The proof of tightness mostly differs from the monotype one because Proposition \ref{Hmoments} is more difficult to establish in the mixing case, but proceeds similarly to \cite{HM12} once we have Proposition \ref{Hmoments}. 

Throughout the section, $(q_n(i))$ is  a sequence of splitting distributions that satisfies (\ref{hyp:principal}), (\ref{eq:simple}) and either the hypotheses of Theorem \ref{theo:critique} or Theorem \ref{theo:melange}, and $(T_n^{(i)})$ is an associated sequence of MB trees. We use the notations introduced in Section \ref{sec:proofs1}, without redefining them.
Section \ref{sec:cvfd} is devoted to the finite-dimensional convergences and Section~\ref{sec:tightness} to the tightness, which will lead to the Gromov-Hausdorff convergence of the rescaled trees. Finally, we add the measures in Section \ref{sec:measure}.

\subsection{Convergence of finite-dimensional marginals}
\label{sec:cvfd}

We now have the tools to prove the following:

\begin{prop}
\label{prop:cvfd}
Assume that the splitting distributions $(q_n^{(i)})$ satisfy the hypotheses of Theorem \ref{theo:critique}, with in the limit a vector $\bnu$ of dislocation measures. Then, jointly for all finite $B \subset \mathbb N$,
$$
n^{-\gamma} \cdot \mathcal R(T_n^{(i)},B) \overset{(d)}{\underset{n \rightarrow \infty} \longrightarrow} \mathcal R(\mathcal T_{\gamma,\bnu},B)
$$
for the Gromov-Hausdorff topology.

Similarly, if $(q_n^{(i)})$ satisfies the hypotheses of Theorem \ref{theo:melange}, then jointly for all finite $B \subset \mathbb N$,
$$
n^{-\gamma} \cdot \mathcal R(T_n^{(i)},B) \overset{(d)}{\underset{n \rightarrow \infty} \longrightarrow} \mathcal R(\mathcal T_{\gamma,\nu},B)
$$
for the Gromov-Hausdorff topology, where $\nu=\sum_{i=1}^{\kappa}\chi_i\nu^{(i)}$, with the notation  $(\chi_i,\nu^{(i)})$ introduced in Theorem  \ref{theo:melange}.
\end{prop}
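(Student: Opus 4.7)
The plan is to prove both statements of the proposition simultaneously by induction on $k=\#B$, taking $B=[k]$ by exchangeability. Throughout we use the notation of Section \ref{sec:prem}: $(X_n,J_n)$ is the discrete tagged fragment chain, $D_k^{(n)}$ is the time at which $[k]$ is first split, and $\mathcal R(T_n^{(i)},[k])$ is the discrete $k$-marginal. The analogous continuous objects are $(X,J)$, $D_k$, and $\mathcal R(\mathcal T^{(i)}_{\gamma,\bnu},[k])$.

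\textbf{Base case $k=1$.} The marginal $\mathcal R(T_n^{(i)},\{1\})$ is a segment of length $D_{\{1\}}^{(n)}$, equal to the absorption time of $(X_n,J_n)$. I would check that the transitions (\ref{transitions}) satisfy the hypotheses of Theorem \ref{th:old1dmixing}/Corollary \ref{cor:tempsabsorption}. In the critical/solo regimes, writing
\[ \sum_{m=0}^n f(m/n)(1-m/n)\,p_n^{(i)}(m) = \sum_{\bar\lambda\in\overline{\p}_n} q_n^{(i)}(\bar\lambda)\sum_{m\geq 1}\frac{\lambda_m}{n} f\!\left(\tfrac{\lambda_m}{n}\right)\!\left(1-\tfrac{\lambda_m}{n}\right) \]
(after the size-bias) and applying (\ref{hypocvcritique}) identifies the limit with the subordinator characteristics of the tagged process in $\mathcal T^{(i)}_{\gamma,\bnu}$ from Section \ref{sec:construcmultMB}. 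In the mixing regime, assumptions (\ref{hypocvmixing}) and (\ref{hypocvmixing2}) with $\beta<\gamma$ produce the averaged Laplace exponent $\sum_i\chi_i\psi_i$ matching $\nu=\sum_i\chi_i\nu^{(i)}$. Corollary \ref{cor:tempsabsorption} then yields the joint Skorokhod convergence of $n^{-1}X_n^{(i)}(\lfloor n^{\gamma}\cdot\rfloor)$ and $n^{-\gamma}D_{\{1\}}^{(n)}$ (with all moments) to the tagged continuous process and its absorption time.

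\textbf{Inductive step.} Assume the claim holds for all marginals of size less than $k$. To bootstrap to $k$, I would decompose $\mathcal R(T_n^{(i)},[k])$ into three pieces: the spine from the root up to level $D_k^{(n)}-1$, the typed partition $[k]\cap\overline\Pi_n(D_k^{(n)})$ created at the splitting step, and, conditionally on the sizes $\lambda_1,\ldots,\lambda_b$ of the emerging blocks carrying the surviving integers, independent MB marginals in trees $T_{\lambda_m}^{(i_m)}$ of size strictly less than $n$. Lemma \ref{lem:separationdiscret} provides an explicit weighted expression for the joint law of the first two pieces, whose limit is exactly the integral appearing in Lemma \ref{lem:fragsplit}: the spine piece converges by the base case, while the size-biased factor $\lambda_1^{k-1}/n^{k-1}\cdot(n/\lambda_1)$ together with the weak convergence (\ref{hypocvcritique}) (resp.\ (\ref{hypocvmixing})) against continuous bounded test functions $h\mathbf 1_{A_k}$ delivers the paintbox measure $\kappa_{\bar\nu^{(i_1(D_k-))}}$ (resp.\ $\kappa_{\nu}$) at the split. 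Applying the inductive hypothesis to each subtree after the split and rescaling all lengths by $n^{-\gamma}$, the continuous mapping theorem (for the map that glues $k$-ary marginals at a common split point) yields the desired convergence of $n^{-\gamma}\cdot\mathcal R(T_n^{(i)},[k])$ to $\mathcal R(\mathcal T^{(i)}_{\gamma,\bnu},[k])$, and analogously in the mixing case.

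\textbf{Main obstacle.} Two points are delicate. First, to pass to the limit in Lemma \ref{lem:separationdiscret} one must control the weight $(X_n(r-1)-1)_{k-1}/(n-1)_{k-1}\approx(X_n(r-1)/n)^{k-1}$, which requires uniform integrability of powers of the tagged process; Proposition \ref{Hmoments} and Corollary \ref{cor:tempsabsorption} together deliver this. Second, in the mixing regime when some $\nu^{(i)}$ vanish, the type $J_n(D_k^{(n)}-)$ along the spine mixes on the fast scale $n^{\beta}\ll n^{\gamma}$, and a macroscopic split can only arise when $J_n$ visits a type with $\nu^{(J_n)}\neq 0$. The averaging that produces $\nu=\sum_i\chi_i\nu^{(i)}$ must therefore be justified by combining the Markov-chain ergodic theorem for $P_n$ with the bound from Proposition \ref{Hmoments}, reducing the question to the $1$-dimensional case already handled by Corollary \ref{cor:tempsabsorption}. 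Once this averaging is established at the spine, the induction proceeds as in the simpler critical case.
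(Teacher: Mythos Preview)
Your overall architecture (induction on $k$, base case via the tagged fragment chain, inductive step via Lemma \ref{lem:separationdiscret} matched against Lemma \ref{lem:fragsplit}) is exactly the paper's route. However, in the critical case there is a real gap at the inductive step that you gloss over.

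When you pass to the limit in the sum from Lemma \ref{lem:separationdiscret}, the integrand contains $p_{X_n(r-1)}^{(J_n(r-1))}\big(h\,\mathbf 1_{\{[k]\cap\bar\pi=\bar\pi'\}}\big)$, so the \emph{type} $J_n(r-1)$ enters explicitly. To obtain the limiting paintbox $\kappa_{\bar\nu^{(i_1(u))}}$ you need joint convergence of $(X_n,J_n)$ after time rescaling, not just of $X_n$ and its absorption time. But your base case only claims convergence of $n^{-1}X_n(\lfloor n^\gamma\cdot\rfloor)$ and $n^{-\gamma}D_{\{1\}}^{(n)}$ (and you cite Theorem \ref{th:old1dmixing}/Corollary \ref{cor:tempsabsorption}, which are mixing-regime statements and say nothing about $J_n$). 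In the critical regime, the full process $J_n(\lfloor n^\gamma\cdot\rfloor)$ need not converge in Skorokhod topology on its own, because the limiting type process may have infinitely many jumps. The paper resolves this by proving a stronger $1$-dimensional result (second part of Corollary \ref{cor:hypok1}): joint convergence of $(X_n,J_n)$ \emph{stopped at the $p$-th type change} $S_n(p)$, for each fixed $p$. The proof of Lemma \ref{lem:cvseparation} then first establishes convergence on the event $\{D_k^{(n)}<S_n(p)\}$, and afterwards removes the truncation by showing $\pr(D_k<S(p))\to 1$ as $p\to\infty$ (since $S(p)\to D_1$ and $D_k<D_1$ a.s.). This truncation-at-bounded-type-changes device is the missing idea in your sketch; without it, the phrase ``delivers the paintbox measure $\kappa_{\bar\nu^{(i_1(D_k-))}}$'' is not justified.

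A secondary remark: for domination when taking the limit in the integral, the paper does not use Proposition \ref{Hmoments} but simply takes $f,g,h$ continuous with compact support in $(0,\infty)$ and $(0,\infty)^b$, which makes the integrand bounded and the domain effectively compact; the extension to all bounded continuous $f,g,h$ is then done a posteriori using that $D_k$, $|\Pi_{(1)}(D_k-)|$ and the block frequencies are a.s.\ strictly positive. Your uniform-integrability route via Proposition \ref{Hmoments} is plausible but would need more work to handle the product of the $(X_n/n)^{k-1-\gamma}$ factor with the $n^\gamma p_n^{(\cdot)}$ factor near the boundary.
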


The joint convergence follows straightforwardly from the individual convergences since for $B' \subset B$, the $B'$-marginal is embedded in the $B$-marginal (by using the same leaf labels). We therefore just have to prove the convergence for each $B$. By exchangeability, we can moreover restrict ourselves to $B=[k]$, for $k\geq 1$. We will proceed by induction on $k$.

\subsubsection{The case of the 1-dimensional marginals: $k=1$}

The case $k=1$ amounts to describing the asymptotic behavior of $D_1^{(n)}$, which can be interpreted as the absorption time of the tagged fragment chain $(X_n,J_n)$ defined in (\ref{def:discretetag}). We will show that divided by $n^{\gamma}$, this absorption time converges in distribution to the absorption time of a $\gamma$-Lamperti transformed Markov additive process with characteristic $\bnu$ in the critical or solo cases and $\nu$ in the mixing case, with $\bnu,\nu$ defined in Proposition \ref{prop:cvfd} above. To iterate and move to higher integers $k$,  we will in fact need more information. In that aim we will describe here the asymptotic of the whole process $X_n$ and, in the critical case, a stronger version modified to include the types if we only go up to a bounded number of type changes. In that aim we will use results of \cite{HS18} as well as the complementary Corollary \ref{cor:tempsabsorption}, that give sufficient conditions on the transition probabilities of bivariate Markov chains to ensure their convergence to some Lamperti-transformed Markov additive process after an appropriate rescaling. 
We recall that the transition probabilities of $(X_n,J_n)$ are given by $(\ref{transitions})$. 

All convergences of processes listed below hold with respect to the Skorokhod topology on appropriate spaces. Corollary \ref{cor:hypok1} and Corollary \ref{cor:hypok2} imply Proposition \ref{prop:cvfd} when $k=1$.

\bigskip

\textbf{Critical and solo cases.} We assume that $(q_n^{(i)})$ satisfy the hypotheses of Theorem \ref{theo:critique}.

\medskip

\begin{lem}
\label{lem:hypok1}
For all types $i$ and $j$ and any continuous function $f$ on $[0,1]$,
\begin{equation*}
\label{Hcr}
n^{\gamma}\sum_{m=0}^n \left(1-\mathbf{1}_{\{j=i\}}\frac{m}{n}\right)f\left(\frac{m}{n}\right)p_{n,i}(m,j) \underset{n\to\infty}\longrightarrow \int_{\overline{\s}^{\downarrow}} \sum_{\ell:i_{\ell}=j}s_{\ell}(1-s_{\ell}\mathbf{1}_{\{j=i\}})f(s_{\ell}) \bar{\nu}^{(i)}(\mathrm d\bar{\mathbf{s}}).
\end{equation*}
\end{lem}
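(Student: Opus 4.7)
The natural strategy is to express the left-hand side as an integral against the probability measure $\bar \nu_n^{(i)}$, then exploit the weak convergence hypothesis (\ref{hypocvcritique}) applied to the family of finite measures $n^\gamma(1-s_1\mathbf{1}_{\{i_1=i\}})\bar \nu_n^{(i)}$.

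Substituting the formula (\ref{transitions}) for $p_{n,i}(m,j)$ and exchanging the order of summation rewrites the left-hand side as
\[n^\gamma \int_{\overline{\mathcal S}^{\downarrow}} G^{(i,j)}(\bar{\mathbf s}) \, \bar \nu_n^{(i)}(\mathrm d \bar{\mathbf s}), \qquad G^{(i,j)}(\bar{\mathbf s}) := \sum_{\ell : i_\ell = j} s_\ell (1-s_\ell \mathbf{1}_{\{j=i\}}) f(s_\ell),\]
while the right-hand side is precisely $\int G^{(i,j)} \, \mathrm d \bar \nu^{(i)}$. It thus suffices to establish that $n^\gamma \int G^{(i,j)} \, \mathrm d\bar\nu_n^{(i)} \to \int G^{(i,j)}\, \mathrm d\bar\nu^{(i)}$.

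The key point is the pointwise bound $|G^{(i,j)}(\bar{\mathbf s})| \leq 2\|f\|_\infty (1-s_1\mathbf{1}_{\{i_1=i\}})$. For $j \neq i$, when $i_1 = i$ the index $\ell = 1$ is absent from the sum, so the sum is bounded by $\|f\|_\infty \sum_{\ell \geq 2} s_\ell \leq \|f\|_\infty(1-s_1)$; when $i_1 \neq i$ the right-hand side of the bound equals $\|f\|_\infty$. For $j = i$, the $\ell=1$ contribution (when $i_1=i$) is controlled using $s_1(1-s_1) \leq 1-s_1$, and the $\ell \geq 2$ contribution by $\|f\|_\infty \sum_{\ell \geq 2} s_\ell$. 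We may therefore write $G^{(i,j)} = (1-s_1\mathbf{1}_{\{i_1=i\}}) H^{(i,j)}$ for a uniformly bounded function $H^{(i,j)}$, defined by this quotient where the denominator is nonzero and extended by $0$ at $\{(s_1,i_1)=(1,i)\}$. By the reductions of Section~\ref{sec:assumptions} we may assume all $\bar \nu_n^{(i)}$ to be supported on the conservative set $\{\sum_\ell s_\ell = 1\}$, as is the limiting dislocation measure $\bar \nu^{(i)}$. On this conservative set, $G^{(i,j)}$ is continuous: for each fixed $\ell$ with $s_\ell > 0$ the type of the $\ell$-th block is locally constant while $s_\ell \mapsto s_\ell(1-s_\ell\mathbf{1}_{\{j=i\}})f(s_\ell)$ vanishes at $0$, and the tail is uniformly controlled by $\|f\|_\infty \sum_{\ell \geq n_0} s_\ell$, which is continuous at conservative points and vanishes with $n_0$ by the continuity properties recalled in Section~\ref{sec:multi-type frag}. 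Consequently $H^{(i,j)}$ is continuous everywhere except possibly at $\{s_1=1,i_1=i\}$, a set carrying no mass under $(1-s_1\mathbf{1}_{\{i_1=i\}})\bar\nu^{(i)}$.

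The Portmanteau theorem then yields
\[n^\gamma \int G^{(i,j)}\,\mathrm d\bar\nu_n^{(i)} = \int H^{(i,j)} \cdot n^\gamma(1-s_1\mathbf{1}_{\{i_1=i\}}) \, \mathrm d\bar\nu_n^{(i)} \, \longrightarrow \, \int H^{(i,j)}(1-s_1\mathbf{1}_{\{i_1=i\}})\,\mathrm d \bar\nu^{(i)} = \int G^{(i,j)}\,\mathrm d\bar\nu^{(i)},\]
completing the proof. The main technical obstacle lies in the continuity verification for $G^{(i,j)}$ on the conservative part of $\overline{\mathcal S}^{\downarrow}$: types could a priori be reshuffled in the limit and the infinite sum need not converge absolutely, but conservation of mass together with the vanishing of $s\, f(s)$ at $s=0$ and the built-in continuity of typed partial sums at conservative points supply the uniform tail control that is needed.
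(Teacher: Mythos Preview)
Your proof is correct and follows essentially the same approach as the paper's: rewrite the sum as an integral of a bounded function $g$ (your $H^{(i,j)}$) against the measure $n^{\gamma}(1-s_1\mathbf{1}_{\{i_1=i\}})\bar{\nu}_n^{(i)}$, verify that $g$ is bounded by $2\|f\|_\infty$ and continuous at every conservative point of $\overline{\s}^{\downarrow}$ except possibly $((1,i),(0,0),\ldots)$, and conclude via the weak convergence hypothesis (\ref{hypocvcritique}). Your write-up is simply more explicit than the paper's in justifying the bound and the continuity at conservative points.
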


\medskip

As a consequence, the transition probabilities of $(X_n,J_n)$ satisfy the assumptions of the ``critical regime" and ``solo regime" theorems of \cite{HS18}. Specifically, assuming (\ref{hypocvcritique2}) then we can use Theorem 3.1, 3.2 and Lemma 3.3 of \cite{HS18}, and if we assume (\ref{hypocvcritique3}) we can use Theorem 5.1. This leads to the following scaling limit in distribution:
\begin{cor}
\label{cor:hypok1}
Let $(X,J)$ denote a $\gamma$-Lamperti transform of a Markov additive process with characteristic $\bnu$, and $D_1$ the absorption time at 0 of $X$. Then
\begin{equation}\label{eq:1dimwithdeathtime}
\left(\frac{X_n(\lfloor n^{\gamma}\cdot \rfloor )}{n},\frac{D_1^{(n)}}{n^{\gamma}}\right) \overset{(d)}{\underset{n\to\infty}\longrightarrow} \big(X(\cdot),D_1\big).
\end{equation}
Moreover, \emph{in the critical case}, let $S(p)$ denote the time of $p$-th type change of $(X,J)$ and $S_n(p)$ the time of $p$-th type change of $(X_n,J_n)$, for all  $p\in\N$ (with the convention that these time changes are equal to the absorption time if there are fewer than $p$ time changes before the process is absorbed). We then have 
\[\left(\frac{X_n(\lfloor n^{\gamma}(\cdot \wedge S_n(p))\rfloor)}{n},J_n(\lfloor n^{\gamma}(\cdot \wedge S_n(p))\rfloor), \frac{S_n(p)}{n^{\gamma}}\right) \overset{(d)}{\underset{n\to\infty}\longrightarrow} \big(X(\cdot\wedge S(p)),J(\cdot\wedge S(p)),S(p)\big).\]
\end{cor}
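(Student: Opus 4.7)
My plan is to deduce both convergences directly from Lemma \ref{lem:hypok1} together with the results of \cite{HS18} and our complementary Corollary \ref{cor:tempsabsorption}. First, summing the conclusion of Lemma \ref{lem:hypok1} over $j \in [\kappa]$ and testing against an arbitrary continuous $f$ on $[0,1]$, I obtain
\[
n^{\gamma}\sum_{m=0}^{n} f\!\left(\tfrac{m}{n}\right)\!\left(1-\tfrac{m}{n}\right) p_n^{(i)}(m) \ \underset{n \to \infty}{\longrightarrow}\ \int_{\overline{\s}^{\downarrow}} \sum_{\ell \geq 1} s_\ell(1-s_\ell) f(s_\ell)\,\bar{\nu}^{(i)}(\mathrm d \bar{\mathbf s}),
\]
which is exactly the size-marginal assumption appearing in Theorem \ref{th:old1dmixing} (with $\beta$ irrelevant since in the critical/solo regime we are outside the mixing framework). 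Because each $\bar{\nu}^{(i)}$ is a true dislocation measure, the limiting measure on the right is nontrivial for every $i$.

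Next, for the first part of the corollary, I separate the critical and solo subcases. Under \eqref{hypocvcritique2}, taking $f \equiv 1$ and $j \neq i$ in Lemma \ref{lem:hypok1} gives $n^{\gamma}\sum_m p_{n,i}(m,j) \to \int \sum_{\ell : i_\ell = j} s_\ell\,\bar{\nu}^{(i)}(\mathrm d\bar{\mathbf s})$, which together with the size convergence puts us precisely in the scope of Theorems~3.1 and~3.2 and Lemma~3.3 of \cite{HS18}, yielding the convergence $n^{-1}X_n(\lfloor n^{\gamma}\cdot\rfloor) \to X$ in distribution for the Skorokhod topology. Under \eqref{hypocvcritique3} the same sum is $o(n^{-\gamma})$, placing us in Theorem~5.1 of \cite{HS18}. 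To upgrade to the joint convergence with the absorption time $D_1^{(n)}/n^{\gamma}$, I invoke Corollary \ref{cor:tempsabsorption}: since the summed limit above is nontrivial for every $i$, its hypotheses are satisfied, and the joint convergence in \eqref{eq:1dimwithdeathtime} follows (the convergence of moments is not needed here but will be useful later).

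For the second, more refined statement in the critical case, I would proceed as follows. The time $S_n(p)$ of the $p$-th type change of $(X_n,J_n)$ is a sum of $p$ successive "interarrival" intervals, each of which corresponds to the absorption time of a chain whose sole source of killing is the type change itself. Using the strong Markov property of $(X_n,J_n)$ at each type-change epoch and conditioning on the type and size right after that epoch, each interarrival is of the form treated in the first part. Concatenating the successive pieces (an induction on the number $p$ of observed transitions), I expect to obtain the stated joint convergence of the size process, the type process and $S_n(p)$, using the continuous-mapping theorem applied to the Skorokhod-continuous ``stop at the $p$-th jump of $J$" functional.

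The main obstacle I anticipate is verifying that the limit process $(X,J)$ indeed has only finitely many type changes before any finite time, so that $S(p)$ is a.s.\ finite and continuous as a functional of $(X,J)$ restricted to the interval $[0,S(p)]$, and that the discrete-time jump instants $S_n(p)$ converge jointly with the path rather than merely marginally. This will be handled by combining the strong Markov property of the Markov additive process with the finite type-change-rate property encoded in condition (iv) of the definition of a dislocation measure.
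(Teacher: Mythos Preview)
Your overall strategy---verify the hypotheses via Lemma~\ref{lem:hypok1} and then invoke the scaling-limit theorems of \cite{HS18}---is exactly what the paper does. The paper's entire argument is the sentence preceding the corollary: Lemma~\ref{lem:hypok1} puts $(X_n,J_n)$ in the scope of Theorems~3.1, 3.2 and Lemma~3.3 of \cite{HS18} under \eqref{hypocvcritique2}, and of Theorem~5.1 under \eqref{hypocvcritique3}, and those results already deliver \emph{both} displayed convergences, including the joint convergence with the absorption time and the stopped-at-$S(p)$ statement.

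There is one genuine misstep in your write-up: you should not invoke Corollary~\ref{cor:tempsabsorption} here. That corollary is an extension of Theorem~\ref{th:old1dmixing}, whose hypotheses include the mixing assumption $n^{\beta}(P_n-I)\to Q$ with $\beta<\gamma$. In the critical regime the type-change probability is of order $n^{-\gamma}$, not $n^{-\beta}$, so the hypotheses of Theorem~\ref{th:old1dmixing} (hence of Corollary~\ref{cor:tempsabsorption}) are simply not met. The absorption-time convergence in \eqref{eq:1dimwithdeathtime} comes directly from Theorem~3.2 (critical) and Theorem~5.1 (solo) of \cite{HS18}; no extra argument is needed since every $\bar\nu^{(i)}$ is a nontrivial dislocation measure.

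For the second display, your induction-on-$p$ plan via the strong Markov property at successive type-change epochs is sound and is essentially what Lemma~3.3 of \cite{HS18} packages. The paper simply cites that lemma rather than reproving it, so your route is longer but not wrong. Your anticipated obstacle about finitely many type changes before a fixed time is a non-issue: condition~(iv) on the dislocation measures makes the type-change rate of the underlying Markov additive process finite, so $S(p)<\infty$ a.s.\ and the stopping functional is well-behaved.
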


\medskip

\textit{Proof of Lemma \ref{lem:hypok1}.}
Let the types $i,j$ and $f$, a continuous function on $[0,1]$, be fixed. In order to prove the convergence of the statement,
rewrite the left-hand side as 
\begin{equation*}
n^{\gamma}\sum_{\bar{\lambda}\in\overline{\mathcal{P}}_n}q_n^{(i)}(\bar{\lambda})\sum_{\ell:i_{\ell}=j}\left(1-\mathbf 1_{\{j=i\}}\frac{\lambda_\ell}{n} \right)\frac{\lambda_{\ell}}{n}f\left(\frac{\lambda_{\ell}}{n}\right)=n^{\gamma}\sum_{\bar{\lambda}\in\overline{\mathcal{P}}_n}q_n^{(i)}(\bar{\lambda})\left(1-\frac{\lambda_1}{n}\mathbf{1}_{\{i_1=i\}}\right)g\left(\frac{\bar{\lambda}}{n}\right)
\end{equation*}
where, for $\overline{\mathbf{s}}\neq ((1,i),(0,0),\ldots),$ \[g(\mathbf{\bar s})=\frac{1}{1-s_1\mathbf{1}_{\{i_1=i\}}}\sum_{\ell:i_{\ell}=j}(1-\mathbf 1_{\{j=i\}}s_\ell)s_{\ell} f(s_\ell),\] while $g((1,i),(0,0),\ldots)$ can be arbitrarily set to $0$. Note that $g$ is bounded on $\overline{\s}^{\downarrow}$ by $2\sup_{x\in[0,1]} |f(x)|$ and is continuous at each point $\overline{\mathbf s}$ such that $\sum_{\ell=1}^{\infty}s_{\ell}=1$, except possibly at the point $((1,i),(0,0),\ldots)$, which is not an atom of $\bar{\nu}^{(i)}$ by assumption. Thus, by (\ref{hypocvcritique}) we directly have the expected convergence.
$\hfill \square$

\bigskip

\textbf{Mixing case.} We assume that $(q_n^{(i)})$ satisfy the hypotheses of Theorem \ref{theo:melange}, with $\beta<\gamma$.
\begin{lem}
\label{lem:hypok2}
We then have for all types $i$ and all continuous functions $f$ on $[0,1]$
\begin{equation*}
\label{Hmix1}
n^{\gamma}\sum_{j\in[\kappa]}\sum_{m=0}^n \left(1-\frac{m}{n}\right)f\left(\frac{m}{n}\right)p_{n,i}(m,j) \underset{n\to\infty}\longrightarrow  \int_{{\s}^{\downarrow}}\sum_{\ell\in\N}s_{\ell}(1-s_{\ell})f(s_{\ell})\nu^{(i)}(\mathrm d{\mathbf{s}})
\end{equation*}
and for $j\neq i$
\begin{equation*}
\label{Hmix2}
n^{\beta}\sum_{m=0}^np_{n,i}(m,j) \underset{n\to\infty}\longrightarrow Q_{i,j}
\end{equation*}
where $Q_{i,j}$ is the $(i,j)$-th entry of the matrix $Q$ appearing in Theorem \ref{theo:melange}.
\end{lem}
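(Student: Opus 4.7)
The plan is to reduce both convergences to the hypotheses of Theorem \ref{theo:melange}, namely (\ref{hypocvmixing}) and (\ref{hypocvmixing2}), after rewriting the transition probabilities $p_{n,i}(m,j)$ using their explicit form (\ref{transitions}) in terms of the splitting distributions $q_n^{(i)}$. The approach closely mirrors the proof of Lemma \ref{lem:hypok1}, with the essential simplification that in the mixing regime we can sum over types on both sides and work with the untyped measures $\nu_n^{(i)}$.

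For the first convergence, I would substitute (\ref{transitions}) and sum over $j$ to rewrite the left-hand side as
\[
n^{\gamma}\sum_{\bar\lambda\in \overline{\mathcal P}_n} q_n^{(i)}(\bar\lambda) \sum_{\ell\geq 1} \Big(1-\tfrac{\lambda_\ell}{n}\Big)\tfrac{\lambda_\ell}{n}\,f\!\left(\tfrac{\lambda_\ell}{n}\right)
=n^{\gamma}\int_{\s^{\downarrow}} (1-s_1)\,g(\mathbf s)\,\nu_n^{(i)}(\mathrm d\mathbf s),
\]
where $g(\mathbf s):=(1-s_1)^{-1}\sum_{\ell\geq 1}s_\ell(1-s_\ell)f(s_\ell)$ for $s_1<1$ and is set to $0$ at $(1,0,\ldots)$. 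The elementary bound $\sum_\ell s_\ell(1-s_\ell)\leq 2(1-s_1)$ (obtained by isolating the contribution of $s_1$) shows that $g$ is bounded by $2\|f\|_\infty$. Moreover $g$ is continuous at every $\mathbf s\in \s^{\downarrow}$ with $s_1<1$ and $\sum_\ell s_\ell=1$: the ``quadratic'' part $\sum s_\ell^2 f(s_\ell)$ is always continuous, and the ``linear'' part $\sum s_\ell f(s_\ell)$ is continuous at conservative sequences by the remarks following (\ref{def:metric}). Since each $\nu^{(i)}$ is either a dislocation measure (hence charging only conservative sequences with $s_1<1$) or null, the measure $(1-s_1)\nu^{(i)}$ sits on the continuity set of $g$ in both cases, and the portmanteau theorem applied to (\ref{hypocvmixing}) yields the stated limit.

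For the second convergence, I would use (\ref{transitions}) and the conservativity assumption (\ref{eq:simple}) to rewrite, for $j\neq i$,
\[
\sum_{m=0}^n p_{n,i}(m,j)=\sum_{\bar\lambda\in\overline{\mathcal P}_n} q_n^{(i)}(\bar\lambda)\sum_{\ell:\,i_\ell=j}\tfrac{\lambda_\ell}{n},
\]
and compare this with $P_n(i,j)=q_n^{(i)}(i_1=j)$, which converges at the correct rate by (\ref{hypocvmixing2}). The difference decomposes into (i) the defect $\sum q_n^{(i)}(\bar\lambda)(1-\lambda_1/n)\mathbf 1_{\{i_1=j\}}$ coming from size-biasing the largest fragment and (ii) the contribution $\sum q_n^{(i)}(\bar\lambda)\sum_{\ell\geq 2,i_\ell=j}(\lambda_\ell/n)$ from the smaller fragments. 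Both terms are bounded by $\int(1-s_1)\nu_n^{(i)}(\mathrm d\mathbf s)=O(n^{-\gamma})$ thanks to (\ref{hypocvmixing}), and this is $o(n^{-\beta})$ because $\beta<\gamma$. Multiplying through by $n^\beta$ gives the claim.

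The only minor subtlety, and the one point that does not appear in the monotype or critical settings, is the possibility that some $\nu^{(i)}$ in (\ref{hypocvmixing}) is the null measure: this is precisely why the argument above treats the null and nonzero cases on the same footing, using only that the continuity set of $g$ contains the support of $(1-s_1)\nu^{(i)}$ in every case. No further obstacle is expected.
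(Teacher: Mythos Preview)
Your proposal is correct and follows essentially the same route as the paper: for the first convergence you unfold the transition probabilities via (\ref{transitions}), introduce the bounded function $g$ on $\s^{\downarrow}$, check its continuity on conservative sequences with $s_1<1$, and apply (\ref{hypocvmixing}); for the second convergence you compare $\sum_m p_{n,i}(m,j)$ with $P_n(i,j)$ and bound the two correction terms by $\int(1-s_1)\nu_n^{(i)}=O(n^{-\gamma})=o(n^{-\beta})$, exactly as the paper does. Your remark about the null-measure case is a harmless observation rather than an additional difficulty.
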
 

\medskip

As a consequence, the transition probabilities of $(X_n,J_n)$ satisfy the assumptions of \cite[Theorem 4.1]{HS18}, which is sufficient to describe the asymptotic behavior of the process $X_n$ but not of its absorption time. To complete, we use our Corollary \ref{cor:tempsabsorption}, which leads to the scaling limit in distribution:

\begin{cor}
\label{cor:hypok2}
\begin{equation}\label{eq:1dimwithdeathtimemix}\left(\frac{X_n(\lfloor n^{\gamma} \cdot \rfloor)}{n},\frac{D_1^{(n)}}{n^{\gamma}}\right) \overset{(d)}{\underset{n\to\infty}\longrightarrow} \big(X(\cdot),D_1\big).
\end{equation}
where $X$ is a $\gamma$-Lamperti transform of a monotype Markov additive process with characteristic $\nu:=\sum_{i=1}^{\kappa} \chi_i\nu^{(i)}$, and $D_1$ is its absorption time at 0.
\end{cor}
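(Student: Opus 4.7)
The plan is simply to verify the hypotheses of Corollary \ref{cor:tempsabsorption} for the bivariate Markov chain $(X_n, J_n)$, whose one-step transition rates $(p_{n,i}(m,j))$ are given by formula (\ref{transitions}). Setting $p_n^{(i)}(m) := \sum_{j \in [\kappa]} p_{n,i}(m,j)$, the first convergence in Lemma \ref{lem:hypok2} yields directly condition (i) of Theorem \ref{th:old1dmixing}, with the finite measures on $(0,1)$ defined by
\[
\mu^{(i)}(\mathrm{d}x) := \int_{\mathcal{S}^\downarrow} \sum_{\ell \in \mathbb{N}} s_\ell(1-s_\ell) \delta_{s_\ell}(\mathrm{d}x)\, \nu^{(i)}(\mathrm{d}\mathbf{s}).
\]
Since by assumption at least one $\nu^{(i)}$ is a genuine (nonzero) dislocation measure, at least one $\mu^{(i)}$ is nontrivial, and this is precisely the setting in which Corollary \ref{cor:tempsabsorption} (rather than only Theorem \ref{th:old1dmixing}) is needed.

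For condition (ii), the marginal type-transition matrix of $(X_n, J_n)$ is $P_n(i,j) = \sum_{m=0}^n p_{n,i}(m,j)$. For $j \neq i$, the second convergence in Lemma \ref{lem:hypok2} gives $n^{\beta} P_n(i,j) \to Q_{i,j}$, and the diagonal entries are handled by $n^\beta(P_n(i,i)-1) = -\sum_{j\neq i} n^\beta P_n(i,j) \to -\sum_{j\neq i} Q_{i,j} = Q_{i,i}$, using the $\mathsf{Q}$-matrix property. Thus $n^\beta(P_n - I) \to Q$ with $Q$ irreducible and invariant measure $\chi$, and condition (ii) is satisfied.

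Corollary \ref{cor:tempsabsorption} then gives the joint convergence (\ref{eq:1dimwithdeathtimemix}) to $(X, D_1)$, where $X$ is the $\gamma$-Lamperti transform of a subordinator with Laplace exponent $\psi = \sum_{i=1}^\kappa \chi_i \psi_i$ and
\[
\psi_i(\lambda) = \int_0^1 (1-x^\lambda) \frac{\mu^{(i)}(\mathrm{d}x)}{1-x} = \int_{\mathcal{S}^\downarrow} \sum_{\ell} s_\ell(1-s_\ell^\lambda)\, \nu^{(i)}(\mathrm{d}\mathbf{s}).
\]
Linearity in $\nu$ gives $\psi(\lambda) = \int_{\mathcal{S}^\downarrow} \sum_\ell s_\ell(1-s_\ell^\lambda)\, \nu(\mathrm{d}\mathbf{s})$ with $\nu = \sum_i \chi_i \nu^{(i)}$, which is precisely the Laplace exponent associated (via Section \ref{sec:construcmultMB}) to the tagged fragment of the monotype fragmentation tree $\mathcal{T}_{\gamma,\nu}$.

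There is no real obstacle here: all the work has already been done in establishing Lemma \ref{lem:hypok2}, in proving Proposition \ref{Hmoments} (which underpinned the passage from Theorem \ref{th:old1dmixing} to Corollary \ref{cor:tempsabsorption} by ensuring uniform boundedness of the moments $n^{-p\gamma}\mathbb{E}[(A_n^{(i)})^p]$), and in the construction of multi-type fragmentation trees. The one point requiring a moment of care is the explicit bookkeeping identifying the $P_n$ of the abstract Theorem \ref{th:old1dmixing} with the marginal type-transition matrix of the tagged chain, and the Laplace-exponent computation showing the mixed subordinator is indeed the monotype tagged-fragment one; both are routine.
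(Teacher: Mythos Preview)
Your proof is correct and follows the same approach as the paper: the paper itself derives Corollary \ref{cor:hypok2} directly from Lemma \ref{lem:hypok2} and Corollary \ref{cor:tempsabsorption}, and you have simply made explicit the verification of hypotheses (i) and (ii) of Theorem \ref{th:old1dmixing} together with the identification of the mixed Laplace exponent $\psi=\sum_i\chi_i\psi_i$ with that of the monotype tagged fragment for $\nu=\sum_i\chi_i\nu^{(i)}$. The only addition over the paper's terse derivation is your explicit computation of $\psi_i$ and the bookkeeping on the two distinct matrices named $P_n$ (the one in \eqref{hypocvmixing2} versus the type-marginal of the tagged chain), which is a helpful clarification but not a different argument.
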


\medskip

\textit{Proof of Lemma \ref{lem:hypok2}.}
The proof of first convergence is essentially the same as the one of Lemma \ref{lem:hypok1}, and left to the reader.
It then remains to prove that for $j\neq i$,
\[n^{\beta}\sum_{m=0}^np_{n,i}(m,j) \underset{n\to\infty}\longrightarrow Q_{i,j}.\]
We rewrite the left-hand side as
\[n^{\beta}\sum_{\bar{\lambda}\in\overline{\mathcal{P}}_n}q_n^{(i)}(\bar{\lambda})\sum_{\ell:i_\ell=j}\frac{\lambda_\ell}{n}=n^{\beta}\sum_{\bar{\lambda}\in\overline{\mathcal{P}}_n}q_n^{(i)}(\bar{\lambda})\mathbf{1}_{\{i_1=j\}}+n^{\beta}\sum_{\bar{\lambda}\in\overline{\mathcal{P}}_n}q_n^{(i)}(\bar{\lambda})\bigg(\Big(\frac{\lambda_1}{n}-1\Big)\mathbf{1}_{\{i_1=j\}}+\sum_{\ell\geq 2:i_\ell=j}\frac{\lambda_\ell}{n}\bigg).\]
The first term converges to $Q_{i,j}$ by (\ref{hypocvmixing2}), and the second is of order $n^{\beta-\gamma}$ by (\ref{hypocvmixing}) and thus tends to $0$.
$\hfill \square$

\subsubsection{Convergence of $k$-dimensional marginals for $k\geq 2$}

Our goal is to prove Proposition \ref{prop:cvfd} by induction on $k$. For simplicity we focus on the {\bf critical case only} - the statements for the solo and mixing cases would require some modification (essentially removing information about the types of the partitions involved when $[k]$ is split) but the proofs would only get simpler, so we only provide the occasional comments and leave these details to the reader. From now we assume that  $(q_n^{(i)})$ satisfies hypotheses (\ref{hypocvcritique}) and (\ref{hypocvcritique2}), with in the limit a vector $\bnu$ of dislocation measures.  We start with two preliminary lemmas, recalling the notation and results of Section \ref{sec:prem}. The first lemma essentially relies on (\ref{hypocvcritique}) and is a straightforward adaptation of \cite[Lemma 26]{HM12} - we leave its proof to the reader. For all $n\geq 1$ and all $i \in [\kappa]$, $p_n^{(i)}$ denotes the distribution of $\overline \Pi_{n}(1)$.

\begin{lem}\label{lem:Hplus} Let $i\in [\kappa]$ and $\bar{\pi}'\in\overline{\mathcal{P}}_{[k]}$ with $b\geq 2$ blocks. Let $g:(0,\infty)^b\to \R$ be continuous with compact support, then
\[n^{\gamma}p^{(i)}_n\left(g\left(\frac{\#\pi_1}{n},\ldots,\frac{\#\pi_b}{n}\right)\mathbf{1}_{\{\bar{\pi}_{\mid [k]}=\bar{\pi}'\}}\right)\underset{n\to\infty}\longrightarrow \int_{\overline{\mathcal{P}}_{\N}}\kappa_{\bar{\nu}^{(i)}}\mathrm (d\bar{\pi})g\big(|\pi_1|,\ldots,|\pi_b|\big)\mathbf{1}_{\{\bar \pi_{\mid [k]}=\bar{\pi}'\}}.\]
\end{lem}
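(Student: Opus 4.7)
The strategy is the standard one for reducing discrete fragmentation laws to continuous paintbox integrals, exploiting that the hypothesis $b \geq 2$ keeps the integrand away from the problematic point $(1,i,0,0,\ldots)$ where the weight $(1-s_1\mathbf{1}_{\{i_1=i\}})$ vanishes.

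First I would express $p_n^{(i)}$ in terms of the splitting distribution $q_n^{(i)}$. Given $\bar\Lambda = \bar\lambda$ sampled from $q_n^{(i)}$, the partition $\overline{\Pi}_n(1)$ has the distribution of a uniform partition of $[n]$ into blocks of sizes $\lambda_1,\ldots,\lambda_{p(\bar\lambda)}$, with the $m$-th block carrying type $i_m$ and the blocks then reindexed by order of least element. On the event $\{\bar\pi_{\mid[k]}=\bar\pi'\}$, where $\bar\pi'$ has $b$ blocks $\pi'_1,\ldots,\pi'_b$ of types $j_1,\ldots,j_b$, the first $b$ blocks of $\overline{\Pi}_n(1)$ (by least element) are exactly the children containing leaves in $[k]$. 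I would enumerate over tuples $(m_1,\ldots,m_b)$ of distinct indices in $\{1,\ldots,p(\bar\lambda)\}$ with $i_{m_l}=j_l$, asking that the leaves of $\pi'_l$ be placed in child $m_l$. A standard combinatorial computation gives the conditional probability $\prod_{l=1}^b (\lambda_{m_l})_{|\pi'_l|}/(n)_k$ and in this case $\#\pi_l = \lambda_{m_l}$. Setting $F_n(\bar{\mathbf s}) := \sum_{(m_l)\text{ distinct},\,i_{m_l}=j_l} g(s_{m_1},\ldots,s_{m_b}) \prod_{l=1}^b (n s_{m_l})_{|\pi'_l|}/(n)_k$, one then has
\[
p_n^{(i)}\!\left(g\!\left(\tfrac{\#\pi_1}{n},\ldots,\tfrac{\#\pi_b}{n}\right)\mathbf{1}_{\{\bar{\pi}_{\mid[k]}=\bar{\pi}'\}}\right) = \int_{\overline{\s}^{\downarrow}} F_n(\bar{\mathbf s})\,\bar{\nu}_n^{(i)}(\mathrm{d}\bar{\mathbf s}).
\]

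Next I would observe that since $g$ has compact support in $(0,\infty)^b$ with $b \geq 2$, on the support of $F_n$ all the $s_{m_l}$ are bounded below by some $\varepsilon>0$, so in particular $s_1 \leq 1-\varepsilon$; consequently $1-s_1\mathbf{1}_{\{i_1=i\}}$ is bounded below on $\mathrm{supp}(F_n)$. Moreover $(ns)_a/n^a \to s^a$ uniformly on $s\geq \varepsilon$ as $n\to\infty$ and $(n)_k \sim n^k$, so $F_n$ converges pointwise to the continuous bounded function
\[
F(\bar{\mathbf s}) := \sum_{\substack{(m_1,\ldots,m_b)\text{ distinct}\\ i_{m_l}=j_l}} g(s_{m_1},\ldots,s_{m_b})\prod_{l=1}^b s_{m_l}^{|\pi'_l|}
\]
on compact subsets of its support, and the quotient $F(\bar{\mathbf s})/(1-s_1\mathbf{1}_{\{i_1=i\}})$ extends continuously and boundedly to $\overline{\s}^{\downarrow}$ (vanishing near $((1,i),(0,0),\ldots)$ since $b\geq 2$ forces all $s_{m_l}>0$ for distinct $m_l$). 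Applying the weak convergence (\ref{hypocvcritique}) to this bounded continuous test function yields
\[
n^{\gamma}\int F_n(\bar{\mathbf s})\,\bar{\nu}_n^{(i)}(\mathrm{d}\bar{\mathbf s}) \underset{n\to\infty}\longrightarrow \int F(\bar{\mathbf s})\,\bar{\nu}^{(i)}(\mathrm{d}\bar{\mathbf s}).
\]

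To finish, I would recognize the right-hand side as the integral against $\kappa_{\bar{\nu}^{(i)}}$ appearing in the statement. Indeed, for each fixed $\bar{\mathbf s}$ with $\sum_\ell s_\ell =1$, the paintbox $\kappa_{\bar{\mathbf s}}$ assigns each integer to color $m$ with probability $s_m$; by an elementary computation (matching the blocks $\pi'_l$ with colors via the sum over $(m_l)$ and noting that $|\pi_l|=s_{m_l}$ a.s.\ under the paintbox), one has $\kappa_{\bar{\mathbf s}}\!\left(g(|\pi_1|,\ldots,|\pi_b|)\mathbf{1}_{\{\bar\pi_{\mid[k]}=\bar\pi'\}}\right) = F(\bar{\mathbf s})$, so integrating against $\bar{\nu}^{(i)}$ gives exactly the announced limit. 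The main obstacle is bookkeeping — keeping track of the different orderings (lexicographic on $\bar\lambda$ versus ``by least element'' on $\overline{\Pi}_n(1)$) and ensuring the type-matching constraints $i_{m_l}=j_l$ are handled symmetrically in the discrete and continuous formulations — but no analytic difficulty arises beyond the uniform convergence of $(ns)_a/n^a$ to $s^a$ on the support of $g$.
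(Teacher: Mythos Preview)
Your proposal is correct and is precisely the ``straightforward adaptation of \cite[Lemma 26]{HM12}'' that the paper invokes without spelling out: express $p_n^{(i)}$ through the splitting law $q_n^{(i)}$ by a combinatorial placement of the $k$ labelled leaves, use the compact support of $g$ in $(0,\infty)^b$ with $b\geq 2$ to keep the resulting integrand away from $((1,i),(0,0),\ldots)$ so that division by $1-s_1\mathbf{1}_{\{i_1=i\}}$ is harmless, apply the weak convergence~(\ref{hypocvcritique}), and identify the limit via the paintbox formula. The only points worth tightening are the continuity of $F/(1-s_1\mathbf{1}_{\{i_1=i\}})$ on $\overline{\s}^{\downarrow}$ (which holds because the compact support of $g$ forces the sum defining $F$ to be locally finite and symmetric in the relevant indices) and the uniform approximation of $F_n$ by $F$, both of which are routine once the support constraint $s_{m_l}\geq\varepsilon$ is in place.
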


\noindent\text{Note:} this also holds in the solo case. In the mixing case, we would consider a partition $\pi'$ without types, and have the measure $\kappa_{\nu^{(i)}}$ in the limit.

In the following, we call $\Pi_{n,(m)}(t)$ the block of $\Pi_n(t)$ which contains the integer $m$ and $\Pi_{n,m}(t)$ the $m$-th block of $\Pi_n(t)$ when blocks are ordered by increasing least element.
\begin{lem}\label{lem:cvseparation} We have the following joint convergence in distribution:
\begin{eqnarray*}
&& \left(\frac{D_k^{(n)}}{n^{\gamma}},\overline{\Pi}_{n}(D_k^{(n)})\cap[k],\bigg(\frac{\#\Pi_{n,(m)}(D_k^{(n)})}{n},1\leq m\leq k\bigg)\right) \\
&\overset{(d)}{\underset{n\to\infty}\longrightarrow}&\Big(D_k,\overline{\Pi}(D_k)\cap [k],\left(|\Pi_{(m)}(D_k)|,1\leq m\leq k\right)\Big).
\end{eqnarray*}
\end{lem}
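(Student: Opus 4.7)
The strategy is to use the explicit formula from Lemma \ref{lem:separationdiscret} to express the joint law of $(D_k^{(n)}, \overline{\Pi}_n(D_k^{(n)})\cap [k], \text{block sizes})$ in terms of the tagged fragment chain $(X_n, J_n)$, and then identify the limit via the continuous analogue Lemma \ref{lem:fragsplit}. Fix $\bar{\pi}' \in \overline{\mathcal{P}}_{[k]}$ with $b \geq 2$ blocks and bounded continuous test functions $f, g, h$. Substituting $f(r/n^\gamma)$ for $f$ in Lemma \ref{lem:separationdiscret}, the right-hand side becomes a Riemann sum of mesh $n^{-\gamma}$ whose integrand has three pieces: the factor $(X_n(r-1)-1)_{k-1}/(n-1)_{k-1}$ rescales as $(X_n(r-1)/n)^{k-1}$ and its convergence is controlled by Corollary \ref{cor:hypok1} (or Corollary \ref{cor:hypok2} in the mixing case); the conditional paintbox probability $n^\gamma p_{X_n(r-1)}^{(J_n(r-1))}\big(h\,\mathbf{1}_{\{[k]\cap \bar\pi = \bar\pi'\}}\big)$ behaves like $(X_n(r-1)/n)^{-\gamma}\,\kappa_{\bar\nu^{(J_n(r-1))}}\big(h\,\mathbf{1}_{\{[k]\cap \bar\pi = \bar\pi'\}}\big)$ after applying Lemma \ref{lem:Hplus} to the embedded MB tree of size $X_n(r-1)$; and the product of these with the mesh $n^{-\gamma}$ yields precisely the integrand $(X(u))^{k-1-\gamma}\,\kappa_{\bar\nu^{(i_1(u))}}(\cdots)\,du$ of Lemma \ref{lem:fragsplit}.

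\textbf{Execution.} I would apply the Skorokhod representation theorem to Corollary \ref{cor:hypok1}: in the critical case one uses the stronger version that includes the type component $J_n$, but only up to the stopping time $S_n(p)$ of the $p$-th type change. For $p$ large, $\mathbb P(S(p) > D_k + 1)$ is close to $1$, and on this event the rescaled path $(X_n(\lfloor n^\gamma \cdot \rfloor)/n, J_n(\lfloor n^\gamma \cdot \rfloor))$ converges uniformly on $[0, D_k + 1]$ to $(X, J)$. The Riemann sum then converges to the integral of Lemma \ref{lem:fragsplit} by dominated convergence; uniform integrability is provided by Proposition \ref{Hmoments} applied to $H_n^{(i)} \geq D_k^{(n)}$, giving uniformly bounded moments of every order for $D_k^{(n)}/n^\gamma$, so that the tail of the Riemann sum (in the $r$-variable) is negligible uniformly in $n$. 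In the mixing case, types become irrelevant in the limit and only Corollary \ref{cor:hypok2} is needed, with the paintbox limit measure being $\kappa_{\nu}$ where $\nu = \sum_i \chi_i \nu^{(i)}$ already incorporates the averaging. Summing the resulting identity over the finitely many values of $\bar{\pi}'$ that appear (for each $b$) yields the joint convergence in distribution.

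\textbf{Main obstacle.} The most delicate point is justifying that, in the limit continuous-time model, $S(p) \to \infty$ almost surely as $p \to \infty$, so that letting $p$ grow bypasses the restriction to paths with at most $p$ type changes in Corollary \ref{cor:hypok1}. This follows from the Poissonian construction of the multi-type fragmentation process recalled in Section \ref{sec:construcmultMB}: the type of the tagged block changes at a finite rate $\sum_{j\neq i}\lambda_{i,j}$, so the Markov additive process $K$ has almost surely finitely many type changes on any compact time interval. A secondary technicality is that the indicator $\mathbf{1}_{\{[k]\cap \bar\pi = \bar\pi'\}}$ and the maps $\bar\pi \mapsto |\pi_m|$ are not everywhere continuous on $\overline{\mathcal P}_{\mathbb N}$; this is circumvented just as in Lemma \ref{lem:Hplus} by restricting the $g$ factor to have compact support in $(0,\infty)^b$ and using that the limit blocks possess asymptotic frequencies almost surely at the splitting time $D_k$, which is built into the construction of $\overline{\Pi}$.
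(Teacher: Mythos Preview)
Your overall architecture matches the paper's: rewrite the joint law via Lemma \ref{lem:separationdiscret} as a Riemann sum in $r/n^\gamma$, use Corollary \ref{cor:hypok1} for the path convergence and Lemma \ref{lem:Hplus} for the paintbox factor, identify the limit as the right-hand side of Lemma \ref{lem:fragsplit}, then let $p\to\infty$ to remove the truncation at the $p$-th type change.

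There is, however, a genuine error in your ``Main obstacle'' paragraph. The claim that $S(p)\to\infty$ a.s.\ is false: by the convention in Corollary \ref{cor:hypok1}, $S(p)\leq D_1$ for every $p$, and $D_1<\infty$ a.s.\ for the self-similar process. Your argument about finitely many type changes on compacts is a statement about the Markov additive process $(\xi,K)$, not about its Lamperti transform $(X,J)$; under the time change the infinitely many type changes of $K$ on $[0,\infty)$ accumulate at $D_1$. Consequently your claim that $\pr(S(p)>D_k+1)\to 1$ is also wrong (there is no reason $D_1>D_k+1$). The correct statement, used in the paper, is that $S(p)\uparrow D_1$ a.s.\ and $D_k<D_1$ a.s., hence $\pr(D_k<S(p))\to 1$; the $\veps$-argument then removes the truncation.

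A smaller point: the paper takes $f,g,h$ continuous with \emph{compact support} in $(0,\infty)$ and $(0,\infty)^b$ first, which makes the dominated convergence step immediate (the integrand is bounded and supported on a compact time interval), and only afterwards extends to bounded continuous functions using that $D_k$, $|\Pi_{(1)}(D_k-)|$ and the block frequencies are a.s.\ strictly positive on the relevant event. Your route via Proposition \ref{Hmoments} handles the tail in $r$, but you still need to control the spatial factor $(X_n/n)^{-\gamma}\cdot X_n^\gamma p_{X_n}^{(J_n)}(\cdots)$ uniformly when $X_n/n$ is small, which is exactly what the compact-support device sidesteps.
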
 
\noindent\text{Note:} this also holds for the solo case. In the mixing case, we would not want to keep information about the types, so the second term on the left hand side would be $\Pi^{(n)}(D_k^{(n)})\cap[k]$, and the second term on the right-hand side $\Pi(D_k)\cap[k].$
\begin{proof}
Let $p\in\N$, we first prove a version of this stopped at the $p$-th type change of the block containing $1$. For this we use the notation $S_n(p),S(p)$ introduced in Corollary \ref{cor:hypok1}. Let $\bar \pi'\in\overline{\mathcal{P}}_{[k]}$ with $b\geq 2$ blocks and $f:(0,\infty)\to \R$, $g:(0,\infty)\to\R,$ $h:(0,\infty)^b\to\R$ be continuous functions with compact support. By Lemma \ref{lem:separationdiscret}, we have

\begin{align*}
\E&\Bigg[f\left(\frac{D_k^{(n)}}{n^{\gamma}}\right)g\left(\frac{X_n(D_k^{(n)}-1)}{n}\right) \\
&\hspace{3.3cm} h\left(\frac{\#\Pi_{n,m}(D_k^{(n)})}{X_n(D_k^{(n)}-1)},1\leq m\leq b\right),\overline{\Pi}^{(n)}(D_k^{(n)})\cap[k]=\bar{\pi}',D_k^{(n)}<S_{n}(p)\Bigg]\\
&=\sum_{r\in \mathbb N}f\left(\frac{r}{n^{\gamma}}\right)\E\Bigg[\frac{(X_n(r-1)-1)_{k-1}}{(n-1)_{k-1}}g\left(\frac{X_n(r-1)}{n}\right) \\
&\hspace{4cm} p_{X_n(r-1)}^{(J_n(r-1))}\left(h\left(\frac{\#\pi_{m}}{X_n(r-1)},1\leq m\leq b\right),\bar{\pi}\cap[k]=\bar{\pi}'\right),r<S_{n}(p)\Bigg] \\
&=\int_{n^{-\gamma}}^{\infty}\mathrm d u\, f\left(\frac{\lfloor n^{\gamma}u\rfloor}{n^{\gamma}}\right)\E\Bigg[\frac{(\mathring{X_n}(u)-1)_{k-1}}{(n-1)_{k-1}}\frac{n^{\gamma}}{(\mathring{X_n}(u))^{\gamma}}g\left(\frac{\mathring{X_n}(u)}{n}\right) \\
&\hspace{4cm} (\mathring{X_n}(u))^{\gamma}p_{\mathring{X_n}(u)}^{(\mathring{J_n}(u))}\left(h\left(\frac{\#\pi_{m}}{\mathring{X_n}(u)},1\leq m\leq b\right),\bar \pi\cap[k]=\bar \pi'\right),\lfloor n^{\gamma}u\rfloor <S_n(p)\Bigg]
\end{align*}
where $\mathring{X_n}(u):=X_n(\lfloor n^{\gamma}u\rfloor-1)$ and $\mathring{J_n}(u):=J_n(\lfloor n^{\gamma}u\rfloor-1).$ Using the convergences in distribution of Corollary \ref{cor:hypok1} and Lemma \ref{lem:Hplus}, we obtain by dominated convergence (since the functions $f,g,h$ have compact support) that this integral converges to
\[\int_0^{\infty}f(u)\mathrm d u\, \E\Bigg[\left(X(u)\right)^{k-1-\gamma} g\left(X(u)\right)\Big(\int_{\overline{\mathcal{P}}_{\N}} \kappa_{\bar{\nu}^{(J(u))}}(\mathrm d\bar{\pi})h(|\pi_m|,1\leq m \leq b)\mathbf{1}_{\{\bar \pi\cap[k]=\bar \pi'\}}\Big),u<S(p)\Bigg],\]
which by Lemma \ref{lem:fragsplit} is equal to
\[\E\left[f(D_k)g\left(|\overline{\Pi}_{(1)}(D_k -)|\right)h\left(\frac{|\Pi_m(D_k)|}{|\Pi_{(1)}(D_k -)|},1\leq m\leq b\right),\overline{\Pi}(D_k)\cap[k]=\bar \pi',D_k<S(p)\right].\]

Since $D_k$, $|\overline{\Pi}_{(1)}(D_k^-)|$ and $(|\Pi_m(D_k)|,1\leq m\leq b)$ are strictly positive on the event where $\overline{\Pi}(D_k)\cap [k]=\overline \pi',$ this classically extends to all continuous and bounded functions $f,$ $g,$ and $h$. In particular, we have $\pr(D_k^{(n)}<S_n(p))\to\pr(D_k< S(p)).$ Now let $\veps>0$. Since $S(p)\to D_1$ a.s.\ and $D_k<D_1$ a.s., there exists $p\in\N$ such that $\pr(D_k<S(p))>1-\veps,$ and $\pr(D_k^{(n)}<S_n(p))>1-\veps$ for $n$ large enough. Taking $n$ possibly larger, we then obtain
\begin{align*}
&\Bigg|\E\Bigg[f\left(\frac{D_k^{(n)}}{n^{\gamma}}\right)g\left(\frac{X_n(D_k^{(n)}-1)}{n}\right) h\left(\frac{\#\Pi^{(n)}_{m}(D_k^{(n)})}{X_n(D_k^{(n)}-1)},1\leq m\leq b\right),\overline{\Pi}^{(n)}(D_k^{(n)})\cap[k]=\bar{\pi}'\Bigg]\\
&-\E\left[f(D_k)g\left(|\overline{\Pi}_1(D_k-)|\right)h\left(\frac{|\Pi_i(D_k)|}{|\Pi_1(D_k-)|}\right),\overline{\Pi}(D_k)\cap [k]=\pi'\right]\Bigg|<2\veps,
\end{align*}
concluding the proof.
\end{proof}

With Lemma \ref{lem:cvseparation} in hand, we can complete the proof of the convergence of the finite-dimensional marginals.

\noindent\textit{End of the proof of the convergence of Proposition \ref{prop:cvfd}.} We want to prove the convergence in distribution of $n^{-\gamma} \cdot \mathcal{R}(T_n^{(i)},[k])$ to $\mathcal{R}(\T_{\gamma,{\bnu}}^{(i)},[k])$ by induction on $k$. The case where $k=1$ has already been treated in the previous section, so let us now assume $k\geq 2,$ and that the convergence of marginals with dimension at most $k-1$ has been proven. The tree $\mathcal{R}(T_n^{(i)},[k])$ can be described the following way: conditionally on $\overline{\Pi}_{n}(D_k^{(n)})\cap[k]=\bar{\pi}=\big((\pi_1,i_1),\ldots,(\pi_b,i_b)\big)$, $\mathcal{R}(T_n^{(i)},[k])$ consists in a segment with length $D_k^{(n)},$ at the end of which are grafted subtrees which, conditionally on $\big(\#\Pi_{n,m}(D_k^{(n)}),1\leq m\leq b\big),$ are independent and are copies of the trees $\mathcal{R}\big(T_{{\tiny{\#\Pi_{n,m}(D_k^{(n)})}}}^{(i_m)},[\#\pi_m]\big),$ $1\leq m\leq b$. Applying the induction hypothesis and Lemma \ref{lem:cvseparation} then shows that the initial segment and the attached subtrees, when rescaled by $n^{\gamma}$, jointly converge in distribution to a segment with length $D_k$ and trees which, conditionally on $\overline{\Pi}(D_k)\cap[k]=\bar{\pi}$ and $\left(|\Pi_{m}(D_k)|,1\leq m\leq b\right)$, are independent and are distributed as copies of the trees $|\Pi_m(D_k)|^{\gamma}\cdot \mathcal{R}\big(\T_{\gamma,{\bnu}}^{(i_m)},[\#\pi_m ]\big),1\leq m\leq b.$ However, when we remove the conditioning on $\overline{\Pi}(D_k)\cap[k],\left(|\Pi_{m}(D_k)|,1\leq m\leq b\right)$ the grafting of such trees has the same distribution as $\mathcal{R}(\T^{(i)}_{\gamma,{\bnu}},[k])$ by self-similarity, and the proof is thus ended.
\qed

\subsection{Tightness}
\label{sec:tightness}

In this section, we will always assume either the hypotheses of Theorem \ref{theo:critique} or those of Theorem \ref{theo:melange}.
The following lemma will lead to the Gromov-Hausdorff part of those theorems. 

\begin{lem}\label{lem:tightness}
For all $\eta>0$ and $i\in[\kappa],$
\[
\underset{k\to\infty}\lim\underset{n\to\infty}\limsup \,\pr\big(d_{\mathrm{GH}}(\mathcal{R}(T_n^{(i)},[k]),T_n^{(i)})\geq \eta \, n^{\gamma}\big)=0.
\]
\end{lem}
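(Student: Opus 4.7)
The plan is to follow the standard route: bound $d_{\mathrm{GH}}(\mathcal{R}(T_n^{(i)},[k]),T_n^{(i)})$ by the maximum height of the subtrees of $T_n^{(i)}$ that branch off the marginal, and then control these heights moment-wise using Proposition \ref{Hmoments}. Write $\Delta_n^{(k)}$ for this maximum. A subtree hanging off $\mathcal{R}(T_n^{(i)},[k])$ is rooted at a vertex of the marginal and corresponds (via the fragmentation process $\overline{\Pi}_n$) to a sub-block, arising at some split, that contains none of the integers $1,\dots,k$ while its parent block did. By the MB branching property, conditionally on the ``skeleton'' of the marginal (i.e.\ the sizes $(m_v)$ and types $(i_v)$ of all these hanging subtrees), each hanging subtree is an independent copy of $T_{m_v}^{(i_v)}$.

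Second, fix $p$ with $p\gamma>1$ (so Proposition \ref{Hmoments} applies and is usable). Using the union bound $\max_v (H_{m_v}^{(i_v)})^p\le\sum_v (H_{m_v}^{(i_v)})^p$, conditioning on the skeleton, and Markov's inequality, we obtain
\[
\pr\big(\Delta_n^{(k)}\ge \eta\, n^{\gamma}\big)\;\le\;\frac{C}{\eta^{p}}\,\mathbb E\!\left[\sum_v \Big(\frac{m_v}{n}\Big)^{p\gamma}\right] \;=:\; \frac{C}{\eta^{p}}\,\Sigma_n^{(i,k)},
\]
with $C$ the uniform constant of Proposition \ref{Hmoments}. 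The problem thus reduces to proving $\limsup_{n\to\infty}\Sigma_n^{(i,k)}\to 0$ as $k\to\infty$, uniformly over the initial type $i\in[\kappa]$.

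Third, I would establish this by a recursive argument based on the MB property. Setting $\Sigma_n^{(k)}:=\max_{i}\Sigma_n^{(i,k)}$ and inspecting what happens at the first split of $[k]$ (encoded by $\overline{\Pi}_n$ at time $D^{(n)}_k$), a direct computation gives
\[
\Sigma_n^{(i,k)} \;\le\; \sum_{\bar\lambda\in\overline{\p}_n} q_n^{(i)}(\bar\lambda)\!\!\sum_{\ell\ge 1}\Big(\tfrac{\lambda_\ell}{n}\Big)^{p\gamma}\!\Big[\mathbf 1_{\{\text{block }\ell\text{ avoids }[k]\}} + \mathbf 1_{\{\text{block }\ell\text{ meets }[k]\}}\,\Sigma_{\lambda_\ell}^{(i_\ell,k')}\Big],
\]
where $k'$ is the (random) number of elements of $[k]$ falling into block $\ell$, which is approximately $k\lambda_\ell/n$. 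Iterating, this shows $\Sigma_n^{(k)}$ is bounded by an expectation involving paintbox computations: a block of asymptotic frequency $s$ avoids all of $k$ i.i.d.\ uniform integers with probability $(1-s)^k$. Using the hypothesis (\ref{hypocvcritique}) (resp.\ (\ref{hypocvmixing})) after rescaling by $n^\gamma$, the leading-order behaviour of the recursion is governed by integrals of the form $\int_{\overline{\mathcal S}^{\downarrow}}\sum_{\ell}s_\ell^{p\gamma}(1-s_\ell)^k\bar\nu^{(i)}(\mathrm d\bar{\mathbf s})$, which are finite since $p\gamma>1$ and tend to $0$ as $k\to\infty$ by dominated convergence (dominated by $\sum_{\ell}s_\ell^{p\gamma}(1-s_\ell)$, which is $\bar\nu^{(i)}$-integrable since $p\gamma-1>0$ and $\bar\nu^{(i)}$ is a dislocation measure). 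The convergence (\ref{hypocvcritique}) together with the uniform height moment bound lets one close the induction on $k$ in the spirit of \cite[Proposition 33]{HM12}.

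The main obstacle is achieving uniformity in $n$ in the estimate $\Sigma_n^{(i,k)}\to 0$: a naive passage to the limit on finite pieces, using Proposition \ref{prop:cvfd} and Lemma \ref{lem:marginalstowholetree} (which would give the continuum analog on $\mathcal T^{(i)}_{\gamma,\bnu}$ directly), only controls finitely many hanging subtrees, while we must also control ``late'' subtrees born at small scales. This is exactly where Proposition \ref{Hmoments} is essential: the uniform moment bound $\mathbb E[(H^{(j)}_m)^p]\le C m^{p\gamma}$ over all $m,j$ gives the discrete dominating estimate that makes the recursion contractive at every scale, bypassing the need for a separate tightness argument for the hanging subtrees. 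With this in hand, the conclusion $\limsup_n\Sigma_n^{(i,k)}\to 0$ follows from the recursive bound together with the weak convergence of the rescaled splitting measures and Fatou's lemma.
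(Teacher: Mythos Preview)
Your first two steps are correct and coincide with the paper's argument: bound the Gromov--Hausdorff distance by the maximal height of the hanging subtrees, condition on the collection of their sizes and types $(m_v,i_v)$, and apply Markov together with Proposition~\ref{Hmoments} to reduce the problem to showing
\[
\limsup_{n\to\infty}\E\Big[\sum_v (m_v/n)^{p\gamma}\Big]\;\underset{k\to\infty}{\longrightarrow}\;0.
\]
The paper takes $p=2/\gamma$, but any $p\gamma>1$ would do.

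Where you diverge from the paper is in controlling this expectation, and here your argument has a genuine gap. Your one--step recursion produces terms $\Sigma_{\lambda_\ell}^{(i_\ell,k_\ell)}$ where $k_\ell$ is the (random) number of marked leaves landing in block $\ell$; but $k_\ell$ can be as small as $1$, and $\Sigma_m^{(j,1)}$ is not small (with a single marked leaf, essentially every sub-block along the spine hangs off the marginal). So the recursion is not contractive in $k$, and iterating it does not yield the bound you claim. Relatedly, the integral $\int\sum_\ell s_\ell^{p\gamma}(1-s_\ell)^k\,\bar\nu^{(i)}(\mathrm d\bar{\mathbf s})$ captures only the hanging subtrees created at a \emph{single} macroscopic dislocation, whereas the sum $\sum_v(m_v/n)^{p\gamma}$ collects contributions from dislocations at \emph{every} scale along the marginal; the dominated convergence you invoke on that one integral does not control the full sum. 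Proposition~\ref{Hmoments} was already fully spent in reaching the quantity $\Sigma_n^{(i,k)}$ and gives you nothing further for closing the recursion.

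The paper bypasses this entirely with an exchangeability trick you are missing. The partition $\pi$ of $\{k+1,\ldots,n\}$ into hanging sub-blocks is exchangeable, hence
\[
\E\Big[\sum_m(\#\pi_m)^2\Big]=(n-k)\,\E\big[\#\pi_{(k+1)}\big],
\]
which converts the whole sum into the expected size of the \emph{single} block containing the extra leaf $k+1$ at the moment it separates from $[k]$. That quantity is then compared to its continuum analogue $\E\big[|\Pi_{(k+1)}(\Sigma(k+1)-)|\big]$ via Lemma~\ref{lem:cvpartitionfrag}, and the latter tends to $0$ as $k\to\infty$ since $\Sigma(k+1)\uparrow D_1$ a.s. This reduces everything to the one--dimensional tagged--fragment analysis already carried out, with no recursion needed.
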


Indeed, with the notation of Theorem \ref{theo:critique} and using the classical \cite[Theorem 3.2]{Bill99}, Lemma \ref{lem:tightness} combined with the convergences of $n^{-\gamma}\mathcal{R}(T_n^{(i)},[k])$ to $\mathcal{R}(\T_{\gamma,\bnu}^{(i)},[k])$ for all $k$ as $n\rightarrow \infty$ (Proposition \ref{prop:cvfd}) and the convergence of $\mathcal{R}(\T^{(i)}_{\gamma,\bnu},[k])$ to $\T_{\gamma,\bnu}^{(i)}$ as $k\rightarrow \infty$, immediately gives the convergence in distribution of $n^{-\gamma} \cdot T_n^{(i)}$ to $\T_{\gamma,\bnu}^{(i)}$ \ in the critical and solo regimes. We conclude similarly in the mixing regime.

Lemma \ref{lem:tightness} itself hinges on Proposition \ref{Hmoments} and the following lemma, of which we will not give a proof. The reader can use the proof of its monotype analogue in \cite{HM12}  as a reference -- see in particular Lemma 32 there and the concluding lines on Page 2633.
\begin{lem}\label{lem:cvpartitionfrag}
Fix $i\in[\kappa].$ Let, for $k\in\N$ and $n\geq k+1,$ \[\Sigma_n(k+1)=\inf\left\{r\in\N:\,[k]\cap\Pi_{n,(k+1)}^{(i)}(r)=\emptyset\right\}\]
(recalling that $\Pi_{n,(k+1)}^{(i)}(r)$ denotes the block of $\Pi_{n}^{(i)}(r)$ which contains $k+1$),
and
\[\Sigma(k+1)=\inf\left\{t\geq 0:\,[k]\cap\Pi_{(k+1)}^{(i)}(t)=\emptyset\right\}.\]
Then
\[\underset{n\to\infty}\limsup\,\E\left[\frac{1}{n}\#\Pi_{n,(k+1)}^{(i)}(\Sigma_n(k+1))\right]\leq \E\left[\left|\Pi^{(i)}_{(k+1)}(\Sigma(k+1)-)\right|\right].\]
\end{lem}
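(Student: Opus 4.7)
The plan is to first establish the joint convergence in distribution
\[
\left(\frac{\Sigma_n(k+1)}{n^{\gamma}},\ \frac{1}{n}\#\Pi_{n,(k+1)}^{(i)}(\Sigma_n(k+1))\right)\ \overset{(d)}{\underset{n\to\infty}{\longrightarrow}}\ \left(\Sigma(k+1),\ |\Pi_{(k+1)}^{(i)}(\Sigma(k+1))|\right),
\]
then pass to the limit in expectation (the second coordinate is bounded by $1$, so bounded convergence applies), and finally invoke the pathwise inequality $|\Pi_{(k+1)}^{(i)}(\Sigma(k+1))|\leq |\Pi_{(k+1)}^{(i)}(\Sigma(k+1)-)|$. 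This last inequality holds since, by definition, $\Sigma(k+1)$ is a splitting time of the block containing the integer $k+1$, so the asymptotic frequency of this block can only (weakly) decrease at $\Sigma(k+1)$. Combined with the convergence of expectations, this yields the claimed $\limsup$ bound (in fact with equality replaced by a true limit on the left-hand side).

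To obtain the joint convergence, I would iterate Lemma \ref{lem:cvseparation} along the discrete path from the root of $T_n^{(i)}$ to the leaf $L_{k+1}$ in $\mathcal{R}(T_n^{(i)},[k+1])$. The first branching event along this path occurs at time $D_{k+1}^{(n)}$, and Lemma \ref{lem:cvseparation} (with $k$ replaced by $k+1$) gives the joint convergence of the rescaled time, the induced typed partition of $[k+1]$, and the block masses. If the block containing $k+1$ after this split still meets $[k]$, then the multi-type MB branching property lets me restart a fresh MB process inside the subtree descending from that block, and apply Lemma \ref{lem:cvseparation} again to its own intersection with $[k+1]$. Iterating, one obtains a finite sequence of branching events on the root-to-$L_{k+1}$ path, ending exactly at step $\Sigma_n(k+1)$ when the tagged block becomes disjoint from $[k]$. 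The analogous procedure carried out in $\mathcal{R}(\T^{(i)}_{\gamma,\bnu},[k+1])$, using the self-similarity of the limiting fragmentation tree, produces the matching limiting sequence, and the tagged block mass immediately after the final branching coincides with $|\Pi_{(k+1)}^{(i)}(\Sigma(k+1))|$.

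The main obstacle is the clean bookkeeping of this iteration, since the number of branching events along the $L_{k+1}$-path is random, though bounded by $k$ uniformly in $n$, both discretely and in the limit. A convenient device is to establish the joint convergence by induction on the step count $p$ of the iteration (combining Lemma \ref{lem:cvseparation} with the MB branching property at each step), working on the event that at least $p$ branching events occur, and then taking $p$ up to $k$. The argument is essentially the same in the critical, solo, and mixing regimes: one only uses Lemma \ref{lem:cvseparation} together with the adjustments already signalled after Lemma \ref{lem:Hplus} and Lemma \ref{lem:cvseparation} (removing the type information in the mixing case). The types therefore cause no additional difficulty beyond what is already handled by Lemma \ref{lem:cvseparation}.
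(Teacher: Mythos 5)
Your proposal is correct in spirit, and the overall structure (joint convergence of the separation time and block mass, bounded convergence for expectations, then a pathwise monotonicity inequality) is sound. The iterated application of Lemma \ref{lem:cvseparation} along the root-to-$L_{k+1}$ spine, restarting via the MB branching property after each split, is exactly the right engine: each iteration strictly shrinks the subset $B_p\subset[k+1]$ meeting the tagged block, so the number of steps is bounded by $k$ uniformly, and since the intermediate restricted partitions live in a finite set, the joint convergence of the full iteration (including its random length) does follow from Lemma \ref{lem:cvseparation}, despite the random stepping count. The paper omits the proof and points to the monotype analogue in [HM12], which is built on this same family of ideas.

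One useful structural remark: your argument proves strictly more than the lemma states, namely the exact limit
\[
\E\Big[\tfrac{1}{n}\#\Pi_{n,(k+1)}^{(i)}(\Sigma_n(k+1))\Big]\ \longrightarrow\ \E\big[|\Pi_{(k+1)}^{(i)}(\Sigma(k+1))|\big],
\]
with the block mass taken at (not strictly before) $\Sigma(k+1)$. The lemma only asserts a $\limsup$ bound by the \emph{left} limit $|\Pi_{(k+1)}^{(i)}(\Sigma(k+1)-)|$, and this weaker form is not an accident: since $\Sigma(k+1)$ is a jump time of the tagged block mass process, Skorokhod convergence of $t\mapsto n^{-1}\#\Pi_{n,(k+1)}^{(i)}(\lfloor n^\gamma t\rfloor)$ together with convergence of the random times $\Sigma_n(k+1)/n^\gamma$ does \emph{not} automatically determine whether the evaluated value converges to $Y(\Sigma(k+1))$ or to $Y(\Sigma(k+1)-)$; it only sandwiches accumulation points between the two, which (for a decreasing process) is precisely the $\limsup$ bound by $Y(\Sigma(k+1)-)$ and, via Fatou, the stated inequality. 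So there is a lighter route: establish only the convergence in distribution of $\Sigma_n(k+1)/n^\gamma$ (jointly with the block-mass process), and let the general Skorokhod semicontinuity at jump times produce the one-sided estimate. Your approach instead pins down which side of the jump one lands on, by working directly with the post-split masses provided by Lemma \ref{lem:cvseparation}. Both work, but if you only need the $\limsup$ bound, the lighter route saves a layer of bookkeeping at each iteration step (no need to carry the conditional split masses beyond what drives the time change).

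No genuine gap, but the one thing you must be careful to make precise is that the termination events (``$B_p$ disjoint from $[k]$'') converge in probability at each step. That is where you implicitly rely on the fact that Lemma \ref{lem:cvseparation} converges the induced \emph{partition} of $[k+1]$ (not just the masses), so that the termination indicator is a continuous functional of the converging data; worth spelling out explicitly when you write the iteration, especially when you switch to the mixing regime where the type information is discarded but the block decomposition of $[k+1]$ is still needed.
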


\begin{proof}[Proof of Lemma \ref{lem:tightness}] Fix $i\in[\kappa],$ $k\in\N$ and $n\geq k$. Let $\bar{\pi}=(\pi,\mathbf{i})$ be the random exchangeable typed partition of $\{k+1,\ldots,n\}$ whose typed blocks are those of $\Pi^{(i)}_n(\cdot)$ when they split off of $[k].$ Specifically, its typed blocks are those of the form $\overline{\Pi}_{n,(m)}^{(i)}(l)$, $m\in\{k+1,\ldots,n\},l\in\N$ such that $\Pi_{n,(m)}^{(i)}(l)\cap [k]=\emptyset$ and $\Pi_{n,(m)}^{(i)}(l-1)\cap [k]\neq\emptyset,$ where $\overline{\Pi}_{n,(m)}^{(i)}(l)$ denotes the block of $\overline{\Pi}_{n}^{(i)}(l)$ which contains $m.$ It is then clear that $T_n^{(i)}$ can be obtained from $\mathcal{R}(T_n^{(i)},[k])$ by grafting to some vertices of the latter independent copies of $T^{(i_m)}_{\#\pi_m},$ of which we note the heights $H^{(i_m)}_{\#\pi_m},$  and thus
\begin{align*}
\pr\left(d_{\mathrm{GH}}(\mathcal{R}(T_n^{(i)},[k]),T_n^{(i)})\geq \eta \, n^{\gamma}\right)&\leq \E\left[\sum_{m\in\N}\pr\left(H_{\#\pi_m}^{(i_m)}\geq \eta\,n^{\gamma}\right)\right] \\
&\leq \E\left[\sum_{m\in\N} \E\left[\frac{\left(H_{\#\pi_m}^{(i_m)}\right)^{2/\gamma}}{\eta^{2/\gamma}n^2}\mid \bar{\pi}_m\right]\right]. 
\end{align*}
Applying Proposition \ref{Hmoments} (which holds both for the critical, solo and mixing regimes) with $p=\frac{2}{\gamma}$ yields
\[\pr\left(d_{\mathrm{GH}}\left(\mathcal{R}(T_n^{(i)},[k]),T_n^{(i)}\right)\geq \eta \, n^{\gamma}\right)\leq \frac{C}{\eta^{2/\gamma}}\E\left[\sum_{m\in\N}\frac{(\#\pi_m)^2}{n^2}\right],\]
for some finite constant $C$. Since $\pi$ is an exchangeable partition of a set with $n-k$ members, we have $\E[\# \pi_{(k+1)}]=\frac{1}{n-k}\E[\sum_{m\in\N}(\#\pi_m)^2],$ hence

\[\pr\left(d_{\mathrm{GH}}\left(\mathcal{R}(T_n^{(i)},[k]),T_n^{(i)}\right)\geq \eta \, n^{\gamma}\right)\leq \frac{C}{\eta^{2/\gamma}}\E\left[\#\frac{\pi_{(k+1)}}{n}\right].\]

Noticing that $\pi_{(k+1)}=\Pi_{n,(k+1)}^{(i)}(\Sigma_n(k+1)),$ Lemma \ref{lem:cvpartitionfrag} implies that
\[\underset{n\to\infty}\limsup\,\pr\left(d_{\mathrm{GH}}\left(\mathcal{R}(T_n^{(i)},[k]),T_n^{(i)}\right)\geq \eta \, n^{\gamma}\right)\leq \frac{C}{\eta^{2/\gamma}}\E\left[|\Pi^{(i)}_{(k+1)}(\Sigma(k+1)-)|\right].
\]
However, by exchangeability, $\Pi^{(i)}_{(k+1)}(\Sigma(k+1)-)$ has the same distribution as $\Pi^{(i)}_{(1)}(\Sigma'(k+1)-)$
where $\Sigma'(k+1)=\inf\big\{t\geq 0:\,\{2,3,\ldots,k+1\}\cap\Pi_{(1)}^{(i)}(t)=\emptyset\big\},$ and thus
\[\underset{n\to\infty}\limsup\, \pr\left(d_{\mathrm{GH}}\left(\mathcal{R}(T_n^{(i)},[k]),T_n^{(i)}\right)\geq \eta \, n^{\gamma}\right)\leq \frac{C}{\eta^{2/\gamma}}\E\left[|\Pi^{(i)}_{(1)}(\Sigma'(k+1)-)|\right].
\]
This expectation, however, tends to $0$ as $k$ tends to infinity, because $\Sigma'(k+1)$ converges a.s.\ to $D_1^{(i)}.$
\end{proof}

\subsection{Adding the measure}\label{sec:measure}

The final step in the proofs of Theorem \ref{theo:critique} and Theorem \ref{theo:melange} consists in, knowing that $n^{-\gamma} \cdot T_n^{(i)}$ converges in distribution for the Gromov-Hausdorff topology, adding the measure $\mu_n^{(i)}$ to it and proof the Gromov-Hausdorff-Prokhorov convergence. Again, the arguments are the same for both theorems but the notation is different, so we use the hypotheses and notation of Theorem \ref{theo:critique}. These arguments are in fact very similar to the corresponding monotype Section 4.4 in \cite{HM12} so we only sketch them briefly.

It follows from classical results~\cite[Lemma 2.3]{EW06} that $(n^{-\gamma}\cdot T_n^{(i)},\mu_n^{(i)})$ is tight for the GHP-topology since $n^{-\gamma} \cdot T_n^{(i)}$ converges in distribution for the GH-topology. Let us thus assume that some subsequence converges in distribution to $(\T',\mu').$ Our goal is to show that $(\T',\mu')$ is distributed as $\big(\T^{(i)}_{\gamma,\bnu}, \mu^{(i)}_{\gamma,\bnu}\big).$ Note that we already know that $\T'$ has the correct distribution.

Under our assumption (\ref{eq:simple}), $\mu_n^{(i)}$ is the uniform measure on the $n$ leaves of $T_n^{(i)}.$ Let $k\in\N$ and $L_1^n,\ldots,L_k^n$ be $k$ independent uniformly chosen leaves of $T_n^{(i)}.$ By \cite[Proposition 10]{M09} and \cite[Lemma 35]{HM12}, the subtree of $T_n^{(i)}$ spanned by the root and the leaves $L_1^n,\ldots,L_k^n,$ seen as a $k+1$-pointed metric space, has scaling limit $\T'_k$ where $\T'_k$ is the subtree of $\T'$ spanned by its root and leaves $L'_1,\ldots,L'_k$, chosen independently with distribution $\mu'.$ On the other hand, we know that, conditionally on $L_1^n,\ldots,L_k^n$ being different (an event which has probability tending to $1$), the subtree of $T_n^{(i)}$ spanned by the root and these $k$ leaves is distributed as $\mathcal{R}(T_n^{(i)},[k]),$ and so multiplied by $n^{-\gamma}$ it converges in distribution to $\mathcal{R}(\T_{\gamma,\bnu}^{(i)},[k])$ by Proposition \ref{prop:cvfd}. Hence $\T'_k$ and $\mathcal{R}(\T_{\gamma,\bnu}^{(i)},[k])$ have the same distribution for all $k$ and, clearly, this also holds jointly for all $k$.

Then, calling $\mu'_k$ the uniform distribution on $L'_1,\ldots,L'_k,$ the measured tree $(\T'_k,\mu'_k)$ converges a.s. as $k\to\infty$ to $(\T'',\mu')$ where $\T''$ is the closure in \ $\T'$ of
\ $\bigcup_{i=1}^{\infty}\llbracket\rho,L'_i\rrbracket.$
Note moreover that, by Lemma \ref{lem:tightness} and since $(n^{-\gamma} \cdot T_n^{(i)}, n^{-\gamma} \cdot\mathcal{R}(T_n^{(i)},[k]))$ converges in distribution to $(\mathcal T',\mathcal T'_k)$ along the considered subsequence, $\pr(d_{\mathrm{GH}}(\T'_k,\T')>\eta)\to 0$ as $k \rightarrow \infty$ for any $\eta>0,$ implying that $\T'=\T''$ a.s.
On the other hand, we know by Lemma \ref{lem:marginalstowholetree} that $\big(\mathcal{R}(\T_{\gamma,\bnu}^{(i)},[k]),\eta_k\big)$ converges a.s. to $(\T_{\gamma,\bnu}^{(i)},\mu^{(i)}_{\gamma,\bnu})$ as $k\to\infty,$ where we recall that $\eta_k$ is the uniform measure on the leaves of $\mathcal{R}(\T_{\gamma,\bnu}^{(i)},[k]),$ thus identifying the distribution of $(\T',\mu')$ and $(\T_{\gamma,\bnu}^{(i)},\nu^{(i)}_{\gamma,\bnu})$ and ending our proof.

\bibliographystyle{siam}
\bibliography{frag}
\end{document}